\let\origfrac\frac
\renewcommand{\frac}[2]{\mathchoice
    {\origfrac{#1}{#2}}
    {\sfrac{#1}{#2}}
    {\sfrac{\scriptstyle #1}{\scriptstyle #2}}
    {\sfrac{\scriptstyle #1}{\scriptstyle #2}}
}
\theoremstyle{plain}
\newtheorem{main}{Theorem}
\newtheorem{thm}{Theorem}
\newtheorem{prop}[thm]{Proposition}
\newtheorem{cor}[thm]{Corollary}
\newtheorem{lem}[thm]{Lemma}
\newtheorem{conj}[thm]{Conjecture}
\theoremstyle{definition}
\newtheorem{defn}[thm]{Definition}
\newtheorem*{defn*}{Definition}
\newtheorem{nota}[thm]{Notation}
\newtheorem{fact}[thm]{Fact}
\newtheorem*{que*}{Question}
\theoremstyle{remark}
\newtheorem{rem}[thm]{Remark}
\newtheorem{exa}[thm]{Example}
\newtheorem{exas}[thm]{Examples}
\numberwithin{thm}{subsection}
\numberwithin{table}{subsection}
\DeclareMathOperator{\dom}{dom}
\DeclareMathOperator{\Frac}{Frac}
\DeclareMathOperator{\hatplus}{\hat{+}}
\DeclareMathOperator{\hatdot}{\hat{\cdot}}
\DeclareMathOperator*{\iprod}{\dot\prod}
\DeclareMathOperator{\osupp}{\overline{supp}}
\DeclareMathOperator{\ot}{ot}
\DeclareMathOperator{\p}{p}
\DeclareMathOperator{\PRV}{P}
\DeclareMathOperator{\Red}{R}
\DeclareMathOperator{\res}{res}
\DeclareMathOperator{\RV}{RV}
\DeclareMathOperator{\rv}{rv}
\DeclareMathOperator{\Suc}{S}
\DeclareMathOperator{\supp}{supp}
\newcommand{\vj}{v_J}
\newcommand{\Nb}{\mathbb{N}}
\newcommand{\Qb}{\mathbb{Q}}
\newcommand{\Rb}{\mathbb{R}}
\newcommand{\Zb}{\mathbb{Z}}
\newcommand{\Abf}{\mathbf{A}}
\newcommand{\Bbf}{\mathbf{B}}
\newcommand{\Gbf}{\mathbf{G}}
\newcommand{\Hbf}{\mathbf{H}}
\newcommand{\Jbf}{\mathbf{J}}
\newcommand{\Kbf}{\mathbf{K}}
\newcommand{\Lbf}{\mathbf{L}}
\newcommand{\Rbf}{\mathbf{R}}
\newcommand{\Sbf}{\mathbf{S}}
\newcommand{\Zbf}{\mathbf{Z}}
\newcommand{\no}{\mathbf{No}}
\newcommand{\on}{\mathbf{On}}
\newcommand{\oz}{\mathbf{Oz}}
\newcommand{\sRV}{\widehat\RV}
\newcommand{\sPRV}{\widehat\PRV}
\newcommand{\KR}{{\Kbf(\Rb^{\leq 0})}}
\newcommand{\kg}{{K(G^{\leq 0})}}
\newcommand{\KG}{{\Kbf(\Gbf^{\leq 0})}}
\newcommand{\kRR}{{K((\Rb^{\leq 0}))}}
\newcommand{\KRR}{{\Kbf((\Rb^{\leq 0}))}}
\newcommand{\kgg}{{K((G^{\leq 0}))}}
\newcommand{\KGG}{{\Kbf((\Gbf^{\leq 0}))}}
\newcommand{\ZKGG}{{\Zbf + \Kbf((\Gbf^{< 0}))}}
\newcommand{\KHH}{{\Kbf((H^{\leq 0}))}}
\newcommand{\ZKHH}{{\Zbf + \Kbf((H^{< 0}))}}
\newcommand{\ZKHHH}{{\Zbf + \Kbf((\Hbf^{< 0}))}}
\newcommand{\KH}{{\Kbf(H^{\leq 0})}}
\newcommand{\LHH}{{\Lbf((H_\sigma^{\leq 0}))}}
\newcommand{\SLHH}{{\Sbf_\sigma + \Lbf_\sigma((H_\sigma^{< 0}))}}
\newcommand*{\lspan}[1]{\langle#1\rangle_\Qb}
\newcommand*{\refarchquot}{\textup{(\hyperref[item:arch-quot]{A1})}\textsubscript{$\sigma$}}
\newcommand*{\refbigcof}{\textup{(\hyperref[item:big-cof]{A2})}\textsubscript{$\sigma$}}
\newcommand*{\refZpS}{\textup{(\hyperref[item:Z-pS]{A3})}}
\title[Factorisation theory for omnific integers]{A factorisation theory for generalised power series and omnific integers}
\author{Sonia L'Innocente}
\address{School of Science and Technology, Mathematics Division, University of Camerino, Camerino, Italy}
\email{sonia.linnocente@unicam.it}
\author{Vincenzo Mantova}
\address{School of Mathematics, University of Leeds, LS2 9JT Leeds, United Kingdom}
\email{v.l.mantova@leeds.ac.uk}
\date{22\textsuperscript{nd} January 2024}
\thanks{No data are associated with this article. For the purpose of open access, the authors have applied a Creative Commons Attribution (CC BY) licence to any Author Accepted Manuscript version arising from this submission.}
\thanks{The authors were supported by: FIRB2010 ``New advances in the Model theory of exponentiation'' RBFR10V792 (S.I., V.M.); PRIN 2017 ``Mathematical logic: models, sets, computability'' 2017NWTM8R, INdAM-GNSAGA (S.I.); ERC AdG ``Diophantine Problems'' 267273, EPSRC grant EP/T018461/1 (V.M.).}
\subjclass[2020]{Primary 13F25, 13F15, secondary 13A05, 03E10}
\keywords{omnific integers, surreal numbers, pre-Schreier domain, valued ring}
\begin{document}

\begin{abstract}
  We prove that in every ring of generalised power series with non-positive real exponents and coefficients in a field of characteristic zero, every series admits a factorisation into finitely many irreducibles of infinite support, the number of which can be bounded in terms of the order type of the series, and a unique product, up to multiplication by a unit, of factors of finite support.

  We deduce analogous results for the ring of omnific integers within Conway's surreal numbers, using a suitable notion of infinite product. In turn, we solve Gonshor's conjecture that the omnific integer $\omega^{\sqrt{2}} + \omega + 1$ is prime.

  We also exhibit new classes of irreducible and prime generalised power series and omnific integers, generalising previous work of Berarducci and Pitteloud.
\end{abstract}

\maketitle

\addtocontents{toc}{\protect\setcounter{tocdepth}{0}}

\section{Introduction}

\subsection{Conway's conjectures on omnific integers}\label{sub:conjectures}

Conway's ordered field $\no$ of surreal numbers \cite{Con1976} is a proper class containing the field $\Rb$ of real numbers as well as the class $\on$ of all ordinals equipped with Hessenberg's natural operations. $\no$ contains, for instance, the least infinite ordinal $\omega$, as well as $\sqrt{2}\omega$, $1 - \omega$, $\omega^2 + \pi$, $\frac{1}{\sqrt{\omega}}$, $\omega^{-\omega}$. Surreal numbers can be represented as series with coefficients in $\Rb$ and exponents in $\no$ itself, such as the following surreals:
\[ 1 + \sum_{n \in \Nb} \omega^{\frac{1}{n+1}}, \quad \sum_{n \in \Nb} \omega^{\frac{1}{n+1}} + \sum_{n \in \Nb} \omega^{\frac{1}{(n+1)\omega}} + \omega^{\frac{1}{\varepsilon_0}}.\]
More precisely, every surreal number can be expressed in a canonical way as a formal sum $\sum_{x \in \no} b_x\omega^x$, where each $b_x$ is in $\Rb$, and $\{ x \in \no : b_x \neq 0 \}$ is a set which is reverse well ordered, namely every non-empty subset has a maximum. See \prettyref{sub:surreal} for a minimal introduction to $\no$.

Inside $\no$, Conway isolated the subring $\oz$ of the \textbf{omnific integers}: those are the surreal numbers $\sum_{x \in \no} b_x\omega^x$ where $b_0 \in \Zb$ and $b_x = 0$ for all $x < 0$. For instance, the numbers $\omega^\omega$, $\sqrt{2}\omega$, $1 - \omega$ are omnific integers, while $\omega^2 + \pi, \omega^{\frac{-1}{2}}, \omega^{-\omega}$ are not. $\oz$ is an \emph{integer part} of $\no$, namely for every $b \in \no$ there is a unique $c \in \oz$ such that $c \leq b < c + 1$. By \cite{She1964}, $\oz^{\geq 0}$ is a model of \emph{Open Induction} (see \prettyref{rem:open-induction} for some details).

Not every element of $\oz$ admit a factorisation into irreducibles, for instance:

\begin{enumerate}
\item[(a)] $\omega = 2 \cdot \frac{\omega}{2} = 3 \cdot \frac{\omega}{3} = \omega^{\frac{1}{2}} \cdot \omega^{\frac{1}{2}} = \omega^{\frac{1}{3}} \cdot \omega^{\frac{1}{3}} \cdot \omega^{\frac{1}{3}} = \ldots$;
\item[(b)] $\omega^2 - 1 = (\omega + 1)(\omega - 1) = (\omega + 1)(\omega^{\frac{1}{2}} + 1)(\omega^{\frac{1}{2}} - 1) = \ldots$;
\item[(c)] $\omega^\omega = (\omega + 1)(\omega^{\omega-1} - \omega^{\omega-2} + \omega^{\omega-3} - \ldots) = (\omega^{\sqrt{2}} + 1)(\omega^{\omega-\sqrt{2}} - \ldots)$.
\end{enumerate}

Inspired by examples like the above ones, Conway made the following conjectures.
\begin{conj}[{\cite[p46]{Con1976}}]\label{conj:conway}
  \phantom{[empty line]}
  \begin{enumerate}
  \item\label{item:conj-conway-prime} $1 + \sum_{n \in \Nb} \omega^{\frac{1}{n+1}}$ is prime in $\oz$.
  \item\label{item:conj-conway-crd} $\oz$ has the \textbf{refinement property}: for all $b, c, d, e \in \oz$, if $bc = de$, then there are $f, g, h, i \in \oz$ such that $b = fg$, $c = hi$, $d = fh$, $e = gi$.
  \end{enumerate}
\end{conj}
Gonshor also asked if $\omega^{\sqrt{2}} + \omega + 1$ is a prime \cite[p117]{Gon1986}. Integral domains with the refinement property are usually called \textbf{pre-Schreier domains} from \cite{Zaf1987}.

\subsection{Generalised power series}
\label{sub:intro-power-series}
Conway's conjectures are intimately connected to analogous problems on certain rings of power series. Given a field $K$ and a totally ordered abelian group $G$, a \textbf{generalised power series} with \textbf{coefficients} in $K$ and \textbf{exponents} in $G$ is a formal sum $b = \sum_{x \in G} b_xt^x$ with $b_x \in K$ such that the \textbf{support} $\supp(b) \coloneqq \{x \in G : b_x \neq 0\}$ is well-ordered, namely every non-empty subset has a minimum. We may also rewrite such sums as $\sum_{i < \alpha} b_it^{x_i}$ where $\alpha$ is the unique ordinal isomorphic to $\supp(b)$ as a well ordered set, $(b_i : i < \alpha)$ is a sequence of non-zero elements of $K$, and $(x_i : i < \alpha)$ is a strictly increasing sequence in $G$. We call $\alpha$ the \textbf{order type} of $b$, denoted by $\ot(b)$.

Let $K((G))$ denote the set of such series, which has a natural ring structure, and is in fact a field; and given a subset $S \subseteq G$, let $K((S))$ consist of the series with support contained in $S$. An ubiquitous example is the field of formal Laurent series $\mathbb{C}((\mathbb{Z})) = \mathbb{C}((t))$, containing for instance $\sum_{n \in \Nb} nt^{n - 3}$, and the subring of Taylor series $\mathbb{C}((\mathbb{N})) = \mathbb{C}[[t]] \subseteq \mathbb{C}((\Zb))$. $\no$ can be seen as the union of the sets $\Rb((S))$ for $S$ subset of $\no$, thus as the class of series with coefficients in $\Rb$ and exponents in $\no$, after identifying $t$ with $\omega^{-1}$ (see \prettyref{sub:surreal} for more details). For instance, $\omega = t^{-1}$, $1 + \sum_{n \in \Nb} \omega^{\frac{1}{n+1}} = 1 + \sum_{n \in \Nb} t^{\frac{-1}{n+1}}$. We will consistently use series in $\omega^{-1}$ to represent surreal numbers, while keeping $t$ for series in other fields $K((G))$.

Gonshor \cite[Ch.\ 8]{Gon1986} observed that one can attack Conway's conjectures by studying factorisations in rings of the form $\kRR$, where $K$ is a field of characteristic $0$. This leads to similar questions in $\kRR$: is $1 + \sum_{n \in \Nb} t^{\frac{-1}{n+1}}$ prime in $\kRR$? Is $\kRR$ a pre-Schreier domain?

Note that the question has negative answer for other rings of series: for instance, $\Rb((\Qb^{\leq 0}))$ is \emph{not} pre-Schreier (so not a GCD domain). See \prettyref{sub:not-pre-schreier} for more details.

\subsection{Previous results}
\label{sub:previous-results}

\prettyref{conj:conway}\prettyref{item:conj-conway-prime} was settled in a sequence of papers. Let $K$ be a field of characteristic $0$ and $G$ be a divisible ordered abelian group.

Berarducci \cite{Ber2000} proved that for $b  \in K((\Rb^{<0}))$ of the form $b = \sum_{i < \omega^{\omega^\beta}} b_i t^{r_i}$, where $\sup \{r_i : i < \omega^{\omega^\beta}\} = 0$, both $b$ and $1 + b$ are irreducible in $\kRR = K + K((\Rb^{<0}))$ (see \prettyref{sub:ordinal-arithmetic} for the definition of ordinal exponentiation). In particular, so is $1 + \sum_{n \in \Nb} t^{\frac{-1}{n+1}}$. He also proved that $t^{-\sqrt{2}} + t^{-1} + 1$ is irreducible in $\kRR$, and deduced that $1 + \sum_{n \in \Nb} \omega^{\frac{1}{n+1}}$ and $\omega^{\sqrt{2}} + \omega + 1$ are irreducible in $\oz$.

Pitteloud \cite{Pit2001} proved that if moreover $b = \sum_{i < \omega} b_i t^{r_i}$ (that is, when $\beta = 0$), both $b$ and $1 + b$ are prime in $\kRR$, and deduced the existence of prime elements in $\kgg$ when $G$ admits a maximal convex subgroup. Then Biljakovic, Kochetov, and Kuhlmann \cite[Thm.\ 4.12]{BKK2006} observed that the primes $1+b$ as above remain prime in $\kgg$ for any $G \supseteq \Rb$. These results imply that $1 + \sum_{n \in \Nb} \omega^{\frac{1}{n+1}}$ is prime in $\oz$, proving \prettyref{conj:conway}\prettyref{item:conj-conway-prime}. On the other hand, the primality of $\omega^{\sqrt{2}} + \omega + 1$ was not addressed.

Moreover, Biljakovic, Kochetov and Kuhlmann proved that irreducibles and primes are unbounded in $\kgg$ and showed various results on some \emph{truncation closed} (\prettyref{rem:open-induction}) subrings of $\kgg$ \cite{BKK2006}. Pitteloud proved that the ideal $J$ generated by $t^x$ for $x \in G^{<0}$ is prime in $\kgg$ and that every non-zero and non-unit element of the quotient $\kgg_{/J}$ admits a factorisation into irreducibles \cite{Pit2002}. For series of the form $b = \sum_{i < \omega^k} b_i t^{r_i} \in K((\Rb^{<0}))$, again with $\sup \{r_i : i < \omega^k\} = 0$, both $b$ and $1+b$ have unique factorisation into irreducibles in $\kRR$ for $k=2$ (by Pommersheim and Shahriari \cite{PS2006}) and $k=3$ (by the authors of this paper \cite{LM2017}); in those papers, choices of $b$ are exhibited so that both $b$ and $1+b$ are irreducible.

We also remark that in \cite{Rib1995}, developed independently of Conway's conjectures, Ribenboim studied the rings $R((S))$ of sums $\sum_x b_xt^x$ with coefficients in a ring $R$ and the exponents in a partially ordered monoid $S$, with suitable restrictions on the supports. Then \cite[6.10]{Rib1995} implies that $\oz$ is \textbf{seminormal}, that is, if $b^2 = c^3$ for some $b, c \in \oz$, there is $d \in \oz$ such that $b = d^3$ and $c = d^2$. However, the rest of \cite{Rib1995} and subsequent works on $R((S))$ (such as \cite{KP2001,Liu2004,Zho2004,BR2008}) do not give information about $\oz$: they typically restrict the order of $S$ in a way that exclude the monoid $G^{\leq 0}$. On the other hand, most results in those papers apply to $G^{\geq 0}$, and one has for instance that $K((G^{\geq 0}))$ is a GCD domain (see \prettyref{rem:KGG-geq0-GCD} for a very short proof).

\subsection{Complete factorisations}

In this paper, we provide a theory of \emph{complete} factorisations in $\oz$, in the sense that we express omnific integers as (possibly infinite) products of factors which are either irreducible, or reducible but with a well understood factorisation theory. We first prove a factorisation theorem in the ring $\KRR$, where $\Kbf$ is a field of characteristic $0$ that may be a proper class.

Given a set $A \subseteq \Rb$ of real numbers, let $\langle A \rangle_\Qb$ denote the linear span of $A$ over $\Qb$, seen as a $\Qb$-vector space.

\begin{main}
  \label{main:KRR}
  Let $b \in \KRR$ with $b \neq 0$. Then there are $r \in \Rb^{\leq 0}$, $n \in \Nb$, and $c_1, \dots, c_n \in \KRR$ such that $b = t^rc_1 \cdots c_n$, where each $c_i$ satisfies exactly one of the following:
  \begin{enumerate}
    \item\label{item:main-KRR-finite-irred} $c_i$ is irreducible with finite support and $\langle\supp(c_i)\rangle_\Qb$ has dimension $\geq 2$;
    \item\label{item:main-KRR-finite-dim-1} $c_i$ has finite support, $\langle\supp(c_i)\rangle_\Qb$ has dimension $1$, and $0 \in \supp(c_i)$;
    \item\label{item:main-KRR-infinite-irred} $c_i$ is irreducible with infinite support;
  \end{enumerate}
  and the supports of the $c_i$'s satisfying \prettyref{item:main-KRR-finite-irred} are pairwise $\Qb$-linearly independent.

  Moreover, $n$ can be bounded in terms of $\ot(b)$, $r$ is unique, and the factors $c_i$ with finite support are unique up to reordering and up to multiplication by elements of $\Kbf$.
\end{main}

The factors in \prettyref{item:main-KRR-finite-dim-1} and $t^r$ are the only ones that can be written as $Q(t^s)$ for some polynomial $Q$ over $\Kbf$ and some $s \in \Rb^{\leq 0}$. Such factors may be irreducible, or may also have no irreducible divisors; however, their factorisation theory is easy to describe (see \prettyref{rem:factorisations-of-fractional-polynomials}). The key step in the proof of \prettyref{main:KRR} is that every series $b \in \KRR$ admits a maximal divisor $p$ with finite support, which is unique up to multiplication by elements of $\Kbf$ (\prettyref{prop:pb-KRR}), and the quotient of $b$ by such $p$ splits into irreducible factors (\prettyref{thm:KRR-fact-unique}); the factorisation of $p$ is then a version of Ritt's factorisation theory \cite{Rit1927} (see \prettyref{prop:ritt}).

The irreducible factors with finite support are prime, by the uniqueness clause of \prettyref{main:KRR}. More generally, every series $p$ with finite support is \textbf{primal} (from \cite{Coh1968}) in the ring $\KRR$ (\prettyref{cor:KR-primal-KRR}), namely, if $p$ divides a product $cd \in \KRR$, then we can write $p = p_1p_2$ with $p_1,p_2 \in \KRR$ so that $p_1$ divides $c$ and $p_2$ divides $d$. Moreover, $p_1, p_2$ have both finite support.

In particular, since $t^{-\sqrt{2}} + t^{-1} + 1$ is irreducible (\cite{Ber2000}), it is prime in $\KRR$. Using a suitable primality transfer result (\prettyref{prop:lift-primal-easy-reduced}, inspired by \cite[Cor.\ 4.3]{BKK2006}), we obtain the positive answer to Gonshor's question.

\begin{main}
  \label{main:Gonshor}
  The omnific integer $\omega^{\sqrt{2}} + \omega + 1$ is prime in $\oz$.
\end{main}

\prettyref{main:KRR} cannot be generalised verbatim to $\oz$, as an omnific integer may well have infinitely many pairwise coprime irreducible divisors. We side step this issue by introducing an appropriate infinite product of (some) omnific integers.

We call an omnific integer $b \in \oz$ \emph{reduced} if $\supp(b) \cap \supp(b-1)$ is contained in a single Archimedean class (see \prettyref{def:reduced}). For instance, $\sum_{n \in \Nb} \omega^{\frac{1}{n+1}}$, $\omega^{\sqrt{2}} + \omega + 1$, $3$ are reduced, while $\omega^\omega + \omega^{\sqrt{2}}$, $\omega^{\sqrt{2}} + \omega + 3$ are not. A key reason for this choice is that every omnific integer admits reduced factors, for example $\omega^{\sqrt{2}} + \omega + 3 = (\frac{1}{3}\omega^{\sqrt{2}} + \frac{1}{3}\omega + 1)3$ (see also \prettyref{fact:gonshor-arch-classes}).

By construction, reduced omnific integers can be identified, in a canonical way, with series in rings of the form $\Zb + \Kbf((\Rb^{<0}))$ for some field $\Kbf$ (see \prettyref{fact:isomorphism-K-H-sigma}, \prettyref{def:rho}), or more precisely in $\Kbf((\Rb^{<0})) \cup (1 + \Kbf((\Rb^{<0})))$. Thus we may apply \prettyref{main:KRR} and obtain a factorisation theory for reduced integers in the same style, with the appropriate changes (\prettyref{prop:reduced-fact-unique}); for instance, series with finite support are replaced by \emph{pseudo-polynomials}, which are reduced omnific integers identified with series of finite support in $\Zb + \Kbf((\Rb^{<0}))$ (\prettyref{def:pseudo-stuff}). Reduced omnific integers with finite support, such as $\omega^{\sqrt{2}} + \omega + 1$, are examples of pseudo-polynomials.

We then introduce a partially defined infinite product operator that maps (some) families $(c_i : i \in I)$ of reduced omnific integers, where $I$ is a set, to $\iprod_{i \in I} c_i \in \oz$ (see \prettyref{def:infinite-product}; note that it differs from the usual infinite product of surreal numbers, see \prettyref{rem:infinite-products}). When an omnific integer $\sum_{x \in \no^{\geq 0}} b_x\omega^x$ is such that $b_0 \neq 0$, or in other words, when $0$ in its support, we obtain the following.

\begin{main}
  \label{main:Oz}
  Let $b \in \oz$ with $0 \in \supp(b)$. Then there exist a family $(c_i : i \in I)$ of reduced omnific integers such that $b = \iprod_{i \in I}c_i$ and each $c_i$ is irreducible, a pseudo-polynomial, or $\pm 1$.
\end{main}

Moreover, the irreducible factors in $\Zb$ are unique up to sign, and the pseudo-polynomials are unique under restrictions similar to the ones in \prettyref{main:KRR}. If $0 \notin \supp(b)$, there are additional issues to consider, as for instance such a $b$ is divisible by every non-zero $n \in \Zb$. We deal with this by allowing \emph{pseudo-irreducible} and \emph{pseudo-monomial} factors (\prettyref{def:pseudo-stuff}), which have a very simple factorisation theory (Propositions~\ref{prop:fact-pseudo-monomial},~\ref{prop:fact-almost-irreducible}).

More generally, we state in \prettyref{thm:ZKGG} a general factorisation theorem for rings of the form $\ZKGG_\kappa$, where $\Kbf$ is a field of characteristic $0$, $\Zbf$ is a subring of $\Kbf$, $\Gbf$ is a divisible ordered abelian group, all of which may be proper classes, and the subscript $\kappa$ means that we restrict to those series whose support has size less than some uncountable cardinal $\kappa$, or whose support is a set when $\kappa = \on$ (see \prettyref{sub:generalised-power}). In this generality, we need to allow for a further type of factor (\emph{almost irreducible}, see again \prettyref{def:pseudo-stuff}) which does not appear in omnific integers. \prettyref{main:Oz} follows as a special case.

We also find new primes and primal elements in $\oz$ (see \prettyref{cor:primal-in-oz}), for instance we prove that $\sum_{n \in \Nb}\omega^{\frac{1}{n+1}}$ is primal (see also \prettyref{exas:primes-primal}) and that every omnific integer with finite support is primal (\prettyref{cor:finite-supp-primal}, \prettyref{exa:finite-supp-primal}). For this, we prove a generalisation of \cite[Cor.\ 4.3]{BKK2006} in \prettyref{prop:lift-primal}, where we show how to construct primal elements in $\ZKGG_\kappa$ starting from primal elements in rings of the form $\Lbf((H^{\leq 0}))$, where $H$ is a divisible Archimedean group, and $\Gbf$, $\kappa$ and $\Zbf$ satisfy specific assumptions described at the beginning of \prettyref{sec:on-primal-series}. We also show that if any of those assumptions are violated, then $\ZKGG_\kappa$ is \emph{not} a pre-Schreier domain (\prettyref{thm:non-pS}), as we are able to exhibit non-primal series (\prettyref{sub:not-pre-schreier}). We note that in the particular case $\Kbf((H^{\leq 0}))$, for $H$ a divisible Archimedean ordered group, one could still ask whether the subring of $\Kbf((H))$ of the series with support bounded from above is pre-Schreier.

\subsection{A new valuation}
\label{sub:intro-new-valuation}

The main ingredient of \cite{Ber2000} and many of the papers cited in \prettyref{sub:previous-results} is a specially crafted function $v_J : \kRR \to \on$, which happens to be a \textbf{multiplicative semi-valuation}, namely it satisfies $v_J(bc) = v_J(b) \odot v_J(c)$ and $v_J(b + c) \leq \max\{v_J(b),v_J(c)\}$ for all $b, c \in \kRR$ (see \prettyref{sub:order-value}; we use the word `multiplicative' to emphasise that the ultrametric inequality uses the maximum, rather than the minimum which is more common in valuation theory). Here $\odot$ denote Hessenberg's natural product on $\on$.  See \prettyref{sub:ordinal-arithmetic} for a minimal introduction to the ordinal arithmetic needed in this paper; here we let $\oplus$ denote Hessenberg's natural sum, and $\alpha \mapsto \omega^\alpha$ denote ordinal exponentiation in base $\omega$. To dispel any potential ambiguity, we also use $\hatplus$, $\hatdot$ for the classical (non-commutative) ordinal operations.

In this paper, we use a different function $\KRR \to \on \cup \{-\infty\}$ that also happens to be a valuation.

\begin{defn*}
  Given $b \in \KRR$ with $b \neq 0$, let the \textbf{degree} of $b$, denoted by $\deg(b)$, be the maximum ordinal $\alpha$ such that $\omega^\alpha \leq \ot(b)$. We also let $\deg(0) \coloneqq -\infty$.
\end{defn*}

The series of degree $0$ are precisely the ones with finite support, such as $t^{-\sqrt{2}} + t^{-1} + 1$, except for $0$, which has degree $-\infty$. Those of degree $1$ have order types between $\omega$ (included) and $\omega^2$ (excluded). For instance, the series $\sum_{n \in \Nb} t^{-1 - \frac{1}{n+1}} + \sum_{n \in \Nb} t^{\frac{-1}{n+1}}$ has degree $1$ because its order type is $\omega \hatplus \omega$, whereas $\sum_{(m,n) \in \Nb} t^{\frac{-1}{(n+1)(m+1)}}$ has degree $2$, as its order type is $\omega^2$. We shall prove that the degree is a \textbf{multiplicative valuation} in the following sense.

\begin{main}
  \label{main:degree}
  For all non-zero $b, c \in \KRR$,
  \begin{enumerate}
  \item\label{item:main-deg-ultra} $\deg(b + c) \leq \max\{\deg(b), \deg(c)\}$ (ultrametric inequality);
  \item\label{item:main-deg-mult} $\deg(bc) = \deg(b) \oplus \deg(c)$ (multiplicativity);
  \item\label{item:main-deg-infty} $\deg(b) = -\infty$ if and only if $b = 0$.
  \end{enumerate}
\end{main}

An earlier proof of condition \prettyref{item:main-deg-mult} can be found in \cite[Rem.\ 4.3, Cor.\ 4.7]{LM2017} from the authors of this paper, however with a considerably different notation based on a combination of tools from \cite{Ber2000}. Here we shall describe a more self-contained proof based only on \cite[Cor.\ 9.9]{Ber2000}.

Thus the degree is quite similar to $v_J$, with $\oplus$ replacing $\odot$, and $-\infty$ replacing $0$. A key difference is that $v_J$ does not satisfy condition \prettyref{item:main-deg-infty}: the set $\{b \in \KRR : v_J(b) = 0\}$ is the ideal generated by $t^x$ for $x \in \Rb^{<0}$. We have that $v_J(b) \leq \omega^{\deg(b)} \leq \ot(b)$ for all series $b$, but both inequalities are often strict (see an example in \prettyref{sub:order-value}).

We remark that the map $\sup : \KRR \to \Rb^{\leq 0} \cup \{-\infty\}$ defined by $\sup(b) \coloneqq \sup(\supp(b))$ for $b \neq 0$, and $\sup(0) \coloneqq -\infty$ (\prettyref{def:sup}) is also a multiplicative valuation (\prettyref{prop:sup-valuation}). This is an almost immediate consequence of \cite[Cor.\ 9.8]{Ber2000}, but we give it a full proof for completeness. We discuss how the naive generalisations of $\deg$ and $\sup$ fail to be multiplicative in $\KGG_\kappa$ for arbitrary $\Gbf$ in Remarks~\ref{rem:deg-not-mult},~\ref{rem:sup-G}, and how they can be corrected in \prettyref{cor:deg-sup-sigma-mult}. With the latter Corollary, we recover a short proof of the main result of \cite{Pit2002}, which is that the ideal generated by the monomials $t^x$ for $x \in \Gbf^{<0}$ is prime (\prettyref{cor:pitteloud}). We also show how to construct a true multiplicative valuation $\sup$ on $\KGG_\kappa$ taking values in a quotient of a suitable version of the Dedekind-MacNeille completion of $\Gbf$ (\prettyref{sub:sup-valuation}).

\subsection{New criteria for irreducibility and primality}

The proof of \prettyref{main:KRR} yields additional information about irreducibles and primes. In particular, we obtain a strengthening of Berarducci's criterion for irreducibility \cite[Thm.\ 10.5]{Ber2000}.

\begin{main}
  \label{main:irreducible}
  For all $b \in \KRR$, if the order type of $b$ is of the form $\omega^{\omega^{\alpha}} \hatplus \beta$ with $\beta < \omega^{\omega^{\alpha}}$, and $b$ is not divisible by $t^x$ for any $x \in \Rb^{<0}$, then $b$ is irreducible.
\end{main}

For comparison, the criterion in \cite{Ber2000} is the special case $\beta = 0,1$ and with $0$ an accumulation point of $\supp(b)$. Thus, the above shows for instance that the series $\sum_{n \in \Nb} t^{-1-\frac{1}{n+1}} + t^{\frac{-1}{3}} + 1$, which has order type $\omega \hatplus 2$, is irreducible. The above is a consequence of a more technical criterion (\prettyref{cor:irreducibility}) which allows us to find further irreducible series; for instance, we can prove that for every ordinal $\beta < \omega_1$ and every ordinal $\omega^{\omega^\beta} \leq \alpha < \omega^{\omega^\beta \hatplus 1}$ there exists an irreducible series $b \in \KRR$ of order type $\alpha$ (\prettyref{prop:irreducible-principal-degree}; see also \prettyref{exa:irreducible-omega2+k+1}). On the other hand, neither \prettyref{main:irreducible} nor \prettyref{cor:irreducibility} detect irreducibles of order type $\omega^2$ and $\omega^3$ as the ones in respectively \cite{PS2006}, \cite{LM2017}, which require different techniques.

In a similar way, we strengthen the Pitteloud's primality criterion \cite{Pit2001}.

\begin{main}
  \label{main:prime}
  For all $b \in \KRR$, if the order type of $b$ is of the form $\omega \hatplus k$ with $k < \omega$, and $b$ is not divisible by $t^x$ for any $x \in \Rb^{<0}$, then $b$ is prime.
\end{main}

In particular, the aforementioned example $\sum_{n \in \Nb} t^{-1-\frac{1}{n+1}} + t^{\frac{-1}{3}} + 1$ of order type $\omega \hatplus 2$ is also prime. For comparison again, the main result of \cite{Pit2001} is the special case $b = 0, 1$ and with $0$ an accumulation point of $\supp(b)$. Our proof goes through a technical criterion (\prettyref{cor:b-irred-frac-sRV1-prime}) that can be used to find further primes. For instance, we show that for every ordinal $\omega \leq \alpha < \omega^2$, there is a prime series $b \in \KRR$ of order type $\alpha$ (\prettyref{prop:prime-degree-1}; see also \prettyref{exa:irreducible-omega2+k+1}).

The above results can be used to find new irreducible, prime, and primal omnific integers via \prettyref{prop:lift-primal}, which allows to transfer these properties from $\KRR$ to $\ZKGG_\kappa$. In particular, we find that $\sum_{n \in \Nb} \omega^{1+\frac{1}{n+1}} + \omega^{\frac{1}{3}} + 1$ is prime in $\oz$ (see \prettyref{exas:primes-primal}).

\subsection*{Acknowledgements}

The authors thank the anonymous referees for the several comments that guided thorough revisions of this paper. The authors would also like to thank the \foreignlanguage{german}{Mathematisches Forschungsinstitut Oberwolfach} for the support during the mini-workshop ``Surreal Numbers, Surreal Analysis, Hahn Fields and Derivations'' in 2016, and to express their gratitude to both organisers and participants for the opportunity to discuss and present a preliminary version of the results in this paper \cite{BEK2016}. Both authors gratefully acknowledge the support of the project FIRB2010 ``New advances in the Model theory of exponentiation'' RBFR10V792, of which the first author was the principal investigator. S.I.\ was partially supported by the PRIN 2017 ``Mathematical logic: models, sets, computability'' 2017NWTM8R and by INdAM-GNSAGA\@. V.M.\ was partially supported by the ERC AdG ``Diophantine Problems'' 267273 and the EPSRC grant EP/T018461/1.
\tableofcontents

\addtocontents{toc}{\protect\setcounter{tocdepth}{1}}
\section{Preliminaries}
\label{sec:preliminaries}

Let us recall some basic definitions and facts, as well as fix the notations that will be used throughout the paper.

\subsection{Generalised power series}
\label{sub:generalised-power}
Let $K$ be a field and $G$ be a totally ordered abelian group. Let $\Kbf$, $\Gbf$ be also a field and a totally ordered abelian group, but whose underlying domains may be a proper classes. We have already introduced in \prettyref{sub:intro-power-series} the set $K((G))$ of generalised power series with coefficients in $K$ and exponents in $G$. We recall a few details about $K((G))$, introduce the \emph{canonical valuation}, and make some early comments about series with finite support.

The ring structure mentioned in the introduction is defined by
\begin{align*}
  \sum_{x \in G} b_xt^x + \sum_{x \in G} c_xt^x &\coloneqq \sum_{x \in G} (b_x + c_x)t^x, \\
  \left(\sum_{x \in G} b_xt^x\right)\left(\sum_{x \in G} c_xt^x\right) &\coloneqq \sum_{x \in G} \left(\sum_{y+z=x}b_yc_z\right)t^x.
\end{align*}
One can easily verify that the right hand sides are well defined and in $K((G))$ (namely, the supports are well ordered, and for the second definition, each term $\sum_{y + z = x}b_yc_z$ contains at most finitely many non-zero summands). Call \textbf{monomials} the series of the form $kt^x$ for $x \in G$ and $k \in K \setminus \{0\}$. Note that by the above definition, $t^x t^y = t^{x+y}$. With these operations, $K((G))$ is a $K$-algebra (for instance, the multiplicative identity is the monomial $t^0$). We identify the elements of $K$ with the monomials of the form $kt^0$.

Furthermore, let $K((G))_\kappa$ denote the subset of the series in $K((G))$ with support of cardinality strictly less than $\kappa$, where $\kappa$ is an uncountable cardinal; equivalently, $K((G))_\kappa$ is the union of $K((H))$ for $H$ ranging over the subgroups of $G$ of size less than $\kappa$. We call such series \textbf{restricted} or \textbf{$\kappa$-bounded}, and they form a $K$-algebra.

We generalise the above to the class $\Kbf$ by letting $\Kbf((G))$ be the class $\bigcup_K K((G))$ for $K$ ranging over the subfields of $\Kbf$ that are sets. For $\Gbf$, we cannot give a general meaning to `$\Kbf((\Gbf))$', as it involves the power of $\Gbf$, but we may define $\Kbf((\Gbf))_\on$ to be the class $\bigcup_G \Kbf((G))$ for $G$ ranging over the subgroups of $\Gbf$ that are sets. We may similarly define $\Kbf((\Gbf))_\kappa$, for $\kappa$ an uncountable cardinal, by restricting the above union to the subgroups of size less than $\kappa$. Note that $\Kbf((G))_\on = \Kbf((G))$.

One can check that every non-zero element of $K((G))$ has a multiplicative inverse (\cite{Hah1907}), thus $K((G))$ is a field. It follows immediately that $\Kbf((\Gbf))_\kappa$ is a field for any $\kappa$ uncountable cardinal and for $\kappa = \on$. Moreover, we have the following.
\begin{fact}
  \label{fact:K((G))-field-ACF-RCF}
  $K((G))$ is a field, and it is:
  \begin{enumerate}
  \item\label{item:fact-K((G))-ACF} \textbf{algebraically closed} (namely, every non-trivial polynomial over $K((G))$ has a root in $K((G))$) if and only if $K$ is algebraically closed and $G$ is divisible;
  \item\label{item:fact-K((G))-RCF} \textbf{real closed} (namely, $K((G))$ becomes algebraically closed after, and only after, adjoining the square root of $-1$) if and only if $K$ is real closed and $G$ is divisible.
  \end{enumerate}
  The same facts hold for $\Kbf((\Gbf))_\kappa$ for any $\kappa$ uncountable cardinal and for $\kappa = \on$.
\end{fact}
The field $K((G))$ is usually called a \textbf{Hahn field}, and $K((G))_\kappa$ is called restricted or $\kappa$-bounded Hahn field. We extend this terminology unchanged to respectively $\Kbf((G))$ and $\Kbf((\Gbf))_\kappa$.

If $S$ is a subset of $G$, then $K((S))$ (the set of the series with support contained in $S$) is clearly a $K$-vector space. If $S$ is a semigroup, then $K((S))$ is closed under product; if $S$ is a monoid, then $K((S))$ also contains $t^0 = 1$ and so is a subring of $K((G))$. In particular, $\kgg$ is a subring of $K((G))$.

These considerations extend to $\Kbf((S))$ in the obvious way, where $\Kbf((S))$ is defined as the union of $K((S))$ for $K$ ranging over the subfields of $\Kbf$ that are sets. If $\Sbf$ is a subclass of a class $\Gbf$, then we can again define $\Kbf((\Sbf))_\kappa$ as the union of $\Kbf((S))$ for $S$ subset of $\Sbf$ of size less than $\kappa$ (if $\kappa$ is a cardinal) or of any size (if $\kappa = \on$). Once again, the above facts about $K((S))$ hold for $\Kbf((\Sbf))_\kappa$ for any uncountable cardinal $\kappa$ and for or $\kappa = \on$; for instance, $\KGG_\kappa$ is a subring of $\Kbf((\Gbf))_\kappa$.

\begin{rem}
  \label{rem:open-induction}
  Let $\Kbf$ be an ordered field and $\Zbf$ denote an \emph{integer part} of $\Kbf$, that is, $\Zbf$ is a subring of $\Kbf$ such that for each $k \in \Kbf$ there is a unique $\lfloor k \rfloor \in \Zbf$ such that $\lfloor k \rfloor \leq k < \lfloor k \rfloor + 1$. Here $\Zbf$ and $\Kbf$ may be proper classes. A typical example is the ring $\mathbb{Z}$ in $\mathbb{R}$, or as mentioned in the introduction, $\oz$ in $\no$.

  Then the ring $\ZKGG_\kappa$ is an integer part of $\Kbf((\Gbf))_\kappa$: given a series $b = \sum_{x \in \Gbf} b_xt^x$ in $\Kbf((\Gbf))_\kappa$, the element $\lfloor b \rfloor = \lfloor b_0 \rfloor + \sum_{x \in \Gbf^{<0}} b_xt^x$ is the unique one in $\ZKGG_\kappa$ satisfying $\lfloor b \rfloor \leq b < \lfloor b \rfloor + 1$. By \cite{She1964}, the non-negative elements of the integer part of a real closed field form a model of Open Induction (the fragment of Peano's arithmetic with induction restricted to quantifier-free formulas). By \prettyref{fact:K((G))-field-ACF-RCF}\prettyref{item:fact-K((G))-RCF}, it follows that the non-negative elements of $\ZKGG_\kappa$, when $\Kbf$ is real closed and $\Gbf$ is divisible, form such a model. In particular, $\oz^{\geq 0}$ is a model of Open Induction.

  Note, moreover, that $\ZKGG_\kappa$ is the unique integer part of $\Kbf((\Gbf))_\kappa$ containing $\Zbf$ that is \emph{truncation closed}, that is, if it contains $\sum_x b_xt^x$, then it contains $\sum_{x < y} b_xt^x$ for all $y \in \Gbf$.

  Since every real closed field $R$ admits an embedding $\iota : R \to \Rb((G))$ with truncation closed image, the preimage of $\Zb + \Rb((G^{<0}))$ through $\iota$ is an integer part of $R$, thus every real closed field admits an integer part \cite{MR1993}. Many questions about integer parts that are truncation closed when embedded into some $\Rb((G))$, such as the existence of irreducible and prime elements, are studied in \cite{BKK2006}.
\end{rem}

Let $G_\infty \coloneqq G \cup \{-\infty\}$. For $x \in G_\infty$, let
\begin{equation*}
  -\infty < x, \quad -\infty + x = x + (-\infty) \coloneqq -\infty.
\end{equation*}
Note the quirk $-\infty < -\infty$, which is convenient to avoid singling out the special value $-\infty$ in some statements. The \textbf{canonical} (or \textbf{$t$-adic}) {valuation} of $K((G))$ is the function $v : K((G)) \to G_\infty$ defined by
\begin{equation*}
  v(b) \coloneqq \begin{cases}
    \min \supp(b) & b \neq 0 \\
    -\infty & b = 0.
  \end{cases}
\end{equation*}

\begin{fact}
  For all $b, c \in K((G))$:
  \begin{enumerate}
  \item\label{item:fact-v-ultra} $v(b + c) \geq \min\{v(b), v(c)\}$;
  \item\label{item:fact-v-mult} $v(bc) = v(b) + v(c)$;
  \item\label{item:fact-v-infty} $v(b) = -\infty$ if and only if $b = 0$.
  \end{enumerate}
\end{fact}

\begin{cor}
  \label{cor:KGG-units}
  The units of $\kgg$, that is to say its invertible elements, are exactly the non-zero elements of $K$.
\end{cor}
\begin{proof}
  If $b, c \in \kgg$ are such that $bc = 1$, then $0 = v(bc) = v(b) + v(c)$, hence $v(b) = v(c) = 0$. Therefore, $b = b_0t^0 = b_0$ where $b_0$ is a non-zero element of $K$.
\end{proof}

The definition of canonical valuation extends naturally to $\Kbf((\Gbf))_\kappa$, where it remains a valuation, for any uncountable $\kappa$ and for $\kappa = \on$, and the above observation about the units remains valid in $\KGG_\kappa$.

\begin{nota}
  Given a subclass $\Sbf \subseteq \Gbf$, let $\Kbf(\Sbf)$ denote the subclass of $\Kbf((\Sbf))_\on$ of the series with finite support.
\end{nota}

Note that the $\Kbf(\Sbf) \subseteq \Kbf((\Sbf))_\kappa$ for any uncountable $\kappa$. Just like for $K((S))$, $\Kbf(\Sbf)$ is always a $\Kbf$-vector space, and if $\Sbf$ is a submonoid of $\Gbf$, then $\Kbf(\Sbf)$ is a subring of $\Kbf((\Sbf))_\on$.

\begin{fact}
  \label{fact:KS-is-UFD}
  Suppose that $S$ is a free commutative monoid on $n$ generators, say $S = \Nb x_1 + \dots + \Nb x_n$ for some $x_1, \dots, x_n \in S$. Then $\Kbf(S)$ is isomorphic to the ring $\Kbf[X_1, \dots, X_n]$ of polynomials in $n$ variables, with the isomorphism mapping $t^{x_i}$ to $X_i$. Then $\Kbf(S)$ is a unique factorisation domain, and its units are the non-zero elements of $\Kbf$.

  Likewise, if $S$ is a free abelian group on $n$ generators, say $S = \Zb x_1 + \dots + \Zb x_n$, then $\Kbf(S)$ is isomorphic to the ring $\Kbf[X_1^{\pm1}, \dots, X_n^{\pm1}]$ of Laurent polynomials in $n$ variables, again mapping $t^{x_i}$ to $X_i$. Hence this $\Kbf(S)$ is also a unique factorisation domains, and this time the units are exactly the monomials of $\Kbf(S)$.
\end{fact}

\subsection{Ordinal arithmetic}
\label{sub:ordinal-arithmetic}

We will make regular use of \emph{Hessenberg natural operations}, \emph{ordinal exponentiation} (only in base $\omega$), \emph{Cantor normal forms}, and the \emph{degree}. We give a minimal self-contained presentation of ordinals, with some examples and no proofs, in order to define all of the above. More details can be found in several sources, such as \cite{Sie1958}.

In order to work correctly with the proper classes $\no$, $\on$, our underlying theory will be the von Neumann-Bernays-Gödel set theory without choice (NBG). All the facts below are theorems of NBG.

We work with \emph{von~Neumann} ordinals, as they are more convenient for surreal numbers. A von~Neumann \textbf{ordinal} is a set $\alpha$ such that the membership relation is a well order on $\alpha$ and that is also \textbf{transitive}, namely every element of $\alpha$ is an (initial) subset of $\alpha$. Every well ordered set $A$ is isomorphic to exactly one ordinal $\alpha$ as an ordered set, in which case we call $\alpha$ the \textbf{order type} of $A$ and write $\alpha = \ot(A)$. This justifies the notation introduced in \prettyref{sub:intro-new-valuation}: for $b \in \KRR$, $\supp(b)$ is well ordered (by definition) and so we have $\ot(b) = \ot(\supp(b))$.

Given two ordinals $\alpha, \beta$, say that $\alpha$ is \textbf{less than} $\beta$, written $\alpha < \beta$, if $\alpha \in \beta$ (in particular $\alpha \subsetneq \beta$, since ordinals are transitive). The (proper) class of all ordinals is denoted by $\on$, and $<$ is a well order on $\on$, namely every non-empty subclass has a minimum. Given an ordinal $\alpha$ or a set (not a proper class) $A$ of ordinals, define:
\begin{align*}
  0 & \coloneqq \varnothing && \text{the least ordinal (\textbf{zero})}; \\
  \Suc(\alpha) & \coloneqq \alpha \cup \{\alpha\} && \text{the least ordinal }>\alpha\text{ (\textbf{successor} of }\alpha\text{)}; \\
  \sup A & \coloneqq \bigcup_{\beta \in A} \beta && \text{the least ordinal }\geq \beta\text{ for all } \beta \in A\text{ (\textbf{supremum} of }A\text{)}.
\end{align*}
One can then define $1 = \Suc(0) = \{0\}$, $2 = \Suc(1) = \{0,1\}$, and so on. Let $\omega$ be the least \textbf{limit} ordinal, namely the least one that is neither a successor nor zero. We identify $\Nb$ with $\omega$. Let $\omega_1$ be the first uncountable ordinal.

Classical ordinal arithmetic can be defined by induction on $\on$. In the literature, the classical operations are usually denoted by $+$ and $\cdot$, but to prevent confusion with the field operations on the surreal numbers, we decorate them with a hat.

\medskip

\begin{center}{
  \renewcommand{\arraystretch}{1.4}
  \begin{tabular}{rcp{0.1\textwidth}p{0.15\textwidth}p{0.25\textwidth}}
     && $\beta = 0$ & $\beta = \Suc(\gamma)$ & $\beta$ limit \\ \hline
     \textbf{sum} $\alpha \hatplus \beta$ & $\coloneqq$ & $\alpha$ & $\Suc(\alpha \hatplus \gamma)$ & $\sup\{\alpha \hatplus \gamma : \gamma < \beta\}$ \\
     \textbf{product} $\alpha \hatdot \beta$ & $\coloneqq$ & $0$ & $\alpha \hatdot \gamma \hatplus \alpha$ & $\sup\{\alpha \hatdot \gamma : \gamma < \beta\}$ \\
     \textbf{exponentiation} $\alpha^{\beta}$ & $\coloneqq$ & $1$ & $\alpha^{\gamma} \hatdot \alpha$ & $\sup\{\alpha^{\gamma} : \gamma < \beta \}$.
  \end{tabular}
}\end{center}

\medskip

For convenience, we let $\on_{\infty} \coloneqq \on \cup \{-\infty\}$, and for $\alpha \in \on_{\infty}$ we set
\begin{equation*}
  -\infty < \alpha, \quad\! -\infty \hatplus \alpha = \alpha \hatplus (-\infty) \coloneqq -\infty, \quad\!
  -\infty \hatdot \alpha = \alpha \hatdot (-\infty) \coloneqq -\infty, \quad\! \alpha^{-\infty} \coloneqq 0.
\end{equation*}
On $\Nb = \omega \subseteq \on$, these operations coincide with the usual sum and product of natural numbers. On $\on$, sum and product are weakly increasing in the first argument and strictly increasing in the second one; likewise, exponentiation is weakly increasing in the base and strictly increasing in the exponent.

Moreover, sum and product are associative, but not commutative. The product is distributive over the sum in the second argument, namely $\alpha \hatdot (\beta \hatplus \gamma) = \alpha \hatdot \beta \hatplus \alpha \hatdot \gamma$; in general, though, $(\beta \hatplus \gamma) \hatdot \alpha$ may differ from $\beta \hatdot \alpha \hatplus \gamma \hatdot \alpha$. We also have $\omega^{\alpha \hatplus \beta} = \omega^\alpha \hatdot \omega^\beta$, as well as $\omega^\alpha \geq \alpha$, for all $\alpha, \beta$. Note that the equation $\omega^\varepsilon = \varepsilon$ has class many solutions, the smallest of which denoted by $\varepsilon_0$.

For all $\alpha \in \on$, there is a maximum $\beta$ such that $\omega^\beta \leq \alpha$, and then a unique $\gamma$ such that $\alpha = \omega^\beta \hatplus \gamma$. Iterating this, we find that there is a unique finite sequence $\beta_1 \geq \beta_2 \geq \ldots \geq \beta_n \geq 0$ of ordinals such that
\begin{equation}
  \label{eq:cantor-normal-form}
  \alpha = \omega^{\beta_1} \hatplus \ldots \hatplus \omega^{\beta_n}.
\end{equation}
The expression on the right-hand side is called \textbf{Cantor normal form} of $\alpha$. The normal form makes it possible to compute sum and product easily. It suffices to recall that both operations are associative, that the product is right distributive over the sum and has identity $1 = \omega^0$, and the following equations:
\begin{align}
  \label{eq:sum-in-cnf}
  \omega^\alpha \hatplus \omega^\beta &= \omega^\beta && \text{if }\alpha < \beta; \\
  \label{eq:product-in-cnf}
  (\omega^{\beta_1} \hatplus \cdots \hatplus \omega^{\beta_n}) \hatdot \omega^\alpha &=
    \omega^{(\max_i\beta_i) \hatplus \alpha} && \text{if } \alpha > 0.
\end{align}

Moreover, $\on$ admits \emph{commutative} operations introduced by Hessenberg \cite{Hes1906}. Given $\alpha = \omega^{\gamma_1} \hatplus \omega^{\gamma_2} \hatplus \cdots \hatplus \omega^{\gamma_n}$ and $\beta = \omega^{\gamma_{n+1}} \hatplus \omega^{\gamma_{n+2}} \hatplus \cdots \hatplus \omega^{\gamma_{n+m}}$ in Cantor normal form, let $\pi$ be a permutation of the integers $1, \ldots, n+m$ such that $\gamma_{\pi(1)} \geq \ldots \geq \gamma_{\pi(n+m)}$. Define
\begin{align*}
  \alpha \oplus \beta &\coloneqq \omega^{\gamma_{\pi(1)}} \hatplus \omega^{\gamma_{\pi(2)}} \hatplus \cdots \hatplus \omega^{\gamma_{\pi(n+m)}} && (\textbf{natural sum}),\\
  \alpha \odot \beta &\coloneqq \bigoplus_{1 \leq i \leq n} \bigoplus_{n+1 \leq j \leq k+m} \omega^{\gamma_j \oplus \gamma_j} && (\textbf{natural product}).
\end{align*}
We extend sum and exponentiation to $\on_{\infty}$ by letting
\begin{equation*}
  (-\infty) \oplus \alpha = \alpha \oplus (-\infty) \coloneqq -\infty, \quad \omega^{-\infty} = 0.
\end{equation*}

\begin{fact}
  The operations $\oplus$ and $\odot$ make $\on$ into an ordered semiring, that is:
  \begin{enumerate}
    \item $\oplus$ is associative, commutative, strictly increasing in both arguments (in particular, it is cancellative), with identity $0$;
    \item $\odot$ is associative, commutative, with identity $1$ and strictly increasing in both arguments on $\on \setminus \{0\}$, whereas $\alpha \odot 0 = 0 \odot \alpha = 0$ for every $\alpha \in \on$;
    \item $\odot$ is distributive over $\oplus$.
  \end{enumerate}
\end{fact}

Note that by definition $\omega^{\alpha} \odot \omega^{\beta} = \omega^{\alpha \oplus \beta}$ for all $\alpha, \beta \in \on_\infty$. Moreover, we have the equality $\omega^{\beta_1} \hatplus \cdots \hatplus \omega^{\beta_n} = \omega^{\beta_1} \oplus \cdots \oplus \omega^{\beta_n}$ if (and only if) the left hand side is in Cantor normal form.

It follows that in Cantor normal form, Hessenberg's arithmetic can be read off as the arithmetic of polynomials in the variables $\omega^{\omega^\alpha}$, for $\alpha \in \on$, with coefficients in $\Nb$. Thus we have the suggestive isomorphism
\[ (\on, \oplus, \odot) \cong (\Nb[X_0,X_1,\dots,X_\alpha,\dots], +, \cdot) \]
where we map $\omega^{\omega^\alpha}$ to the variable $X_\alpha$.

For $\alpha \in \on$, let the \textbf{degree} of $\alpha$ be the exponent $\beta_1$ in its Cantor Normal form when $\alpha > 0$, or $-\infty$ if $\alpha = 0$. In other word, $\deg(\alpha) = \max\{\beta \in \on_\infty : \omega^\beta \leq \alpha\}$. This justifies the notation of \prettyref{sub:intro-new-valuation}: for $b \in \KRR$ we have $\deg(b) = \deg(\ot(b))$.

\begin{fact}
  The degree is a multiplicative valuation on $\on$: for all $\alpha, \beta \in \on$
  \begin{enumerate}
    \item $\deg(\alpha \oplus \beta) \leq \max\{\deg(\alpha),\deg(\beta)\}$ (in fact, this is always an equality);
    \item $\deg(\alpha \odot \beta) = \deg(\alpha) \oplus \deg(\beta)$;
    \item $\deg(\alpha) = -\infty$ if and only if $\alpha = 0$.
  \end{enumerate}
\end{fact}

We conclude with some technical considerations about order types that will be used in \prettyref{sec:degree}. Given an ordered set $A$ and two subsets $B, C \subseteq A$ (or more generally $A, B, C$ classes), write $B < C$ if $x < y$ for all $x \in B$ and $y \in C$. Furthermore, if $A$ is a subclass of an ordered abelian group, let $B + C \coloneqq \{x + y \,:\, x \in B, y \in C\}$. We have the following.

\begin{fact}
  \label{fact:ot-ABC}
  Let $A$ be a well ordered set. Then:
  \begin{enumerate}
  \item\label{item:fact-ot-side-union} for all $\alpha, \beta \in \on$, $\ot(A) = \alpha \hatplus \beta$ if and only if there are $B, C \subseteq A$ such that $B < C$, $\ot(B) = \alpha, \ot(C) = \beta$, and $A = B \cup C$;
  \item\label{item:fact-ot-union} for all $B, C \subseteq A$, $\ot(B \cup C) \leq \ot(B) \oplus \ot(C)$ (\cite[Lem.\ 4.1]{Ber2000});
  \item\label{item:fact-ot-sum} if $A$ is a subset of an ordered abelian group, then for all $B, C \subseteq A$, $\ot(B + C) \leq \ot(B) \odot \ot(C)$ (\cite[Lem.\ 4.5]{Ber2000}).
  \end{enumerate}
\end{fact}

\subsection{Surreal numbers}
\label{sub:surreal}
For most of this paper, the reader only needs to know that $\no = \Rb((\Gbf))_\on$, where $\Gbf$ is the additive group of $\no$ itself, and the subscript $\on$ means that we only take the series the supports of which are sets, rather than proper classes; in other words, we treat $\on$ as an uncountable cardinal larger than any cardinal that is a set. We also write $\omega$ in place of $t^{-1}$, and just write
\[ \no = \Rb((\no))_\on. \]
Moreover, when proving \prettyref{prop:no-G-sigma-cofinality}, we will use that $\no$ contains an isomorphic copy of $\on$ which is unbounded in $\no$. Every result in this paper can be deduced from the above observations only.

\emph{Simplicity} will also appear as an optional ingredient in the definition of infinite product, in case one does not want to use a global axiom of choice. The reader may well skip this section and only come back to learn about simplicity or for additional background.

Here we give some minimal definitions and examples about $\no$, without proofs, in order to justify the identification $\no = \Rb((\no))_\on$ and give a flavour of the constructions in $\no$. We follow the approach of \cite{Gon1986}; all the details can be found there.

\begin{figure}[t]
  \centering
    \begin{tikzpicture}[radius=1pt,x=1.4cm,y=0.85cm]
      \tikzset{
        every node/.style={scale=0.8},
        pics/surreal/.style={code={\draw node [fill=white,inner sep=0.06cm] {\color{blue}{#1}};}},
        edge-/.style={to path={
          -- (\tikztotarget) node [midway,fill=white,draw,circle,inner sep=.03cm] {\tiny $0$}}},
        edge/.style={to path={
          -- (\tikztotarget) node [midway,fill=white,draw,circle,inner sep=.03cm] {\tiny $1$}}},
        edge--/.style={to path={
          -- (\tikztotarget) node [midway,fill=white,draw,circle,inner sep=.03cm] {\tiny $1$}}}
      }

      \coordinate (0) at (0,0);
      \foreach \s in {,-} {

        \coordinate (\s1) at (\s3/2,1);
        \coordinate (\s2) at (\s5/2,2);
        \coordinate (\s3) at (\s3,3);

        \coordinate (\s1/2) at (\s3/4,2);

        \coordinate (\s1/4) at (\s3/8,3);
        \coordinate (\s3/4) at (\s9/8,3);
        \coordinate (\s3/2) at (\s2,3);

        \coordinate (\s pi4) at (\s75/64,4.5);   
        \coordinate (\s sqrt2) at (\s61/32,4.5); 
        \coordinate (\s e) at (\s183/64,4.5);    

        \coordinate (\s omega) at (\s7/2,4.5);
        \coordinate (\s1/omega) at (\s1/8,4.5);

        \coordinate (\s omegaomega) at (\s29/8,5);
        \coordinate (\s epsilon) at (\s59/16,5.5);
        \coordinate (\s omega1) at (\s119/32,6);

        \draw [edge\s] (0) to (\s1) to (\s2) to (\s3);
        \draw [edge\s-] (\s1) to (\s1/2);
        \draw [edge\s-] (\s1/2) to (\s1/4);
        \draw [edge\s] (\s1/2) to (\s3/4);
        \draw [edge\s-] (\s2) to (\s3/2);

        \draw [dashed] (\s3/4) -- ++(\s3/16,1/2) -- ++(-\s3/32,1/4) -- (\s pi4); 
        \draw [dashed] (\s3/2) -- ++(-\s1/4,1/2) -- ++(\s1/8,1/4) -- ++(\s1/16,1/4) -- (\s sqrt2);
        \draw [dashed] (\s3) -- ++(-\s1/4,1/2) -- ++(\s1/8,1/4) -- ++(-\s1/16,1/4) -- (\s e); 
        \draw [dashed] (\s3) -- (\s omega);
        \draw [dashed] (\s1/4) -- ++(-\s1/8,1/2) -- (\s1/omega);

        \draw [dashed] (\s omega) -- (\s omegaomega) -- (\s epsilon) -- (\s omega1);

        \draw [dotted] (\s1/4) -- ++(\s1/8,1/2);
        \draw [dotted] (\s3/4) -- ++(-\s3/16,1/2);
        \draw [dotted] (\s3/2) -- ++(\s1/4,1/2);
        \draw [dotted] (\s omega) -- ++(-\s1/8,1/2);

        \draw [dotted] (\s3/4) ++(\s3/16,1/2) -- ++(\s1/8,1/4);
        \draw [dotted] (\s3/2) ++(-\s1/4,1/2) -- ++(-\s1/8,1/4);
        \draw [dotted] (\s3) ++(-\s1/4,1/2) -- ++(-\s1/8,1/4);

        \foreach \t in {,-} {
          \draw [dotted] (\s omega1) -- ++(\t\s1/64,1/2);
          \draw [dotted] (\s3/4) ++(-\s3/16,1/2) -- ++(\t\s1/16,1/4);
          \draw [dotted] (\s3/2) ++(\s1/4,1/2) -- ++(\t\s1/8,1/4);
          \draw [dotted] (\s1/4) ++(\s1/8,1/2) -- ++(\t\s1/16,1/4);
          \foreach \x in {pi4,sqrt2,e,1/omega}
            \draw [dotted] (\s\x) -- ++(\t\s1/32,1/2);
        }

        \foreach \x in {1,2,3}
          \draw (\s\x) pic{surreal={\small $\s\x$}};
        \foreach \x in {1,3}
          \foreach \y in {2,4}
              \draw (\s\x/\y) pic{surreal={\tiny $\frac{\s\x}{\y}$}};

        \draw (\s omega) pic{surreal={\small $\s\omega$}};
        \draw (\s 1/omega) pic{surreal={\small $\s\origfrac{1}{\omega}$}};
        \draw (\s sqrt2) pic{surreal={\tiny $\s\sqrt{2}$}};
        \draw (\s pi4) pic{surreal={\small $\s\frac{\pi}{4}$}};
        \draw (\s e) pic{surreal={\small $\s e$}};
        \draw (\s omegaomega) pic{surreal={\tiny $\s\omega^\omega$}};
        \draw (\s epsilon) pic{surreal={\tiny $\s\varepsilon_0$}};
        \draw (\s omega1) pic{surreal={\tiny $\s\omega_1$}};
      }
      \draw (0) pic{surreal={\small $0$}};

      \fill (-4,0) circle;
      \fill (-4,1) circle;
      \fill (-4,2) circle;
      \fill (-4,3) circle;
      \fill (-4,4.5) circle;
      \fill (-4,5) circle;
      \fill (-4,5.5) circle;
      \fill (-4,6) circle;
      \draw (-4,0) node [left] {\color{blue}{$0$}};
      \draw (-4,1) node [left] {\color{blue}{$1$}};
      \draw (-4,2) node [left] {\color{blue}{$2$}};
      \draw (-4,3) node [left] {\color{blue}{$3$}};
      \draw (-4,4.5) node [left] {\color{blue}{$\omega$}};
      \draw (-4,5) node [left] {\color{blue}{$\omega^\omega$}};
      \draw (-4,5.5) node [left] {\color{blue}{$\varepsilon_0$}};
      \draw (-4,6) node [left] {\color{blue}{$\omega_1$}};
      \draw (-4,0) -- (-4,3);
      \draw [dashed] (-4,3) -- (-4,6);
      \draw [dotted] (-4,6) -- (-4,6.5);

      \draw (-4.7,2.75) node {\rotatebox{90}{\color{red}partial, well founded order $<_s$}};
      \draw[->,color=red] (-4.5,0.5) -- (-4.5,5);
      \draw[->,color=red] (-2,-0.6) -- (2,-0.6);
      \draw (0,-1) node {\color{red}total, dense order $<$};

    \end{tikzpicture}
  \caption{The tree of surreal numbers.}
  \label{fig:surreal-numbers}
\end{figure}

A \textbf{surreal number} is a function $s : \alpha \to 2 = \{0,1\}$ for some ordinal $\alpha \in \on$. Let $\no$ denote the class of all surreal numbers. By \cite{Con1976}, it is common to represent the elements of $\no$ as the nodes of a binary rooted tree with edges labelled by $0$, $1$ and paths of any ordinal length (see Figure~\ref{fig:surreal-numbers}). Each node represents the function whose domain is the length of the path from the root to the node, and whose values are the labels on the edges of the path. For instance, the function $s : 3 = \{0,1,2\} \to \{0,1\}$ taking values $s(0) = 1$, $s(1) = 0$, $s(2) = 0$ is represented by the node $\frac{1}{4}$.

Given $b, c \in \no$ distinct surreal numbers, let $\alpha$ be the least ordinal where they differ, in the sense that either $b(\alpha) \neq c(\alpha)$, or $\alpha \notin \dom(b)$, or $\alpha \notin \dom(c)$. We say that $b$ is \textbf{less than} $c$, written $b < c$, if $b(\alpha) = 0$ or $c(\alpha) = 1$. This is a dense linear order on $\no$. In Figure~\ref{fig:surreal-numbers}, the order can be read by scanning the tree from left to right, for instance, $0 < \frac{1}{\omega} < \frac{1}{2} < 1$.

We also say that $b$ is \textbf{simpler} than $c$, written $b <_s c$, if $\dom(b) < \dom(c)$ and $b$ is the restriction of $c$ to $\dom(b)$. Write $b \leq_s c$ if $b <_s c$ or $b = c$. In Figure~\ref{fig:surreal-numbers}, $b \leq_s c$ when $b$ is on the path from the root to $c$ (for instance, $0 <_s 1 <_s \frac{1}{2} < \frac{1}{\omega}$, whereas $-1$ and $1$ are not $<_s$-comparable). This is a partial order, and it is \textbf{well founded}, namely every non-empty subclass of $\no$ contains $<_s$-minimal elements.

\begin{fact}
  \label{fact:L-R}
  If a non-empty subclass $C \subseteq \no$ is convex, namely for all $x,y \in C$ and $x < z < y$ we have $z \in C$, then the $<_s$-minimal element is unique, and we shall call it the \textbf{simplest} element of $C$. If $L, R$ are subsets of $\no$ with $L < R$, the convex class $\{b \in \no : L < b < R\}$ is non-empty, and in particular it contains a unique simplest element, denoted by $L \mid R$.
\end{fact}

For instance,
\[ 0 = \varnothing \mid \varnothing, \quad 1 = \{0\} \mid \varnothing, \quad \frac{1}{2} = \{0\} \mid \{1\}, \quad \omega = \{0,1,2,3,\dots\} \mid \varnothing. \]
Note that the above fact may fail if $L$ and $R$ are proper classes: there is no surreal $b$ such that $L < b < R$ when $L = \no^{<0}$ and $R = \no^{\geq 0}$.

Since $<_s$ is well founded, we can define various functions by induction on $<_s$. For the sake of brevity, we only show the definition of the sum. Given $b \in \no$, write $L_b \coloneqq \{ b' \in \no : b' <_s b, b' < b \}$ and $R_b \coloneqq \{ b'' \in \no : b'' <_s b, b < b'' \}$. We have $b = L_b \mid R_b$. We define, by induction on $<_s$,
\begin{align*}
  -b &\coloneqq \{ -b'' : b'' \in R_b \} \mid \{ -b' : b' \in L_b \} \\
  b + c & \coloneqq \{ b' + c, b + c' : b' \in L_b, c' \in L_c \} \mid \{ b'' + c, b + c'' : b'' \in R_b, c'' \in R_c \}
\end{align*}
To show that the definitions are well posed, one needs to ensure that the classes $L$ and $R$ respectively on the left and right of the $\mid$ symbol are sets and that $L < R$.

\begin{fact}
  The maps $(b,c) \mapsto b + c$, $b \mapsto -b$ are well defined, and make $\no$ into a divisible ordered abelian group. Moreover, there is an inductively defined map $(b, c) \mapsto bc$ making $\no$ into an ordered field.
\end{fact}

\begin{fact}
  The surreal numbers with either finite domain or with domain $\omega$ and not eventually constant form a complete real closed field, thus isomorphic to $\Rb$.

  The surreal numbers that are constantly $1$ form an isomorphic copy of $\on$ with Hessenberg's natural sum and product, among which those of finite domain form a copy of $\Nb$.
\end{fact}

In light of the above, we shall freely identify $\Rb$ and $\on$ with their isomorphic copies inside $\no$. The labels in \prettyref{fig:surreal-numbers} correspond to this identification.

\begin{fact}
  \label{fact:omega-map}
  There is an inductively defined map $x \in \no \mapsto \omega^x \in \no^{>0}$ such that for all $x, y \in \no$, $\omega^{x+y} = \omega^x\omega^y$, and if $x < y$, then $0 \leq k\omega^x < \omega^y$ for all $k \in \Nb$. Moreover, the map $\alpha \mapsto \omega^\alpha$ coincides with ordinal exponentiation in base $\omega$ when $\alpha \in \on$.
\end{fact}

One can define, by induction on the order type of a series, a natural map $\iota : \Rb((\no))_\on \to \no$, where the monomial $t^{-x}$ is mapped to $\omega^x$ for $x \in \no$. We omit its definition here.

\begin{fact}
  \label{fact:surreal-as-series}
  $\iota : \Rb((\no))_\on \to \no$ is an ordered ring isomorphism. In particular, $\no$ is a real closed field by \prettyref{fact:K((G))-field-ACF-RCF}.
\end{fact}

We shall freely identify $\no = \Rb((\no))_\on$. As mentioned in the introduction, we shall write the series with $\omega$ in place of $t^{-1}$ to emphasise that we are talking about surreal numbers, as opposed to series in some $\Kbf((\Gbf))_\kappa$.

\begin{rem}
  Under the identification $\no = \Rb((\no))_\on$, we have $\on = \Nb(-\on)$, namely the ordinal numbers are (identified with) the series with finite support, coefficients in $\Nb$, and exponents in $-\on$. The series corresponding to an ordinal is simply its Cantor normal form.
\end{rem}

\subsection{Archimedean classes}
\label{sub:archimedean}

Let $\Gbf$ be a class equipped with the structure of divisible ordered abelian group. We shall use the following notation for the Archimedean relation in $\Gbf$: for $x, y \in \Gbf$, we write
\begin{itemize}
  \item $x \preceq y$ if there is some non-zero $n \in \Nb$ such that $|x| \leq n|y|$;
  \item $x \asymp y$ if $y \preceq x \preceq y$;
  \item $x \prec y$ if $x \preceq y$ and $x \not\asymp y$.
\end{itemize}
Then $\preceq$ is a total \textbf{quasi-order} (namely, it is a reflexive, transitive, and total), thus in particular $\asymp$ is an equivalence relation. Let $[x]$ denote the $\asymp$-equivalence class of $x \in \Gbf$, and we call it the \textbf{Archimedean class} of $x$. The relation $\preceq$ induces a total order on the class of Archimedean classes $\Gbf_{/\asymp}$. In line with our previous conventions, if $\Abf, \Bbf \subseteq \Gbf$ and $x \in \Gbf$, we write $\Abf \prec x$ to mean that $y \prec x$ for all $y \in \Abf$, $\Abf \prec \Bbf$ if $y \prec z$ for all $y \in \Abf$ and $z \in \Bbf$, and give analogous meanings to $\Abf \preceq x$, $\Abf \preceq \Bbf$.

Now let $\Kbf$ be a field, $\Zbf$ be a subring of $\Kbf$, both possibly proper classes, and $\kappa$ be an uncountable cardinal or $\kappa = \on$. We can use the Archimedean classes of $\Gbf$ to reduce to the case of series with real exponents. See also \cite[\S~12]{Ber2000}, \cite{BKK2006} for considerations along these lines.

\begin{nota}
  Let $\sigma$ be an Archimedean class of $\Gbf$. Define:
  \begin{itemize}
    \item the groups $\Gbf_{\prec \sigma} \coloneqq \{x \in \Gbf : [x] \prec \sigma\}$, $\Gbf_{\preceq \sigma} \coloneqq \{x \in \Gbf : [x] \preceq \sigma\}$ for $\sigma \neq [0]$, and $\Gbf_{\prec 0} = \Gbf_{\preceq 0} \coloneqq \{0\}$.
    \item the field $\Lbf_\sigma \coloneqq \Kbf((\Gbf_{\prec \sigma}))_\kappa \subseteq \Kbf((\Gbf))_\kappa$;
    \item the ring $\Sbf_\sigma \coloneqq \Lbf_\sigma \cap (\ZKGG_\kappa) = \Zbf + \Kbf((\Gbf_{\prec \sigma}^{<0}))_\kappa \subseteq \ZKGG_\kappa$.
  \end{itemize}
\end{nota}

Note that the above groups, fields, and rings may also be proper classes.

\begin{fact}
  \label{fact:isomorphism-K-H-sigma}
  For every $\sigma \in \Gbf_{/\asymp}$ we have:
  \begin{enumerate}
    \item\label{item:fact-isom-K-H-div} $\Gbf_{\prec \sigma},\Gbf_{\preceq \sigma}$ are divisible convex subgroups of $\Gbf$;
    \item\label{item:fact-isom-K-H-complement} there is an ordered group complement $H_\sigma$ of $\Gbf_{\prec \sigma}$ into $\Gbf_{\preceq \sigma}$;
    \item\label{item:fact-isom-K-H-arch} $H_\sigma$ is Archimedean when $\sigma \neq [0]$, and $H_\sigma = \{0\}$ otherwise;
    \item\label{item:fact-isom-K-H-isom} the map $\iota_\sigma : \Kbf((\Gbf_{\preceq \sigma}))_\kappa \to \Lbf_\sigma((H_\sigma))$ defined by
    \[ \iota_\sigma : b = \sum_{x \in \Gbf_{\preceq \sigma}} b_xt^x \mapsto  \sum_{x \in H_\sigma} t^x \left(\sum_{y \in \Gbf_{\prec \sigma}} b_{x+y}t^y\right) \]
    is a field isomorphism;
    \item\label{item:fact-isom-K-H-image} the restriction of $\iota_\sigma$ to $\ZKGG_\kappa$ is a ring isomorphism with image $\SLHH$.
  \end{enumerate}
\end{fact}
The above are just standard considerations in the theory of divisible ordered abelian groups and Hahn series. Note that since $H_\sigma$ is Archimedean, it embeds into $\Rb$, and in particular it is always a set.

In the case of $\oz$, so when the group $\Gbf$ is the additive group of $\no$, we also have some additional information.

\begin{prop}
  \label{prop:H-sigma-is-R}
  For every non-zero $\sigma \in \no_{/\asymp}$, $H_\sigma \cong \Rb$.
\end{prop}
\begin{proof}
  Let $\sigma \in \no_{/\asymp}$. Note that by construction (\prettyref{fact:omega-map}), if $x < y$, then $\omega^x \prec \omega^y$; moreover, by \prettyref{fact:surreal-as-series}, there is some $x \in \no$ such that $\omega^x \in \sigma$. We may thus write
  \[ \no_{\preceq \sigma} = \left\{ \sum_{y \leq x}b_y\omega^y \right\}, \quad \no_{\prec \sigma} = \left\{ \sum_{y < x}b_y\omega^y \right\}. \]
  It is then clear that the additive group $\Rb\omega^x$ is a complement of $\no_{\prec \sigma}$ in $\no_{\preceq \sigma}$. Since all complements of $\no_{\prec \sigma}$ are isomorphic, $H_\sigma$ is isomorphic to $\Rb$.
\end{proof}

We also observe that the cofinality of a group $\Gbf$, and in particular how it compares to $\kappa$, affects the properties of the associated rings $\ZKGG_\kappa$. This will play a crucial role in \prettyref{sec:on-primal-series}. Given an ordered class $\Abf$ and a subclass $\Bbf \subseteq \Abf$, say that $\Bbf$ is \textbf{cofinal} in $\Abf$ if $\Bbf$ is not bounded from above. Call \textbf{cofinality} of $\Abf$ the least cardinality $\lambda$ of a cofinal subset, if one exists. For instance, $\Zb$ is cofinal in $\Rb$ and they both have cofinality $\aleph_0$. If no subset is cofinal, but there is a cofinal subclass of the form $\{x_\alpha \mid \alpha \in \on\}$, we say that $\Abf$ has cofinality $\on$, otherwise we say that $\Abf$ has cofinality $\infty$. By convention, we write $\alpha < \on < \infty$.

Clearly, $\Abf$ has the same cofinality as any of its cofinal subclasses. In particular, $\no$ has cofinality $\on$, since $\on$ itself is cofinal in $\no$. Likewise, since the interval $\no^{<b}$ is isomorphic to $\no$ as an ordered class for any $b \in \no$, the cofinality of $\no^{<b}$ is also $\on$.

\begin{prop}
  \label{prop:no-G-sigma-cofinality}
  For every non-zero $\sigma \in \no_{/\asymp}$, $\no_{\prec \sigma}$ has cofinality $\on$.
\end{prop}
\begin{proof}
  Recall that $\no_{\prec \sigma} = \left\{ \sum_{y < x}b_y\omega^y \right\}$ for some $x \in \no$ from the proof of \prettyref{prop:H-sigma-is-R}. It follows that $\{\omega^y : y < x\}$ is cofinal in $\no_{\prec \sigma}$, thus $\no_{\prec \sigma}$ has the same cofinality as $\no^{<x}$, and thus it has cofinality $\on$.
\end{proof}

The cofinality affects the arithmetic $\ZKGG_\kappa$ in a way that will be particularly relevant in \prettyref{sub:lifting-primal}. Below, given an integral domain $\Rbf$, let $\Frac(\Rbf)$ denote its fraction field. We thank Pietro Freni for noticing an error in an earlier version of the statement below.

\begin{prop}
  \label{prop:frac-Z-is-K}
  $\Frac(\ZKGG_\kappa) = \Kbf((\Gbf))_\kappa = t^\Gbf\KGG_\kappa$ if and only if $\Gbf$ has cofinality $\geq \kappa$, or $\Gbf = \{0\}$ and $\Frac(\Zbf) = \Kbf$. In particular, $\Frac(\oz) = \no$ (\cite[Thm.\ 32]{Con1976}).
\end{prop}
\begin{proof}
  The case $\Gbf = \{0\}$ is obvious, so assume that $\Gbf$ is a non-trivial group. Let $\lambda$ be its cofinality. By assumption, $\lambda \geq \aleph_0$.

  Suppose that $\lambda < \kappa$. Let $(x_i)_{i < \lambda}$ be a cofinal sequence in $\Gbf$. After taking a subsequence, we may assume that $x_{i+1} > 2x_i > 0$ for all $i < \lambda$ (where $2x_i = x_i + x_i$). Let $b = \sum_{i < \lambda} t^{x_i}$, and take any $c \in \ZKGG_\kappa$. By cofinality, there exists $i < \lambda$ such that $x_i > -v(c)$. It follows that for all $j,k < \lambda$ satisfying $k > \max\{i,j\}$ (thus in particular $x_k \geq 2x_i, 2x_j$) we have
  \[ \supp(t^{x_k}c) \geq x_k + v(c) > \max\{2x_j, 2x_i\} + v(c) > x_j \geq \supp(t^{x_j}c). \]
  In particular, $0 < x_k + v(c) \in \supp(bc)$, thus $bc \notin \ZKGG_\kappa$. Since $c$ was arbitrary, it follows that $b \notin \Frac(\ZKGG_\kappa)$.

  For the converse, assume that $\lambda \geq \kappa$. Let $b \in \Kbf((\Gbf))_\kappa$. Since the cardinality of $\supp(b)$ is less than $\kappa$, $\supp(b)$ is not cofinal, so there exists $x \in \Gbf^{\leq 0}$ such that $-x > \supp(b)$. It follows that $\supp(t^xb) < 0$, hence $t^xb \in \ZKGG_\kappa$. This holds for any $b$, thus $\Frac(\ZKGG_\kappa) = \Kbf((\Gbf))_\kappa = t^\Gbf\KGG_\kappa$.
\end{proof}

\subsection{Pre-Schreier domains and GCD domains}

Throughout the paper, we will use that \emph{GCD domains} are pre-Schreier domains (which are the domains with the refinement property), and that the rings $\Kbf(\Gbf)$, $\KG$ of series with finite support are GCD domains. We assume that $\Kbf$ is a field and $\Gbf$ an ordered abelian group, both possibly proper classes.

Let $R$ be an integral domain. Recall from the introduction that $b \in R$ is primal  if for all $c, d \in R$, if $b$ divides $cd$, then there are $b_1, b_2 \in R$ such that $b = b_1b_2$ and $b_1$ divides $c$, $b_2$ divides $d$. For instance, $b \in R$ is prime if and only if it is irreducible and primal.

An integral domain is pre-Schreier if and only if every element is primal (this is actually the original definition from \cite{Zaf1987}). Indeed, suppose that every element is primal, and that $bc = de$. Since $b$ is primal, we can write $b = fg$ so that $f$ divides $d$ and $g$ divides $e$. After dividing both sides, we find $c = \frac{d}{f}\,\frac{e}{g}$, and so we find the desired $f,g,h,i$ on setting $h = \frac{d}{f}$ and $i = \frac{e}{g}$. Conversely, suppose that $b$ divides a product $de$. Then we can write $bc = de$ for some $c$. Then we can write $b = fg$, where $f$ divides $d$ and $g$ divides $e$, hence $b$ is primal.

Given $b, c \in R$, we say that $d$ is a \textbf{greatest common divisor} of $b$ and $c$ if $d$ divides $b$ and $c$, and for any other $e \in R$ dividing $b$ and $c$, $e$ divides $d$. We say that $R$ is a \textbf{GCD domain} if all $b, c \in R$ have a greatest common divisor.

\begin{fact}
  \label{fact:GCD-is-pre-Schreier}
  Every GCD domain is pre-Schreier (see e.g.\ \cite[Thm.\ 2.5]{Coh1968}).
\end{fact}

\begin{fact}
  \label{fact:KG-GCD-domain}
  For all fields $K$ and ordered abelian groups $G$, $\kg$ and $K(G)$ are GCD domains, so in particular pre-Schreier domains (see e.g.\ \cite[Thm.\ 6.4]{GP1974}). The units of $\kg$ are the non-zero elements of $K$, and the units of $K(G)$ are exactly the monomials. The same facts follow for $\KG$, $\Kbf(\Gbf)$.
\end{fact}

A quick explanation is that every ordered abelian group can be written as a directed union of finitely generated free abelian groups, thus $\Kbf(\Gbf)$ is a directed union of unique factorisation domains by \prettyref{fact:KS-is-UFD}. Likewise, $\Gbf^{\leq 0}$ is a directed union of finitely generated free commutative monoids, and so $\KG$ is a directed union of unique factorisation domains. A directed union of pre-Scheier domains is clearly a pre-Schreier domain, hence so are $\Kbf(\Gbf)$ and $\KG$.

\begin{rem}
  \label{rem:shepherdson}
  In \cite{She1964}, Shepherdson not only proves that (the non-negative part of) the integer part of a real closed field is a model of Open Induction, but also exhibits the ring $\Zb + \Qb^{\textrm{rc}}(\Qb^{<0})$ as one such example, where $\Qb^{\textrm{rc}}$ is the field of real algebraic numbers. This was the first recursive non-standard model of a significant fragment of Peano's arithmetic. Using the above comments, it is easy to verify that $\Zb + \Qb^{\textrm{rc}}(\Qb^{<0})$ is a GCD domain, thus in particular pre-Schreier, and moreover its only primes are the primes of $\Zb$.
\end{rem}

\begin{rem}
  \label{rem:KGG-geq0-GCD}
  The ring $\Kbf((\Gbf^{\geq 0}))_\kappa$ is also a notable example of GCD domain, where $\kappa$ is an uncountable cardinal or $\kappa = \on$ (note that we are taking \emph{positive} exponents here). Given a non-zero $b \in \Kbf((\Gbf^{\geq 0}))_\kappa$, there are unique $k \in \Kbf$ and $\varepsilon \in \Kbf((\Gbf^{>0}))_\kappa$ such that $b = kt^{v(b)}(1 + \varepsilon)$. We then observe that $k(1 + \varepsilon)$ is invertible, with inverse
  \[ k^{-1}(1 - \varepsilon + \varepsilon^2 - \dots) \in \Kbf((\Gbf^{\geq 0}))_\kappa. \]
  The fact that the above infinite sum represents a well defined series in $\Kbf((\Gbf))_\on$ is a well known fact that goes back to \cite{Neu1949}, and if the support of $\varepsilon$ has cardinality less than $\kappa$, than so does its inverse by an easy computation. It follows that given two non-zero series $b, c \in \Kbf((\Gbf^{\geq 0}))_\kappa$, $b$ divides $c$ if and only if $v(b) \leq v(c)$. In particular, $t^{\min\{v(b),v(c)\}}$ is a greatest common divisor of $b$ and $c$.
\end{rem}

\subsection{Ritt's theorem}

Let $\Kbf$ be a field and $\Gbf$ be a divisible ordered abelian group, both possibly proper classes. The factorisation theory of $\KG$ can be described in a little more detail, after rephrasing \cite{Rit1927} to talk about series with finite support rather than exponential polynomials. See for instance \cite{EP1997} for a rephrasing of Ritt's theorem on rings of the form $R(G)$, where $R$ is unique factorisation domain and $G$ is a divisible ordered abelian group. The case of $\KG$, even just for $\Kbf$, $\Gbf$ sets, is slightly different and it seems to be missing from the literature. Since it is an important ingredient in the proof of \prettyref{main:KRR}, we give a short account here.

Given a set $S \subseteq \Gbf$, let $\langle S \rangle_\Qb$ denote its linear span over $\Qb$ in $\Gbf$, seen as a $\Qb$-vector space. This coincides with the notation used in \prettyref{main:KRR} when $\Gbf = \Rb$. First, let us formulate the main theorem of \cite{EP1997} in a manner convenient for our purposes. In order to do so, we first observe the following, where $\supp(b) - v(b)$ means $\supp(b) + (-v(b))$.

\begin{lem}
  \label{lem:KG-supp(bc)}
  Let $b, c \in \Kbf(\Gbf)$. Then
  \[ \lspan{\supp(bc) - v(bc)} = \lspan{(\supp(b) - v(b)) \cup (\supp(c) - v(c))}. \]
\end{lem}
\begin{proof}
  Note that $\supp(b) - v(b) = \supp(t^{-v(b)}b)$. Therefore, after replacing $b$ with $t^{-v(b)}b$, we may directly assume that $v(b) = v(c) = 0$, and since $v(bc) = v(b) + v(c)$, we wish to prove that $\lspan{\supp(bc)} = \lspan{\supp(b) \cup \supp(c)}$. Let $K$ be a subfield of $\Kbf$, $G$ be a divisible subgroup of $\Gbf$, both of which are sets, and such that $b, c \in K(G)$.

  For the sake of notation, let $d = bc$. Recall that by definition of product, $\supp(d) \subseteq \supp(b) + \supp(c)$, hence $\lspan{\supp(d)}$ is contained in $\lspan{\supp(b) \cup \supp(c)}$.

  For the other inclusion, let $x$ be any element of $G$ outside of $\lspan{\supp(d)}$ (if there is none, we are done). Let $H$ be a complement of $\lspan{x}$ in $G$ containing $\lspan{\supp(d)}$, which exists since $G$ is divisible. For $q \in \Qb$, set
  \[ b_q = \sum_{y \in qx + H} b_yt^y, \quad
     c_q = \sum_{y \in qx + H} c_yt^y, \quad
     d_q = \sum_{y \in qx + H} d_yt^y. \]
  Observe that by construction, $d_q = \sum_{q'+q''=q}b_{q'}c_{q''}$. Moreover, $d_q = 0$ for every $q \neq 0$, since $H \supseteq \lspan{\supp(d)}$. Finally, since $0 \in \supp(b) \cap \supp(c)$ we have at least $b_0c_0 \neq 0$.

  Now, if $p', p''$ are maximal such that $b_{p'} \neq 0, c_{p''} \neq 0$, the sum
  \[ d_{p'+p''} = \sum_{q'+q''=p'+p''}b_{q'}c_{q''}\]
  contains only one non-zero term, hence $p' + p'' = 0$. Since $p',p'' \geq 0$, we find that $p' = p'' = 0$. Likewise, if $p', p''$ are minimal such that $b_{p'} \neq 0, c_{p''} \neq 0$, we must have $p' = p'' = 0$. It follows that $b_q = c_q = 0$ for every $q \neq 0$, which implies that $x \notin \lspan{\supp(b) \cup \supp(c)}$, as desired.
\end{proof}

\begin{prop}[Ritt's theorem for $\Kbf(\Gbf)$]
  \label{prop:ritt-EP}
  Let $b \in \Kbf(\Gbf)$. There are $n \in \Nb$ and $c_1, \dots, c_n \in \Kbf(\Gbf)$ such that $b = c_1 \cdots c_n$, where each $c_i$ satisfies exactly one of the following:
  \begin{enumerate}
    \item\label{item:prop-ritt-EP-dim-2} $c_i$ is irreducible and $\langle\supp(c_i) - v(c_i)\rangle_\Qb$ has dimension $\geq 2$;
    \item\label{item:prop-ritt-EP-dim-1} $\langle\supp(c_i) - v(c_i)\rangle_\Qb$ has dimension $1$;
  \end{enumerate}
  and the supports of the $c_i$'s satisfying \prettyref{item:prop-ritt-EP-dim-1} are pairwise $\Qb$-linearly independent.

  Moreover, the factors $c_i$ as above are unique up to reordering and up to multiplication by monomials.
\end{prop}
\begin{proof}
  By \cite[Thm.\ 1]{EP1997}, $b$ can be written as a product of irreducible factors and of factors of the form $P(t^x)$, where each $P$ is a polynomial over $\Kbf$ and the exponents $x$ are pairwise $\Qb$-linearly independent. The latter ones clearly satisfy condition \prettyref{item:prop-ritt-EP-dim-1} of the conclusion.

  Now consider all the factors $c$ such that the dimension of $\lspan{\supp(c)-v(c)}$ is 1. These are exactly the factors that can be written as $t^yQ_i(t^z)$ for some non-constant polynomial $Q$ over $\Kbf$, and where $z$ is in $\lspan{\supp(c)-v(c)}$. When two such factors share the same linear space $\lspan{\supp(c)-v(c)}$, we multiply them together and obtain a single factor of the same form. We repeat this until we find a factorisation in which all factors satisfy either \prettyref{item:prop-ritt-EP-dim-2} or \prettyref{item:prop-ritt-EP-dim-1}. This shows the existence of the desired factorisation.

  We can easily prove uniqueness from the fact that $\Kbf(\Gbf)$ is a GCD domain (\prettyref{fact:KG-GCD-domain}). Indeed, all irreducible factors, in particular the ones in \prettyref{item:prop-ritt-EP-dim-2}, are prime, thus unique up to reordering and up to units. Now suppose that $c$ is a factor satisfying \prettyref{item:prop-ritt-EP-dim-1}. Suppose that $b = c_1' \cdots c_m'$ is another factorisation of $b$ of the desired form. Since $c$ is primal, we can write $c = d_1 \cdots d_m$ for some $d_1, \dots, d_m \in \Kbf(\Gbf)$ such that $d_i$ divides $c_i'$ for all $i = 1, \dots, m$. By \prettyref{lem:KG-supp(bc)}, for each $i$ the space $\lspan{\supp(d_i) - v(d_i)}$ is either $\lspan{\supp(c) - v(c)}$ or $0$ (in which case $d_i$ is a unit), and such space must be a subset of $\lspan{\supp(c_i') - v(c_i')}$. By construction, there is exactly one $i$ such that $\lspan{\supp(d_i) - v(d_i)} = \lspan{\supp(c_i') - v(c_i')}$, hence $d_i$ is equal to $c_i'$ up to multiplication by a unit. It follows easily that our desired factorisation is unique up to reordering and up to multiplication by units.
\end{proof}

It is easy to deduce the analogous statement for $\KG$.

\begin{prop}[Ritt's theorem for $\KG$]
  \label{prop:ritt}
  Let $b \in \KG$. There are $n \in \Nb$, $x \in \Gbf$, and $c_1, \dots, c_n \in \KG$ such that $b = t^xc_1 \cdots c_n$, where each $c_i$ satisfies exactly one of the following:
  \begin{enumerate}
    \item\label{item:prop-ritt-dim-2} $c_i$ is irreducible and $\langle\supp(c_i)\rangle_\Qb$ has dimension $\geq 2$;
    \item\label{item:prop-ritt-dim-1} $\langle\supp(c_i)\rangle_\Qb$ has dimension $1$ and $0 \in \supp(c_i)$;
  \end{enumerate}
  and the supports of the $c_i$'s satisfying \prettyref{item:prop-ritt-dim-1} are pairwise $\Qb$-linearly independent.

  Moreover, $x$ is unique, and the factors $c_i$ as above are unique up to reordering and up to multiplication by elements of $\Kbf$.
\end{prop}
\begin{proof}
  First, write $b = c_1' \cdots c_n'$ for some $c_i' \in \Kbf(\Gbf)$ satisfying the conclusion of \prettyref{prop:ritt-EP}. Let $c_i = t^{-\max\supp(c_i')}c_i'$, so that $c_i \in \KG$. We obtain
  \[ b = t^{\max\supp(c_1') + \dots + \max\supp(c_n')}c_1 \cdots c_n. \]
  We claim that the above factorisation is the desired one.

  First, we observe that $0 \in \supp(c_i)$ for all $i = 1, \dots, n$. It follows in particular that $\lspan{\supp(c_i)} = \lspan{\supp(c_i') - v(c_i')}$.

  Therefore, for every $i$, if $c_i'$ satisfies condition \prettyref{item:prop-ritt-dim-1} of \prettyref{prop:ritt-EP}, we have that $\lspan{\supp(c_i)}$ has dimension 1, so it satisfies condition \prettyref{item:prop-ritt-dim-1} of the desired conclusion. Moreover, such supports are pairwise $\Qb$-linearly independent.

  Now suppose instead that $c_i'$ satisfies condition \prettyref{item:prop-ritt-dim-2} of \prettyref{prop:ritt-EP}. Then $\lspan{\supp(c_i)}$ has dimension at least 2. Moreover, $c_i$ is irreducible in $\Kbf(\Gbf)$, since it is just $c_i'$ multiplied by a unit of $\Kbf(\Gbf)$. Suppose that $c_i = de$ for some $d, e \in \KG$. Then one of $d, e$ is a unit of $\Kbf(\Gbf)$, say $d = kt^x$ for some $k \in \Kbf$ and $x \in \Gbf$. But $0 = \max\supp(c_i) = \max\supp(d) + \max\supp(e) \leq 0$, hence $x = 0$. Therefore, $c_i$ is irreducible in $\KG$.

  For the uniqueness, suppose we are given a factorisation as in the conclusion. If $c_i$ is irreducible, then $0 \in \supp(c_i)$, or it would be non-trivially divisible by $t^x$ for any $\supp(c_i) < x < 0$. Therefore, $0 \in \supp(c_i)$ for all $i = 1, \dots, n$.

  In particular, $x = \max\supp(b)$, hence it is unique. Moreover, by \prettyref{prop:ritt-EP}, the factors $c_i$ are unique up to reordering and multiplication by monomials. But since $0 \in \supp(c_i)$ for every $i = 1, \dots, n$, the only monomials by which we can multiply are elements of $\Kbf$, as desired.
\end{proof}

\begin{rem}
  \label{rem:factorisations-of-fractional-polynomials}
  The factorisations of the series $c_i$ such that $\lspan{\supp(c_i)}$ has dimension 1 and $0 \in \supp(c_i)$ depend on the arithmetic of the underlying field $\Kbf$. In some cases, $c_i$ can be irreducible, in others it may not have any irreducible factors. We illustrate this with a few examples.

  Fix one such series $c = c_i$ and let $x$ be a negative generator of $\lspan{\supp(c)}$. Then $c \in \Kbf(\lspan{x}^{\leq 0}) \cong \bigcup_N \Kbf[X^{\frac{1}{N}}]$. For simplicity, let us assume that $c \in 1 + \Kbf(\Gbf^{<0})$, and that we only look at factorisations into factors in $1 + \Kbf(\Gbf^{<0})$. Any other factorisation of $c$ can be transformed into one like this after multiplying by appropriate units, namely non-zero elements of $\Kbf$.

  When $\Kbf$ is algebraically closed, we can split $c$ into finitely many factors of the form $1 - at^y$ where $a \in \Kbf$ is non-zero and $y \in \lspan{x}$ is negative. Every factorisation of $1 - at^y$ lies in some $\Kbf\left(\frac{1}{N}\mathbb{N}y\right)$, hence it can be refined to a product of series of the form $1 - \zeta_Na^{\frac{1}{N}}t^{\frac{y}{N}}$, where $a^{\frac{1}{N}}$ is an $N$-th root of $a$ and $\zeta_N$ is an $N$-th root of unity. In turn, every factorisation of $c$ can be refined into a product of factors as above.

  When $\Kbf$ is real closed, one can obtain a similar description. Recall that $\Kbf$ becomes algebraically closed after adding a square root of $-1$. Given a factorisation of $c_i$, one can go to the algebraic closure, refine it into a product of series of the form $1 - \zeta_Na^{\frac{1}{N}}t^{\frac{y}{N}}$, then multiply the pairs of factors that are Galois conjugate over $\Kbf$ to find a refinement into factors in $\KG$ of the forms $1 - at^y$ or $1 - at^y + bt^{2y}$.

  At the other extreme, consider for instance $\Kbf = \Qb$. In this case, $c$ has a maximal divisor of the form $Q(t^x)$, where $Q \in \Qb[X]$ is a divisor of $X^N - 1$ for some $N \in \Nb$. One can verify (for instance via Kummer theory) that $\frac{c_i}{Q(t^x)}$ admits a factorisation into irreducibles. On the other hand, $Q(t^x)$ admits no irreducible factors by simple Galois theoretic considerations on roots of unity.
\end{rem}

\subsection{Berarducci's order value}
\label{sub:order-value}
Let us recall the definition of the function $v_J$ from \cite{Ber2000}. It will only be used directly in the proof of \prettyref{main:prime}, so the reader may skip this subsection and come back when needed. Let $\Kbf$ be a field of characteristic 0, possibly a proper class. Let $\Jbf$ be the (proper) ideal of $\KRR$ generated by the series of the form $t^x$ for $x < 0$. Given $b \in \KRR$, the \textbf{order value} of $b$ is
\begin{equation}
  \label{eq:vJ}
  v_J(b) \coloneqq \begin{cases}
    0 & \text{if } b \in \Jbf, \\
    1 & \text{if } b \in (\Jbf + \Kbf) \setminus \Jbf, \\
    \min\{\ot(c) : c \equiv b \mod \Jbf + \Kbf\} & \text{otherwise}.
  \end{cases}
\end{equation}

The key and by far most difficult result of \cite{Ber2000} is that $v_J$ is a multiplicative semi-valuation.

\begin{fact}[{\cite[Lem.\ 5.5,\ Thm.\ 9.7]{Ber2000}}]
  \label{fact:order-value}
  For all $b, c \in \KRR$ we have:
  \begin{enumerate}
  \item\label{item:vJ-ultra} $v_J(b + c) \leq \max\{v_J(b), v_J(c)\}$;
  \item\label{item:vJ-mult} $v_J(bc) = v_J(b) \odot v_J(c)$;
  \item\label{item:vJ-infty} $v_J(b) = 0$ if and only if $b \in \Jbf$.
  \end{enumerate}
\end{fact}

\begin{rem}
  \label{rem:vJ-deg-inequality}
  Note that by construction $v_J(b) \leq \ot(b)$. Since $v_J(b)$ is always of the form $\omega^\alpha$ for some $\alpha \in \on_\infty$ (see \cite[Rem.\ 5.3]{Ber2000}), we find that $v_J(b) \leq \omega^{\deg(b)}$. This inequality is often strict. For instance, the reader can easily verify directly from the definitions that
  \[ \sum_{(n,m) \in \Nb^2} t^{-1 - \frac{1}{(n+1)(m+1)}} + \sum_{n \in \Nb} t^{-\frac{1}{n+1}} \]
  has degree $2$ and order value $\omega$.
\end{rem}

\section{The degree is a multiplicative valuation}
\label{sec:degree}

We are now ready to tackle the main theorems of this paper. From this point forward, we work with the following data:
\begin{itemize}
  \item $\Kbf$ a field of characteristic $0$, possibly a proper class;
  \item $\Gbf$ a divisible ordered abelian group, possibly a proper class;
  \item $\kappa$ an uncountable cardinal, or $\kappa = \on$.
\end{itemize}

The purpose of this section is to prove \prettyref{main:degree}, while collecting additional useful observations along the way.

\subsection{Preliminary inequalities}
We first establish some easy inequalities satisfied by the degree. We also introduce a sibling function $\sup : \KRR \to \Rb_\infty = \Rb \cup \{-\infty\}$, satisfying similar inequalities.

\begin{prop}[{\cite[Remark 5.4]{Ber2000}}]
  \label{prop:ot}
  For all $b, c \in \Kbf((\Gbf))_\kappa$ we have:
  \begin{enumerate}
  \item $\ot(b + c) \leq \ot(b) \oplus \ot(c)$;
  \item $\ot(bc) \leq \ot(b) \odot \ot(c)$;
  \end{enumerate}
\end{prop}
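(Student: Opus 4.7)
The plan is to reduce both inequalities to the corresponding combinatorial inequalities on well ordered sets provided by \prettyref{fact:ot-ABC}, via two elementary inclusions of supports and the monotonicity of order type under increasing inclusions. Recall that if $A \subseteq A'$ are well ordered and the inclusion is increasing, then $\ot(A) \leq \ot(A')$; this is immediate from the definition of the ordering on $\on$.

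For the first inequality, I observe that from the definition of addition in $K((G))$, the coefficient of $t^x$ in $b+c$ is $k_x + l_x$, which can only be nonzero if at least one of $k_x, l_x$ is nonzero. Hence
\[ \supp(b + c) \subseteq \supp(b) \cup \supp(c), \]
and both sides are well ordered (the right-hand side as the union of two well ordered subsets of $G$). By monotonicity and the second item of \prettyref{fact:ot-ABC},
\[ \ot(b+c) \leq \ot(\supp(b) \cup \supp(c)) \leq \ot(\supp(b)) \oplus \ot(\supp(c)) = \ot(b) \oplus \ot(c). \]

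For the second inequality, the coefficient of $t^x$ in $b \cdot c$ is $\sum_{y+z=x} k_y l_z$, which can only be nonzero if $x = y + z$ for some $y \in \supp(b)$ and $z \in \supp(c)$. Therefore
\[ \supp(b \cdot c) \subseteq \supp(b) + \supp(c). \]
The right-hand side is well ordered because the support of a product of series in $K((G))$ is well ordered (which is precisely what underlies the third item of \prettyref{fact:ot-ABC}). Applying monotonicity and the third item of \prettyref{fact:ot-ABC}, we obtain
\[ \ot(b \cdot c) \leq \ot(\supp(b) + \supp(c)) \leq \ot(\supp(b)) \odot \ot(\supp(c)) = \ot(b) \odot \ot(c), \]
as desired. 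There is no real obstacle here: the content of the proposition is entirely contained in \prettyref{fact:ot-ABC}, and my only task is to connect support operations with the ordinal operations $\oplus$ and $\odot$ through the two standard inclusions above.
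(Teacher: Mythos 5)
Your proof is correct and follows essentially the same route as the paper: both establish the two support inclusions $\supp(b+c) \subseteq \supp(b) \cup \supp(c)$ and $\supp(bc) \subseteq \supp(b) + \supp(c)$ and then invoke \prettyref{fact:ot-ABC}. You are merely more explicit about the monotonicity of $\ot$ under increasing inclusion, which the paper leaves tacit.
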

\begin{proof}
  Let $b, c \in \Kbf((\Gbf))_\kappa$. Note that $\supp(b + c) \subseteq \supp(b) \cup \supp(c)$ and $\supp(bc) \subseteq \supp(b) + \supp(c)$. The inequalities then follow at once from \prettyref{fact:ot-ABC}\prettyref{item:fact-ot-union}--\prettyref{item:fact-ot-sum}.
\end{proof}

\begin{cor}
  \label{cor:deg}
  For all $b, c \in \Kbf((\Gbf))_\kappa$ we have:
  \begin{enumerate}
  \item\label{item:deg-ultra-early} $\deg(b + c) \leq \max\{\deg(b), \deg(c)\}$;
  \item\label{item:deg-submult} $\deg(bc) \leq \deg(b) \oplus \deg(c)$.
  \end{enumerate}
\end{cor}

The above inequalities, combined with \cite{LM2017}, are already enough to prove \prettyref{main:degree}. Indeed, note that $\omega^{\deg(b)}$ is the first term of the Cantor Normal Form of $\ot(b)$. Thus, by \cite[Cors.\ 4.3,\ 4.7]{LM2017}, we have $\omega^{\deg(bc)} = \omega^{\deg(b)}\odot\omega^{\deg(c)}$, which implies $\deg(bc) = \deg(b) \oplus \deg(c)$. However, we prefer to give a more self-contained proof.

\begin{defn}
  \label{def:sup}
  Given $b \in \KRR^{\neq 0}$, let the \textbf{supremum} of $b$ be $\sup(b) \coloneqq \sup(\supp(b))$. We also let $\sup(0) \coloneqq -\infty$.
\end{defn}

\begin{prop}
  \label{prop:sup}
  For all $b, c \in \KRR$ we have:
  \begin{enumerate}
  \item\label{item:sup-ultra-early} $\sup(b + c) \leq \max \{\sup(b), \sup(c)\}$;
  \item\label{item:sup-submult} $\sup(bc) \leq \sup(b) + \sup(c)$.
  \end{enumerate}
\end{prop}
\begin{proof}
  Immediate from the definitions of sum and product.
\end{proof}

Thus, for \prettyref{main:degree}, we need to prove that $\deg(bc) = \deg(b) \oplus \deg(c)$ for all $b, c \in \KRR$. Along the way we also prove that $\sup(bc) = \sup(b) + \sup(c)$, thus both functions are multiplicative valuations on $\KRR$.

\begin{rem}
  \label{rem:monoid-is-omega-1}
  For $\Gbf = \Rb$, the maps $\ot$ and $\deg$ take values among the \emph{countable} ordinals. Indeed, each well ordered subset of $\Rb$ is countable (because, for instance, we can pick a distinct rational number within each pair of successive elements of such a set, thus constructing an injective map from the set to the rational numbers). The set of countable ordinals is denoted by $\omega_1$, which is itself an ordinal (the least uncountable one).

  Conversely, every countable ordinal is represented by a well ordered subset of $\Rb$ (or even $\Rb^{\leq 0}$), so every ordinal in $\omega_1$ is the order type of some series in $\Kbf((\Rb))$ (or $\KRR$). One can easily deduce that the same is true for the degree.
\end{rem}

\subsection{Truncations and principal series}

We now translate \prettyref{fact:ot-ABC}\prettyref{item:fact-ot-side-union} into a few statements about series.

\begin{prop}
  \label{prop:b-c+d}
  For all $b \in \Kbf((\Gbf))_\kappa$ and $\alpha, \beta \in \on$, $\ot(b) = \alpha \hatplus \beta$ if and only if there are $c, d \in \Kbf((\Gbf))_\kappa$ such that $\supp(c) < \supp(d)$, $\ot(c) = \alpha$, $\ot(d) = \beta$ and $b = c + d$.
\end{prop}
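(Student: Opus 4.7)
The plan is to apply the first bullet of \prettyref{fact:ot-ABC} essentially verbatim, transferring the decomposition of the support as an ordered set to a decomposition of the series.

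For the forward direction, assume $\ot(b) = \alpha + \beta$ and write $b = \sum_{x \in G} k_x t^x$. Apply \prettyref{fact:ot-ABC} to $A := \supp(b)$ to obtain $B, C \subseteq A$ with $B < C$, $\ot(B) = \alpha$, $\ot(C) = \beta$, and $A = B \cup C$. Define
\[ c := \sum_{x \in B} k_x t^x, \qquad d := \sum_{x \in C} k_x t^x. \]
Both $B$ and $C$, as subsets of a well ordered set, are themselves well ordered, so $c, d \in K((G))$. Since $k_x \neq 0$ for every $x \in A$, we have $\supp(c) = B$ and $\supp(d) = C$, giving $\ot(c) = \alpha$, $\ot(d) = \beta$, and $\supp(c) < \supp(d)$. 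Finally $b = c + d$ by construction.

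For the reverse direction, suppose $b = c + d$ with $\supp(c) < \supp(d)$, $\ot(c) = \alpha$, $\ot(d) = \beta$. Because $\supp(c)$ and $\supp(d)$ are disjoint (they are strictly separated), no coefficient cancellation can occur when summing, hence $\supp(b) = \supp(c) \cup \supp(d)$. Applying the first bullet of \prettyref{fact:ot-ABC} to this decomposition of $\supp(b)$ yields $\ot(b) = \alpha + \beta$.

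There is no real obstacle here; the only small point to mind is the equality $\supp(c+d) = \supp(c) \cup \supp(d)$, which uses disjointness of the supports so that the coefficients of $b$ at each exponent coincide with those of $c$ or $d$ individually. The rest is a direct translation between the combinatorial statement on well ordered sets and its incarnation at the level of series.
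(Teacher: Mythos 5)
Your proof is correct and takes essentially the same route as the paper: both reduce to the first bullet of Fact~\ref{fact:ot-ABC} via the identification of the series decomposition with a decomposition of $\supp(b)$. If anything, you are slightly more careful than the paper in spelling out the reverse direction, noting explicitly that $\supp(c) < \supp(d)$ forces disjointness and hence $\supp(c+d) = \supp(c) \cup \supp(d)$ with no cancellation.
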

\begin{proof}
  Let $A = \supp(b)$. By \prettyref{fact:ot-ABC}\prettyref{item:fact-ot-side-union}, $\ot(A) = \alpha \hatplus \beta$ if and only if there are $B, C \subseteq A$ such that $B < C$, $\ot(B) = \alpha$, $\ot(C) = \beta$, and $A = B \cup C$. Write $b = \sum_{x \in \Gbf} b_x t^x$. Then clearly
  \[ b = \sum_{x \in \Gbf} b_x t^x = \sum_{x \in A} b_x t^x = \sum_{x \in B} b_x t^x + \sum_{x \in C} b_x t^x. \]
  The conclusion follows at once on setting $c = \sum_{x \in B} b_x t^x$, $d = \sum_{x \in C} b_x t^x$: indeed, $\supp(c) = B$, $\supp(d) = C$, so $\supp(c) < \supp(d)$,  $\ot(c) = \alpha$, $\ot(d) = \beta$ and $b = c + d$.
\end{proof}

When a series $b$ is written as a sum $b = c + d$ as in the above proposition, we shall call $c$ and $d$ \emph{truncations} of $b$. We will use the following notation.

\begin{defn}
  \label{def:truncation}
  Given $b = \sum_x b_x t^x \in \Kbf((\Gbf))_\kappa$ and $y \in \Gbf$, we let the \textbf{truncations} of $b$ at $y$ be:
  \[ b_{\leq y} \coloneqq \sum_{x \leq y} b_x t^x, \quad b_{< y} \coloneqq \sum_{x < y} b_x t^x, \quad b_{\geq y} \coloneqq \sum_{x \geq y} b_x t^x, \quad b_{> y} \coloneqq \sum_{x > y} b_x t^x. \]
\end{defn}

Note that by construction, $b = b_{<y} + b_{\geq y} = b_{\leq y} + b_{> y}$, as well as $\ot(b) = \ot(b_{<y}) \hatplus \ot(b_{\geq y}) = \ot(b_{\leq y}) \hatplus \ot(b_{>y})$. In particular, $\ot(b_{\leq y}) < \ot(b)$ as soon as $b_{\leq y} \neq b$, and consequently $\deg(b_{\leq y}) \leq \deg(b)$.

\subsection{Normal forms}

Given a series $b \in \Kbf((\Gbf))_\kappa$, we can use the Cantor Normal Form of $\ot(b)$ to give a special decomposition of $b$.

\begin{defn}
  \label{def:principal}
  An ordinal $\alpha \in \on$ is \textbf{additively principal} if $\alpha = \omega^{\beta}$ for some $\beta \in \on$; equivalently, by \prettyref{eq:sum-in-cnf} on page~\pageref{eq:sum-in-cnf}, if $\alpha = \beta \hatplus \gamma$ implies $\beta = 0$ or $\gamma = 0$ for all $\beta, \gamma \in \on$, and $\alpha \neq 0$.

  A series $b \in \Kbf((\Gbf))_\kappa$ is \textbf{weakly principal} if $\ot(b)$ is additively principal.

  A series $b \in \KRR$ is \textbf{principal} if it is weakly principal and $\sup(b) = 0$.
\end{defn}

\begin{defn}
  Given $b \in \Kbf((\Gbf))_\kappa$, we call the sum
  \[ b = b_1 + \dots + b_n \]
  the \textbf{weak normal form} of $b$ when:
  \begin{itemize}
  \item $b_i$ is weakly principal for all $i = 1, \dots, n$;
  \item $\ot(b_1) \geq \dots \geq \ot(b_n)$;
  \item $\supp(b_1) < \dots < \supp(b_n)$.
  \end{itemize}
\end{defn}

\begin{rem}
  Note that $\ot(b) = \ot(b_1) \hatplus \dots \hatplus \ot(b_n)$; since each
  $\ot(b_i)$ is additively principal, this sum is by definition the Cantor
  normal form (\prettyref{eq:cantor-normal-form}) of $\ot(b)$.
\end{rem}

\begin{prop}
  \label{prop:weak-normal-form}
  For all series $b \in \Kbf((\Gbf))_\kappa$ there exists a unique weak normal form.
\end{prop}
\begin{proof}
  Let $b \in \Kbf((\Gbf))_\kappa$. There is a unique way of writing $\ot(b) = \omega^{\beta_1} \hatplus \dots \hatplus \omega^{\beta_n}$ with $\beta_1 \geq \dots \geq \beta_n$. By iterating \prettyref{prop:b-c+d}, there are unique $b_1, \dots, b_n$ such that $b = b_1 + \dots + b_n$, $\ot(b_i) = \omega^{\beta_i}$, and $\supp(b_i) < \supp(b_{i+1})$. Conversely, for any weak normal form $b = b_1 + \dots + b_n$, the sum $\ot(b) = \ot(b_1) \hatplus \dots \hatplus \ot(b_n)$ must be the Cantor normal form of $\ot(b)$. Therefore, the weak normal form is unique.
\end{proof}

\begin{cor}
  \label{cor:weakly-principal-tail}
  For all non-zero $b \in \Kbf((\Gbf))_\kappa$ there exists $x \in \Gbf$ such that $b_{\geq x}$ is non-zero and weakly principal.
\end{cor}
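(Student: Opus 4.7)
The plan is to read off the result directly from the weak normal form furnished by \prettyref{prop:weak-normal-form}. Given a non-zero $b \in K((G))$, write its weak normal form $b = b_1 + \dots + b_n$, so each $b_i$ is weakly principal, $\ot(b_1) \geq \dots \geq \ot(b_n)$, and $\supp(b_1) < \dots < \supp(b_n)$. Since $b \neq 0$, the last summand $b_n$ is non-zero and weakly principal.

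Next I would produce the cutoff point $x$. Because $\supp(b_n) \subseteq G$ is a non-empty well ordered set (as a subset of the support of $b$), it has a least element; set $x := \min \supp(b_n) = v(b_n)$. The condition $\supp(b_1) < \dots < \supp(b_{n-1}) < \supp(b_n)$ then guarantees that for every $y \in \supp(b_i)$ with $i < n$ we have $y < x$, while every $y \in \supp(b_n)$ satisfies $y \geq x$. Hence the truncation $b_{\geq x}$ picks up exactly the monomials appearing in $b_n$, i.e.\ $b_{\geq x} = b_n$, which is non-zero and weakly principal as required.

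There is essentially no obstacle here: the statement is a direct unpacking of the weak normal form, and the only non-trivial input is the existence part of \prettyref{prop:weak-normal-form}, which has already been established. The well-ordering of $\supp(b_n)$ is needed to produce the minimum $x$, but that is built into the definition of $K((G))$.
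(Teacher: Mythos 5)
Your proof is correct and is essentially the paper's own argument: take the weak normal form $b = b_1 + \dots + b_n$ from \prettyref{prop:weak-normal-form}, set $x = v(b_n)$, and observe that the condition $\supp(b_1) < \dots < \supp(b_n)$ forces $b_{\geq x} = b_n$. You simply spell out a little more explicitly why the truncation equals $b_n$; there is no substantive difference.
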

\begin{proof}
  Given a non-zero $b \in \Kbf((\Gbf))_\kappa$, it suffices to write $b = b_1 + \dots + b_n$ in weak normal form and let $x = v(b_n)$. Then $b_{\geq x} = b_n$ is non-zero and weakly principal.
\end{proof}

When $\Gbf = \Rb$, we can also use principal series.

\begin{defn}
  \label{def:normal-form}
  Given $b \in \Kbf((\Rb))$, we call the sum
  \[ b = b_1t^{x_1} + \dots + b_nt^{x_n} \]
  the \textbf{normal form} of $b$ when:
  \begin{itemize}
  \item $x_1 \leq \dots \leq x_n$;
  \item $b_i$ is principal for all $i = 1, \dots, n$;
  \item $\ot(b_1) \geq \dots \geq \ot(b_n)$;
  \item $\supp(t^{x_i}b_i) < \supp(t^{x_{i+1}}b_{i+1})$ for all $i = 1, \dots, n-1$.
  \end{itemize}
\end{defn}

\begin{prop}
  \label{prop:normal-form}
  For all series $b \in \KRR$ there exists a unique normal form.
\end{prop}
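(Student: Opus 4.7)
The plan is to reduce the claim to the existence and uniqueness of the \emph{weak} normal form (\prettyref{prop:weak-normal-form}) by factoring out appropriate monomials from each weakly principal piece. Concretely, I would first apply \prettyref{prop:weak-normal-form} to $b \in \KRR$ to obtain the unique weak normal form $b = c_1 + \dots + c_n$ where each $c_i$ is weakly principal, $\ot(c_1) \geq \dots \geq \ot(c_n)$, and $\supp(c_1) < \dots < \supp(c_n)$. Because $b \in \KRR$, every $\supp(c_i)$ is a well-ordered subset of $\Rb^{\leq 0}$, hence $x_i := \sup \supp(c_i) \in \Rb^{\leq 0}$ is a genuine real number (this is the one point where the hypothesis $b \in \KRR$, rather than $b \in K((\Rb))$, is used).

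For existence, set $b_i := t^{-x_i} c_i$. The support of $b_i$ is $\supp(c_i) - x_i \subseteq \Rb^{\leq 0}$ with supremum $0$, and $\ot(b_i) = \ot(c_i) = \omega^{\alpha_i}$ is principal; so each $b_i$ is principal in the sense of \prettyref{def:principal}. The conditions of \prettyref{def:normal-form} then follow from the weak normal form: the inequality $x_1 \leq \dots \leq x_n$ holds since $\supp(c_i) < \supp(c_{i+1})$ gives $x_i = \sup \supp(c_i) \leq \min \supp(c_{i+1}) \leq x_{i+1}$; the ordering $\ot(b_1) \geq \dots \geq \ot(b_n)$ is inherited from the $c_i$'s since multiplying by a monomial preserves order type; and the condition $x_i + \supp(b_i) < x_{i+1} + \supp(b_{i+1})$ is literally $\supp(c_i) < \supp(c_{i+1})$.

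For uniqueness, suppose $b = b_1 t^{x_1} + \dots + b_n t^{x_n}$ is any normal form, and put $c_i := b_i t^{x_i}$. Then each $c_i$ is weakly principal (since $\ot(c_i) = \ot(b_i)$ is principal), the order type inequalities on $c_i$ mirror those on $b_i$, and $\supp(c_i) = x_i + \supp(b_i)$ gives $\supp(c_1) < \dots < \supp(c_n)$ from the fourth condition of \prettyref{def:normal-form}. Hence $b = c_1 + \dots + c_n$ is a weak normal form, and \prettyref{prop:weak-normal-form} forces the $c_i$ to coincide with those extracted above. Finally, since $b_i$ is principal we have $\sup \supp(b_i) = 0$, so $x_i = \sup \supp(c_i)$ is uniquely determined, and then $b_i = t^{-x_i} c_i$ is uniquely determined as well.

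The only subtle step is the translation: one must observe that $b_i$ being principal (rather than just weakly principal) pins down the exponent $x_i$ uniquely as the supremum of $\supp(c_i)$, which is what allows the bookkeeping between the weak normal form and the normal form to be a bijection. Everything else is a direct translation of the three conditions in \prettyref{def:normal-form} to the three conditions in the weak normal form via the map $c_i \mapsto (x_i, b_i) = (\sup \supp(c_i),\, t^{-x_i} c_i)$, whose inverse is $(x_i, b_i) \mapsto b_i t^{x_i}$.
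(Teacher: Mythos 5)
Your proposal is correct and follows exactly the same route as the paper: obtain the weak normal form from \prettyref{prop:weak-normal-form}, set $x_i = \sup(\supp(c_i))$, and factor each piece as $c_i = t^{x_i}b_i$ with $b_i$ principal. The paper states the translation of conditions and the uniqueness argument as "follows trivially", whereas you spell out the bijection $c_i \leftrightarrow (x_i, b_i)$ and verify each clause of \prettyref{def:normal-form}, which is a more careful write-up of the same idea.
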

\begin{proof}
  Let $b \in \KRR$. By \prettyref{prop:weak-normal-form}, we can write uniquely $b = b_1' + \dots + b_n'$ in weak normal form. It now suffices to define $x_i = \sup(\supp(b_i'))$ and $b_i = t^{-x_i}b_i'$. The conclusion then follows trivially.
\end{proof}

\begin{rem}
  \label{rem:P-not-a-ring}
  \prettyref{prop:normal-form} shows that any series in $\KRR$ can be intuitively thought as a series with coefficients in the set of principal series, exponents in $\Rb$, and \emph{finite support}. However, principal series do not form a ring, because the sum of two principal series may not be principal. For instance $\sum_{n \in \Nb} t^{\frac{-1}{n+1}}$ and $t^{-1} + t^{\frac{-1}{2}} - \sum_{n \in \Nb} t^{\frac{-1}{n+1}}$ are both principal (both have order type $\omega$ and support with supremum $0$), but their sum is $t^{-1} + t^{\frac{-1}{2}}$ which has order type $2$ and the supremum of its support is $-\frac{1}{2}$.
\end{rem}

\begin{rem}
  Note that the above proof uses not only that $\Rb$ is complete, but also that $\supp(b_i')$ is bounded from above for $b_i' \in \KRR$. In fact, arbitrary series in $\Kbf((\Rb))$ may not have a normal form. For instance
  \[ b = 1 + t + t^2 + t^3 + \dots \]
  is weakly principal, but there are no $x$ and $b'$ such that $b = t^xb'$ and $b'$ is principal, because such an $x$ would be an upper bound of $\supp(b)$, which is clearly unbounded.
\end{rem}

\begin{rem}
  Suppose that $\alpha$ is order type of a series $b \in \KRR$, with Cantor Normal Form $\alpha = \omega^{\beta_1} \hatplus \cdots \hatplus \omega^{\beta_n}$, and write $b = b_1t^{x_1} + \dots + b_nt^{x_n}$ in normal form. Let $k \leq n$ be the largest integer such that $\beta_k > 0$, or $k = 0$ if no such $k$ exists.

  One can easily verify that
  \[ x_1 < \dots < x_k \leq x_{k+1} < \dots < x_n. \]
  Indeed, $x_i \leq v(b_{i+1}) + x_{i+1} < x_{i+1}$ for $i < k$, while $x_i = x_i + v(b_i) < x_{x+1} + v(b_{i+1}) = x_{i+1}$ for $i > k$. In particular, if $k = n$ or $k = 0$, then the exponents are all distinct. Otherwise, the equality may indeed occur. For instance, the series
  \[ 1 + \sum_{n \in \Nb} t^{\frac{-1}{n+1}} \]
  has normal form
  \[ \left(\sum_{n \in \Nb} t^{\frac{-1}{n+1}}\right)t^0 + t^0. \]
  Here the order type is $\omega \hatplus 1$ and the exponent $0$ appears twice in the normal form.
\end{rem}

\subsection{Multiplicativity of the degree}

We are finally ready to prove \prettyref{main:degree}. First, we recall that the order type function $\ot$ is multiplicative on weakly principal series.

\begin{fact}[{\cite[Corollary 9.9]{Ber2000}}]
  \label{fact:berarducci}
  If $b, c \in \KRR$ are weakly principal, then $\ot(bc) = \ot(b) \odot \ot(c)$.
\end{fact}

One can prove this fact directly from \prettyref{fact:order-value} and \prettyref{prop:ot}, together with the observation that for a principal series $b$ we have $\ot(b) = \vj(b)$. In particular, the product of two weakly principal series $b, c$ is weakly principal, because $\ot(bc)$ is also of the form $\omega^\alpha$ for some ordinal $\alpha$, and moreover, $\deg(bc) = \deg(b) \oplus \deg(c)$. We now leverage this fact to prove that the latter equality holds for every $b, c \in \KRR$.

\begin{lem}
  \label{lem:deg-bc-x}
  Let $b, c \in \KRR$ and $x \in \Rb$. Suppose that $b$ is principal and one of the following holds:
  \begin{enumerate}
    \item\label{item:lem-deg-supp>x} $\supp(c) > x$, or
    \item\label{item:lem-deg-supp-geq-x} $\supp(c) \geq x$ and $\deg(c) > 0$.
  \end{enumerate}
  Then $\deg((bc)_{\leq x}) < \deg(b) \oplus \deg(c)$.
\end{lem}
\begin{proof}
  Let $b$, $c$, $x$ satisfy the hypothesis of case \prettyref{item:lem-deg-supp>x}. Let $y$ be such that $x < y < \supp(c)$ and let $d = b_{\leq x-y}$, $e = b_{>x - y}$, so that $\supp(d) \leq x - y < \supp(e)$. Then $\supp(ce) > y + (x - y) = x$. It follows that $(bc)_{\leq x} = (cd + ce)_{\leq x} = (cd)_{\leq x}$. Therefore,
  \[ \deg((bc)_{\leq x}) = \deg((cd)_{\leq x}) \leq \deg(cd) \leq \deg(c) \oplus \deg(d). \]
  Recall that $\sup(\supp(d)) \leq x - y < 0$. Since $b$ is
  principal, we must have $b \neq d$, so $e \neq 0$, so $\ot(e) > 0$. Since
  $\ot(b) = \ot(d) \hatplus \ot(e)$, it follows that $\ot(d) < \ot(b)$. On the other
  hand, $\ot(b)$ is of the form $\omega^\alpha$, so $\deg(d) < \deg(b)$. In turn,
  $\deg((bc)_{\leq x}) < \deg(b) \oplus \deg(c)$, as desired.

  Now let $b$, $c$, $x$ satisfy \prettyref{item:lem-deg-supp-geq-x}. Write $c = kt^x + c'$, where $c' \neq 0$, $k \in \Kbf$, and $\supp(c') > x$. Then $bc = bkt^x + bc'$. By case \prettyref{item:lem-deg-supp>x}, $\deg((bc')_{\leq x}) < \deg(b) \oplus \deg(c') \leq \deg(b) \oplus \deg(c)$. Moreover, $\deg(bkt^x) \leq \deg(b) < \deg(b) \oplus \deg(c)$. Therefore, $\deg((bc)_{\leq x}) < \deg(b) \oplus \deg(c)$ by \prettyref{cor:deg}\prettyref{item:deg-ultra-early}.
\end{proof}

\begin{prop}
  \label{prop:degree}
  For all $b, c \in \KRR$, $\deg(bc) = \deg(b) \oplus \deg(c)$.
\end{prop}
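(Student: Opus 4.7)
By \prettyref{cor:deg} the upper bound $\deg(bc) \leq \deg(b) \oplus \deg(c)$ is already established, so the plan is to prove the reverse inequality. The case $b = 0$ or $c = 0$ is trivial, so I assume both are nonzero and invoke \prettyref{prop:normal-form} to decompose $b = b_1 t^{x_1} + b'$ and $c = c_1 t^{y_1} + c'$. Here $b_1$ and $c_1$ are principal and realise the leading term of the Cantor normal forms of $\ot(b)$ and $\ot(c)$, so $\deg(b_1) = \deg(b)$ and $\deg(c_1) = \deg(c)$, while the remainders satisfy $v(b') \geq x_1$, $v(c') \geq y_1$, and $\deg(b'), \deg(c') \leq \deg(b), \deg(c)$.

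The target is the sharper statement
\[
  \deg\!\bigl((bc)_{\leq x_1 + y_1}\bigr) \;=\; \deg(b) \oplus \deg(c),
\]
which immediately yields $\deg(bc) \geq \deg(b) \oplus \deg(c)$. Expanding
\[
  bc \;=\; b_1 c_1 t^{x_1+y_1} \;+\; b_1 t^{x_1} c' \;+\; b' c_1 t^{y_1} \;+\; b' c'
\]
and truncating each summand at $x_1 + y_1$, the ``diagonal'' piece $b_1 c_1 t^{x_1+y_1}$ is already supported in $(-\infty, x_1+y_1]$ because $\sup(b_1 c_1) \leq 0$ by \prettyref{prop:sup}, and \prettyref{thm:berarducci} applied to the principal factors gives its degree to be exactly $\deg(b_1)\oplus\deg(c_1) = \deg(b)\oplus\deg(c)$. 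The remaining three truncations are to be shown to have strictly smaller degree, so that the diagonal dominates by the ultrametric inequality.

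For the cross term $b_1 t^{x_1} c'$, the identity $(b_1 t^{x_1} c')_{\leq x_1+y_1} = t^{x_1}(b_1 c')_{\leq y_1}$ reduces the analysis to truncating $b_1 c'$ at $y_1$. I would apply \prettyref{lem:deg-bc-x-1} when $\supp(c') > y_1$ strictly, and \prettyref{lem:deg-bc-x-2} when $v(c') = y_1$ with $\deg(c') > 0$. The residual case $v(c') = y_1$ with $\deg(c') = 0$ can only occur when $0 \notin \supp(c_1)$, which already forces $\deg(c) = \deg(c_1) \geq 1$; one then splits $c' = k t^{y_1} + c''$ with $\supp(c'') > y_1$, handles $c''$ by \prettyref{lem:deg-bc-x-1}, and notes that the isolated monomial at $y_1$ contributes only $\deg(b_1) = \deg(b) < \deg(b)\oplus\deg(c)$. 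The cross term $b' c_1 t^{y_1}$ is treated symmetrically, and the remainder $b'c'$ has $v(b'c') \geq x_1+y_1$, so its truncation at $x_1+y_1$ is at most a monomial, of degree $\leq 0$, which is strictly below $\deg(b)\oplus\deg(c)$ unless the latter vanishes (in which case $b$ and $c$ are both constants and the identity is immediate).

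The main obstacle is the case analysis driven by whether $0$ belongs to $\supp(b_1)$ and $\supp(c_1)$: this controls the strict-versus-non-strict behaviour of the normal-form inequalities at the leading pieces and thus decides which of \prettyref{lem:deg-bc-x-1} and \prettyref{lem:deg-bc-x-2} can be invoked. The delicate point is recognising that the borderline configuration $v(c') = y_1$ (and symmetrically $v(b') = x_1$) forces the opposite factor to have strictly positive degree, which is precisely what supplies the strict inequality needed to absorb the otherwise-threatening leftover monomial contribution into $\deg(b) \oplus \deg(c)$.
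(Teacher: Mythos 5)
Your proposal is correct and matches the paper's proof almost exactly: both extract the leading principal pieces via the normal form (\prettyref{prop:normal-form}), expand $bc$, truncate at $x_1+y_1$, compute the diagonal's degree with \prettyref{thm:berarducci}, and kill the three remaining truncated pieces with \prettyref{lem:deg-bc-x-1} and \prettyref{lem:deg-bc-x-2}, so only a cosmetic difference remains in how the case split is organised (you split on $v(c') = y_1$ versus $\supp(c') > y_1$, the paper on $\deg(c') = \deg(c)$ versus $\deg(c') < \deg(c)$). One small slip: when $\deg(b)\oplus\deg(c) = 0$ you assert that $b$ and $c$ are constants, when in fact they merely have finite support; the conclusion you draw is still correct, since $bc$ then also has finite support and is nonzero, giving $\deg(bc) = 0$ directly.
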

\begin{proof}
  Let $b, c \in \KRR$. We may assume that $b$, $c$ are both non-zero, otherwise the conclusion is trivial. By \prettyref{prop:normal-form}, we can write $b = b_1t^x + b'$ with $b_1$ principal, $\supp(b') \geq x$ and $\deg(b_1) = \deg(b) \geq \deg(b')$. Moreover, if $\deg(b) = 0$, we may further assume that $\supp(b') > x$. Similarly, we may write $c = c_1t^y + c'$ with $c_1$ principal, $\supp(c') \geq y$, $\deg(c_1) = \deg(c) \geq \deg(c')$, and if $\deg(c) = 0$, then $\supp(c') > y$.

  Trivially, we have
  \[ bc = (b_1t^x + b')(c_1t^y + c') = b_1c_1t^{x+y} + b_1c't^x + b'c_1t^y + b'c'. \]
  After truncating both sides at $x+y$, we get
  \[ (bc)_{\leq x+y} = b_1c_1t^{x+y} + (b_1c')_{\leq y}t^x + (b'c_1)_{\leq x}t^y + (b'c')_{\leq x+y}. \]

  By \prettyref{fact:berarducci}, $\deg(b_1c_1t^{x+y}) = \deg(b_1c_1) = \deg(b_1) \oplus \deg(c_1) = \deg(b) \oplus \deg(c)$. We claim that the other three summands on the right-hand side have smaller degree.

  For the fourth summand, note that $\supp(b'c') \geq x+y$, so $(b'c')_{\leq x+y} \in \Kbf$, which means that its degree is at most $0$. If $\deg(b) > 0$ or $\deg(c) > 0$, then $\deg(b) \oplus \deg(c) > 0$, and we are done. If instead $\deg(b) = \deg(c) = 0$, then actually $\supp(b'c') > x+y$, so $(b'c')_{\leq x+y} = 0$, which has degree $-\infty < 0 = \deg(b) \oplus \deg(c)$, and we are done.

  For the second and third summand, we distinguish two cases. If $\deg(c') < \deg(c)$, then in fact $\deg((b_1c')_{\leq y}) \leq \deg(b_1) \oplus \deg(c') < \deg(b_1) \oplus \deg(c')$, in which case we are done. By symmetry, the same holds for the third summand when $\deg(b') < \deg(b)$.

  Now assume $\deg(c') = \deg(c)$. If $\deg(c) > 0$, then by \prettyref{lem:deg-bc-x}\prettyref{item:lem-deg-supp-geq-x} we have $\deg((b_1c')_{\leq y}) < \deg(b_1) \oplus \deg(c') = \deg(b) \oplus \deg(c)$, and we are done. If $\deg(c) = 0$, then $\supp(c') > y$, so by \prettyref{lem:deg-bc-x}\prettyref{item:lem-deg-supp>x} we have $\deg((b_1c')_{\leq y}) < \deg(b_1) \oplus \deg(c') = \deg(b) \oplus \deg(c)$, and we are done. Again, by symmetry the same holds for the third summand when $\deg(b') = \deg(b)$, concluding the proof of the claim.

  Therefore, by \prettyref{cor:deg}\prettyref{item:deg-ultra-early}, $\deg((bc)_{\leq x + y}) = \deg(b_1c_1t^{x+y}) = \deg(b) \oplus \deg(c)$. Since $\deg((bc)_{\leq x+y}) \leq \deg(bc) \leq \deg(b) \oplus \deg(c)$, it follows that $\deg(bc) = \deg(b) \oplus \deg(c)$, as desired.
\end{proof}

\begin{proof}[Proof of \prettyref{main:degree}]
  The inequality \prettyref{item:main-deg-ultra} $\deg(b + c) \leq \max\{\deg(b), \deg(c)\}$ is \prettyref{cor:deg}\prettyref{item:deg-ultra-early}; the multiplicativity \prettyref{item:main-deg-mult} is the conclusion of \prettyref{prop:degree}. Finally, the conclusion \prettyref{item:main-deg-infty} that $\deg(b) = -\infty$ if and only if $b = 0$ is an immediate consequence of the definition of degree.
\end{proof}

\begin{rem}
  \label{rem:deg-not-mult}
  The map $b \mapsto \deg(\ot(b))$ is well defined on $\KGG_\kappa$ for any group $\Gbf$, but it fails to be multiplicative as soon as $\Gbf$ does not embed into $\Rb$. Indeed, let $x, y \in \Gbf^{<0}$ be such that $y < (n+1)x < 0$ for all $n \in \Nb$, or in other words $x \prec y$ in the notation of \prettyref{sub:archimedean}. Such $x$, $y$ exist exactly when $\Gbf$ does not embed into $\Rb$. Then
  \[ b = t^y \cdot \sum_{n \in \Nb} t^{-nx}, \quad c = 1 - t^x \]
  are both series in $\KGG_\kappa$, with $bc = t^y$, but $\deg(\ot(bc)) = 0 \neq \deg(\ot(b)) \oplus \deg(\ot(c)) = 1$.
\end{rem}

\subsection{Multiplicativity of the supremum}

Another consequence of Berarducci's \prettyref{fact:berarducci} is that the supremum is a multiplicative valuation. It can be easily deduced from the fact that the ideal $\Jbf$ generated by all the monomials $t^x$ for $x \in \Rb^{<0}$ is prime \cite[Cor.\ 9.8]{Ber2000}. Here we include an alternative proof using the results presented in this section.

\begin{prop}
  \label{prop:sup-valuation}
  For all $b, c \in \KRR$ we have:
  \begin{enumerate}
  \item\label{item:sup-ultra} $\sup(b + c) \leq \max\{\sup(b), \sup(c)\}$;
  \item\label{item:sup-mult} $\sup(bc) = \sup(b) + \sup(c)$;
  \item\label{item:sup-infty} $\sup(b) = -\infty$ if and only if $b = 0$.
  \end{enumerate}
\end{prop}
\begin{proof}
  Let $b, c \in \KRR$. \prettyref{item:sup-infty} is a straightforward consequence of the definition of $\sup$, so we shall assume that $b, c \neq 0$. \prettyref{prop:sup} already shows \prettyref{item:sup-ultra}, so we only have to prove \prettyref{item:sup-mult}.

  We first prove the conclusion for $b, c$ principal. In this case, $\sup(b) = \sup(c) = 0$, so we need to show that $\sup(bc) = 0$. Fix some $x < 0$. Then
  \[ (bc)_{\leq 2x} = (b_{\leq x}c_{\leq x} + b_{> x}c_{\leq x} + b_{\leq x}c_{> x} + b_{> x}c_{> x})_{\leq 2x}. \]
  Since $\supp(b_{> x}c_{> x}) > 2x$, we have $(bc)_{\leq 2x} = (b_{\leq x}c_{\leq x} + b_{> x}c_{\leq x} + b_{\leq x}c_{> x})_{\leq 2x}$.  Therefore,
  \[ \ot((bc)_{\leq 2x}) \leq \ot(b_{\leq x}c_{\leq x}) \oplus \ot(b_{> x}c_{\leq x}) \oplus \ot(b_{\leq x}c_{> x}). \]
  Since $b, c$ are principal, the three summands are strictly less than $\ot(b) \odot \ot(c)$; since the ordinal $\ot(b) \odot \ot(c)$ is additively principal, it follows that
  \[ \ot(bc_{\leq 2x}) < \ot(b) \odot \ot(c) = \ot(bc), \]
  where the latter equality is given by \prettyref{fact:berarducci}. This implies that $bc \neq bc_{\leq 2x}$ for all $x < 0$, so in particular $\sup(bc) = 0$, as desired.

  For the general case, write $b = b_1t^{x_1} + \dots + b_nt^{x_n}$ and $c = c_1t^{y_1} + \dots + c_mt^{y_m}$ in normal form. Note that $\sup(b) = x_n$ and $\sup(c) = y_m$. By \prettyref{prop:sup}\prettyref{item:sup-submult},
  \begin{align*}
    \sup(b(c_1t^{y_1} + \dots + c_{m-1}t^{y_{m-1}})) &\leq \sup(b) + y_{m-1} < \sup(b) + \sup(c) \\
    \sup((b_1t^{x_1} + \dots + b_{n-1}t^{x_{n-1}})c) &\leq x_{n-1} + \sup(c) < \sup(b) + \sup(c).
  \end{align*}
  On the other hand, the previous case shows that $\sup(b_nc_m) = 0$, hence
  \[ \sup(b_nt^{x_n}c_mt^{x_m}) = x_n + y_m + \sup(b_nc_m) = x_n + y_m = \sup(b) + \sup(c). \]
  By \prettyref{prop:sup}\prettyref{item:sup-ultra-early}, it follows that
  \[ \sup(bc) = \sup(b_nt^{x_n}c_nt^{y_m}) = \sup(b) + \sup(c) \qedhere \]
\end{proof}

\begin{rem}
  \label{rem:sup-G}
  Given an arbitrary group $\Gbf$ and $b \in \KGG_\kappa$, one may define $\sup'(b)$ as the supremum of $\supp(b)$ in the Dedekind-MacNeille completion of $\Gbf$ (with additional care if $\Gbf$ is a proper class). This map fails to be multiplicative: using the same $\Gbf$, $b$, $c$ of \prettyref{rem:deg-not-mult}, we have $\sup'(b) + \sup'(c) = \sup'(b) > y-x > y = \sup'(t^y) = \sup'(bc)$. One can make $\sup'$ into a multiplicative valuation by taking a suitable quotient of the completion of $\Gbf$. We detail this in \prettyref{prop:sup-G}, even though it is not used in this paper.
\end{rem}

\subsection{Consequences for principal series}

We can now show that the set of principal series is closed under product, and closed under at least some sums.

\begin{prop}
  \label{prop:berarducci}
  If $b, c \in \KRR$ are principal, then $bc$ is principal.
\end{prop}
\begin{proof}
  Let $b, c \in \KRR$ be principal series. By \prettyref{fact:berarducci}, $bc$ is weakly principal, and by \prettyref{prop:sup-valuation}\prettyref{item:sup-mult}, $\sup(bc) = \sup(b) + \sup(c) = 0$, so $bc$ is principal.
\end{proof}

\begin{prop}
  \label{prop:sum-principal}
  For all $b, c \in \KRR$, if $b$, $c$ are principal and $\deg(b + c) = \deg(b)$, then $b + c$ is principal.
\end{prop}
\begin{proof}
  Let $b, c \in \KRR$. If one of $b$, $c$ is zero, the conclusion is trivial, so we may assume that they are both non-zero. Assume that $\deg(b) = \deg(b + c) = \alpha$ for some $\alpha \in \omega_1$. Note in particular that $\deg(c) \leq \alpha$ by the ultrametric inequality \prettyref{cor:deg}\prettyref{item:deg-ultra-early}. If $\alpha = 0$, then $b, c \in \Kbf$, so $b + c \in \Kbf$, and $b + c \neq 0$ since it has degree $0$, so $b + c$ is principal, as desired. Now assume $\alpha > 0$.

  Fix some $x \in \Rb^{<0}$. Since $b, c$ are principal, we have $\sup(b) = \sup(c) = 0$. In particular, $\deg(b_{\leq x}), \deg(c_{\leq x}) < \alpha$, because $\ot(b_{\leq x} + b_{>x}) = \ot(b_{\leq x}) \hatplus \ot(b_{>x})$, from which it follows that $\ot(b_{\leq x}) < \omega^\alpha$ (likewise for $c$). Therefore, $\deg((b + c)_{\leq x}) = \deg(b_{\leq x} + c_{\leq x}) < \alpha$. Since $\deg(b + c) = \alpha$, we have that $(b + c)_{\leq x} \neq b + c$, so $\sup(b + c) > x$. Since $x$ was arbitrary, it follows that $\sup(b + c) = 0$.

  It remains to verify that $\ot(b + c) = \omega^\alpha$. By assumption we have $\ot(b + c) \geq \omega^\alpha$. Moreover, $0 \notin \supp(b + c)$, as $0$ is neither in $\supp(b)$ nor in $\supp(c)$ when $\alpha > 0$. Therefore, $\supp(b + c)$ is the union of the sets $\supp((b + c)_{\leq x})$ for $x \in \Rb^{< 0}$. It follows that $\ot(b + c)$ is the supremum of $\ot((b + c)_{\leq x})$ for $x \in \Rb^{< 0}$. But $\deg(b_{\leq x} + c_{\leq x}) < \alpha$ for all $x \in \Rb^{<0}$, so $\ot(b + c) \leq \omega^\alpha$, so $\ot(b + c) = \omega^\alpha$, as desired.
\end{proof}

\section{The quotient monoid \texorpdfstring{$\RV$}{RV}}
\label{sec:RV-monoid}

In this section, let $\Rbf$ be a commutative ring, possibly a proper class, and $(M, +)$ be an ordered commutative monoid. Let $w : \Rbf \to M_\infty = M \cup \{-\infty\}$ be a multiplicative semi-valuation in the sense we used in \prettyref{sub:intro-new-valuation}, namely a map such that for all $b, c \in \Rbf$ we have
\begin{enumerate}
\item $w(b + c) \leq \max\{w(b), w(c)\}$ (ultrametric inequality),
\item $w(bc) = w(b) + w(c)$ (multiplicativity),
\end{enumerate}
where by convention $m - \infty = -\infty + m = -\infty$ and $-\infty < m$ for all $m \in M_\infty$ (including $-\infty < -\infty$). Recall that $w$ is a multiplicative \emph{valuation} when $w(b) = -\infty$ if and only if $b = 0$.

We shall construct a monoid $\RV$ by taking an appropriate quotient of the multiplicative monoid of $\Rbf$, and an auxiliary ring $\sRV$ containing $\RV$ as multiplicative submonoid. This will be eventually be applied to $\Rbf = \KRR$ and $w = \deg$.

\subsection{The \texorpdfstring{$\RV$}{RV} monoid}
We first build the $\RV$ monoid of a valued ring $(\Rbf, w)$, generalising the construction of the $\RV$ group of valued fields.

\begin{defn}
  Given $b, c \in \Rbf$, we write $b \sim c$ if $w(b - c) < w(b)$.
\end{defn}

We can quickly verify that $\sim$ is an equivalence relation compatible with multiplication and the semi-valuation $w$.

\begin{prop}
  \label{prop:deg-RV}
  For all $b, c \in \Rbf$, if $b \sim c$, then $w(b) = w(c)$.
\end{prop}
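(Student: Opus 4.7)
The plan is to derive $w(b) = w(c)$ directly from the ultrametric inequality applied in both directions, using the hypothesis $w(b-c) > w(b)$.

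First I would write $c = b - (b - c)$ and apply the ultrametric inequality (together with the fact that $w(-x) = w(x)$, which for unital rings follows from $w(1) = 0$ and multiplicativity, and in the non-unital case is one of the standing assumptions on $w$) to get
\[ w(c) \geq \min\{w(b), w(b-c)\}. \]
Since the hypothesis gives $w(b-c) > w(b)$, the minimum on the right-hand side is $w(b)$, and so $w(c) \geq w(b)$.

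Next I would establish the reverse inequality by a symmetric argument: writing $b = c - (c - b)$ and applying the ultrametric inequality yields
\[ w(b) \geq \min\{w(c), w(b-c)\}. \]
If we had $w(c) > w(b)$, then combined with the assumption $w(b-c) > w(b)$ this would give $w(b) > w(b)$, a contradiction. Hence $w(c) \leq w(b)$, and together with the previous step $w(b) = w(c)$.

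There is no real obstacle here; the only subtlety is the use of $w(-x) = w(x)$, which the paper has already built into its setup, so the proof amounts to two applications of the ultrametric inequality.
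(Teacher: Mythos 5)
Your proof is correct and follows essentially the same route as the paper's: two applications of the ultrametric inequality to $b = c + (b-c)$ and $c = b + (c-b)$, using the hypothesis $w(b-c) > w(b)$ to determine which term realises the minimum. The only cosmetic difference is that one of your two directions is phrased as a proof by contradiction, whereas the paper argues both directly; the content is identical, and your remark about $w(-x)=w(x)$ correctly identifies the one point the paper has already built into its standing assumptions.
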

\begin{proof}
  By the ultrametric inequality, $w(b) = w(c + (b-c)) \leq \max\{w(c), w(b-c)\}$, so necessarily $w(b) \leq w(c)$. By symmetry, $w(c) \leq w(b)$, hence $w(b) = w(c)$.
\end{proof}

\begin{cor}
  The relation $\sim$ is an equivalence relation.
\end{cor}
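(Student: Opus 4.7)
The plan is to verify the three axioms of an equivalence relation separately; all three will be short consequences of the multiplicativity of $w$, the ultrametric inequality, and \prettyref{prop:deg-RV}, so I do not expect any of them to pose a serious obstacle. I would start with reflexivity, since it is the one step where the unusual convention $\infty < \infty$ is actually used. First I would argue $w(0) = \infty$: multiplicativity applied to $0 = 0 \cdot 0$ gives $w(0) = w(0) + w(0)$, and the only element of $M \cup \{\infty\}$ satisfying $m = m + m$ in an ordered monoid is $\infty$. Hence $w(b - b) = w(0) = \infty > w(b)$ for every $b$, using the stated convention that $m < \infty$ holds for all $m \in M \cup \{\infty\}$, and so $b \sim b$.

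For symmetry, the essential observation is $w(-b) = w(b)$. In the non-unital case this is part of the standing hypothesis, and in the unital case I would derive it by noting that $w(1) = w(1 \cdot 1) = 2 w(1)$ forces $w(1) = 0$ (or $w(1) = \infty$, the degenerate case where the conclusion is vacuous), and then $(-1)^2 = 1$ together with the order structure on $M$ gives $w(-1) = 0$, whence $w(-b) = w(-1) + w(b) = w(b)$. Now if $b \sim c$, then $w(c - b) = w(-(b - c)) = w(b - c) > w(b) = w(c)$, where the last equality is \prettyref{prop:deg-RV}, so $c \sim b$.

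For transitivity I would use the decomposition $b - d = (b - c) + (c - d)$ and the ultrametric inequality: if $b \sim c$ and $c \sim d$, then
\[ w(b - d) \geq \min\{w(b - c), w(c - d)\} > \min\{w(b), w(c)\} = w(b), \]
where the strict inequality combines $w(b - c) > w(b)$ and $w(c - d) > w(c)$, and the final equality is again \prettyref{prop:deg-RV}. Thus $b \sim d$. As already noted, there is no real obstacle here; \prettyref{prop:deg-RV} is doing all the substantive work by letting one replace $w(c)$ by $w(b)$ in the defining inequalities, and the only subtle point is that reflexivity must still hold for elements with $w(b) = \infty$, which is precisely what the convention $\infty < \infty$ is designed to handle.
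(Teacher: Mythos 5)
Your argument has the same skeleton as the paper's: reflexivity from $w(b-b) = \infty > w(b)$, symmetry from $w(-x) = w(x)$ together with \prettyref{prop:deg-RV}, and transitivity from the ultrametric inequality together with \prettyref{prop:deg-RV}. The one place where you go beyond the paper is in trying to \emph{derive} $w(0) = \infty$ and $w(-b) = w(b)$ rather than taking them as given, and those derivations are not quite right. Multiplicativity gives $w(0) = w(0) + w(0)$, but it is false that the only solution of $m = m + m$ in $M \cup \{\infty\}$ is $\infty$: the neutral element $0 \in M$ also satisfies it (and in the intended application $M = (\omega_1, \oplus)$ with the reversed order, one indeed has $0 \oplus 0 = 0$), so your argument only shows $w(0) \in \{0, \infty\}$, and $w(0) = 0$ would break reflexivity. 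Similarly, $2w(1) = w(1)$ forces $w(1) \in \{0, \infty\}$ only under a strict compatibility assumption between $+$ and $\leq$ on $M$ that the paper never states. In fact the paper simply treats $w(b-b) = \infty$ and $w(b-c) = w(c-b)$ as understood (both hold for $w = \deg$, and the non-unital case explicitly assumes $w(-b) = w(b)$), and its own proof of this corollary uses them without comment; the cleanest fix is to do the same. Once those two facts are granted, your reflexivity, symmetry and transitivity steps are correct and coincide with the paper's.
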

\begin{proof}
  Let $b, c, d \in \Rbf$. Clearly, $b \sim b$, as $w(b - b) = -\infty < w(b)$. Moreover, if $b \sim c$, then $w(b) = w(c)$ by \prettyref{prop:deg-RV}, so $w(c - b) = w(b - c) < w(c)$, that is $c \sim b$. Finally, if $b \sim c$ and $c \sim d$, then again $w(c) = w(d) = w(b)$, and $w(b - d) \leq \max\{w(b - c), w(c - d)\} < w(b)$, so $b \sim d$.
\end{proof}

\begin{defn}
  \label{def:RV-monoid}
  Let $\RV$ be the quotient $\Rbf_{/\sim}$, and let $\rv : \Rbf \to \RV$ be the natural quotient map. Given $b \in \Rbf$, we define $w(\rv(b)) \coloneqq w(b)$.
\end{defn}

\begin{prop}
  \label{prop:product-RV}
  The relation $\sim$ is a congruence, that is, for all $b, c, d \in \Rbf$, if $c \sim d$, then $bc \sim bd$.
\end{prop}
\begin{proof}
  Let $b, c, d \in \Rbf$ with $c \sim d$. Then $w(bc - bd) = w(b) + w(c-d) < w(b) + w(c) = w(bc)$, so $bc \sim bd$.
\end{proof}

This immediately implies that the following product is well defined.

\begin{defn}
  Given $B, C \in \RV$, we let $B \cdot C \coloneqq \rv(bc)$, where $\rv(b) = B$, $\rv(c) = C$.
\end{defn}

\begin{prop}
  The multiplication $\cdot$ makes $\RV$ into a commutative monoid with zero (with multiplicative identity $\rv(1)$ and absorbing element $\rv(0)$).
\end{prop}
\begin{proof}
  Let $B, C, D \in \RV$. Pick some $b, c, d \in \Rbf$ such that $B = \rv(b), C = \rv(c), D = \rv(d)$. It follows at once from the definition that $B \cdot C = \rv(bc) = \rv(cb) = C \cdot B$ and that $B \cdot (C \cdot D) = \rv(b(cd)) = \rv((bc)d) = (B \cdot C) \cdot D$. $\rv(1)$ is clearly the neutral element, as $\rv(b) \cdot \rv(1) = \rv(b \cdot 1) = \rv(b)$ for every $b \neq 0$, and $\rv(0)$ is the absorbing one since $\rv(b) \cdot \rv(0) = \rv(0)$ for every $b \in \Rbf$.
\end{proof}

\begin{rem}
  For all $B, C \in \RV$ we have $w(B \cdot C) = w(B) + w(C)$, and $w(B) = -\infty$ if and only if $B = \rv(0)$.
\end{rem}

\subsection{Modules in \texorpdfstring{$\RV$}{RV}}

We shall now verify that $\RV$ also carries a natural notion of (partial) sum. To define the sum, we first partition $\RV$ into subsets of constant valuation.

\begin{defn}
  \label{def:RV-alpha}
  Given $m \in M$, let $\RV_m \coloneqq \{ B \in \RV \,:\, w(B) = m\} \cup \{\rv(0)\}$.
\end{defn}

\begin{rem}
  For all $m, n \in M$, $\RV_m \cdot \RV_n \subseteq \RV_{m+n}$.
\end{rem}

\begin{prop}
  \label{prop:sum-RV}
  For all $b, c, d \in \Rbf$, if $c \sim d$ and $w(b + c) = w(b)$, then $b + c \sim b + d$.
\end{prop}
\begin{proof}
  Let $b, c, d \in \Rbf$, with $c \sim d$ and $w(b + c) = w(b)$. Note in particular that $w(c) = w((b + c) - b) \leq \max\{w(b + c), w(b)\} = w(b + c)$. Then $w((b + c) - (b + d)) = w(c - d) < w(c) \leq w(b + c)$, so $b + c \sim b + d$.
\end{proof}

This implies that the following partial sum is well defined.

\begin{defn}
  Given $m \in M$ and $B, C \in \RV_m$, we define
  \[ B + C \coloneqq \begin{cases}
      \rv(b + c) & \textrm{if }w(b + c) = w(b),\\
      \rv(0) & \textrm{otherwise},
    \end{cases} \]
  where $b, c \in \Rbf$ are such that $B = \rv(b)$ and $C = \rv(c)$.
\end{defn}

We shall verify that each $\RV_m$ is an abelian group with respect to the above sum, and moreover that the product is distributive over the sum.

\begin{lem}
  \label{lem:sum-rv-b1-bn}
  For all $m \in M$ and all $b_1, \dots, b_n \in \Rbf$ either zero or of value $m$, we have
  \[ ((\rv(b_1) + \rv(b_2)) + \dots ) + \rv(b_n) = \begin{cases}
      \rv(b_1 + \dots + b_n) & \textrm{if }w(b_1 + \dots + b_n) = m, \\
      \rv(0) & \textrm{otherwise.}
    \end{cases} \]
\end{lem}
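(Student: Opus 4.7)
The plan is to argue by induction on $n$. For $n = 1$ the iterated ``sum'' is simply $\rv(b_1)$, which agrees with the right-hand side tautologically using the hypothesis that $b_1 = 0$ or $w(b_1) = m$. For the inductive step, set $T := b_1 + \dots + b_{n-1}$ and let $X$ denote the iterated sum of $\rv(b_1), \dots, \rv(b_{n-1})$. By the inductive hypothesis, $X = \rv(T)$ if $w(T) = m$, and $X = \rv(0)$ otherwise; the full iterated sum of $\rv(b_1), \dots, \rv(b_n)$ is then $X + \rv(b_n)$.

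If $w(T) = m$, then $X + \rv(b_n) = \rv(T) + \rv(b_n)$, and the identity is immediate from the definition of the partial sum on $\RV_m$ applied to the representatives $T$ and $b_n$, because $w((T + b_n)) = w(b_1 + \dots + b_n)$ is either $m$ or strictly larger. If instead $w(T) > m$, then $X = \rv(0)$, and I would split further on $b_n$. If $b_n = 0$, the iterated sum is $\rv(0)$, and the right-hand side is also $\rv(0)$ since $w(b_1 + \dots + b_n) = w(T) > m$. If $w(b_n) = m$, then $X + \rv(b_n) = \rv(b_n)$; on the other hand, the strict ultrametric inequality forces $w(b_1 + \dots + b_n) = w(b_n) = m$, so the right-hand side asks for $\rv(b_1 + \dots + b_n)$, and this equals $\rv(b_n)$ because $(b_1 + \dots + b_n) - b_n = T$ has value strictly greater than $w(b_n)$, i.e.\ $b_n \sim b_1 + \dots + b_n$.

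The only mildly subtle point is this last $\sim$-equivalence, which is an immediate application of the definition of $\sim$ together with strict domination of valuations (as already exploited in \prettyref{prop:deg-RV} and \prettyref{prop:sum-RV}). No serious obstacle is anticipated: the lemma is essentially a bookkeeping statement ensuring that iterated applications of the partial sum on $\RV_m$ correctly recover the class of the sum computed in $R$, whenever the valuations stay in the intended stratum.
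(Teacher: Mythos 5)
Your proof is correct and follows essentially the same inductive argument as the paper, case-splitting on whether the partial sum $T = b_1 + \dots + b_{n-1}$ has value $m$ or larger, and invoking the observation $b_n \sim T + b_n$ in the latter case. Your explicit sub-split on whether $b_n$ is zero or has value $m$ is slightly more careful than the paper's statement (which asserts $b_n \sim c + b_n$ uniformly, a claim that is formally vacuous when $b_n = 0$ but whose desired conclusion still holds), and you rely implicitly on $w(T) \geq m$ to make the two cases exhaustive — both are harmless.
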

\begin{proof}
  We prove the conclusion by induction on $n$, the case $n = 1$ being trivial. Suppose that $n > 1$ and that the conclusion true for $n - 1$. Let $b_1, \dots, b_n \in \Rbf$ be zero or of valuation $m$, $c = b_1 + \dots + b_{n-1}$, and $C = (\rv(b_1) + \rv(b_2)) + \dots + \rv(b_{n-1})$. We may assume that $b_n \neq 0$, thus that $w(b_n) = m$.

  By inductive hypothesis, if $w(c) \neq m$, then $C = \rv(0)$, in which case $C + \rv(b_n) = \rv(b_n)$, while $c + b_n \sim b_n$, hence $C + \rv(b_n) = \rv(c + b_n)$. If on the other hand $w(c) = m$, we have $C = \rv(c)$. Therefore, $C + \rv(b_n)$ is by definition $\rv(c + b_n)$ if $w(c + b_n) = m$, and $\rv(0)$ otherwise.
\end{proof}

\begin{prop}
  For all $m \in M$, the sum $+$ makes $\RV_m$ into an abelian group.
\end{prop}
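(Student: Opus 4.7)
The plan is to verify the group axioms for $(\RV_m, +)$ one at a time. Closure is already built into the definition of $+$ (both branches output elements of $\RV_m$), so I will address commutativity, the identity, inverses, and---most substantively---associativity.

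Commutativity, identity, and inverses are all essentially inherited from $R$. Commutativity follows from $b + c = c + b$ in $R$, which makes the defining condition $w(b+c) = w(b)$ and the resulting value $\rv(b+c)$ symmetric in $b$ and $c$ (noting that for $B, C \in \RV_m$ with neither equal to $\rv(0)$ we have $w(b) = w(c) = m$). The identity is $\rv(0)$: for $B = \rv(b) \in \RV_m$, the representative sum $b + 0 = b$ gives $B + \rv(0) = B$. For inverses, I would set $-B := \rv(-b)$. This is well-defined on equivalence classes because the standing assumption $w(-x) = w(x)$ yields $w(-b - (-b')) = w(-(b-b')) = w(b-b') > w(b) = w(-b)$ whenever $b \sim b'$, and it also gives $w(-b) = m$, so $-B \in \RV_m$. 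Then $B + (-B) = \rv(0)$, since $b + (-b) = 0$ has infinite valuation and hence falls into the ``otherwise'' branch of the sum.

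The substantial step is associativity, which is handled cleanly by Lemma~\ref{lem:sum-rv-b1-bn}. Given $B_1, B_2, B_3 \in \RV_m$ with representatives $b_1, b_2, b_3$ (each either $0$ or of valuation $m$), applying the lemma with $n = 3$ to $(b_1, b_2, b_3)$ computes $(B_1 + B_2) + B_3$ as $\rv(b_1 + b_2 + b_3)$ if $w(b_1+b_2+b_3) = m$, and $\rv(0)$ otherwise. For the right association $B_1 + (B_2 + B_3)$, I would first invoke the already-established commutativity to rewrite it as $(B_2 + B_3) + B_1$, then apply the lemma to the permuted tuple $(b_2, b_3, b_1)$; since both the condition $w(b_1+b_2+b_3) = m$ and the element $\rv(b_1+b_2+b_3)$ are invariant under permutation of summands in $R$, the two associations yield the same element of $\RV_m$. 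The only subtlety I anticipate is keeping track of the degenerate cases where some representative equals $0$, but these are precisely what Lemma~\ref{lem:sum-rv-b1-bn} was framed to absorb, so the whole argument reduces to elementary algebra in $R$ once the lemma is in hand.
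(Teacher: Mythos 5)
Your proof is correct and takes essentially the same approach as the paper: commutativity, identity, and inverses are read off directly from the corresponding facts in $R$, and associativity is handled by \prettyref{lem:sum-rv-b1-bn} together with commutativity to permute the representatives. The extra care you take to check that $\rv(-b)$ is well-defined on $\sim$-classes (using $w(-x)=w(x)$) is a minor elaboration the paper leaves implicit, but the argument is the same.
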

\begin{proof}
  Let $m \in M$ and $B, C, D \in \RV_m$. Choose $b, c, d \in \Rbf$ such that $B = \rv(b)$, $C = \rv(c)$, $D = \rv(d)$. If $w(b + c) = m$, then $B + C = \rv(b + c) = \rv(c + b) = C + B$, otherwise $B + C = \rv(0) = C + B$, so the sum is commutative. Moreover, by \prettyref{lem:sum-rv-b1-bn}, if $w(b + c + d) = m$ we have
  \[ B + (C + D) = \rv(b + c + d) = \rv(d + b + c) = D + (B + C) = (B + C) + D, \]
  otherwise $B + (C + D) = \rv(0) = D + (B + C) = (B + C) + D$. Therefore, the sum is associative.

  By definition, $B + \rv(0) = \rv(0) + B = B$, so $\rv(0)$ is the neutral element. Finally, $\rv(-b)$ is the inverse of $B$, as $w(\rv(-b)) = m$ and $B + \rv(-b) = \rv(0)$.
\end{proof}

One could alternative verify that $\RV_m$ inherits the group structure of the quotient $\mathcal{O}_m/\mathcal{M}_m$, where $\mathcal{O}_m$ is the class of the elements $b \in \mathcal{O}$ such that $w(b) \leq m$, and $\mathcal{M}_m$ is the class of the elements such that $w(b) < m$. This structures coincides with the one above.

\begin{prop}
  For all $B \in \RV$, $m \in M$ and $C, D \in \RV_m$, we have $B \cdot (C + D) = B \cdot C + B \cdot D$.
\end{prop}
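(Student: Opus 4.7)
The plan is a direct unwrapping of definitions, using the multiplicativity of $w$ to match the two definitional cases on each side. I would pick representatives $b, c, d \in R$ with $\rv(b) = B$, $\rv(c) = C$, $\rv(d) = D$, chosen so that $c$ is either $0$ or satisfies $w(c) = m$, and likewise for $d$. Trivial cases first: if $B = \rv(0)$, both sides are $\rv(0)$; if $C = \rv(0)$ (resp.\ $D = \rv(0)$) then $C + D = D$ (resp.\ $C$) and $B \cdot C$ (resp.\ $B \cdot D$) equals $\rv(0)$, so both sides reduce to $B \cdot D$ (resp.\ $B \cdot C$). So I would reduce to the case $b, c, d$ all nonzero with $w(c) = w(d) = m$.

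The core is then a case split on $w(c + d)$. By the ultrametric inequality, $w(c+d) \geq m$. In the first case, $w(c+d) = m$, the definition of $+$ on $\RV_m$ gives $C + D = \rv(c+d)$, hence $B \cdot (C + D) = \rv(b(c+d)) = \rv(bc + bd)$. On the other hand, multiplicativity of $w$ yields $w(bc) = w(bd) = w(b) + m$ and $w(bc + bd) = w(b) + w(c+d) = w(b) + m$, so by definition $\rv(bc) + \rv(bd) = \rv(bc + bd)$, which is exactly $B \cdot C + B \cdot D$. In the second case, $w(c+d) > m$, so $C + D = \rv(0)$ and $B \cdot (C+D) = \rv(0)$; and multiplicativity gives $w(bc + bd) = w(b) + w(c+d) > w(b) + m = w(bc)$, so the definition of $+$ on $\RV_{w(b)+m}$ forces $\rv(bc) + \rv(bd) = \rv(0)$ as well.

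There is no real obstacle: the whole point is that multiplicativity of $w$ makes the comparison ``$w(c+d) = m$ versus $w(c+d) > m$'' translate verbatim into ``$w(bc+bd) = w(b)+m$ versus $w(bc+bd) > w(b)+m$'', so the two definitional cases of $+$ line up on both sides. The only thing to be careful about is bookkeeping when representatives are zero, which is handled by the initial reductions; everything else is just the fact that $\rv$ is a ring-theoretic quotient map on products of representatives.
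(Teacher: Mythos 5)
Your proof is correct and follows essentially the same approach as the paper: pick representatives, use the ultrametric inequality to reduce to the dichotomy $w(c+d)=m$ versus $w(c+d)>m$, and observe that multiplicativity of $w$ translates this dichotomy verbatim to the level of $bc+bd$, so the two definitional cases of $+$ line up on both sides. Your extra care with zero representatives is a harmless refinement that the paper leaves implicit.
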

\begin{proof}
  Let $C, D \in \RV_m$, $B \in \RV_n$ for some $m, n \in M$. Note that $B \cdot C, B \cdot D \in \RV_{m + n}$. Pick $b, c, d \in \Rbf$ such that $B = \rv(b)$, $C = \rv(c)$, $D = \rv(d)$.

  Assume first that $w(c + d) = w(c)$. Then $w(bc + bd) = w(b) + w(c + d) = w(b) + w(c) = w(bc)$. Therefore, $\rv(bc) + \rv(bd) = \rv(bc + bd) = \rv(b) \cdot \rv(c + d)$. Now suppose that $w(c + d) \neq w(c)$. Then $w(bc + bd) = w(b) + w(c + d) \neq w(b) + w(c) = w(bc)$. Therefore, $\rv(bc) + \rv(bd) = \rv(0) = \rv(b) \cdot \rv(c + d)$.
\end{proof}

Recall that $\RV_m \cdot \RV_n \subseteq \RV_{m + n}$ for all $m, n \in M$. It follows at once that $\RV_0$ is closed under multiplication and that $\RV_0 \cdot \RV_m \subseteq \RV_m$ for all $m \in M$.

\begin{cor}
  The sum $+$ and the product $\cdot$ make $\RV_0$ into a ring.
\end{cor}

\begin{cor}
  For all $m \in M$, $\RV_m$ is an $\RV_0$-module.
\end{cor}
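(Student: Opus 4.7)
The plan is to assemble the module axioms for $\RV_m$ over $\RV_0$ by combining the structural facts established just before the corollary, rather than redoing any computation on representatives. Concretely, the data in hand is: $(\RV_m, +)$ is an abelian group, $(\RV_0, +, \cdot)$ is a ring, scalar multiplication is well-defined because $\RV_0 \cdot \RV_m \subseteq \RV_m$, and we have one distributivity law $B \cdot (C + D) = B \cdot C + B \cdot D$ for $B \in \RV$ and $C, D \in \RV_m$.

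The first step is to fix $m \in M$ and note that the additive structure on $\RV_m$ is the group structure already proven. Scalar multiplication $\RV_0 \times \RV_m \to \RV_m$ is given by restricting the monoid product $\cdot$ on $\RV$; this is well-defined by the containment $\RV_0 \cdot \RV_m \subseteq \RV_m$. Associativity of scalar multiplication, $(A \cdot B) \cdot C = A \cdot (B \cdot C)$ for $A, B \in \RV_0$ and $C \in \RV_m$, is immediate from the associativity of $(\RV, \cdot)$ as a commutative monoid. Similarly, $\rv(1) \in \RV_0$ is the neutral element of $(\RV, \cdot)$, so $\rv(1) \cdot C = C$ for all $C \in \RV_m$.

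It remains to check the two distributivity laws. One of them, $A \cdot (C + D) = A \cdot C + A \cdot D$ for $A \in \RV_0$ and $C, D \in \RV_m$, is the special case $B = A$ of the distributivity already proven. The other, $(A + B) \cdot C = A \cdot C + B \cdot C$ for $A, B \in \RV_0$ and $C \in \RV_m$, follows by applying the proven distributivity with $B$ replaced by $C$ (noting that $A + B \in \RV_0$) and then using the commutativity of $\cdot$ on $\RV$ to rearrange both sides. No new computation at the level of representatives is required.

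I do not expect any obstacle: each axiom reduces directly to a fact already proven in this subsection, so the corollary is genuinely just an assembly of previously checked statements. The only point worth flagging is that the two distributivity laws are \emph{not} both instances of the single distributivity lemma as stated, but swapping the roles of the factors via commutativity bridges that gap immediately.
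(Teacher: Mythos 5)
Your proof is correct and matches the paper's intent: the corollary is stated without proof precisely because, as you observe, every module axiom is an immediate consequence of the propositions proved just before it, together with commutativity of the product in $\RV$. Your careful note about obtaining the second distributivity law from the first via commutativity is exactly the right observation and fills in the one detail the paper leaves implicit.
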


The ring $\RV_0$ also has the alternative representation below, which will be convenient for calculations.

\begin{prop}
  \label{prop:classical-residue-ring}
  Let $\mathcal{O} = \{ b \in \Rbf \,:\, w(b) \leq 0\}$, $\mathcal{M} = \{ b \in \Rbf \,:\, w(b) < 0\}$. Let $\res : \mathcal{O} \to \RV_0$ be defined as $\res(b) = \rv(b)$ if $b \notin \mathcal{M}$ and $\res(b) = \rv(0)$ otherwise. Then $\res : \mathcal{O} \to \RV_0$ is a surjective ring homomorphism with kernel $\mathcal{M}$.
\end{prop}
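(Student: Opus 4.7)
The plan is to verify in sequence that $\pi$ is well-defined into $\RV_0$, that it is a ring homomorphism, that it is surjective, and that its kernel equals $\mathcal{M}$. Essentially everything reduces to a small case analysis on whether arguments lie in $\mathcal{M}$ or in $\mathcal{O} \setminus \mathcal{M}$, where in the latter case $w$ vanishes. Well-definedness is immediate: if $b \in \mathcal{O} \setminus \mathcal{M}$ then $w(b) = 0$, so $\rv(b) \in \RV_0$; and $\rv(0) \in \RV_0$ by \prettyref{def:RV-alpha}.

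For multiplicativity, given $b, c \in \mathcal{O}$ I would use $w(bc) = w(b) + w(c)$. If $b, c \notin \mathcal{M}$ then $w(bc) = 0$, so $bc \notin \mathcal{M}$ and $\pi(bc) = \rv(bc) = \rv(b) \cdot \rv(c) = \pi(b) \cdot \pi(c)$. Otherwise one of $b, c$ lies in $\mathcal{M}$, which forces $bc \in \mathcal{M}$, and both $\pi(bc)$ and $\pi(b) \cdot \pi(c)$ equal $\rv(0)$. The identity $\pi(1) = \rv(1)$ follows since $w(1) = 0$.

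For additivity, I would split into three cases. If $b, c \notin \mathcal{M}$, both have valuation $0$, and the definition of the sum on $\RV_0$ directly matches $\pi(b+c)$ (the sum is $\rv(b+c)$ when $w(b+c) = 0$, i.e.\ when $b+c \notin \mathcal{M}$, and $\rv(0)$ otherwise). If, say, $b \in \mathcal{M}$ and $c \notin \mathcal{M}$, then $w(b) > w(c) = 0$ and the ultrametric inequality (together with the standard ``strict inequality wins'' observation already used implicitly in \prettyref{prop:deg-RV}) gives $w(b+c) = 0$; moreover $w((b+c) - c) = w(b) > 0$ shows $b + c \sim c$, so $\pi(b+c) = \rv(c) = \rv(0) + \rv(c) = \pi(b) + \pi(c)$. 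If both $b, c \in \mathcal{M}$, then $w(b+c) > 0$ and both sides collapse to $\rv(0)$.

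Surjectivity is clear: any $B \in \RV_0$ is either $\rv(0) = \pi(0)$ or of the form $\rv(b)$ with $w(b) = 0$, in which case $b \in \mathcal{O} \setminus \mathcal{M}$ and $\pi(b) = B$. For the kernel, $\mathcal{M} \subseteq \ker\pi$ by construction; conversely, if $b \in \mathcal{O} \setminus \mathcal{M}$ satisfies $\pi(b) = \rv(b) = \rv(0)$, then $b \sim 0$ gives $w(b) > w(b)$, contradicting $w(b) = 0$. No step poses a real obstacle; the only care needed is in the additivity case where $b+c$ drops into $\mathcal{M}$, where one must check that both sides of the proposed identity independently collapse to $\rv(0)$.
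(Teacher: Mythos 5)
Your proof is correct and follows the same line of reasoning as the paper's: a direct case analysis on whether the arguments lie in $\mathcal{M}$, using the semi-valuation axioms and the definition of the partial sum on $\RV_0$. You are in fact a bit more thorough than the paper, which only writes out additivity in the case $b, c \notin \mathcal{M}$ and leaves multiplicativity, surjectivity, and the kernel identification as "clear," and your treatment of the cross case $b \in \mathcal{M}$, $c \notin \mathcal{M}$ (where $w(b+c) = 0$ forces $b+c \sim c$) correctly fills in what the paper elides.
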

\begin{proof}
  Recall that $\mathcal{M}$ is a (prime) ideal of $\mathcal{O}$. Let $b, c \in \mathcal{O}$.

  By definition, $\rv(b) + \rv(c) = \rv(b + c)$ whenever at least one of $b$, $c$ is in $\mathcal{O} \setminus \mathcal{M}$. If on the other hand $b, c \in \mathcal{M}$, then $b + c \in \mathcal{M}$, hence $\res(b + c) = \rv(0) = \res(b) = \res(c)$. In both cases, $\res(b + c) = \res(b) + \res(c)$.

  Similarly, by definition $\rv(b) \cdot \rv(c) = \rv(bc)$. If $bc \in \mathcal{M}$, then one of $b, c$ is in $\mathcal{M}$ and we find $\res(bc) = \rv(0) = \res(b) \cdot \res(c)$. If $bc \notin \mathcal{M}$, then $b,c \notin \mathcal{M}$. In both cases, $\res(bc) = \res(b) \cdot \res(c)$.

  It follows that $\res$ is a ring homomorphism, and by definition its kernel is $\mathcal{M}$. It is surjective by definition of $\RV_0$.
\end{proof}

\begin{defn}
  \label{def:residue-ring}
  We call $\RV_0$ the \textbf{residue ring} of $(\Rbf, w)$.
\end{defn}

\subsection{Embedding \texorpdfstring{$\RV$}{RV} into a ring}

Since the multiplication of $\RV$ is distributive over the sum defined in each $\RV_m$, it is possible to embed $\RV$ into a ring. More precisely, there is a ring $\sRV$ containing $\RV$, with the universal property that any map from $\RV$ to a ring preserving sum and product factors through $\sRV$. We construct $\sRV$ explicitly as follows.

\begin{defn}
  \label{def:sRV}
  Let $(\sRV, +)$ be the direct sum $\bigoplus_{m \in M} (\RV_m, +)$. We write its elements as $\sum_{m \in M} B_m$, where $B_m \in \RV_m$ for all $m \in M$, and $B_m \neq \rv(0)$ for at most finitely many values of $m$.

  For $\sum_{m \in M} B_m, \sum_{m \in M} C_m \in \sRV$, we define
  \[ \left(\sum_{m \in M} B_m\right) \cdot \left(\sum_{m \in M} C_m\right) \coloneqq \sum_{m \in M} \left(\sum_{\substack{n + o = m\\n, o \in M}} B_n \cdot C_o\right). \]
\end{defn}

\begin{nota}
  $\RV$ embeds naturally into $\sRV$ by sending $B \in \RV$ into the sum $\sum_{m \in M} B_m$ having $B_{w(B)} = B$ and $B_m = 0$ for $m \neq w(B)$. It is immediate from the definition that such embedding preserves sums and products, so we shall identify $\RV$ with its isomorphic copy in $\sRV$.
\end{nota}

\begin{prop}
  The sum $+$ and the product $\cdot$ make $\sRV$ into a (unital) ring (with multiplicative identity $\rv(1)$ and absorbing element $\rv(0)$).
\end{prop}
\begin{proof}
  Immediate from the definitions and from the properties of sum and product in $\RV$.
\end{proof}

\begin{defn}
  We extend $w : \RV \to M_\infty$ to $\sRV$ as follows: for every non-zero $B = \sum_m B_m \in \sRV$, let $w(B) \coloneqq \max\{m \in M : B_m \neq \rv(0)\}$, and let $w(0) \coloneqq -\infty$.
\end{defn}

\begin{prop}
  \label{prop:w-sRV-valuation}
  The function $w : \sRV \to M_\infty$ is a multiplicative semi-valuation:
  \begin{enumerate}
    \item $w(b + c) \leq \max\{w(b), w(c)\}$
    \item $w(bc) = w(b) + w(c)$.
  \end{enumerate}
  Moreover, it is a multiplicative valuation if and only if the original $w : \Rbf \to M_\infty$ is a multiplicative valuation.
\end{prop}
\begin{proof}
  Clear from the definitions.
\end{proof}

One can iterate the above constructions starting from the ring $\sRV$ and the multiplicative semi-valuation $w$. The resulting monoid is the same $\RV$, and the map $\rv : \sRV \to \RV$ simply sends $B = \sum_m B_m$ to $B_{w(B)}$. We will not use these facts and shall omit the relevant verifications.

\subsection{Some examples}

Suppose that $\Rbf$ is a field, in which case $w$ is necessarily a multiplicative valuation. Then the residue ring $\RV_0$ is the usual residue field by \prettyref{prop:classical-residue-ring}, and if we let $\RV^* \coloneqq \RV \setminus \{\rv(0)\}$, the monoid $\RV^*$ is a group sitting in the exact sequence $1 \to \RV_0^\times \to \RV^* \to M \to 0$, where $\RV_0^\times$ is the multiplicative group of the residue field. Moreover, each $\RV_m$ is a one-dimensional $\RV_0$-vector space. In many cases, $\RV$ is isomorphic to $\RV_0^\times \times M$, in which case the ring $\sRV$ is the group ring $\RV_0(M)$. This group ring appears in many common valued rings (for instance, for $\Rbf = \Zb$ and $w = v_p$ the $p$-adic valuation, $\RV_0$ is the finite field $\mathbb{F}_p$ and $\sRV$ is $\mathbb{F}_p(\mathbb{N})$).

\begin{table}[t]
  \begin{center}{
    \renewcommand{\arraystretch}{1.4}
    \begin{tabular}{c|cccc}
      $(R, w)$ & $\RV_0$ & $\RV^*$ & \footnotesize $\RV_m$ (for $m \in M$) & $\sRV$ \\
      \hline
      $(\Qb, v_p)$ & $\mathbb{F}_p$ & $\mathbb{F}_p^\times \times \Zb$ & $\mathbb{F}_p$ & $\mathbb{F}_p(\Zb)$ \\
      $(\Zb, v_p)$ & $\mathbb{F}_p$ & $\mathbb{F}_p^\times \times \Nb$ & $\mathbb{F}_p$ & $\mathbb{F}_p(\Nb)$ \\
      \footnotesize $(\KRR, v)$ & $\Kbf$ & $\Kbf^\times \times \Rb$ & $\Kbf$ & $\KR$ \\
      \scriptsize $(\KRR, \sup)$ & $\KRR_{/\Jbf}$ & \tiny $(\KRR_{/\Jbf})^* \times \Rb$ & $\KRR_{/\Jbf}$ & \scriptsize $\KRR_{/\Jbf}(\Rb^{\leq 0})$ \\
      \scriptsize $(\KRR, \deg)$ & \tiny $\KR$ (\ref{prop:rv-KR-isom}) & $\RV^{*}$ & \tiny $\PRV_m \otimes_\Kbf \KR$ (\ref{prop:RV-alpha-tensor}) & \footnotesize $\sPRV(\Rb^{\leq 0})$ (\ref{prop:sRV-tensor}) \\
      \scriptsize $(\KRR, v_J)$ & $\Kbf$ & $\PRV$ (\ref{rem:RV-of-vJ}) & $\PRV_\alpha$ (for $m = \omega^\alpha$) & $\sPRV$
    \end{tabular}
  }\end{center}

  \caption{Rings $\sRV$ arising from various valued rings.}
  \label{tab:examples}
\end{table}

For the sake of example, \prettyref{tab:examples} lists a few notable residue rings $\RV_0$, monoids $\RV^* = \RV \setminus \{\rv(0)\}$, modules $\RV_m$ and rings $\sRV$ for different valuations. The first four rows can be verified directly from the definitions. The fifth row, along with the definitions of $\PRV$, $\PRV_\alpha$ and $\sPRV$,  will be discussed and proved in Sections \ref{sec:existence}, \ref{sec:uniqueness}. For the last row, see \prettyref{rem:RV-of-vJ}.

\section{Finding a factorisation}
\label{sec:existence}

During this section, we shall work with the ring $\KRR$ and the valuation $\deg$. We apply the constructions of \prettyref{sec:RV-monoid}: the equivalence relation $\sim$, the monoid $\RV$, the modules $\RV_\alpha$ shall always refer to the ones obtained from the ring $\Rbf = \KRR$ with the multiplicative valuation $w = \deg$, where $\alpha$ ranges in the monoid $M = \omega_1$ equipped with Hessenberg's natural sum (see \prettyref{rem:monoid-is-omega-1}). Thus $\rv : \KRR \to \RV$ will denote the multiplicative, degree-preserving quotient map. We will not use the ring $\sRV$ in this section.

\subsection{The residue ring of generalised power series}

First, we check that the residue ring of $\KRR$ is (an isomorphic copy of) the subring of the series with degree at most $0$, that is $\KR$.

\begin{prop}
  \label{prop:rv-KR-isom}
  The residue ring $\RV_0$ is the image $\rv(\KR)$, and the restriction $\rv_{\restriction \KR} : \KR \to \RV_0$ is a ring isomorphism.
\end{prop}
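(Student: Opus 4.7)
The plan is to verify four things in turn: that $\rv_{\restriction \KR}$ takes values in $\RV_0$, that it is surjective onto $\RV_0$, that it is injective, and finally that it preserves both sum and product. Throughout I must remember the order-reversal convention: with $w = \deg$, the inequality $w(b+c) \geq \min\{w(b),w(c)\}$ in \prettyref{sec:RV-monoid} becomes $\deg(b+c) \leq \max\{\deg(b),\deg(c)\}$, and ``$w(b) = 0$'' means $\deg(b) = 0$, i.e.\ $b$ has nonempty finite support.

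First I would handle well-definedness and surjectivity together. If $b \in \KR$ then $\supp(b)$ is finite, so $\ot(b) < \omega$ and $\deg(b) \in \{-\infty, 0\}$; hence $\rv(b) \in \RV_0$ by \prettyref{def:RV-alpha} (with $m = 0$). Conversely, by \prettyref{prop:deg-RV} every $B \in \RV_0 \setminus \{\rv(0)\}$ has all its representatives of degree $0$, hence finite nonempty support, hence lying in $\KR$; and $\rv(0)$ is the image of $0 \in \KR$. So $\rv(\KR) = \RV_0$.

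Next I would prove injectivity. Suppose $b, c \in \KR$ with $\rv(b) = \rv(c)$. If $b = 0$ then $\rv(c) = \rv(0)$ forces $c = 0$ (the class $\rv(0)$ is a singleton, since $d \sim 0$ means $\deg(d) < \deg(0) = -\infty$, impossible unless $d = 0$). Otherwise $b \sim c$ means $\deg(b - c) < \deg(b) = 0$ under the reversed ordering, so $\deg(b-c) = -\infty$, i.e.\ $b = c$. Then I would turn to the ring-homomorphism property. Multiplicativity is immediate from the definition $\rv(b) \cdot \rv(c) := \rv(bc)$. For additivity, take $b, c \in \KR$; I want $\rv(b+c) = \rv(b) + \rv(c)$ in $\RV_0$. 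The cases where $b = 0$ or $c = 0$ are trivial using the neutral element. Otherwise both classes have $w$-value $0$, and $b + c \in \KR$. If $b + c \neq 0$ then $\deg(b+c) = 0$, so by definition $\rv(b) + \rv(c) = \rv(b+c)$. If $b + c = 0$ then $\deg(b+c) = -\infty \neq 0$, so the partial sum yields $\rv(0)$, which equals $\rv(b+c)$ as required.

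None of the steps present a real obstacle; the only thing that requires care is consistently tracking the reversed order convention on $M = \omega_1$ (so that ``value $0$'' corresponds to the ring's finite-support elements and the ultrametric inequality has the correct direction), and handling the edge cases $b = 0$, $c = 0$, $b + c = 0$ where $\rv$ falls outside the generic ``common value $\alpha$'' case used to define the partial sum on $\RV_\alpha$.
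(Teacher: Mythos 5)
Your proof is correct. It is, in substance, the same argument as the paper's, just unpacked: the paper simply invokes Proposition \ref{prop:classical-residue-ring} (which shows that $\pi \colon \Oc \to \RV_0$ is a surjective ring homomorphism with kernel $\mathcal{M}$) and then observes that for $w = \deg$ on $\KRR$ one has $\Oc = \{b : \deg(b) \leq 0\} = \KR$ and $\mathcal{M} = \{b : \deg(b) < 0\} = \{0\}$, so $\pi = \rv_{\restriction \KR}$ is an isomorphism. You verify directly (well-definedness, surjectivity, injectivity, homomorphism property) exactly what that proposition packages, so the route is not genuinely different; you could have shortened the write-up by appealing to \ref{prop:classical-residue-ring}. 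One small point of phrasing: where you write ``$d \sim 0$ means $\deg(d) < \deg(0) = -\infty$'', the definition of $\sim$ with the pair $(d,0)$ actually reads $\deg(d-0) < \deg(d)$; you are implicitly using the symmetric formulation $0 \sim d$, i.e.\ $\deg(d) < \deg(0)$. Since $\sim$ has already been shown to be symmetric this is harmless, but it is worth stating which direction of the definition you are applying.
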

\begin{proof}
  Immediate by \prettyref{prop:classical-residue-ring}, as $\mathcal{O} = \{ b \in \KRR \,:\, \deg(b) \leq 0\} = \KR$ and $\mathcal{M} = \{ b \in \KRR \,:\, \deg(b) < 0\} = \{0\}$.
\end{proof}

\begin{nota}
   With a slight abuse of notation, we shall identify $\RV_0$ with $\KR$, and for instance write $0$ in place of $\rv(0)$. In particular, we shall also say that each $\RV_\alpha$ is a $\KR$-module.
\end{nota}

\subsection{The submonoid of principal elements}

By \prettyref{fact:berarducci}, the product of two weakly principal series is weakly principal. We have also shown that the product of two \emph{principal} series is also principal (\prettyref{prop:berarducci}). Therefore, their images through the map $\rv$ form a multiplicative submonoid of $\RV$.

\begin{defn}
  \label{def:principal-rv}
  Given $B \in \RV$, we say that $B$ is \textbf{principal} if $B = \rv(b)$ for some principal $b \in \KRR$. Let $\PRV$ denote the subset of $\RV$ consisting of $0$ and of all principal elements of $\RV$. Given $\alpha \in \omega_1$, we define $\PRV_\alpha \coloneqq \PRV \cap \RV_\alpha$.
\end{defn}

In other words, $\PRV$ is the image through $\rv$, plus $0$, of the set of principal series mentioned in \prettyref{rem:P-not-a-ring}.

\begin{exa}
  Under the identification $\KR = \RV_0$, we have that $\PRV_0 = \Kbf$. Indeed, a series $p \in \KR$ is principal if and only if $p \in \Kbf \setminus \{0\}$.
\end{exa}

\begin{prop}
  $\PRV$ is closed under the multiplication of $\RV$.
\end{prop}
\begin{proof}
  Recall that if $b, c \in \KRR$ are principal, then $bc$ is principal by \prettyref{prop:berarducci}. Therefore, if $B, C \in \PRV \setminus \{0\}$, then $B = \rv(b), C = \rv(c)$ for some principal $b, c \in \KRR$, so $B \cdot C = \rv(bc)$ is principal as well. If one of $B, C$ is $0$, their product is obviously in $\PRV$ as well.
\end{proof}

We shall now verify that the intersection of $\PRV$ with each $\RV_\alpha$ is also a $\Kbf$-linear space.

\begin{prop}
  For all $\alpha \in \omega_1$, $\PRV_\alpha$ is an additive subgroup of $\RV_\alpha$.
\end{prop}
\begin{proof}
  Let $B, C \in \PRV_\alpha$ for some given $\alpha \in \omega_1$. Pick $b, c \in \KRR$ such that $B = \rv(b), C = \rv(c)$. If $\deg(b + c) = \deg(b)$, then $B + C = \rv(b + c)$, and $b + c$ is principal by \prettyref{prop:sum-principal}, so $B + C$ is principal and in $\RV_\alpha$, hence $B + C \in \PRV_\alpha$. On the other hand, if $\deg(b + c) \neq \deg(b)$, then $B + C = 0 \in \PRV_\alpha$. Therefore, for each $\alpha \in \omega_1$, $\PRV_\alpha$ is closed under sum. Finally, if $B \in \PRV_\alpha$ is of the form $B = \rv(b)$ for some principal $b$, then $-b$ is also principal as $\ot(-b) = \ot(b)$ and $\sup(-b) = \sup(b) = 0$, so $\rv(-b) \in \PRV_\alpha$ and $B + \rv(-b) = 0$.
\end{proof}

\begin{cor}
  For all $\alpha \in \omega_1$, $\PRV_\alpha$ is a $\Kbf$-linear subspace of $\RV_\alpha$.
\end{cor}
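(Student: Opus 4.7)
The plan is to read the statement as a standard subgroup test applied to each $\RV_\alpha$, where the nontrivial content has already been done in \prettyref{prop:sum-principal}. Recall that an element of $\PRV_\alpha$ is either $\rv(0)$ or of the form $\rv(b)$ for some principal $b \in \KRR$ with $\deg(b) = \alpha$; since $\rv(0)$ is the neutral element of $(\RV_\alpha, +)$ by construction, it lies in $\PRV_\alpha$ automatically, so it only remains to verify closure under addition and additive inverses.

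First I would handle inverses. If $B \in \PRV_\alpha$ with $B = \rv(b)$ and $b$ principal, then $-b$ has the same support as $b$ (the coefficients are just negated), so $\ot(-b) = \ot(b) = \omega^\alpha$ and $\sup(-b) = \sup(b) = 0$, meaning $-b$ is principal. The inverse of $B$ in $\RV_\alpha$ is $\rv(-b)$, which therefore lies in $\PRV_\alpha$.

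For closure under addition, let $B, C \in \PRV_\alpha$. If either is $\rv(0)$, then $B + C$ is the other, and we are done. Otherwise pick principal representatives $b, c \in \KRR$ with $\rv(b) = B$, $\rv(c) = C$, so $\deg(b) = \deg(c) = \alpha$. By the definition of addition in $\RV_\alpha$, either $B + C = \rv(0) \in \PRV_\alpha$, or $\deg(b + c) = \alpha$ and $B + C = \rv(b + c)$. In the latter case \prettyref{prop:sum-principal} applies verbatim to conclude that $b + c$ is principal, hence $\rv(b+c) \in \PRV_\alpha$.

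There is essentially no obstacle here: the whole content of the corollary has been packed into the previous proposition, and the only thing to check carefully is that the definition of addition in $\RV_\alpha$ interacts cleanly with the notion of being principal, which it does because the cases in the definition of $B + C$ exactly mirror the hypothesis $\deg(b+c) = \deg(b)$ of \prettyref{prop:sum-principal}.
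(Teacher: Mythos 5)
Your proof is correct and takes essentially the same route as the paper: closure under addition is reduced to \prettyref{prop:sum-principal} via the case split on whether $\deg(b+c)$ equals $\alpha$, and inverses follow from observing that $-b$ is principal whenever $b$ is. The only difference is that you explicitly dispose of the cases where $B$ or $C$ equals $\rv(0)$ (which have no principal representative), a minor point the paper passes over silently.
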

\begin{proof}
  Note that if $b \in \KRR$ is principal, then so is $kb$ for any non-zero $k \in \Kbf$, since $\ot(kb) = \ot(b)$ and $\sup(kb) = \sup(b) = 0$, hence $\Kbf \cdot \PRV_\alpha \subseteq \PRV_\alpha$.
\end{proof}

\subsection{Decomposing the modules}
\label{sub:RV-alpha-tensor}
The multiplication map $\mu : \PRV_\alpha \times \KR \to \RV_\alpha$ given by $(B, p) \mapsto B \cdot p$ is $\Kbf$-linear in both arguments. Thus, it has a canonical extension to a $\Kbf$-linear map $\tilde{\mu} : \PRV_\alpha \otimes_\Kbf \KR \to \RV_\alpha$ on the tensor product of $\PRV_\alpha$ and $\KR$ over $\Kbf$. We can show that $\tilde{\mu}$ is an isomorphism, and thus that $\RV_\alpha$ is the \textbf{extension of scalars} of $\PRV_\alpha$ from $\Kbf$ to $\KR$.

We briefly recall that if $\mathbf{V}$, $\mathbf{W}$ are $\Kbf$-linear spaces and $\mathbf{W}$ has a basis, the tensor product $\mathbf{V} \otimes_\Kbf \mathbf{W}$ can be presented as the class of all finite sums of the vectors $v \otimes w$, for $v$ varying among the non-zero vectors $\mathbf{V}$ and $w$ in the given basis of $\mathbf{W}$. The same can be done if we are given a basis of $\mathbf{V}$. In particular, since the monomials $t^x$ for $x \in \Rb^{\leq 0}$ form a basis of $\KR$, each element of $\PRV_\alpha \otimes_\Kbf \KR$ has a unique representation as a sum
\[ B_1 \otimes t^{x_1} + \dots + B_n \otimes t^{x_n} \]
where $x_1 < \dots < x_n \leq 0$ are distinct real numbers and $B_1, \dots, B_n$ are non-zero elements of $\PRV_\alpha$. Another consequences is that if $C_1, \dots, C_k \in \PRV_\alpha$ are $\Kbf$-linearly independent, then for each $\mathcal{B} \in \PRV_\alpha \otimes_\Kbf \KR$ there is at most one choice of $q_1, \dots, q_k \in \KR$ such that
\[ \mathcal{B} = C_1 \otimes q_1 + \dots + C_k \otimes q_k. \]

\begin{prop}
  \label{prop:RV-alpha-tensor}
  For all $\alpha \in \omega_1$, the map $\tilde{\mu} : \PRV_\alpha \otimes_\Kbf \KR \to \RV_\alpha$ is an isomorphism.
\end{prop}
\begin{proof}
  Let $\alpha \in \omega_1$. Pick a non-zero element of $\mathcal{B} \in \PRV_\alpha \otimes_\Kbf \KR$, with image $\tilde{\mu}(\mathcal{B}) = B \in \RV_\alpha$, and write (uniquely)
  \[ \mathcal{B} = B_1 \otimes t^{x_n} + \dots + B_n \otimes t^{x_n} \]
  where $B_i \in \PRV_\alpha$ are non-zero, $x_1 < \dots < x_n$, and $n > 0$.

  Let $b_i \in \KRR$ be some principal series such that $\rv(b_i) = B_i$, so that $B = \rv(b)$ for $b = b_1t^{x_1} + \dots + b_mt^{x_m}$. We may further assume that $\supp(b_j) \geq x_{j-1} -  x_j$ for all $j = 2, \dots, m$, as we are free to replace $b_j$ with $(b_j)_{\geq x_{j-1} - x_j}$ without affecting the value of $\rv(b_j)$. With these choices, $b_1t^{x_1} + \dots + b_mt^{x_m}$ is the normal form of $b$. It follows that $\deg(B) = \deg(b_1) = \alpha$, thus in particular $B \neq 0$. This shows that $\tilde{\mu}$ is injective.

  For surjectivity, pick some $B \in \RV_\alpha$ and some $b \in \KRR$ such that $\rv(b) = B$. Write $b = b_1t^{x_1} + \dots + b_nt^{x_n}$ in normal form, with $b_1, \dots, b_n$ principal. We may further assume $\deg(b_1) = \dots = \deg(b_n) = \alpha$, as we can discard the terms of lower degree without changing the value of $\rv(b)$. Then, by \prettyref{lem:sum-rv-b1-bn},
  \begin{align*}
    B &= \rv(b) = \rv(b_1t^{x_1} + \dots + b_nt^{x_n}) = \rv(b_1t^{x_1}) + \dots + \rv(b_nt^{x_n}) = \\
    &= \rv(b_1) \cdot t^{x_1} + \dots + \rv(b_n) \cdot t^{x_n} = \tilde{\mu}\left(\rv(b_1) \otimes t^{x_1} + \dots + \rv(b_n) \otimes t^{x_n}\right). \qedhere
  \end{align*}
\end{proof}

\subsection{The maximal divisor of finite support for \texorpdfstring{$\RV$}{RV}}

We can now verify that each element of $\RV_\alpha$ has a maximal divisor in $\KR$.

\begin{defn}
  \label{def:div-RV}
  Given $B, C \in \RV$, we say that $B$ \textbf{divides} $C$, and write $B \mid C$, if $C = B \cdot D$ for some $D \in \RV$.
\end{defn}

\begin{rem}
  For all $p, q \in \KR = \RV_0$, $p$ divides $q$ in the sense of \prettyref{def:div-RV} if and only if $p$ divides $q$ in the ring $\KR$.
\end{rem}

\begin{prop}
  \label{prop:pB}
  For all $B \in \RV$, there exists a series $p \in \KR$ such that for all $q \in \KR$, $q \mid B$ if and only if $q \mid p$. Moreover, $p$ is unique up to multiplication by non-zero elements of $\Kbf$, and $p \in \Kbf$ if $B$ is principal.
\end{prop}
\begin{proof}
  Let $B \in \RV$. If $B = 0$, the conclusion is trivial on taking $p = 0$, so assume otherwise. Let $\alpha = \deg(B)$, and write (uniquely)
  \[ B = B_1 \cdot t^{x_1} + \dots + B_n \cdot t^{x_n} \]
  where $x_1 < \dots < x_n$ and $B_1, \dots, B_n$ are non-zero elements of $\PRV_\alpha$. Let $C_1, \dots, C_k$ be a maximal $\Kbf$-linearly independent subset of $B_1, \dots, B_n$, and rewrite the above equality as
  \[ B = C_1 \cdot q_1 + \dots + C_k \cdot q_k \]
  for some $q_1, \dots, q_k \in \KR$. Since $C_1, \dots, C_k$ have been chosen to be $\Kbf$-linearly independent, the series $q_1, \dots, q_k$ are unique.

  Suppose that $q \mid B$ for some $q \in \KR$, namely $B = B' \cdot q$ for some $B' \in \RV_\alpha$. Via the same argument as for $B$, we can find $C_{k+1}, \dots, C_{k+\ell} \in \PRV_\alpha$ that are $\Kbf$-linearly independent over $C_1, \dots, C_k$ and such that
  \[ B' = C_1 \cdot q_1' + \dots + C_{k+\ell} \cdot q_{k+\ell}' \]
  for some (unique again) $q_1', \dots, q_{k+\ell}' \in \KR$. In particular,
  \[ B = C_1 \cdot (qq_1') + \dots + C_{k+\ell} \cdot (qq_{k+\ell}') = C_1 \cdot q_1 + \dots + C_k \cdot q_k. \]
  By the uniqueness of the series $q_i$, we find that $q_{k+1}' = \dots = q_{k+\ell}' = 0$, and that $q$ divides $q_1, \dots, q_k$.

  Conversely, if $q$ divides $q_1, \dots, q_k$, say $q_i = qq_i'$, then
  \[ B = C_1 \cdot (qq_1') + \dots + C_k \cdot (qq_k') = (C_1 \cdot q_1' + \dots + C_k \cdot q_k') \cdot q, \]
  thus $q$ divides $B$.

  It follows that our desired $p$ is exactly a greatest common divisor of $q_1, \dots, q_k$, which exists since $\KR$ is a GCD domain (\prettyref{fact:KG-GCD-domain}). When $B$ is principal, we may assume that $B = C_1$, in which case $p \in \Kbf$. Such $p$ is determined up to multiplication by units of $\KR$, and the units of $\KR$ are the non-zero elements of $\Kbf$.
\end{proof}

\begin{cor}
  \label{cor:pB}
  For all $B \in \sRV$, there exists a $p \in \KR$ such that for all $q \in \KR$, $q \mid B$ if and only if $q \mid p$, and such $p$ is unique up to multiplication by non-zero elements of $\Kbf$.
\end{cor}
\begin{proof}
  Let $B = \sum_\alpha B_\alpha$. By construction, $q \in \KR$ divides $B$ if and only if $q$ divides each $B_\alpha$. Let $p_\alpha \in \KR$ be the series given by \prettyref{prop:pB} applied to $B_\alpha$, for all (finitely many) $\alpha$ such that $B_\alpha \neq 0$. Let $p \in \KR$ to be a greatest common divisor of such $p_\alpha$. Then $q \in \KR$ divides $B$ if and only if $q$ divides $p$. Again, such $p$ is determined up to multiplication by non-zero elements of $\Kbf$.
\end{proof}

\begin{nota}
  \label{nota:pB}
  Given a non-zero $B \in \sRV$, if $p$ is an element of $\KR$ satisfying the conclusion of \prettyref{cor:pB}, then $kp$ also satisfies the conclusion for any non-zero $k \in \Kbf$. Therefore, there is exactly one choice of $p$ such that the coefficient of $t^{\sup(p)}$ is $1$. Let $\p(B)$ be this element. We also let $\p(0) \coloneqq 0$.
\end{nota}

\begin{rem}
  \label{rem:pB}
  For every $B \in \sRV$ we have $\p(B) \mid B$, and when $p \in \KR = \RV_0$, we have $\p(p) = kp$ for some non-zero $k \in \Kbf$. Moreover, $\p(B) = 1$ whenever $B \in \RV$ is principal.
\end{rem}

\begin{rem}
  \label{rem:pB-computation}
  The proof of \prettyref{prop:pB} is also an algorithm to compute the maximal divisor of $\KR$. Suppose that $B \in \RV_\alpha$ is of the form $B = C_1 \cdot q_1 + \dots + C_k \cdot q_k$ for some $\Kbf$-linearly independent elements $C_i \in \PRV_\alpha$. Then $\p(B)$ is a greatest common divisor of $q_1, \dots, q_k$. To find such elements $C_i$, it suffices to write $B = B_1 \cdot t^{x_1} + \dots + B_n \cdot t^{x_n}$ in normal form, and extract a maximal subset of $\Kbf$-linearly independent elements. Likewise, the proof of \prettyref{cor:pB} shows that for $B = \sum_\alpha B_\alpha \in \sRV_\alpha$, $\p(B)$ is a greatest common divisor of $\{\p(B_\alpha) : \alpha \in \omega_1\}$.

  For instance, take
  \[ B = \rv\left(\sum_{n \in \Nb} \left(t^{-\frac{1}{n+1}} + t^{-\frac{2}{n+1}}\right) - \sum_{n \in \Nb} t^{-2 - \frac{1}{n+1}} - \sum_{n \in \Nb} t^{-3 - \frac{2}{n+1}}\right). \]
  Note that $B$ has degree 1. Let $C = \rv\left(\sum_{n \in \Nb} t^{\frac{-1}{n+1}}\right)$ and $D = \rv\left(\sum_{n \in \Nb} t^{\frac{-2}{n+1}}\right)$. Both $C$ and $D$ are principal of degree 1, and one can easily prove that they are $\Kbf$-linearly independent, for instance using the fact that their supports are disjoint. We have
  \[ B = C + D - C \cdot t^{-2} - D \cdot t^{-3} = C \cdot (1 - t^{-2}) + D \cdot (1 - t^{-3}). \]
  A greatest common divisor of $1 - t^{-2}$ and $1 - t^{-3}$ is $t^{-1} - 1$, hence $\p(B) = 1 - t^{-1}$.
\end{rem}

\begin{prop}
  \label{prop:pB-pC-div-pBC}
  For all $B, C \in \sRV$, $\p(B)\p(C) \mid \p(B \cdot C)$.
\end{prop}
\begin{proof}
  Let $B, C \in \sRV$. Take $B', C' \in \RV$ such that  $B = B' \cdot \p(B)$ and $C = C' \cdot \p(C)$. Then $B \cdot C = B' \cdot C' \cdot (\p(B)\p(C))$, so $\p(B)\p(C) \mid B \cdot C$, so by definition $\p(B)\p(C) \mid \p(B \cdot C)$.
\end{proof}

\subsection{The maximal divisor of finite support for \texorpdfstring{$\KRR$}{K((ℝ≤0))}}
The existence of the maximal divisor of finite support can be lifted from the quotient $\RV$ to the ring $\KRR$. In this step, it is crucial that the valuation $\deg$ takes values in the ordinals, as it allows us to reason by induction on the degree.

\begin{prop}
  \label{prop:pb-KRR}
  For all $b \in \KRR$, there exists a $p \in \KR$ such that for all $q \in \KR$, $q \mid b$ if and only if $q \mid p$. Moreover, $p$ in unique up to multiplication by non-zero elements of $\Kbf$.
\end{prop}
\begin{proof}
  Let $b \in \KRR$ and $q \in \KR$. We reason by induction on $\deg(b)$. If $\deg(b) \leq 0$, then $b \in \KR$, so the conclusion is trivial on taking $p = b$.

  Suppose now that $\deg(b) > 0$. Let $B = \rv(b)$. By definition, there is $b' \in \KRR$ such that $B = B' \cdot \p(B)$, where $B' = \rv(b')$. Then $b \sim \p(B)b'$, so we can write $b = \p(B)b' + c$ with $c \in \KRR$ satisfying $\deg(c) < \deg(b)$.

  If $q \mid \p(B)$ and $q \mid c$, then clearly $q \mid b$. Conversely, assume that $q \mid b$. In particular, $q \mid \rv(b) = B$, so $q \mid \p(B)$ by definition of $\p(B)$. In turn, $q \mid (b - \p(B)b') = c$. Therefore, $q \mid b$ if and only if $q \mid \p(B)$ and $q \mid c$. By inductive hypothesis, there exists a $p' \in \KR$ such that $q \mid c$ if and only if $q \mid p'$. The conclusion follows on letting $p$ be a greatest common divisor of $\p(B)$ and $p'$.
\end{proof}

\begin{nota}
  \label{nota:pb}
  Just as in \prettyref{nota:pB}, given a non-zero $b \in \KRR$, let $\p(b)$ be the unique divisor satisfying the conclusion of \prettyref{prop:pb-KRR}, namely $q \mid b$ if and only if $q \mid \p(b)$ for all $q \in \KR$, such that the coefficient of $t^{\sup(p)}$ is $1$. We also let $\p(0) \coloneqq 0$.
\end{nota}

\begin{rem}
  For every $b \in \KRR$, we have $\p(b) \mid b$. When $p \in \KR = \RV_0$, we have $\p(p) = kp$ for some non-zero $k \in \Kbf$ (in fact, this new definition of $\p(p)$ coincides with the one of \prettyref{nota:pB}). If $b$ is principal, then $\p(b) = 1$.
\end{rem}

The above proof is again also an algorithm to find the maximal divisor in $\KR$. Given $b_0 = b \in \KRR$, compute $p_0 = \p(\rv(b_0))$ as in the previous subsection, and thus in particular find some $b_0' \in \KRR$ such that $\rv(b_0') \cdot p_0 = \rv(b_0)$. Take $b_1 = b_0 - p_0b_0'$, where by construction $\deg(b_1) < \deg(b)$. If $b_1 = 0$, we are done, otherwise compute $p_1 = \p(\rv(b_1))$, $b_1'$ such that $\rv(b_1') \cdot p_1 = \rv(b_1)$, and $b_2 = b_1 - p_1b_1'$ as above, so that $\deg(b_2) < \deg(b_1)$. Repeat the procedure until, after finitely many steps, we reach $b_n = 0$. Finally, compute a greatest common divisor of $p_1, \dots, p_{n-1}$ and adjust its coefficients to find $\p(b)$.

\begin{exa}
  Let
\[ b_0 = -\sum_{n \in \Nb} t^{-2 - \frac{1}{n+1}} + t^{-1} + \sum_{n \in \Nb} t^{-\frac{1}{n+1}} + 1.\]
This is a series of order type $\omega + \omega + 1$, thus of degree $1$. To find its maximal divisor of finite support $\p(b_0)$, we first consider $\rv(b_0)$. We have
\[ \rv(b_0) = \rv\left(-\sum_{n \in \Nb} t^{-2 - \frac{1}{n+1}} + \sum_{n \in \Nb} t^{-\frac{1}{n+1}}\right) = \rv\left(\sum_{n \in \Nb} t^{-\frac{1}{n+1}}\right) \cdot (1 - t^{-2}). \]
Since $\rv(\sum_{n \in \Nb} t^{\frac{-1}{n+1}})$ is principal, $p_0 = \p(\rv(b_0)) = (1 - t^{-2})$. We may now write
\[ b_1 = b - (1 - t^{-2}) \sum_{n \in \Nb} t^{-\frac{1}{n+1}} = t^{-1} + 1. \]
Since $p_1 = \p(t^{-1}+1) = t^{-1}+1$, the next computation yields $b_2 = 0$, and we stop. It now suffices to compute a greatest common divisor of $(1 - t^{-2})$ and $t^{-1} + 1$, and we find $\p(b) = t^{-1} + 1$.
\end{exa}

\begin{prop}
  \label{prop:pb-pc-div-pbc}
  For all $b, c \in \KRR$, $\p(b)\p(c) \mid \p(bc)$.
\end{prop}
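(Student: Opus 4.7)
The plan is essentially a one-line chase through the defining property of $p(\cdot)$, together with the fact that $\KR$ is closed under multiplication.

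First, I would recall from the Notation that $p(b) \mid b$ and $p(c) \mid c$ in $\KRR$, so we may write $b = p(b) \cdot b'$ and $c = p(c) \cdot c'$ for suitable $b', c' \in \KRR$. Multiplying these gives $bc = p(b)p(c) \cdot b'c'$, so $p(b)p(c)$ divides $bc$.

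Next, since $p(b), p(c) \in \KR$ and $\KR$ is a subring of $\KRR$, the product $p(b)p(c)$ also lies in $\KR$. Applying the defining property of $p(bc)$ from \prettyref{prop:pb-KRR} to the element $q := p(b)p(c) \in \KR$, the fact that $q \mid bc$ gives $q \mid p(bc)$, that is, $p(b)p(c) \mid p(bc)$.

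There is no real obstacle here; the content has already been absorbed into the construction of $p(\cdot)$ as a \emph{maximal} divisor in $\KR$. The statement is just the observation that the assignment $b \mapsto p(b)$ is submultiplicative with respect to divisibility, which follows formally from (i) $p(b)\mid b$, and (ii) $\KR$ being closed under products.
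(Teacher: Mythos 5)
Your proof is correct and is essentially the same as the paper's: write $b = p(b)b'$, $c = p(c)c'$, conclude $p(b)p(c) \mid bc$ with $p(b)p(c) \in \KR$, and invoke the defining maximality property of $p(bc)$. You have merely spelled out the step that $p(b)p(c) \in \KR$, which the paper leaves implicit.
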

\begin{proof}
  Let $b, c \in \KRR$. Take $b', c' \in \KRR$ such that $b = \p(b)b$, $c' = \p(c)c$. Then $bc = (\p(b)\p(c))b'c'$, so $\p(c)\p(c) \mid bc$, so by definition $\p(b)\p(c) \mid \p(bc)$.
\end{proof}

\subsection{The factorisation} We can now factor the series in $\KRR$ by induction on the degree.

\begin{prop}
  \label{prop:KRR-fact-exists}
  For all non-zero $b \in \KRR$, there exist $n \in \Nb$, irreducible series $c_1, \dots, c_n \in \KRR$ with infinite support, and $k \in \Kbf$ such that
  \[ b = k \p(b) c_1 \cdots c_n. \]
  Moreover, $n$ is at most the number of terms in the Cantor Normal Form of $\deg(b)$.
\end{prop}
\begin{proof}
  Let $b \in \KRR$ be non-zero and let $b' \coloneqq \frac{b}{\p(b)} \in \KRR$. By maximality of $\p(b)$, we must have $\p(b') = 1$. We work by induction on $\deg(b)$. If $\deg(b) = 0$, then $b' \in \Kbf$, and we are done.

  Assume $\deg(b) = \alpha > 0$. If $b'$ is irreducible, then we are done. Otherwise, $b' = cd$ for some $c, d \in \KRR$ not in $\Kbf$. Since $\p(b') = 1$, we have $\p(c) = \p(d) = 1$ by \prettyref{prop:pb-pc-div-pbc}, so $c, d$ cannot be in $\KR$, hence $\deg(c), \deg(d) > 0$. Since $\deg(b') = \deg(c) \oplus \deg(d)$, it follows that $\deg(c) < \alpha$, $\deg(d) < \alpha$.

  By inductive hypothesis, $c$ and $d$ can be written as products of irreducible series of positive degree. Therefore, $b'$ is also a product of irreducible series of positive degree, that is to say, with infinite support, as desired.

  Moreover, since $\deg(\p(b)) = 0$ by definition, we have $\deg(b) = \deg(c_1) \oplus \dots \oplus \deg(c_n)$, where $\deg(c_i) > 0$ for each $i = 1, \dots, n$. It follows that $n$ is bounded by the number of terms in the Cantor Normal Form of $\deg(b)$.
\end{proof}

The above argument has an algorithmic flavour: one may compute $p$ as described after \prettyref{prop:pB}, then proceed by induction on the degree to find the remaining factors. However, we do not know an algorithm to determine when a series with infinite support is irreducible, and the task seems highly not trivial. We discuss in \prettyref{sec:new-irreducibles-primes} a few techniques to find irreducible series, which are however still limited to series of a certain form.

\begin{rem}
  The bound on $n$ is sharp. More strongly, for every $\alpha \in \omega_1$, if $n$ is the number of terms in its Cantor Normal Form, then there is a series $b \in \KRR$ of degree $\alpha$ that factors into $n$ irreducible series with infinite support. To see this, write $\alpha = \omega^{\beta_1} \hatplus \dots \hatplus \omega^{\beta_n}$, and pick principal series $c_1,\dots,c_n \in \KRR$ of degrees respectively $\omega^{\beta_1}, \dots, \omega^{\beta_n}$ (they exist since the degree is surjective on $\omega_1$ by \prettyref{rem:monoid-is-omega-1}). Each $c_i$ has order type $\omega^{\omega^{\beta_i}}$, and $\sup(c_i) = 0$, so it is irreducible by \cite{Ber2000}, hence $b = c_1 \cdots c_n$ has as many irreducible factors as the number of terms in the Cantor Normal form of its degree.

  On the other hand, the factors may be fewer than $n$. For instance, we know from \cite{PS2006,LM2017} that there are irreducible series in $\KRR$ of order types $\omega^2, \omega^3$. The Cantor Normal Form of their degrees are respectively $1 \hatplus 1$ and $1 \hatplus 1 \hatplus 1$, but their factorisations have only one irreducible term.
\end{rem}

\section{Uniqueness of the factor with finite support}
\label{sec:uniqueness}

As in the previous section, we shall work with the ring $\KRR$ and the valuation $\deg$; the symbols $\sim$, $\RV$, $\RV_\alpha$, $\sRV$, and $\rv : \KRR \to \RV \subseteq \sRV$ shall refer to the objects constructed in \prettyref{sec:RV-monoid}. Recall that we are identifying $\RV_0$ with $\KR$.

\subsection{The subring of principal elements}

\begin{defn}
  An element $\sum_\alpha B_\alpha$ of $\sRV$ is \textbf{principal} if each $B_\alpha$ is either zero or principal (namely in $\PRV$). Let $\sPRV$ denote the set of all principal elements of $\sRV$.
\end{defn}

Note that $\sPRV$ is precisely the subring of $\sRV$ generated by $\PRV$, and it contains $\rv(1)$. Similarly to \prettyref{sub:RV-alpha-tensor}, we note that the map $(B,p) \mapsto B \cdot p$ from $\sPRV \times \KR$ to $\sRV$ is bilinear, thus it extends uniquely to a map $\tilde{\mu} : \sPRV \otimes_\Kbf \KR \to \sRV$.

\begin{prop}
  \label{prop:sRV-tensor}
  The map $\tilde{\mu} : \sPRV \otimes_\Kbf \KR \to \sRV$ is an isomorphism.
\end{prop}
\begin{proof}
  Immediate, since $\tilde{\mu}$ restricts to an isomorphism $\RV_\alpha \to \PRV_\alpha \otimes_\Kbf \KR$ for each $\alpha$ by \prettyref{prop:RV-alpha-tensor}, and $\sPRV$, $\sRV$ are direct sums of respectively $\PRV_\alpha$ and $\RV_\alpha$.
\end{proof}

Therefore, by the properties of the tensor product, every $B \in \sRV$ can be written in a unique way as $B = \sum_{x \in \Rb^{\leq 0}} B_x \cdot t^x$, where $B_x \in \sPRV$ and at most finitely many $B_x$'s are non-zero. In particular, we can canonically identify $\sRV$ with $\sPRV(\Rb^{\leq 0})$, the ring of the series with coefficients in $\sPRV$, exponents in $\Rb^{\leq 0}$, and finite support.

\begin{rem}
  \label{rem:frac-sprv-GCD}
  Consider the ring $\sPRV^{-1} \cdot \sRV$ obtained by localising $\sRV$ with respect to the multiplicative subset $\sPRV \setminus \{0\}$. Since $\sPRV$ is a subring of $\sRV$, we have $\sPRV^{-1} \cdot \sRV = \Frac(\sPRV) \cdot \sRV = \Frac(\sPRV)(\Rb^{\leq 0})$. In particular, $\sPRV^{-1} \cdot \sRV$ is a GCD domain by \prettyref{fact:KG-GCD-domain}.
\end{rem}

\subsection{Divisibility in \texorpdfstring{$\sRV$}{R̂V}}

Given $B, C \in \sRV$, we say that $B$ \textbf{divides} $C$, and write $B \mid C$, if $C = B \cdot D$ for some $D \in \sRV$. First of all, we observe that divisibility in $\sRV$ is just an extension of the notion of divisibility in \prettyref{def:div-RV}, so there is no risk of ambiguity.

\newcommand{\mindeg}{\deg^{-}}

\begin{prop}
  \label{prop:BC-RV-B-C-RV}
  For all non-zero $B, C \in \sRV$, if $B \cdot C \in \RV$, then $B, C \in \RV$.
\end{prop}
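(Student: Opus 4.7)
The plan is to exploit the grading of $\sRV$ together with the strict monotonicity of Hessenberg's natural sum $\oplus$ in each argument in order to pin down the ``top'' and ``bottom'' homogeneous components of the product $BC$. Write $B = \sum_\alpha B_\alpha$ and $C = \sum_\alpha C_\alpha$ as in \prettyref{def:sRV}, with only finitely many non-zero summands, and denote by $\maxdeg(B)$ and $\mindeg(B)$ the largest and smallest ordinals for which the corresponding component of $B$ is non-zero, and similarly for $C$; these exist because $B, C \in \sRV^{*}$.

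First I would compute the homogeneous component of $BC$ at degree $\maxdeg(B) \oplus \maxdeg(C)$. By the defining formula for multiplication in $\sRV$, it is a sum of products $B_\alpha \cdot C_\beta$ with $\alpha \oplus \beta = \maxdeg(B) \oplus \maxdeg(C)$; any such pair contributing a non-zero summand satisfies $\alpha \leq \maxdeg(B)$ and $\beta \leq \maxdeg(C)$, and by strict monotonicity of $\oplus$ in each argument the only such pair is $(\maxdeg(B), \maxdeg(C))$. Thus this component equals $B_{\maxdeg(B)} \cdot C_{\maxdeg(C)}$. The identical argument identifies the component of $BC$ at degree $\mindeg(B) \oplus \mindeg(C)$ as $B_{\mindeg(B)} \cdot C_{\mindeg(C)}$.

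Next I would check that both of these components are non-zero. This reduces to the fact that $\RV$ has no zero divisors other than $\rv(0)$: if $B_{\maxdeg(B)} = \rv(b)$ and $C_{\maxdeg(C)} = \rv(c)$ with $b, c \neq 0$, then $bc \neq 0$ because $\KRR$ is an integral domain as a subring of the Hahn field $K((\Rb))$, whence $B_{\maxdeg(B)} \cdot C_{\maxdeg(C)} = \rv(bc) \neq \rv(0)$, and similarly for the bottom component.

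Finally, since $BC \in \RV^*$ has at most one non-zero homogeneous component, the two degrees computed above must coincide, i.e.\ $\maxdeg(B) \oplus \maxdeg(C) = \mindeg(B) \oplus \mindeg(C)$. Combined with $\maxdeg(B) \geq \mindeg(B)$ and $\maxdeg(C) \geq \mindeg(C)$, one more invocation of strict monotonicity of $\oplus$ collapses this to $\maxdeg(B) = \mindeg(B)$ and $\maxdeg(C) = \mindeg(C)$, so $B$ and $C$ are each supported in a single degree, i.e.\ belong to $\RV^{*}$. I do not expect any genuine obstacle: this is the standard ``leading/trailing term'' argument from graded rings, and the only two prerequisites (strict monotonicity of $\oplus$ in each argument and the fact that $\KRR$ is a domain) are both either standard or immediate from what has already been established.
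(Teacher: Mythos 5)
Your proof is correct and follows essentially the same route as the paper's: define $\mindeg$ and $\maxdeg$, show via strict monotonicity of $\oplus$ that the top and bottom graded components of $BC$ are the products $B_{\maxdeg(B)}\cdot C_{\maxdeg(C)}$ and $B_{\mindeg(B)}\cdot C_{\mindeg(C)}$ (both non-zero), and then force $\mindeg = \maxdeg$ for each factor. The one place you are more explicit than the paper is in justifying that those extremal products are non-zero; the paper asserts this without comment, while you correctly reduce it to $\KRR$ being a domain and the definition of $\rv$.
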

\begin{proof}
  Given a non-zero $B = \sum_\alpha B_\alpha \in \sRV$, let $\mindeg(B)$ be the smallest $\alpha$ such that $B_\alpha \neq 0$. We also set $\mindeg(0) = +\infty$. Clearly, for $B \neq 0$, we have $\mindeg(B) \leq \deg(B)$ and $\mindeg(B) = \deg(B)$ if and only if $B \in \RV$. The reader can easily verify that $\mindeg(B + C) \geq \min\{\mindeg(B), \mindeg(C)\}$ for all $B, C$ and $\deg^{-}(B) = +\infty$ if and only if $B = 0$.

  We claim that for all non-zero $B, C \in \sRV$, $\mindeg(B \cdot C) = \mindeg(B) \oplus \mindeg(C)$. Let $B, C \in \sRV$ be non-zero, and write $B \cdot C = \sum_\alpha D_\alpha$. By definition,
  \[ D_\alpha = \left(\sum_{\beta \oplus \gamma = \alpha} B_\beta \cdot C_\gamma\right). \]
  It follows at once that $D_\alpha = 0$ for all $\alpha < \mindeg(B) \oplus \mindeg(C)$, while $D_\alpha = B_\beta \cdot C_\gamma \neq 0$ for $\beta = \mindeg(B)$, $\gamma = \mindeg(C)$ and $\alpha = \beta \oplus \gamma$. Therefore, $\mindeg(B \cdot C) = \mindeg(B) \oplus \mindeg(C)$. In particular, $\mindeg$ is a valuation.

  In particular, if $B, C \in \sRV$ are non-zero and $B \cdot C \in \RV$, then $\mindeg(B \cdot C) = \mindeg(B) \oplus \mindeg(C) = \deg(B \cdot C) = \deg(B) \oplus \deg(C)$. It follows at once that $\mindeg(B) = \deg(B)$ and $\mindeg(C) = \deg(C)$, so $B, C \in \RV$.
\end{proof}

\begin{cor}
  \label{cor:BC-PRV-B-C-PRV}
  For all non-zero $B, C \in \sRV$, if $B \cdot C \in \PRV$ (or $\sPRV$), then $B, C \in \PRV$ (resp.\ $\sPRV$).
\end{cor}
\begin{proof}
  Recall that $\sRV = \sPRV(\Rb^{\leq 0})$ as explained after \prettyref{prop:sRV-tensor}. Then clearly if two non-zero $B, C \in \sRV$ are such that $B \cdot C \in \sPRV$, we must have $B, C \in \sPRV$. If moreover $B \cdot C \in \PRV \subseteq \RV$, then by \prettyref{prop:BC-RV-B-C-RV}, $B, C \in \RV \cap \sPRV = \PRV$.
\end{proof}

\begin{cor}
  For all $B, C \in \RV$, $B$ divides $C$ in the sense of \prettyref{def:div-RV} if and only if $B$ divides $C$ in the ring $\sRV$. In particular, for all $p, q \in \KR$, $p$ divides $q$ in $\sRV$ if and only if $p$ divides $q$ in $\KR$.
\end{cor}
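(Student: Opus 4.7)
The plan is to deduce this corollary directly from \prettyref{prop:BC-RV-B-C-RV}, essentially as a repackaging of that statement. The forward direction is immediate: if $B, C \in \RV$ and $C = B \cdot D$ holds for some $D \in \RV$, then since $\RV$ embeds into $\sRV$ by the notation established just after \prettyref{def:sRV}, the same equation $C = B \cdot D$ holds in $\sRV$, so $B \mid C$ in $\sRV$.

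For the backward direction, I would argue by cases. Suppose $B, C \in \RV$ and $C = B \cdot D$ for some $D \in \sRV$. If $B = \rv(0)$, then $C = \rv(0)$ and any $D \in \RV$ works, so in particular $D = \rv(0)$ witnesses divisibility in $\RV$. If $B \neq \rv(0)$ and $C = \rv(0)$, the multiplicativity of $\mindeg$ and $\maxdeg$ established inside the proof of \prettyref{prop:BC-RV-B-C-RV} shows that $\sRV$ is a domain, forcing $D = 0 \in \RV$. The remaining case is $B, C \in \RV^{*}$ and necessarily $D \in \sRV^{*}$; then $B \cdot D = C \in \RV^{*}$, and \prettyref{prop:BC-RV-B-C-RV} applied to the pair $B, D$ yields $D \in \RV^{*}$, so $B \mid C$ in $\RV$ in the sense of \prettyref{def:div-RV}.

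For the second assertion, I would combine the first part with the remark immediately following \prettyref{def:div-RV}, which already records that divisibility of $p, q \in \KR = \RV_0$ in the sense of \prettyref{def:div-RV} agrees with divisibility in the ring $\KR$. Since the first part of the corollary now identifies divisibility in $\sRV$ between elements of $\RV$ with divisibility in the sense of \prettyref{def:div-RV}, the chain of equivalences collapses to the stated statement for $p, q \in \KR$.

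I do not foresee any real obstacle: the only non-trivial content is the reduction of divisibility in the larger graded ring to divisibility inside a single homogeneous component, and this is exactly what \prettyref{prop:BC-RV-B-C-RV} provides. The only subtlety worth spelling out carefully is the zero cases, where one must note that the graded structure forces $\sRV$ to be a domain so that cancellation arguments behave as expected.
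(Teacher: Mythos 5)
Your proof is correct and takes essentially the same route the paper intends: the corollary is stated without proof immediately after \prettyref{prop:BC-RV-B-C-RV} precisely because it is a direct repackaging of that proposition, and your case analysis (handling the zero cases via the domain property implicit in the multiplicativity of $\mindeg$, and applying the proposition to $B, D$ in the main case) fills in exactly the intended argument.
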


We also observe the following.

\begin{prop}
  \label{prop:B-div-C-div-C-alpha}
  For all $B \in \RV$, $C = \sum_\alpha C_\alpha \in \sRV$, we have that $B \mid C$ if and only if $B \mid C_\alpha$ for all $\alpha \in \omega_1$.
\end{prop}
\begin{proof}
  Let $B \in \RV$, $C = \sum_\alpha C_\alpha \in \sRV$. Clearly, we may assume $B \neq 0$, so let $\beta = \deg(B)$.

  If $B \mid C$, then $C = B \cdot C'$ for some $C' = \sum_\alpha C_\alpha'$. By definition of product, $B \cdot C' = \sum_\alpha \sum_{\beta \oplus \gamma = \alpha} B \cdot C'$. Therefore, we have $C_{\alpha \oplus \beta} = B \cdot C_\alpha'$ for all $\alpha$, and $C_\gamma = 0$ for all ordinal $\gamma$ that cannot be written in the form $\alpha \oplus \beta$. In turn, $B \mid C_\alpha$ for all $\alpha$.

  Conversely, suppose $B \mid C_\alpha$ for all $\alpha$. Then for all $\alpha$ we must have $C_{\alpha \oplus \beta} = B \cdot C_\alpha'$ for some $C_\alpha' \in \RV_\alpha$, and if $\gamma$ is an ordinal not of the form $\gamma = \alpha \oplus \beta$, then $C_\gamma = 0$. It follows at once that $B \cdot \sum_\alpha C_\alpha' = C$, so $B \mid C$, as desired.
\end{proof}

\subsection{All series in \texorpdfstring{$\KR$}{K(ℝ≤0)} are primal in \texorpdfstring{$\KRR$}{K((ℝ≤0))}}

By \prettyref{fact:KG-GCD-domain}, every element of $\KR$ is primal in $\KR$. Recall that in a ring $R$, an element $p \in R$ is primal if whenever $p$ divides a product $bc$ of some $b,c \in R$, there are $p_1,p_2 \in R$ such that $p = p_1p_2$ and $p_1$ divides $b$, $p_2$ divides $c$. We shall now prove that the series in $\KR$ are primal in both $\sRV$ and $\KRR$.

\begin{lem}
  \label{lem:p-div-BC-div-B-RV}
  For all $B \in \RV$ and non-zero $C \in \PRV$ we have $\p(B \cdot C) = \p(B)$.
\end{lem}
\begin{proof}
  Let $p \in \KR$, $B \in \RV$, $C \in \PRV$ with $C \neq 0$. Clearly, we may assume $B \neq 0$. Let $\beta = \deg(B)$ and $\alpha = \deg(B \cdot C)$. As explained in \prettyref{rem:pB-computation}, we may compute $\p(B)$ and $\p(B \cdot C)$ as follows. We can find some $\Kbf$-linearly independent $D_1, \dots, D_k \in \PRV_\beta$ such that
  \[ B = D_1 \cdot q_1 + \dots + D_k \cdot q_k \]
  for some (unique) $q_1, \dots, q_k \in \KR$. Then $\p(B)$ is a greatest common divisor of $q_1, \dots, q_k$. We also have
  \[ B \cdot C = (D_1 \cdot C) \cdot q_1 + \dots + (D_k \cdot C) \cdot q_k. \]
  We now observe that $D_1 \cdot C, \dots, D_k \cdot C$ are $\Kbf$-linearly independent elements of $\PRV_\alpha$. Therefore, $\p(B \cdot C)$ is also a greatest common divisor of $q_1, \dots, q_k$, thus $\p(B \cdot C) = \p(B)$ by comparing the coefficients.
\end{proof}

\begin{lem}
  \label{lem:p-div-BC-div-B}
  For all $B \in \sRV$ and non-zero $C \in \sPRV$ we have $\p(B \cdot C) = \p(B)$.
\end{lem}
\begin{proof}
  Let $B \in \sRV$, $C \in \sPRV$ with $C \neq 0$. Clearly, we may assume $B \neq 0$. Write $B = \sum_\alpha B_\alpha$, $C = \sum_\alpha C_\alpha$. Let $\beta = \deg(B)$ and $\gamma = \deg(C)$. We shall prove by induction on $\beta$ that if $p \in \KR$ divides $B \cdot C$, then it also divides $B$. This implies the conclusion: we find that $p = \p(B \cdot C)$ divides $B$, thus it divides $\p(B)$, while on the other hand $\p(B) \mid \p(B \cdot C)$ by \prettyref{prop:pB-pC-div-pBC}, hence $\p(B \cdot C) = \p(B)$ by comparing the coefficients.

  Write $B \cdot C = \sum_\alpha D_\alpha$. By \prettyref{prop:B-div-C-div-C-alpha}, $p \mid D_{\beta \oplus \gamma} = B_\beta \cdot C_\gamma$, thus by \prettyref{lem:p-div-BC-div-B-RV}, $p \mid B_\beta$. Now let $B' = B - B_\beta$. If $B' = 0$, we are done. Otherwise, note that $p \mid B' \cdot C$ and $\deg(B') < \deg(B)$. By inductive hypothesis, $p \mid B'$, so $p \mid B$, as desired.
\end{proof}

\begin{lem}
  \label{lem:K-alg-cl-sPRV}
  $\Kbf$ is relatively algebraically closed in $\Frac(\sPRV)$, that is, every element of $\Frac(\sPRV)$ which is algebraic over $\Kbf$ is already in $\Kbf$.
\end{lem}
\begin{proof}
  Let $B, C \in \sPRV$ be non-zero elements such that $\frac{B}{C}$ is algebraic over $\Kbf$, namely there is a polynomial $Q(X) = X^d + k_{d-1}X^{d-1} + \dots + k_0 \in \Kbf[X]$ with $d > 0$ such that
  \[ Q\left(\frac{B}{C}\right) = \left(\frac{B}{C}\right)^d + k_{d-1}\left(\frac{B}{C}\right)^{d-1} +\dots + k_0 = 0. \]
  Assume that $d$ is minimal with this property. In particular, $Q(X)$ is irreducible in $\Kbf[X]$. Now rewrite the above equation as
  \[ B^d + k_{d-1}B^{d-1}C + \dots + k_1BC^{d-1} + k_0C^d = 0. \]
  Since the degree is a multiplicative valuation, two distinct non-zero terms of the above sum must have the same degree, that is, $e\deg(B) \oplus f\deg(C) = e'\deg(B) \oplus f'\deg(C)$ for some natural numbers $e,f,e',f'$ such that $e + f = e' + f' = d$ and $e \neq e'$. Since the Hessenberg sum is cancellative and torsion-free, it follows at once that $\deg(B) = \deg(C) = \beta$ for some $\beta \in \omega_1$.

  Now write $B = \sum_\alpha B_\alpha$, $C = \sum_\alpha C_\alpha$. Since $\deg(B - B_\beta), \deg(C - C_\beta) < \beta$, we must have
  \[ B_\beta^d + k_{d-1} B_\beta^{d-1} C_\beta + \dots + k_1 B_\beta C_\beta^{d-1} + k_0 C_\beta^d = 0. \]
  Let $b, c \in \KRR$ be series such that $\rv(b) = B_\beta$, $\rv(c) = C_\beta$. By \prettyref{lem:sum-rv-b1-bn}, the above equality implies that
  \[ \deg(b^d + k_{d-1}b^{d-1}c + \dots + k_1bc^{d-1} + k_0c^d) < d\beta. \]

   Now write $Q(X) = \prod_{i = 1}^d(X - \zeta_i)$ for some $\zeta_i$'s in the algebraic closure of $\Kbf$. Note that the definition of degree is independent of the field of the coefficients, so it can be naturally extended from $\KRR$ to $\Kbf^{\mathrm{alg}} \cdot \KRR = \Kbf^{\mathrm{alg}}((\Rb^{\leq 0}))$ while remaining a multiplicative valuation, where $\Kbf^{\mathrm{alg}}$ is the algebraic closure of $\Kbf$. Therefore, there is some $i = 1, \dots, d$ such that $\deg(b - \zeta_i c) < \beta$.

  In particular, there exists at least one $x \in \supp(b) \cup \supp(c)$ such that the coefficient of $t^x$ in $b - \zeta_i c$ is $0$. If $b_x, c_x$ are the coefficients of $t^x$ in respectively $b$ and $c$, we must have $c_x \neq 0$ and $\zeta_i = \frac{b_x}{c_x}$. In particular, $\zeta_i \in \Kbf$. Since $Q(X)$ is monic irreducible in $\Kbf[X]$, we must have $Q(X) = X - \zeta_i$. In turn, $B = \zeta_iC$, so $\frac{B}{C} \in \Kbf$, as desired.
\end{proof}

\begin{lem}
  \label{lem:p-fact-sPRV-KR}
  For all non-zero $p_1, p_2 \in \Frac(\sPRV)(\Rb^{\leq 0})$, if $p_1p_2 \in \KR$, then there is some $B \in \Frac(\sPRV)$ such that $p_1 \cdot B \in \KR$, $p_2 \cdot B^{-1} \in \KR$.
\end{lem}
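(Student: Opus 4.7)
The plan is to reduce the factorisation to a Laurent polynomial ring over the field $L := \Frac(\sPRV)$, and then to apply a Gauss-type argument using that $K$ is relatively algebraically closed in $L$ by Lemma~\ref{lem:K-alg-cl-sPRV}. Since $p \in \KR$ and $p_1, p_2 \in \sPRV^{-1}\cdot\sRV = L(\Rb^{\leq 0})$ all have finite support, the subgroup $G \subseteq \Rb$ generated by $\supp(p) \cup \supp(p_1) \cup \supp(p_2)$ is free abelian of some finite rank $d$. Choosing a $\Zb$-basis identifies the group algebra $L[G]$ with the Laurent polynomial ring $L[T_1^{\pm 1}, \dots, T_d^{\pm 1}]$, which is a UFD with units $L^{*}\cdot T^{\Zb^d}$. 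The equation $p = p_1 p_2$ then holds in $L[G]$, with $p$ lying in the sub-UFD $K[G]$.

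The core step is to show there exist $c \in L^{*}$, $\alpha \in \Zb^d$ and $q_1 \in K[G]$ with $p_1 = c \cdot T^\alpha \cdot q_1$. I would prove this by combining unique factorisation in $K[G]$ with a Gauss-lemma argument: factor $p = u \pi_1 \cdots \pi_n$ in $K[G]$ with each $\pi_i$ irreducible, and show each $\pi_i$ remains irreducible in $L[G]$ up to a unit of $L[G]$. The essential input is Lemma~\ref{lem:K-alg-cl-sPRV}; concretely, after normalising one coefficient of a hypothetical $L[G]$-factor to $1$, the relation $p_1 p_2 = p$ produces polynomial identities over $K$ satisfied by the remaining coefficients, forcing those coefficients into $K$ by relative algebraic closedness. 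Comparing the unique factorisations of $p$ in $K[G]$ and in $L[G]$ then expresses $p_1$ as a product of some $\pi_i$'s times a unit of $L[G]$, yielding the claim.

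Setting $B := c^{-1}$, we get $p_1 B = T^\alpha q_1 \in K[G]$, and $\supp(p_1 B) = \supp(p_1) \subseteq \Rb^{\leq 0}$ gives $p_1 B \in \KR$. For the other factor, $p_2 B^{-1} = p/(p_1 B)$ lies in $L[G] \cap \Frac(K[G])$, and this intersection equals $K[G]$ by linear disjointness of $L$ and $\Frac(K[G])$ over $K$ (a consequence of $K[G]$ being free as a $K$-module); combined with $\supp(p_2 B^{-1}) \subseteq \Rb^{\leq 0}$, this gives $p_2 B^{-1} \in \KR$. The main obstacle is the Gauss-lemma step in the second paragraph: the classical proof that $K$-irreducibles remain $L$-irreducible needs $L/K$ to be separable in addition to being relatively algebraically closed, so some extra care will be required in positive characteristic, either by verifying separability for the specific ring $\sPRV$ or by carrying out the coefficient manipulation sketched above directly, bypassing the general regularity hypothesis.
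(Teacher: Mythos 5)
Your proposal is essentially the paper's argument: reduce the factorisation $p = p_1 p_2$ to a Laurent/polynomial ring over $L := \Frac(\sPRV)$, factor $p$ into irreducibles over $K$, and use Lemma~\ref{lem:K-alg-cl-sPRV} to show these irreducibles remain irreducible over $L$, so that $p_1$ is a subproduct times a unit of the larger ring. The only cosmetic difference is the choice of coordinates: you pass to the Laurent ring $L[T_1^{\pm 1},\dots,T_d^{\pm 1}]$ over the \emph{group} generated by the supports, whereas the paper chooses $\Zb$-linearly independent \emph{negative} reals $x_1,\dots,x_n$ and works in $L(H)\cong L[X_1,\dots,X_n]$ with $H = \Nb x_1 + \dots + \Nb x_n$, so the unit group is simply $L^{*}$ and no monomial bookkeeping is needed (and the containment in $\KR$ is automatic). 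Your linear-disjointness argument for $p_2 B^{-1}$ is correct and is what the paper's ``it follows at once that $B\cdot C\in K$'' amounts to.

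The separability worry you raise at the end is a false alarm, though. The transfer of irreducibility from $K[X_1,\dots,X_n]$ to $L[X_1,\dots,X_n]$ requires only that $K$ be relatively algebraically closed in $L$; regularity of $L/K$ is not needed. Indeed, relative algebraic closedness is preserved under adjoining a transcendental: one checks that the integral closure of $K[X]$ in $L[X]$ is $K[X]$ (e.g.\ by specialising $X$ to $n+1$ values in $K$ and inverting a Vandermonde matrix to see each coefficient is algebraic over $K$ and lies in $L$, hence in $K$; if $K$ is finite it is perfect and regularity holds automatically). By induction $K(X_1,\dots,X_{n-1})$ is relatively algebraically closed in $L(X_1,\dots,X_{n-1})$, and Gauss's lemma reduces to the univariate case, where a monic factor $g$ of an irreducible $f \in K'[Y]$ in $L'[Y]$ has coefficients that are elementary symmetric functions of roots of $f$, hence algebraic over $K'$, hence in $K'$ — no separability enters. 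Regularity is only needed for the stronger geometric statement that base change to $\overline{K}$ stays integral; here one only base changes to the single field $L$ in which $K$ is relatively algebraically closed.
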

\begin{proof}
  Let $p_1, p_2 \in \Frac(\sPRV)(\Rb^{\leq 0})$ be non-zero with $p_1p_2 \in \KR$. For the sake of notation, let $\Lbf = \Frac(\sPRV)$.

  There is a finite set of negative real numbers $x_1, \dots, x_n$ which are $\Zb$-linearly independent, and such that $p_1, p_2 \in \Lbf(H)$, where $H = \Nb x_1 + \dots + \Nb x_n$. Therefore, $\Kbf(H) \cong \Kbf[X_1, \dots, X_n]$ and $\Lbf(H) \cong \Lbf[X_1, \dots, X_n]$, with isomorphisms sending $t^{x_i}$ to the variable $X_i$ (as in \prettyref{fact:KS-is-UFD}). In particular, $\Kbf(H)$ and $\Lbf(H)$ are unique factorisation domains, with groups of units consisting of the non-zero elements of respectively $\Kbf$ and $\Lbf$. Moreover, since $\Kbf$ is relatively algebraically closed in $\Lbf$ by \prettyref{lem:K-alg-cl-sPRV}, each irreducible element of $\Kbf(H)$ remains irreducible in $\Lbf(H)$.

  Let us write $p_1p_2 = q_1 \cdots q_m$ where $q_1, \dots, q_m$ are irreducible elements of $\Kbf(H)$. Thus $p_1$ is a product of some of the factors $q_1, \dots, q_m$ and some unit element $B^{-1} \in \Lbf(H)$. Since the units of $\Lbf(H)$ are the non-zero elements of $\Lbf$, we find that $p_1 \cdot B \in \KR$ for some $B \in \Lbf = \Frac{\sPRV}$. Likewise, $p_2 \cdot C \in \KR$ for some $C \in \Lbf$.

  To conclude, note that $p_1p_2 \cdot B \cdot C \in \KR \subseteq \Lbf(\Rb^{\leq 0})$, hence $B \cdot C \in \Kbf$. It follows that $C \in B^{-1} \cdot \Kbf$, so $p_2 \cdot B^{-1} \in \KR$, as desired.
\end{proof}

\begin{rem}
  By the identification $\sPRV^{-1} \cdot \sRV = \Frac(\sPRV)(\Rb^{\leq 0})$ (\prettyref{rem:frac-sprv-GCD}), \prettyref{lem:p-fact-sPRV-KR} says in particular that for all $p, q \in \KR$, $p$ divides $q$ in the ring $\sPRV^{-1} \cdot \sRV$ if and only if $p$ divides $q$ in the ring $\KR$. Indeed, if $q = p \cdot q'$ for some $q' \in \sPRV^{-1} \cdot \sRV$, then for some $B \in \Frac(\sPRV)$ we have $p \cdot B \in \KR$, $q' \cdot B^{-1} \in \KR$. But then $B \in \Kbf$, so in particular $q' \in \KR$, hence $p$ divides $q$ in $\KR$.
\end{rem}

\begin{cor}
  \label{cor:p-primal-sRV}
  Every $p \in \KR$ is primal in $\sRV$.
\end{cor}
\begin{proof}
  Let $p \in \KR$, and suppose that $p$ divides $B \cdot C$ for some $B, C \in \sRV$. Since $\sPRV^{-1} \cdot \sRV = \Frac(\sPRV)(\Rb^{\leq 0})$ is a GCD domain, we know that there are $p_1, p_2, B', C' \in \sPRV^{-1} \cdot \sRV$ such that $p = p_1p_2$ and $B = p_1 \cdot B'$, $C = p_2 \cdot C'$. By \prettyref{lem:p-fact-sPRV-KR}, we may further assume that $p_1, p_2 \in \KR$.

  Now write $B'$, $C'$ as fractions $B' = \frac{M}{D}$, $C' = \frac{N}{E}$ for some $M, N \in \sRV$ and non-zero $D, E \in \sPRV$. Then $p_1 \mid B \cdot D$, $p_2 \mid C \cdot E$. Since $D, E \in \sPRV$, it follows by \prettyref{lem:p-div-BC-div-B} that $p_1 \mid B$, $p_2 \mid C$, showing that $p$ is primal, as desired.
\end{proof}

\begin{cor}
  \label{cor:pBC-pB-pC}
  For all $B, C \in \sRV$, $\p(BC) = \p(B)\p(C)$.
\end{cor}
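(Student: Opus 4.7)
The plan is to establish mutual divisibility between $p(BC)$ and $p(B)p(C)$ in the GCD-domain $\KR$, and then to conclude equality using the normalisation convention introduced in \prettyref{nota:pb}.

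The easy direction is $p(B)p(C) \mid p(BC)$. Indeed, by definition we can write $B = p(B) \cdot B'$ and $C = p(C) \cdot C'$ for some $B', C' \in \RV$, whence $BC = p(B)p(C) \cdot (B'C')$, so $p(B)p(C) \mid BC$ in $\sRV$. Since $p(B)p(C) \in \KR$, the defining property of $p(BC)$ from \prettyref{prop:pb} forces $p(B)p(C) \mid p(BC)$. This is essentially the content of \prettyref{prop:pb-pc-div-pbc} transposed to the $\RV$ setting.

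The reverse divisibility $p(BC) \mid p(B)p(C)$ is where the work done in the previous subsection pays off. By definition, $p(BC) \mid BC$ in $\sRV$, and $p(BC) \in \KR$ is primal in $\sRV$ by \prettyref{prop:p-div-BC-p1-p2}. Hence we can write $p(BC) = p_1 p_2$ with $p_1 \mid B$ and $p_2 \mid C$ in $\sRV$; since $p(BC) \in \RV_0$, multiplicativity of degree (together with \prettyref{prop:BC-RV-B-C-RV}) forces $\deg(p_1) = \deg(p_2) = 0$, i.e.\ $p_1, p_2 \in \KR$. The defining property of $p(B)$ and $p(C)$ then yields $p_1 \mid p(B)$ and $p_2 \mid p(C)$, and therefore $p(BC) = p_1 p_2 \mid p(B)p(C)$.

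Combining the two divisibilities, $p(BC)$ and $p(B)p(C)$ are associates in $\KR$, i.e.\ they differ by a unit in $K^*$. The normalisation fixed in \prettyref{nota:pb} says that both have leading coefficient $1$ at the top of their (finite) support; since the leading coefficient of a product in $\KR$ is the product of the leading coefficients, this unit must be $1$, giving $p(BC) = p(B)p(C)$. The only genuinely non-trivial ingredient is the primality step, but this has already been established, so the corollary is essentially a matter of packaging.
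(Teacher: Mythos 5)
Your proof is correct and follows essentially the same route as the paper: mutual divisibility in the GCD domain $\KR$ established via the primality result \prettyref{prop:p-div-BC-p1-p2}, followed by the normalisation of leading coefficients fixed in \prettyref{nota:pb}. The only minor variation is that you derive $p_1, p_2 \in \KR$ explicitly via multiplicativity of $\deg$ and \prettyref{prop:BC-RV-B-C-RV}, whereas the paper tacitly invokes the slightly stronger fact (visible in the proof of \prettyref{prop:p-div-BC-p1-p2}, though not in its statement) that the primal decomposition can be chosen inside $\KR$ to begin with; both justifications are sound.
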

\begin{proof}
  Let $B, C \in \sRV$. We already know that $\p(B)\p(C) \mid \p(BC)$ from \prettyref{prop:pB-pC-div-pBC}. We claim that $\p(BC) \mid \p(B)\p(C)$. Recall that by definition $\p(BC) \mid BC$, so by \prettyref{cor:p-primal-sRV} we can write $\p(BC) = p_1p_2$ for some $p_1, p_2 \in \KR$ such that $p_1 \mid B$, $p_2 \mid C$. But then $p_1 \mid \p(B)$, $p_2 \mid \p(C)$, so $\p(BC) \mid \p(B)\p(C)$. Therefore, $\p(BC) = k\p(B)\p(C)$ for some non-zero $k \in \Kbf$. By comparing the coefficients we deduce that $\p(BC) = \p(B)\p(C)$, as desired.
\end{proof}

\begin{prop}
  \label{prop:pbc-pb-pc}
  For all $b, c \in \KRR$, $\p(bc) = \p(b)\p(c)$.
\end{prop}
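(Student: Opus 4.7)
The direction $p(b)p(c) \mid p(bc)$ is \prettyref{prop:pb-pc-div-pbc}, so my plan is to establish the reverse divisibility $p(bc) \mid p(b)p(c)$. Mirroring \prettyref{cor:pBC-pB-pC}, it suffices to prove that every $p \in \KR$ is \emph{primal} in $\KRR$: whenever $p \mid bc$ in $\KRR$, one can write $p = p_1 p_2$ with $p_1 \mid b$ and $p_2 \mid c$ in $\KRR$. Indeed, given primality, each $q \in \KR$ dividing $bc$ in $\KRR$ admits a factorisation $q = q_1 q_2$ with $q_1 \mid b$ and $q_2 \mid c$, whence $q_1 \mid p(b)$ and $q_2 \mid p(c)$ by the universal property of $p(b), p(c)$, so $q \mid p(b)p(c)$; applying this to $q = p(bc)$ yields $p(bc) \mid p(b)p(c)$, and the normalisation convention of \prettyref{nota:pb} converts this to strict equality.

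To prove primality of $p \in \KR$ in $\KRR$, I would induct on $\deg(b) \oplus \deg(c)$. The base case $\deg(b) = \deg(c) = 0$ reduces to $b, c \in \KR$, where primality holds since $\KR$ is a GCD, hence pre-Schreier, domain. For the inductive step, invoke \prettyref{prop:p-div-BC-p1-p2} at the $\sRV$-level: from $p \mid \rv(b) \cdot \rv(c)$ one obtains $p = q_1 q_2$ with $q_1, q_2 \in \KR$, $q_1 \mid \rv(b)$, $q_2 \mid \rv(c)$ in $\sRV$, and by \prettyref{prop:BC-RV-B-C-RV} the divisibility holds already in $\RV$. Lift by choosing $b_1, c_1 \in \KRR$ with $\rv(b_1) = \rv(b)/q_1$ and $\rv(c_1) = \rv(c)/q_2$, so that $b = q_1 b_1 + \tilde{b}$ and $c = q_2 c_1 + \tilde{c}$ with $\deg(\tilde{b}) < \deg(b)$ and $\deg(\tilde{c}) < \deg(c)$. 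Expanding $bc = q_1 q_2 b_1 c_1 + q_1 b_1 \tilde{c} + q_2 c_1 \tilde{b} + \tilde{b}\tilde{c}$ and using $p = q_1 q_2 \mid bc$ gives $q_1 \mid q_2 c_1 \tilde{b} + \tilde{b}\tilde{c} = \tilde{b} c$, and symmetrically $q_2 \mid b \tilde{c}$. Since $\deg(\tilde{b}) \oplus \deg(c)$ and $\deg(b) \oplus \deg(\tilde{c})$ are both strictly less than $\deg(b) \oplus \deg(c)$, the inductive hypothesis supplies $q_1 = q_{11} q_{12}$, $q_2 = q_{21} q_{22}$ with $q_{11} \mid \tilde{b}$, $q_{12} \mid c$, $q_{21} \mid b$, $q_{22} \mid \tilde{c}$ in $\KRR$. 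Then $q_{11} \mid b$ (since it divides both $q_1 b_1$ and $\tilde{b}$, hence their sum $b$) and $q_{22} \mid c$ analogously.

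The main obstacle is assembling the resulting four-factor refinement $p = q_{11} q_{12} q_{21} q_{22}$ into a two-factor decomposition $p = p_1 p_2$ with $p_1 \mid b$ and $p_2 \mid c$: the naive choice $p_1 = q_{11} q_{21}$ requires the product of two divisors of $b$ again to divide $b$, which is not automatic without unique factorisation. To handle this, I would impose a coprimality condition on the $\sRV$-factorisation by taking $q_1 = \gcd_{\KR}(p, p(\rv(b)))$, so that $\gcd_{\KR}(q_1, q_2) = 1$, and propagate the coprimality through the inductive subfactorisations using the GCD structure of $\KR$, as was done in \prettyref{lem:p-fact-sPRV-KR} and \prettyref{prop:p-div-BC-p1-p2}. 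A cleaner equivalent route is to reduce the proposition to the Gauss-type ``primitivity preservation'' statement that $p(\hat{b}\hat{c}) = 1$ whenever $p(\hat{b}) = p(\hat{c}) = 1$, where $\hat{b} = b/p(b)$ and $\hat{c} = c/p(c)$: the ratio $p(bc)/(p(b)p(c)) \in \KR$ — well-defined by \prettyref{prop:pb-pc-div-pbc} — divides $\hat{b}\hat{c}$ in $\KRR$ (since $bc = p(b)p(c)\hat{b}\hat{c}$), and so must be a unit; the primitivity statement itself admits the same $\sRV$-primality induction, with the recombination step trivialised because the target divisor is $1$.
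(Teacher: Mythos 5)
Your final ``cleaner equivalent route'' is the right one and is essentially the paper's proof: reduce to $p(b)=p(c)=1$, show $p(bc) \in K^{*}$, and run an $\sRV$-based induction on degree. You also correctly identify why the first plan (proving that every $p \in \KR$ is primal in $\KRR$ by a direct induction) does not close: with four pieces $q_{11}q_{12}q_{21}q_{22}$ one cannot recombine $q_{11}q_{21}$ into a divisor of $b$ without unique factorisation, which $\KRR$ does not have. Note also that the reduction in your first paragraph inverts the paper's logical order -- primality of $\KR$ in $\KRR$ is Corollary~\ref{cor:KR-primal-KRR}, derived \emph{from} this proposition -- so one would need an independent primality proof, which is exactly where the obstacle bites.

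The one caveat in the cleaner route is the claim that ``the primitivity statement itself admits the same $\sRV$-primality induction, with the recombination step trivialised.'' To close the lifting step you need the \emph{full multiplicativity} statement at lower degree, not merely primitivity. Concretely, having produced $q_{1} \mid \tilde{b}c$ with $q_{1} \in \KR$, the universal property gives $q_{1} \mid p(\tilde{b}c)$, and to get $q_{1} \mid \tilde{b}$ (and hence $q_{1} \mid b$, so $q_{1} \mid p(b) = 1$) you need $p(\tilde{b}c) = p(\tilde{b})p(c) = p(\tilde{b})$; but $p(\tilde{b})$ need not be $1$, and $\KRR$ has no Euclid-type lemma letting you strip $p(\tilde{b})$ off. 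Since you have already shown primitivity is equivalent to multiplicativity (given Proposition~\ref{prop:pb-pc-div-pbc} and the normalisation), this is easily repaired by taking the inductive hypothesis to be the proposition itself at lower degree; that is exactly what the paper does. Beyond this, the paper's mechanics differ slightly at the concluding step: rather than factoring $p(bc) = q_{1}q_{2}$ through $\sRV$-primality directly, it shows $p(bc)$ is coprime in $\KR$ to both $p(\rv(b))$ and $p(\rv(c))$ via $\gcd$ and the same truncation $b = p(\rv(b))b' + d$, and then concludes from $p(bc) \mid p(\rv(bc)) = p(\rv(b))p(\rv(c))$ (Corollary~\ref{cor:pBC-pB-pC}) that $p(bc)$ is a unit. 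Both routes ultimately rest on the same $\sRV$-primality result, so this is a matter of packaging rather than substance.
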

\begin{proof}
  Let $b, c \in \KRR$. We reason by induction on $\deg(b)$ and $\deg(c)$. We already know that $\p(b)\p(c) \mid \p(bc)$. We claim that $\p(bc) \mid \p(b)\p(c)$. After dividing $b$ and $c$ by $\p(b)$ and $\p(c)$, we may assume that $\p(b) = \p(c) = 1$, so our claim reduces to proving that $\p(bc) \in \Kbf$.

  Let $q$ be a greatest common divisor between $\p(bc)$ and $\p(\rv(b))$. By definition of $\p(\rv(b))$, we can write $b = \p(\rv(b))b' + d$ where $b', d \in \KRR$ are such that $\deg(d) < \deg(b)$. Since $q \mid \p(bc) \mid bc$ and $q \mid \p(\rv(b))$, we must have $q \mid dc$, hence $q \mid \p(dc)$. By inductive hypothesis, $q \mid \p(dc) = \p(d)\p(c) = \p(d)$. Therefore, $q \mid d$. In turn, $q \mid b$, which means that $q \mid \p(b) = 1$, so $q \in \Kbf$. Therefore, $\p(bc)$ and $\p(\rv(b))$ are coprime.

  By symmetry, $\p(bc)$ and $\p(\rv(c))$ are also coprime. On the other hand, $\p(bc) \mid \p(\rv(bc))$, and $\p(\rv(bc)) = \p(\rv(b))\p(\rv(c))$ by \prettyref{cor:pBC-pB-pC}. Since $\p(bc)$ is coprime with both $\p(\rv(b))$ and $\p(\rv(c))$, we must have $\p(bc) \in \Kbf$, proving the claim.

  Therefore, for all $b, c \in \KRR$, $\p(bc) = k\p(b)\p(c)$ for some $k \in \Kbf$. By comparing the coefficients we deduce that $\p(bc) = \p(b)\p(c)$, as desired.
\end{proof}

\begin{cor}
  \label{cor:KR-primal-KRR}
  Every $p \in \KR$ is primal in $\KRR$.
\end{cor}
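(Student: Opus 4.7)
The plan is to combine \prettyref{prop:pbc-pb-pc} with the fact (already cited in the preliminaries) that $\KR$ is itself a GCD domain, hence a common refinement domain. All the substantive work has been done in the previous section; the corollary will follow by a short translation argument.

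First, I would set up the situation: take $p \in \KR$ and suppose $p \mid cd$ in $\KRR$ for some $c, d \in \KRR$. The case where $c$ or $d$ is zero is trivial (take $p_1 = p$, $p_2 = 1$, or vice versa), so assume $c, d \in \KRR^{*}$. By the defining property of $p(cd)$ in \prettyref{prop:pb-KRR} (the maximal divisor in $\KR$), the hypothesis $p \mid cd$ with $p \in \KR$ gives $p \mid p(cd)$ in $\KR$. Applying \prettyref{prop:pbc-pb-pc}, this becomes $p \mid p(c) \cdot p(d)$ in $\KR$.

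Next, I would invoke the GCD property of $\KR$: since $\KR$ is a GCD domain, it is a common refinement (pre-Schreier) domain, so $p$ is primal \emph{inside $\KR$}. Therefore we can factor $p = p_1 p_2$ with $p_1, p_2 \in \KR$ such that $p_1 \mid p(c)$ and $p_2 \mid p(d)$ in $\KR$. Finally, because $p(c) \mid c$ and $p(d) \mid d$ in $\KRR$ by the very definition of $p(\cdot)$, these divisibilities lift to $p_1 \mid c$ and $p_2 \mid d$ in $\KRR$, which is exactly primality of $p$ in $\KRR$.

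There is essentially no obstacle left: the heart of the argument was the multiplicativity $p(bc) = p(b) p(c)$, which reduces the statement of primality in the large ring $\KRR$ to primality in the small GCD-domain $\KR$. The only thing to be mildly careful about is that divisibility by elements of $\KR$ is the same whether interpreted in $\KR$ or in $\KRR$ at the relevant steps, but this is immediate from the definitions.
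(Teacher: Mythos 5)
Your proposal is correct and matches the paper's proof essentially verbatim: both reduce $p \mid cd$ to $p \mid p(c)p(d)$ via \prettyref{prop:pbc-pb-pc}, then invoke the pre-Schreier property of the GCD domain $\KR$ and lift the resulting factorisation back to $\KRR$ using $p(c) \mid c$, $p(d) \mid d$.
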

\begin{proof}
  Let $p \in \KR$ and $b, c \in \KRR$. Suppose that $p \mid bc$. By \prettyref{prop:pbc-pb-pc}, $p \mid \p(b)\p(c)$. Since $\KR$ is a GCD domain, there are $p_1, p_2 \in \KR$ such that $p = p_1p_2$ and $p_1 \mid \p(b) \mid b$, $p_2 \mid \p(c) \mid c$, thus $p$ is primal.
\end{proof}

\subsection{Uniqueness of the factor with finite support}

It now follows at once that if a series $b \in \KRR$ factors into a product of one series of finite support and other irreducible series of infinite support, the factor of finite support is unique up to multiplication by an element of $\Kbf$, which is the final ingredient towards proving \prettyref{main:KRR}.

\begin{thm}
  \label{thm:KRR-fact-unique}
  For all non-zero $b \in \KRR$, there exist $n \in \Nb$, irreducible series $c_1, \dots, c_n \in \KRR$ with infinite support, and $k \in \Kbf$ such that
  \[ b = k \p(b) c_1 \cdots c_n. \]
  Moreover, $n$ is at most the number of terms in the Cantor Normal Form of $\deg(b)$, and $p$ is unique up to multiplication by elements of $\Kbf$.
\end{thm}
\begin{proof}
  Let $b \in \KRR$. The existence of the desired factorisation and the bound on $n$ are the conclusions of \prettyref{prop:KRR-fact-exists}, so we only need to check for uniqueness.

  Suppose that $b = pc_1 \cdots c_n$ for some $c_1, \dots, c_n \in \KRR$ irreducible with infinite support and $p \in \KR$. Since each $c_i$ is irreducible, we have $\p(c_i) = 1$. Moreover, $\p(p) = kp$ for some $k \in \Kbf$. Therefore,
  \[ \p(b) = \p(p)\p(c_1) \cdots \p(c_n) = \p(p) = kp, \]
  and the conclusion follows.
\end{proof}

\begin{proof}[Proof of \prettyref{main:KRR}]
  Simply combine \prettyref{thm:KRR-fact-unique} with \prettyref{prop:ritt}.
\end{proof}

\begin{cor}
  \label{cor:KRR-pS-equiv}
  The following are equivalent:
  \begin{enumerate}
      \item\label{item:KRR-pS?} $\KRR$ is a pre-Schreier domain;
      \item\label{item:KRR-GCD?} $\KRR$ is a a GCD domain;
      \item\label{item:KRR-irred?} in $\KRR$, every irreducible series with infinite support is prime.
  \end{enumerate}
\end{cor}
\begin{proof}
  \prettyref{item:KRR-GCD?} $\Rightarrow$ \prettyref{item:KRR-pS?} is \prettyref{fact:GCD-is-pre-Schreier}. \prettyref{item:KRR-pS?} $\Rightarrow$ \prettyref{item:KRR-irred?} is clear: if $\KRR$ is a pre-Schreier domain, every element is primal, and so every irreducible series is prime.

  Assume now \prettyref{item:KRR-irred?} Then for every $b \in \KRR$ the factorisation $b = pc_1 \cdots c_n$ of \prettyref{thm:KRR-fact-unique} is unique up to reordering the factors and to multiplication by elements of $\Kbf$. It follows that a series $d \in \KRR$ divides $b$ if and only if it is a product of some of factors $c_1, \dots, c_n$, a factor of $p$, and some non-zero element of $\Kbf$. Therefore, given two series $b, c \in \KRR$, their greatest common divisor is a greatest common divisor of $\p(b)$ and $\p(c)$, multiplied by the irreducible series with infinite support that appear in both factorisations up to multiplication by elements of $\Kbf$. Thus $\KRR$ is a GCD domain, proving \prettyref{item:KRR-irred?} $\Rightarrow$ \prettyref{item:KRR-GCD?}.
\end{proof}

\subsection{The non-complete case}
\label{sub:non-complete}
To find a general factorisation theorem for series beyond $\KRR$, we shall encounter rings of the form $\KHH$ where $H$ is a divisible subgroup of $\Rb$, which may or may not be all of $\Rb$; equivalently, $H$ is a divisible Archimedean group, possibly non-complete. When $H \neq \Rb$, the conclusion of \prettyref{thm:KRR-fact-unique} is false if rephrased in $\KHH$, and actually $\KHH$ is \emph{not} a pre-Schreier domain (see \prettyref{prop:G-not-complete-not-pS}).

We may recover a factorisation theorem by weakening the notion of irreducibility: we shall say that a series $b \in \KHH$ is \textbf{almost irreducible} if for every divisor $c \in \KHH$ of $b$, if $c$ is not a monomial, then $\frac{b}{c}$ is.

\begin{rem}
  \label{rem:almost-irred}
  Clearly, all irreducible series are almost irreducible. Moreover, if $b$ is almost irreducible and $\sup(b) = 0$, then $b$ is irreducible, since its only monomial divisors are elements of $\Kbf$. Conversely, if $\sup(b) < 0$, then $b$ is not irreducible: since $H$ is divisible, there is some $x \in H$ such that $\sup(b) < x < 0$, and so $t^x$ is a non-trivial divisor of $b$.
\end{rem}

\begin{lem}
  \label{lem:p-KR-pH-KH}
  For all $p \in \KR$, there exists a unique $p_H \in 1 + \Kbf(H^{<0})$ such that for all $q \in 1 + \Kbf(H^{<0})$, $q$ divides $p$ if and only if $q$ divides $p_H$.
\end{lem}
\begin{proof}
  Clearly, we may assume that $p \neq 0$. Apply \prettyref{prop:ritt} to $p$ and write $p = t^xc_1 \cdots c_n$, where for each $i$, $0 \in \supp(c_i)$, and $\lspan{\supp(c_i)}$ has dimension $1$ or $c_i$ is irreducible. Let
  \[ S = \{i \in \{1,\dots,n\} : \lspan{\supp(c_i)} \subseteq H\} \]
  and let $p_H = k \prod_{i \in S} c_i$, where $k$ is chosen so that $p_H \in 1 + \Kbf(H^{<0})$.

  Suppose that $q \in 1 + \Kbf(H^{<0})$ divides $p$. Since $\KH$ is a GCD domain, we can write $q = q_1q_2$ where $q_1 \in \KH$ is a greatest common divisor of $q$ and $p_H$ and $q_2 = \frac{q}{q_1} \in \KH$ has no common divisor with $p_H$. In particular, $q_2$ divides $\frac{p}{p_H} = t^x\prod_{i \notin S}c_i$. Since $\sup(q_2) = 0$, $q_2$ divides $\prod_{i \notin S}c_i$. Since $\KR$ is a GCD domain, we may write $q_2 = \prod_{i \notin S}r_i$, for $r_i \in \KR$, where each $r_i$ divides the corresponding $c_i$. Note in particular that $\sup(r_i) = 0$ for every $i \notin S$.

  By \prettyref{lem:KG-supp(bc)}, the support of $r_i$ is contained in $H$ (because $r_i$ divides $q$) and in $\lspan{\supp(c_i)}$ (because $r_i$ divides $c_i$) for every $i \notin S$. If $c_i$ is irreducible, then $\supp(r_i) = \supp(c_i)$ or $\supp(r_i) = \{0\}$, because $r_i = kc_i$ or $r_i = k$ for some $k \in \Kbf \setminus \{0\}$. If $c_i$ is not irreducible, then $\lspan{\supp(r_i)} = \lspan{\supp(c_i)}$ or $\supp(r_i) = \{0\}$ by dimension considerations. In both cases, since $\supp(c_i) \nsubseteq H$, we must have $\supp(r_i) = \{0\}$ for $i \notin S$, and therefore $q_2 \in \Kbf$. It follows that $q$ divides $p_H$.

  Finally, if $p'$ is another series in $1 + \Kbf(H^{<0})$ satisfying the conclusion, then by construction $p'$ divides $p_H$ and $p_H$ divides $p'$, thus $p' = kp_H$ for some $k \in \Kbf$. By comparing the coefficients, we must have $k = 1$, thus $p' = p_H$.
\end{proof}

\begin{cor}
  For all $b \in \KRR$ and $q \in 1 + \Kbf(H^{<0})$, $q$ divides $b$ if and only if $q$ divides ${\p(b)}_H$.
\end{cor}

\begin{cor}
  \label{cor:pHb-pHc-div-pHbc}
  For all $p, q \in \KR$, $(pq)_H = p_Hq_H$.
\end{cor}
\begin{proof}
  Let $p, q \in \KR$, and $r \in \KH$. We first observe that $p_Hq_H$ divides $pq$, so $p_Hq_H$ divides $(pq)_H$. For the converse, since $\KH$ is a GCD domain, we can write $(pq)_H = r_1r_2$ for some $r_1, r_2 \in \KH$ such that $r_1$ divides $p$ and $r_2$ divides $q$ in $\KH$. Clearly, we may also assume $r_1, r_2 \in 1 + \Kbf(H^{<0})$. Then $r_1$ divides $p_H$ and $r_2$ divides $q_H$, so $(pq)_H$ divides $p_Hq_H$. By comparing the coefficients, we find that $(pq)_H = p_Hq_H$, as desired.
\end{proof}

\begin{cor}
  For all $b, c \in \KRR$, $\p(bc)_H = {\p(b)}_H{\p(c)}_H$.
\end{cor}

\begin{cor}
  \label{cor:KH-primal-KHH}
  Every $p \in 1 + \Kbf(H^{<0})$ is primal in $\KHH$.
\end{cor}
\begin{proof}
  Let $p \in 1 + \Kbf(H^{<0})$ and $b, c \in \KHH$, and suppose that $p$ divides $bc$. By \prettyref{cor:pHb-pHc-div-pHbc}, $p \mid bc$ if and only if $p \mid {\p(b)}_H{\p(c)}_H$. Since $\KH$ is a GCD domain, there are $p_1, p_2 \in \KH$ such that $p' = p_1p_2$ and $p_1 \mid {\p(b)}_H \mid b$, $p_2 \mid {\p(c)}_H \mid c$, showing that $p$ is primal in $\KHH$. Since every element of $t^H\KHH$ is the product of an element of $\KHH$ and a unit of $t^H\KHH$, it easily follows that $p$ is primal in $t^H\KHH$ as well.
\end{proof}

\begin{thm}
  \label{thm:KHH-fact-unique}
  Let $b \in \KHH$ with $b \neq 0$. Then there are $x \in H^{\leq 0}$, $n \in \Nb$, $c_1, \dots, c_n \in \KHH$, and a unique $p \in 1 + \Kbf(H^{<0})$ such that $b = pt^xc_1 \cdots c_n$, where each $c_i$ is almost irreducible with infinite support.

  If moreover $\sup(b) \in H$, then we may take $c_1, \dots, c_n$ irreducible, in which case $x$ is unique (and equal to $\sup(b)$).
\end{thm}
\begin{proof}
  Let $b \in \KHH$ be a non-zero series. We proceed as in the proof of \prettyref{prop:KRR-fact-exists}. Let $b' = \frac{b}{{\p(b)}_H} \in \KHH$. Note that $\p(b')_H$ is necessarily $1$. We work by induction on $\deg(b)$. If $\deg(b) = 0$, then $b'$ is of the form $kt^x$ for some $k \in \Kbf$ and $x \in H$, hence it is a unit, and we are done.

  Assume $\deg(b) > 0$. If $b'$ is almost irreducible, we are done. Otherwise, $b' = cd$ for some $c, d \in \KHH$ not of the form $kt^x$. Since $\p(b')_H = 1$, $c, d$ are not in $\KH$, hence $\deg(c), \deg(d) > 0$. Since $\deg(b') = \deg(c) \oplus \deg(d)$, it follows that $\deg(c) < \alpha$, $\deg(d) < \alpha$. Note moreover that $\p(c)_H = \p(d)_H = 1$ by \prettyref{cor:pHb-pHc-div-pHbc}.

  By inductive hypothesis, $c$ and $d$ can be written as products of almost irreducible series with infinite support and some $t^{x'}$ with $x' \in H^{\leq 0}$. Therefore, $b'$ is also a product of the same form, as desired.

  For the uniqueness of the factor $p$, suppose that $b = pt^xc_1 \cdots c_n$ is a factorisation of $b$ as in the conclusion. It then suffices to note that
  \[ \p(b)_H = \p(p)_H \p(t^x)_H \p(c_1)_H \cdots \p(c_n)_H = \p(p)_H = p. \]

  Finally, suppose $\sup(b) \in H$ and let $x = \sup(b)$. We then have $b' \coloneqq \frac{b}{\p(b)_H t^x} \in \KHH$. We apply the previous conclusion to $b'$ and find a factorisation $b' = c_1 \cdots c_n$. Since $\sup(b')  = \sup(c_1) + \dots + \sup(c_n) = 0$, we have $\sup(c_1) = \dots = \sup(c_n) = 0$, hence $c_1, \dots, c_n$ are irreducible in $\KHH$. If $b = p' t^y c_1' \cdots c_n'$ is another factorisation of the same form, we have already observed that $p' = \p(b)_H$, and since the series $c_i'$ are irreducible, we have $\sup(c_i') = 0$, hence $x = \sup(b)$.
\end{proof}

Note that \prettyref{thm:KRR-fact-unique} is the special case of the above statement in which $\sup(b) \in H = \Rb$ for every $b \in \KHH$, because $\Rb$ is complete.

\section{New irreducibles and new primes}
\label{sec:new-irreducibles-primes}

As implicitly hinted by the statement of \prettyref{cor:KRR-pS-equiv}, it is still an open question whether all irreducible series of $\KRR$ are prime. In fact, it is not even clear how many series are irreducible in the first place. As explained in \prettyref{sub:previous-results}, \cite{Ber2000} only implies the existence of irreducible series of order types $\omega^{\omega^\alpha}$ and $\omega^{\omega^\alpha}+1$, and \cite{PS2006,LM2017} only find, with substantial effort, irreducible series of order types respectively $\omega^2,\omega^2+1$ and $\omega^3,\omega^3+1$. For primes, the situation is considerably less clear, and we only know of primes of order type $\omega, \omega+1$ from \cite{Pit2001}, plus all the irreducible series of $\KR$ by \prettyref{cor:KR-primal-KRR}.

In this section, we exploit the tools of the previous sections to find new classes of irreducibles and primes, and in particular prove Theorems~\ref{main:irreducible},~\ref{main:prime}.

\subsection{Finding new irreducibles}
\label{sub:criterion-irred}

Recall that Berarducci proved that every series $b \in \KRR$ whose order type $\ot(b)$ is multiplicatively principal, i.e.\ of the form $\omega^{\omega^\alpha}$, and with $\sup(b) = 0$ is irreducible \cite[Thm.\ 10.5]{Ber2000}. Here we prove a similar, but more general irreducibility criterion (\prettyref{main:irreducible}) and show some techniques produce new irreducibles.

\begin{lem}
  \label{lem:irreducibility}
  For all $b \in \KRR$, if $\frac{\rv(b)}{\p(\rv(b))}$ is irreducible and $\p(b) = 1$, then $b$ is irreducible.
\end{lem}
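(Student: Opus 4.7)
Suppose, toward a contradiction with ``$b$ is a unit'' not yielding irreducibility trivially, that $b = cd$ for some $c, d \in \KRR^{*}$. The goal is to show that one of $c, d$ lies in $K^{*}$. Applying $\rv$ gives $\rv(b) = \rv(c)\rv(d)$ in $\RV$, and by the multiplicativity of $p(\cdot)$ on $\RV$ (\prettyref{cor:pBC-pB-pC}) we have $p(\rv(b)) = p(\rv(c))\cdot p(\rv(d))$. Dividing, we obtain the factorisation
\[ \frac{\rv(b)}{p(\rv(b))} \;=\; \frac{\rv(c)}{p(\rv(c))} \cdot \frac{\rv(d)}{p(\rv(d))} \]
as an identity in $\RV \subseteq \sRV$ (each of the quotients makes sense in $\RV$ because $p(\rv(\cdot))$ divides $\rv(\cdot)$ by construction).

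The irreducibility hypothesis on the left-hand side, taken in $\sRV$, forces one of the two right-hand factors to be a unit of $\sRV$; say $\frac{\rv(c)}{p(\rv(c))}$. Here the key auxiliary fact is that the units of $\sRV$ are exactly $K^{*}$: from the two monotone valuations $\deg^{-}$ and $\deg^{+}$ on $\sRV$ introduced in the proof of \prettyref{prop:BC-RV-B-C-RV}, any $a, b \in \sRV$ with $ab = 1$ satisfy $\deg^{-}(a) \oplus \deg^{-}(b) = 0 = \deg^{+}(a) \oplus \deg^{+}(b)$, which (since $\alpha \oplus \beta = 0$ iff $\alpha = \beta = 0$) places $a$ and $b$ both in $\RV_0$, and the units of $\RV_0 \cong \KR$ are $K^{*}$. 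Consequently $\rv(c) = k \cdot p(\rv(c)) \in \RV_0$ for some $k \in K^{*}$; since $c \neq 0$ this means $\deg(c) = 0$, equivalently $\ot(c) < \omega$, so $c \in \KR$.

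It remains to exploit the second hypothesis $p(b) = 1$. By \prettyref{prop:pbc-pb-pc}, $1 = p(b) = p(c)\cdot p(d)$ in the integral domain $\KR$; combined with the leading-coefficient normalisation of \prettyref{nota:pb}, this forces $p(c) = p(d) = 1$. But $c$ lies in $\KR$ and divides itself, so by the defining property of $p(c)$ we have $c \mid p(c) = 1$, whence $c \in K^{*}$, a unit of $\KRR$. This completes the argument.

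The only genuinely new step is the identification of the units of $\sRV$ with $K^{*}$, which is where I expect the proof to require a touch of care; everything else is a mechanical assembly of the multiplicative properties of $\rv$ and $p(\cdot)$ already established in Sections~\ref{sec:RV-monoid} and~\ref{sec:existence}.
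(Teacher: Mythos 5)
Your proof is correct and follows essentially the same route as the paper's: apply $\rv$, factor $p(\rv(b)) = p(\rv(c))p(\rv(d))$ via \prettyref{cor:pBC-pB-pC}, use irreducibility of $\frac{\rv(b)}{p(\rv(b))}$ to force one residue factor to be a unit (hence one of $c,d$ has finite support), and then use $p(b)=1$ to conclude that factor is a constant. The one place you add material not spelled out in the paper is the explicit identification of the units of $\sRV$ with $K^{*}$ via the $\deg^{-}$, $\deg^{+}$ valuations from the proof of \prettyref{prop:BC-RV-B-C-RV}; the paper leaves this implicit, and your justification is a legitimate and clean way to fill it in. (Two tiny cosmetic points: the opening sentence reads as a proof by contradiction but the argument is in fact a direct verification of irreducibility; and for the final step you could bypass \prettyref{prop:pbc-pb-pc} entirely — once $c \in \KR$ and $c \mid b$, the defining property of $p(b)$ gives $c \mid p(b) = 1$ directly, which is what the paper does.)
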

\begin{proof}
  Let $b \in \KRR$ as in the hypothesis, and suppose that $b = cd$ for some $c, d \in \KRR$. Then $\rv(b) = \rv(c) \cdot \rv(d)$. By \prettyref{cor:pBC-pB-pC}, we can divide both sides by $\p(\rv(b)) = \p(\rv(c))\p(\rv(d))$. By the hypothesis, one of $\frac{\rv(c)}{\p(\rv(c))}$, $\frac{\rv(d)}{\p(\rv(d))}$ is a unit, thus it is an element of $\Kbf$. Therefore, one of $c, d \in \KRR$ has finite support. Since $\p(b) = 1$, it follows that one of $c, d$ is a unit.
\end{proof}

The element $\frac{\rv(b)}{\p(\rv(b))}$ is automatically irreducible when $\deg(b)$ is additively principal: in this case, any factorisation of $\frac{\rv(b)}{\p(\rv(b))}$ must have a factor of degree $0$, which must then be an element of $\Kbf$ since we have already divided out the maximal divisor of degree $0$. We thus obtain the following.

\begin{cor}
  \label{cor:irreducibility}
  For all $b \in \KRR$, if $\deg(b)$ is additively principal and $\p(b) = 1$, then $b$ is irreducible.
\end{cor}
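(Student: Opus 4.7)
The plan is to reduce the corollary directly to Lemma~\ref{lem:irreducibility}. Since the hypothesis already supplies $p(b) = 1$, I only need to verify that $B := \rv(b)/p(\rv(b))$ is irreducible in $\sRV$. (Recall this quotient makes sense in $\RV$ because $p(\rv(b)) \mid \rv(b)$ by the very definition of $p$.)

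Suppose, toward a contradiction, that $B = C \cdot D$ with $C, D \in \sRV$ both non-units. Since $B \in \RV^*$, Proposition~\ref{prop:BC-RV-B-C-RV} forces $C, D \in \RV^*$. By multiplicativity of $\deg$ on $\RV$, we then have $\deg(C) \oplus \deg(D) = \deg(B) = \deg(\rv(b)) = \deg(b) = \omega^\alpha$. The crucial observation is that $\omega^\alpha$ is additively indecomposable under Hessenberg's natural sum: the Cantor normal form of a natural sum of two nonzero ordinals concatenates the CNFs of the summands and so has at least two terms, whereas $\omega^\alpha$ has exactly one. Consequently one of $\deg(C), \deg(D)$ must vanish; say $\deg(D) = 0$, so that $D \in \RV_0 = \KR$ (via \prettyref{prop:rv-KR-isom}).

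Now the defining property of $p$ (\prettyref{prop:pb}) tells us that for $D \in \KR$, the relation $D \mid B$ in $\sRV$ is equivalent to $D \mid p(B)$. But $p(B) \in K^*$: indeed, if $q \in \KR$ divides $B$, then $q \cdot p(\rv(b))$ divides $\rv(b)$, whence $q \cdot p(\rv(b)) \mid p(\rv(b))$ by definition of $p(\rv(b))$, forcing $q$ to be a unit of $\KR$ and hence an element of $K^*$. Therefore $D \in K^*$, which is a unit in $\sRV$, contradicting our assumption. Hence $B$ is irreducible in $\sRV$, and Lemma~\ref{lem:irreducibility} yields that $b$ itself is irreducible.

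I anticipate no significant obstacle: the argument is entirely a graded-ring style manipulation, leveraging the fact that $\omega^\alpha$ is an atom for $\oplus$ together with the already-established $p$-content machinery. The only bookkeeping point is the identification of units of $\sRV$ with $K^*$, which is immediate from the grading ($UV = 1$ forces both factors into $\RV_0 = \KR$, whose units are precisely $K^*$).
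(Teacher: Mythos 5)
Your proposal is correct and takes essentially the same approach as the paper: both rest on the observation that an additively principal ordinal $\omega^\alpha$ cannot split as a natural sum $\deg(C) \oplus \deg(D)$ of two nonzero ordinals, so one factor of $\rv(b)/p(\rv(b))$ must land in $\RV_0 = \KR$, and then the $p$-content normalisation forces it to be a unit. The paper's one-line proof ("any factorisation of $\rv(b)$ has a factor of degree $0$, so $\rv(b)/p(\rv(b))$ is irreducible") is simply a compressed version of the same reasoning you spell out.
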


The condition $\p(b) = 1$ can now be arranged easily in order to find plenty of irreducible series. For a start, this is always true for the series satisfying the assumptions of \prettyref{main:irreducible}.

\begin{proof}[Proof of \prettyref{main:irreducible}]
  Let $b \in \KRR$ have order type $\omega^{\omega^\alpha} \hatplus \beta$, where $\beta < \omega^{\omega^\alpha}$, and not divisible by $t^x$ for any $x \in \Rb^{<0}$. Thus $\deg(b) = \omega^{\alpha}$ is additively principal.

  By \prettyref{prop:normal-form}, we can write $b = b't^y + b''$ with $b'$ principal of order type $\omega^{\omega^{\alpha}}$ and $b''$ of order type $\beta$ with $\supp(b'') \geq y$. In particular, $\deg(b'') < \omega^{\alpha} = \deg(b')$. It follows that $\rv(b) = \rv(b't^y) = \rv(b') \cdot t^y$. Since $\rv(b')$ is principal, $\p(\rv(b')) = 1$ (see \prettyref{rem:pB}), hence $\p(\rv(b)) = t^y$.

  Since $\p(b) \mid \p(\rv(b)) = t^y$, $\p(b)$ must be of the form $t^x$ for some $x \geq y$, thus $x = 0$, so $\p(b) = 1$. By \prettyref{cor:irreducibility}, $b$ is irreducible.
\end{proof}

A similar argument lets us find irreducible series of order type $\alpha$ whenever $\deg(\alpha)$ is additively principal.

\begin{lem}
  \label{lem:b1-bl-almost-disjoint}
  Let $b_1, \dots, b_\ell \in \KRR$ be series of the same degree $\alpha$. Suppose that $\deg\ot(\supp(b_i) \cap \supp(b_j)) < \alpha$ for all $i \neq j$. Then $\rv(b_1), \dots, \rv(b_\ell)$ are $\Kbf$-linearly independent.
\end{lem}
\begin{proof}
  Let $b_1, \dots, b_\ell \in \KRR$ satisfy the assumptions. Suppose that
  \[ \rv(b_1) \cdot k_1 + \cdots + \rv(b_\ell) \cdot k_\ell = 0 \]
  for some $k_1,\dots,k_\ell \in \Kbf$. By \prettyref{lem:sum-rv-b1-bn},
  \[ \deg(b_1k_1 + \cdots + b_\ell k_\ell) < \alpha. \]
  Let $A = \supp(b_1k_1 + \cdots + b_\ell k_\ell)$.

  Suppose by contradiction that $k_i \neq 0$ for some $i$. For every $x \in \supp(b_i) \setminus A$, there is some $j \neq i$ such that $x \in \supp(b_j)$. It follows that
  \[ \supp(b_i) \subseteq A \cup \bigcup_{j \neq i} (\supp(b_i) \cap \supp(b_j)). \]
  Thus, by \prettyref{fact:ot-ABC}\prettyref{item:fact-ot-union},
  \[ \ot(b_i) = \ot(\supp(b_i)) \leq \ot(A) \oplus \bigoplus_{j \neq i} \ot(\supp(b_i) \cap \supp(b_j)) < \omega^\alpha, \]
  thus $\deg(b_i) < \alpha$, a contradiction.
\end{proof}

\begin{prop}
  \label{prop:irreducible-principal-degree}
  For every $\alpha, \beta \in \omega_1$ with $\omega^{\omega^\beta} \leq \alpha < \omega^{\omega^\beta \hatplus 1}$, there is an irreducible series $b \in \KRR$ such that $\ot(b) = \alpha$.
\end{prop}
\begin{proof}
  Let $\alpha \in \omega_1$ be as in the hypothesis and write
  \[ \alpha = \omega^{\beta_1} \hatplus \cdots \hatplus \omega^{\beta_n} \]
  in Cantor Normal Form, where by assumption $\beta_1 = \omega^\beta$ for some $\beta \in \omega_1$. Let $\ell \geq 1$ be maximal such that $\beta_1 = \dots = \beta_\ell = \deg(\alpha)$. Pick some principal series $b_1, \dots, b_n \in \KRR$ of order types respectively $\omega^{\beta_1}, \dots, \omega^{\beta_n}$, satisfying the property that $\rv(b_1), \dots, \rv(b_\ell)$ are $\Kbf$-linearly independent. For instance, by \prettyref{lem:b1-bl-almost-disjoint}, we could just take $b_1, \dots, b_\ell$ with pairwise disjoint supports, which can be easily arranged.

  Now let
  \[ b = b_1t^{-n+1} + b_2t^{-n+2} + \dots + b_n. \]
  By construction, $\ot(b) = \alpha$ and $\sup(b) = 0$. Moreover, by the assumption of $\Kbf$-linear independence, we have
  \[ \rv(b) = \rv(b_1t^{-n+1} + \dots + b_\ell t^{-n+\ell}) = \rv(b_1) \cdot t^{-n+1} + \dots + \rv(b_\ell) \cdot t^{-n+\ell}. \]
  It follows that $\p(\rv(b)) = t^{-n+\ell}$ by \prettyref{prop:RV-alpha-tensor}. Since $\p(b)$ divides $\p(\rv(b))$, and $\sup(b) = 0$, we must have $\p(b) = 1$. Therefore, $b$ is irreducible by \prettyref{cor:irreducibility}.
\end{proof}

\begin{exa}
  \label{exa:irreducible-omega2+k+1}
  For any $x \in \Gbf^{<0}$, $\sum_{n \in \Nb} t^{\frac{x}{3n+1}}$ and $\sum_{n \in \Nb} t^{\frac{x}{3n+2}}$ are principal series of degree $1$, with disjoint supports, thus $\Kbf$-linearly independent by \prettyref{lem:b1-bl-almost-disjoint}. Then, by following the above proof, we find that
  \[ b = \left(\sum_{n \in \Nb} t^{\frac{x}{3n+1}}\right)t^{-3} + \left(\sum_{n \in \Nb} t^{\frac{x}{3n+2}}\right)t^{-2} + t^{\frac{(k-1)x}{k}} + \dots + t^{\frac{x}{k}} + 1 \]
  is an irreducible series of order type $\omega \hatplus \omega \hatplus k \hatplus 1$ for every $k \in \Nb$.
\end{exa}

Is is reasonable to expect that for \emph{every} $\alpha \in \omega_1$ there is an irreducible series $b \in \KRR$ with order type $\ot(b) = \alpha$. For instance, one would expect that if $\supp(b)$ is chosen sufficiently randomly but with order type $\alpha$, then $b$ is irreducible. It would be interesting to investigate if the techniques of this paper, combined with previous strategies from \cite{PS2006,LM2017}, can shed more light on this problem.

\subsection{A broader criterion for primality}
\label{sub:criterion-prime}

In this subsection, we extend Pitteloud's primality criterion in order to find more prime series of degree $1$. We reuse some of the arguments in \cite{Pit2001}, so we first translate them in our language.

\begin{prop}
  \label{prop:principal-vj}
  A non-zero series $b \in \KRR$ is principal if and only if $\ot(b) = v_J(b)$.
\end{prop}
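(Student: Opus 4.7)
The plan is to argue both implications by a case analysis on the piecewise definition of $v_J$, using in both directions the additive principality of ordinals of the form $\omega^\alpha$ together with the truncation estimate in \prettyref{prop:truncation-weakly-principal} (which controls the order type of truncations of weakly principal series below their supremum).

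For the forward direction, assume $b$ is principal, so $\ot(b) = \omega^\alpha$ for some $\alpha$ and $\sup(b) = 0$. If $\alpha = 0$, then $\supp(b) = \{0\}$ and $b \in K^{*}$, so $b \in (J + K) \setminus J$ and $v_J(b) = 1 = \omega^0$. If $\alpha > 0$, I would first observe that $0 \notin \supp(b)$: otherwise $\ot(b) = \ot(b_{<0}) + 1$, which is a successor, contradicting that $\omega^\alpha$ is a limit for $\alpha > 0$. From this it follows that $b - k \notin J$ for every $k \in K$ (its coefficient at $0$ is nonzero for $k \neq 0$, and $\sup(b) = 0$ for $k = 0$), hence $b \notin J + K$, so $v_J(b) = \min\{\ot(c) : b - c \in J + K\}$. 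Taking $c = b$ shows $v_J(b) \leq \omega^\alpha$. For the reverse inequality, given any $c$ with $b - c = j + k \in J + K$, pick $\epsilon > 0$ with $\supp(j) \subseteq (-\infty, -\epsilon]$; then $\supp(b) \cap (-\epsilon, 0) \subseteq \supp(c)$, and \prettyref{prop:truncation-weakly-principal} forces $\ot(b_{\leq -\epsilon}) < \omega^\alpha$, so by additive principality $\ot(b_{> -\epsilon}) = \omega^\alpha$, giving $\ot(c) \geq \omega^\alpha$.

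For the reverse direction, assume $\ot(b) = v_J(b)$. The cases $b \in J$ and $b \in (J+K) \setminus J$ are immediate: the first forces $\ot(b) = 0$, hence $b = 0$, contradicting $b \in \KRR^{*}$; the second forces $\ot(b) = 1$, and a short check (a nonzero $j \in J$ contribution would add a second support point disjoint from a nonzero constant) shows $b \in K^{*}$, which is principal. In the remaining case $b \notin J + K$, $b \notin J$ gives $\sup(b) = 0$ immediately, so it only remains to prove that $\ot(b)$ is additively principal. I would argue by contradiction: if $\ot(b) = \beta + \gamma$ with $0 < \gamma < \ot(b)$, use \prettyref{prop:b-c+d} to write $b = c + d$ with $\supp(c) < \supp(d)$, $\ot(c) = \beta$, $\ot(d) = \gamma$. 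If $\sup(c) < 0$ then $c \in J$, so $b - d \in J + K$ and $v_J(b) \leq \gamma < \ot(b)$, contradiction. If $\sup(c) = 0$, then $\min(\supp(d)) \geq 0$ combined with $\supp(d) \subseteq \Rb^{\leq 0}$ forces $\supp(d) = \{0\}$, i.e.\ $d \in K$, so $b - c \in K \subseteq J + K$ gives $v_J(b) \leq \beta < \beta + 1 = \ot(b)$, again a contradiction.

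The only subtle step is the forward inequality $v_J(b) \geq \omega^\alpha$, which is where the structure of principal series really matters: the argument requires controlling how much of $b$ can be hidden inside an element of $J + K$, and the fact that $\supp(j)$ is bounded strictly away from $0$ (even though $J$ is only defined by $\sup(j) < 0$) is exactly what lets the truncation estimate kick in. Everything else is bookkeeping on the Cantor normal form, on the (non)membership of $0$ in $\supp(b)$, and on the trichotomy defining $v_J$.
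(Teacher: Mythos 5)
Your proof is correct and uses essentially the same strategy as the paper: a case analysis on the trichotomy defining $v_J$, with additive principality of $\omega^\alpha$ together with \prettyref{prop:truncation-weakly-principal} doing the real work in the case $b \notin J+K$. If anything your reverse direction is slightly more careful than the paper's: you explicitly split the sub-case $\sup(c)=0$ (which forces the tail $d$ to be a nonzero constant, so $\gamma=1$, and the witness must be the head $c$) from the sub-case $\sup(c)<0$ (where the head lies in $J$ and the witness is the tail $d$). The paper's proof instead decomposes $b = b' + b''$ with $\ot(b'')=\beta$ and asserts $\sup(b')<0$ ``by construction,'' taking $b''$ as the witness; that inequality does not actually hold in the sub-case $\beta=1$, $0\in\supp(b)$, $\sup(b_{<0})=0$ (e.g.\ $b = 1 + \sum_{n\geq 1}t^{-1/n}$), where one should instead take the head $b'$ as the witness, exactly as you do. So your proposal is correct and, at that one spot, a bit more complete than the paper's argument.
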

\begin{proof}
  Let $b \in \KRR$ be non-zero. We distinguish three cases, according to~\prettyref{eq:vJ} in~\prettyref{sub:order-value}.

  \textbf{Case $b \in \Jbf$.} In this case, $b$ is not principal, since $\sup(b) < 0$, and moreover $\ot(b) > 0 = v_J(b)$.

  \textbf{Case $b \in (\Jbf + \Kbf) \setminus \Jbf$.} We have $0 \in \supp(b)$. It follows that $\ot(b) = \alpha \hatplus 1$ for some ordinal $\alpha$. Therefore, $b$ is principal if and only if $\ot(b) = 1 = v_J(b)$.

  \textbf{Case $b \notin \Jbf + \Kbf$.} In particular, $\ot(b)$ is infinite. Let $c \in \KRR$. Note that $b - c \in \Jbf + \Kbf$ if and only if there are $x \in \Rb^{<0}$ and $k \in \Kbf$ such that $c_{\geq x} = b_{\geq x} + k$. Since $\ot(c_{\geq x}) \leq \ot(c)$, we conclude that $v_J(b)$ is the minimum of $\ot(b_{\geq x} + k)$ for $x \in \Rb^{<0}$ and $k \in \Kbf$.

  If $b$ is principal, then $\ot(b_{\geq x}) = \ot(b)$ for any $x \in \Rb^{<0}$, and $\ot(b_{\geq x} + k) = \ot(b) \hatplus 1$ for any non-zero $k \in \Kbf$, so $v_J(b) = \ot(b)$. If $b$ is not principal, $\ot(b) = \omega^{\alpha} \hatplus \beta$ with $0 < \beta \leq \omega^{\alpha}$. Write $b = b' + b''$ with $\supp(b') < \supp(b'')$ and $\ot(b'') = \beta$. By construction, $\sup(b - b'') = \sup(b') < 0$, so $v_J(b) \leq \ot(b'') = \beta < \ot(b)$.
\end{proof}

\begin{defn}[{\cite[p.~1209]{Pit2001}}]
  Given $\alpha \in \omega_1$, let $\Jbf_{\omega^{\alpha}}$ be the $\Kbf$-vector space $\Jbf_{\omega^{\alpha}} \coloneqq \{e \in \KRR \,:\, v_J(e) < \omega^{\alpha}\}$. Moreover, write $b \mid c \mod \Jbf_{\omega^\alpha}$ if there exists $d \in \KRR$ such that $c \equiv bd \mod \Jbf_{\omega^\alpha}$.
\end{defn}

For clarity, note for instance that $\Jbf_{\omega^0} = \Jbf_1 = \Jbf$, $\Jbf_{\omega^1} = \Jbf_\omega = \Jbf + \Kbf$.

\begin{lem}
  \label{lem:rv-Jomega}
  Let $b, c \in \KRR$ be two principal series, with $\deg(c) = \alpha$. Then $\rv(b) = \rv(c)$ if and only if $b \equiv c \mod \Jbf_{\omega^{\alpha}}$.
\end{lem}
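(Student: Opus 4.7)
The plan is to combine \prettyref{prop:principal-vj} (a series is principal iff $v_J(b) = \ot(b)$, which for principal $b$ equals $\omega^{\deg(b)}$) with \prettyref{prop:sum-principal} (if $b, c$ are principal and $\deg(b+c) = \deg(b)$ then $b+c$ is principal) and the general bound $v_J(b) \leq \ot(b)$ noted in the introduction.

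For the forward direction, if $\rv(b) = \rv(c)$ then either $b = c$, in which case $v_J(0) = 0 < \omega^{\alpha}$ trivially, or $b \neq c$ and \prettyref{prop:deg-RV} gives $\deg(b) = \deg(c) = \alpha$ together with $\deg(b-c) < \alpha$. The Cantor normal form of $\ot(b-c)$ then has every exponent bounded by $\deg(b-c) < \alpha$, so $\ot(b-c) < \omega^{\alpha}$; combining with $v_J \leq \ot$ yields $v_J(b-c) < \omega^{\alpha}$, that is, $b - c \in J_{\omega^{\alpha}}$.

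For the reverse direction, I would argue by contrapositive. If $\rv(b) \neq \rv(c)$, then $b \neq c$, and since $\sim$ preserves degree, one of the following holds: (i) $\deg(b) > \alpha$; (ii) $\deg(b) < \alpha$; (iii) $\deg(b) = \alpha$ and the leading order types fail to cancel, so $\deg(b-c) = \alpha$. Two short applications of the ultrametric inequality for $\deg$ identify $\deg(b-c)$ in each case: it equals $\deg(b)$ in (i), and $\alpha$ in (ii) and (iii). In every case I then apply \prettyref{prop:sum-principal} to the pair $b$ and $-c$, orienting so that the summand of larger degree plays the role of the first argument, to conclude that $b-c$ is principal of degree $\max\{\deg(b), \alpha\} \geq \alpha$. \prettyref{prop:principal-vj} then gives $v_J(b-c) = \omega^{\deg(b-c)} \geq \omega^{\alpha}$, hence $b - c \notin J_{\omega^{\alpha}}$.

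The main subtlety is confirming that \prettyref{prop:sum-principal} genuinely applies in cases (i) and (ii), where the two principal summands have \emph{different} degrees. That proposition requires only $\deg(b+c) = \deg(b)$ and tolerates $\deg(c) \leq \deg(b)$, so after orienting the pair so that the larger-degree series comes first, the required hypothesis is immediate from the degree computation above; the rest of the argument is then a straightforward bookkeeping of the three cases.
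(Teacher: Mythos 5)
Your proof is correct and follows essentially the same route as the paper's: the forward direction uses the bound $v_J(b-c) \leq \omega^{\deg(b-c)}$, and the reverse direction shows $b-c$ is principal via \prettyref{prop:sum-principal} and then applies \prettyref{prop:principal-vj} to get $v_J(b-c) = \ot(b-c) \geq \omega^{\alpha}$. Your treatment is a bit more explicit in splitting the reverse direction into three cases and noting the orientation of the summands, whereas the paper compresses this to ``trivial if $\deg(b) \neq \alpha$, and from \prettyref{prop:sum-principal} if $\deg(b) = \alpha$''; the extra care does no harm, and your observation that \prettyref{prop:sum-principal} tolerates unequal degrees after reordering is exactly the point the paper is glossing over.
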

\begin{proof}
  If $\rv(b) = \rv(c)$, then $\deg(b - c) < \alpha$, so $\deg(b) = \alpha$ and $v_J(b - c) \leq \omega^{\deg(b - c)} < \omega^{\alpha}$, hence $b - c \in \Jbf_{\omega^{\alpha}}$, or in other symbols, $b \equiv c \mod \Jbf_{\omega^{\alpha}}$. If $\rv(b) \neq \rv(c)$, then $b - c$ is principal of degree $\max\{\deg(b), \deg(c)\}$: indeed, this is trivial if $\deg(b) \neq \alpha$, and it follows from \prettyref{prop:sum-principal} if $\deg(b) = \alpha$. Therefore, $v_J(b - c) = \ot(b - c) = \omega^{\alpha}$, hence $b - c \notin \Jbf_{\omega^{\alpha}}$.
\end{proof}

\begin{rem}
  \label{rem:RV-of-vJ}
  Every element in the quotient space $\Jbf_{\omega^{\alpha+1}} / \Jbf_{\omega^{\alpha}}$ can be represented as the class $b + \Jbf_{\omega^{\alpha}}$ for some principal series $b \in \KRR$ of degree $\alpha$. By \prettyref{lem:rv-Jomega}, it follows at once that the $\Kbf$-vector space $\PRV_\alpha$ can be alternatively presented as the quotient $\Jbf_{\omega^{\alpha+1}}/\Jbf_{\omega^{\alpha}}$. On the other hand, the quotient $\Jbf_{\omega^{\alpha+1}}/\Jbf_{\omega^{\alpha}}$ is also the module $\RV_m$ for the semi-valuation $w = v_J$ and for $m = \omega^{\alpha}$ (the verification is left to the reader). In particular, $\PRV$ is the $\RV$ monoid of the semi-valuation $v_J$.
\end{rem}

\begin{cor}
  \label{cor:rv-mid-Jomega}
  Let $b, c \in \KRR$ be two principal series, with $\deg(c) = \alpha$. Then $b \mid c \mod \Jbf_{\omega^{\alpha}}$ if and only if $\rv(b) \mid \rv(c)$.
\end{cor}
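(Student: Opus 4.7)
Both directions of the corollary reduce to \prettyref{lem:rv-Jomega}: the forward direction is nearly immediate from \prettyref{lem:BC-PRV-B-C-PRV} and \prettyref{cor:berarducci}, while the converse needs an extraction argument using the normal form of $d$.

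For the direction $\rv(b) \mid \rv(c) \Rightarrow b \mid c \bmod J_{\omega^{\alpha}}$, I write $\rv(c) = \rv(b) \cdot B$ in $\RV$. Since $\rv(b), \rv(c) \in \PRV^{*}$, \prettyref{lem:BC-PRV-B-C-PRV} forces $B \in \PRV$, so $B = \rv(d)$ for some principal $d \in \KRR$. Then $bd$ is principal of degree $\alpha$ by \prettyref{cor:berarducci}, with $\rv(bd) = \rv(b) \cdot \rv(d) = \rv(c)$, and \prettyref{lem:rv-Jomega} yields $c - bd \in J_{\omega^{\alpha}}$.

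For the converse, assume $c - bd \in J_{\omega^{\alpha}}$. The aim is to produce a principal $d^{*} \in \KRR$ for which $bd^{*}$ is principal of degree $\alpha$ and $c - bd^{*} \in J_{\omega^{\alpha}}$; \prettyref{lem:rv-Jomega} will then give $\rv(c) = \rv(bd^{*}) = \rv(b) \cdot \rv(d^{*})$, hence $\rv(b) \mid \rv(c)$. I write $d$ in normal form $d = d_{1} t^{x_{1}} + \dots + d_{n} t^{x_{n}}$. The ultrametric property of $v_{J}$ (derivable from its defining minimum together with \prettyref{fact:ot-ABC}), combined with $v_{J}(c) = \omega^{\alpha}$ and $v_{J}(c - bd) < \omega^{\alpha}$, forces $v_{J}(bd) = \omega^{\alpha}$. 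Since the summands $bd_{i} t^{x_{i}}$ with $x_{i} < 0$ lie in $J$ and contribute $v_{J} = 0$, this $\omega^{\alpha}$ contribution must come from the $x_{i} = 0$ terms; multiplicativity of $v_{J}$ then pins down that the maximum of $\deg(d_{i})$ over those indices is exactly $\gamma$, where $\gamma$ is the ordinal satisfying $\deg(b) \oplus \gamma = \alpha$. Setting $d^{*} := d_{i^{*}}$ for the unique such index $i^{*}$, \prettyref{cor:berarducci} makes $bd^{*}$ principal of degree $\alpha$, while $d - d^{*}$ collects into a $J$-part plus $x_{i} = 0$ terms of degree strictly below $\gamma$, so $b(d - d^{*}) \in J_{\omega^{\alpha}}$ and hence $c - bd^{*} \in J_{\omega^{\alpha}}$.

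The main technicality is the uniqueness of $i^{*}$. Two normal-form indices $i < j$ with $x_{i} = x_{j} = 0$ and $\deg(d_{i}) = \deg(d_{j}) = \gamma > 0$ would force $\supp(d_{i}) < \supp(d_{j})$ in $\Rb^{\leq 0}$, each principal with supremum $0$ but (by positive-degree principality) not containing $0$, yielding the impossible inequality $0 = \sup(\supp(d_{i})) \leq \inf(\supp(d_{j})) < 0$; if instead $\gamma = 0$, then both $d_{i}$ and $d_{j}$ would be constants at $x = 0$, and uniqueness of the normal form would have collapsed them into a single term. Once $i^{*}$ is singled out, the remaining step (that $b(d - d^{*}) \in J_{\omega^{\alpha}}$) is a straightforward bookkeeping using that $J$ is an ideal and that $v_{J}(e) \leq \omega^{\deg(e)}$ for every $e$.
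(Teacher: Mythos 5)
Your proof is correct. The direction $\rv(b) \mid \rv(c) \Rightarrow b \mid c \bmod J_{\omega^{\alpha}}$ matches the paper's argument essentially verbatim (invoke \prettyref{lem:BC-PRV-B-C-PRV} to get a principal preimage $d$ of the quotient, then apply \prettyref{cor:berarducci} and \prettyref{lem:rv-Jomega}).

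For the other direction the paper takes a lighter route. Given $c \equiv bd \bmod J_{\omega^{\alpha}}$, it observes that $bd_{\geq x} \equiv bd \bmod J$ for any $x<0$, and that once $x$ is close enough to $0$ the truncation $d_{\geq x}$ decomposes as a principal series plus a constant; replacing $d$ by this tail and invoking \prettyref{lem:rv-Jomega} finishes. Your version instead writes $d$ in full normal form and uses the multiplicativity of $v_J$ together with the fact that $v_J$-values are powers of $\omega$ to pin down a single summand $d_{i^{*}}t^{0}$ whose degree $\gamma$ satisfies $\deg(b)\oplus\gamma=\alpha$, then checks $b(d-d^{*})\in J_{\omega^{\alpha}}$ by bookkeeping with $v_J(e)\leq\omega^{\deg(e)}$. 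Both arguments isolate the same principal piece of $d$ (a truncation of your $d_{i^{*}}$ is exactly the paper's $d'$, and they have the same image under $\rv$), so the routes are the same in spirit. The trade-off: the paper's truncation sidesteps all explicit $v_J$ arithmetic and is shorter, while yours makes the degree bookkeeping and the uniqueness of the surviving summand completely explicit, at the cost of invoking the multiplicativity of $v_J$ (a Berarducci result used only implicitly in this paper) and a mild ultrametric lemma for $v_J$ that you correctly indicate can be extracted from the definition and \prettyref{fact:ot-ABC}. Your uniqueness argument for $i^{*}$ is carefully done and fills in a point the paper glosses over when it asserts ``$d_{\geq x}=d'+k$ for some principal $d'$''.
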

\begin{proof}
  If $b \in \Kbf$, the conclusion is trivial, so assume otherwise. In particular, we also assume $\alpha > 0$. Suppose that $b \mid c \mod \Jbf_{\omega^{\alpha}}$, namely that $c \equiv bd \mod \Jbf_{\omega^{\alpha}}$ for some $d \in \KRR$. Note that for any $x \in \Rb^{<0}$, $bd_{\geq x} \equiv bd \mod \Jbf$, so in particular $c \equiv bd_{\geq x} \mod \Jbf_{\omega^{\alpha}}$, since $\Jbf = \Jbf_1 \subseteq \Jbf_{\omega^{\alpha}}$. If $x$ is sufficiently close to $0$, then $d_{\geq x} = d' + k$ for some principal series $d'$, or possibly $d' = 0$, and $k \in \Kbf$. We replace $d$ with $d' + k$, so that $bd = bd' + bk$ is also principal. Then $\rv(c) = \rv(bd)$ by \prettyref{lem:rv-Jomega}, so $\rv(b) \mid \rv(c)$.

  Conversely, if $\rv(b) \mid \rv(c)$, then there exists $d \in \KRR$ such that $\rv(c) = \rv(b) \cdot \rv(d) = \rv(bd)$. Then $\rv(d)$ is principal by \prettyref{cor:BC-PRV-B-C-PRV}, so we may assume that $d$ is principal. Thus $bd$ is also principal, and $c \equiv bd \mod \Jbf_{\omega^{\alpha}}$ by \prettyref{lem:rv-Jomega}, so $b \mid c \mod \Jbf_{\omega^{\alpha}}$.
\end{proof}

We may thus reinterpret the key step in Pitteloud's proof as a statement about primality in $\sRV$, as \prettyref{cor:rv-mid-Jomega} translates between divisibility modulo $\Jbf_{\omega^\alpha}$ and divisibility in $\RV$.

\begin{prop}[{\cite[Prop.\ 3.2]{Pit2001}}]
  \label{prop:pitteloud}
  Let $a, b, c, d \in \KRR$ be such that $\vj(a) = \omega$ and assume that $a^k b \equiv c^l d \mod \Jbf_{\vj(a^{k}b)}$ with $k,l > 0$. Then $a \mid c \mod \Jbf_{\vj(c)}$ or $a \mid d \mod \Jbf_{\vj(d)}$.
\end{prop}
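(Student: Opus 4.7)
The plan is to translate the hypothesis and conclusion into statements in the graded ring $\sPRV$, using the identifications from \prettyref{rem:RV-of-vJ} (that $\PRV$ is the $\RV$ monoid of $v_J$) and \prettyref{cor:rv-mid-Jomega} (that divisibility mod $J_{\omega^\alpha}$ corresponds to divisibility of rv-classes in $\PRV$). Since $v_J(a) = \omega$, subtracting an element of $J$ does not change any of the hypotheses and conclusions, so I may assume $a$ is principal of degree $1$, whence $A := \rv(a) \in \PRV_1 \setminus \{0\}$.

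Next, I would extract principal parts of $b$, $c$, $d$ from their weak normal forms and discard pieces of strictly smaller $v_J$-value. Since $a^k b \equiv c^l d \pmod{J_{v_J(a^k b)}}$ forces $v_J(c^l d) = v_J(a^k b) = \omega^{k + \deg(b)}$ (otherwise the congruence would force the left side to have smaller $v_J$), the top principal components must agree, giving an identity
\[ A^k B_0 = C_0^l D_0 \]
in $\sPRV$, where $B_0, C_0, D_0 \in \PRV$ are the leading principal components of $b, c, d$. The conclusion $a \mid c \pmod{J_{v_J(c)}}$ (or the analogue for $d$) then translates, again via \prettyref{cor:rv-mid-Jomega}, to the statement $A \mid C_0$ or $A \mid D_0$ in $\sPRV$.

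The crux is thus to prove: \emph{if $A \in \PRV_1 \setminus \{0\}$ and $A^k B_0 = C_0^l D_0$ in $\sPRV$, then $A$ divides $C_0$ or $D_0$ in $\sPRV$.} My approach would be to pass to the fraction ring $\Frac(\sPRV)$ and localize away irrelevant factors, then combine two structural inputs: the relative algebraic closure of $K$ in $\Frac(\sPRV)$ from \prettyref{lem:K-alg-cl-sPRV}, and the tensor-product description $\RV_1 = \PRV_1 \otimes_K \KR$ from \prettyref{prop:RV-alpha-tensor}. The first ensures that irreducible elements of $\PRV_1$ remain irreducible after scalar extension, and the second gives enough control on the $K$-linear structure of degree-$1$ principal parts to run a unique-factorisation argument in a suitable polynomial-type subring, mirroring the strategy used in \prettyref{lem:p-fact-sPRV-KR} for elements of $\KR$. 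Induction on $k$ (or on $l$) would then reduce the statement to the base case of $A$ dividing a product of two factors.

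The main obstacle will be the primality step itself: although elements of $\PRV_1$ are irreducible in $\sPRV$ (since $\deg = 1$ is additively principal), primality is a stronger property and the ring $\sPRV$ is only known to be a graded integral domain, not a UFD. Pitteloud's original 2001 argument sidesteps this by a direct combinatorial analysis of series of order type $\omega$ modulo $J$, carefully tracking cancellations of coefficients; I expect any $\sRV$-based reformulation to still require that combinatorial input, repackaged as a statement about how the leading coefficient functional on $\PRV_1$ interacts with products in $\sPRV$.
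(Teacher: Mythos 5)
The paper does not prove this proposition: it is cited verbatim from Pitteloud's original work (\cite[Prop.\ 3.2]{Pitteloud2001}) and imported as a black box. The logical flow in the paper then runs \emph{from} this proposition \emph{to} \prettyref{cor:PRV-1-prime-in-PRV} and \prettyref{cor:PRV-1-prime-in-RV}, which establish primality of elements of $\PRV_1$ in $\PRV$ and in $\sPRV$, $\sRV$. Your proposal runs the arrow backwards: you translate the hypothesis and conclusion into the $\sPRV$ framework (correctly, and essentially this is the content of \prettyref{cor:rv-mid-Jomega} together with \prettyref{cor:PRV-1-prime-in-PRV}), and then you need precisely the primality of $A \in \PRV_1$ to close the argument. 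Appealing to \prettyref{cor:PRV-1-prime-in-RV} at that point would be circular, and you explicitly acknowledge that you cannot derive that primality from the $\sRV$ structure results alone.

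You are right to flag this as the genuine gap, and right that it cannot be filled with the tools you list. The lemmas \prettyref{lem:K-alg-cl-sPRV} and \prettyref{lem:p-fact-sPRV-KR} govern the behaviour of the degree-$0$ graded piece $\KR = \RV_0$ inside $\sRV$; they give a unique-factorisation argument \emph{only} for elements of $\KR$, because $\KR \cong K(\Rb^{\leq 0})$ is a known GCD domain. Nothing analogous is available for $\PRV_1$: the tensor description $\RV_1 = \PRV_1 \otimes_K \KR$ of \prettyref{prop:RV-alpha-tensor} pins down $\RV_1$ as a $\KR$-module but says nothing about divisibility \emph{within} $\PRV_1$, and $\sPRV$ itself is not established to be a UFD or even a pre-Schreier domain. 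Irreducibility of a degree-$1$ element (which does follow from additive principality of $1$) is strictly weaker than primality here. So the missing step is exactly the combinatorial content of Pitteloud's Prop.\ 3.2 --- a careful analysis of coefficient cancellations in products of series of order type $\omega$ --- and it has no known proof via the $\sRV$ machinery developed in this paper. The correct move, which the paper makes, is to cite the result and derive the $\sRV$-level primality statements from it rather than the other way around.
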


By \prettyref{cor:rv-mid-Jomega}, the above statement says that if $\rv(a^kb) = \rv(c^ld)$, then $rv(a) \mid \rv(c)$ or $\rv(a) \mid \rv(d)$.

\begin{cor}
  \label{cor:PRV-1-prime-in-PRV}
  For all $B \in \PRV$ of degree $1$ and $C, D \in \PRV$, if $B \mid C \cdot D$, then $B \mid C$ or $B \mid D$.
\end{cor}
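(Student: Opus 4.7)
The plan is to translate the statement back and forth between divisibility in $\PRV$ and congruence modulo $J_{\vj(\cdot)}$ using \prettyref{cor:rv-mid-Jomega}, so that it reduces directly to Pitteloud's \prettyref{prop:pitteloud}.

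First I would dispose of the degenerate cases $B = 0$, $C = 0$, $D = 0$, which are trivial. Then I would lift everything to principal representatives: pick principal series $\beta, c, d \in \KRR^*$ with $B = \rv(\beta)$, $C = \rv(c)$, $D = \rv(d)$, using the definition of $\PRV$. Since $B \in \PRV_1$, we have $\deg(\beta) = 1$ and $\ot(\beta) = \omega$, so by \prettyref{prop:principal-vj} we get $\vj(\beta) = \omega$, which is exactly the hypothesis needed to apply Pitteloud.

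Next, from $B \mid CD = \rv(cd)$ I would extract a principal quotient: there exists $E \in \RV$ with $B \cdot E = \rv(cd)$, and since $\rv(cd) \in \PRV$ and $B \neq 0$, \prettyref{lem:BC-PRV-B-C-PRV} forces $E \in \PRV$, so I may write $E = \rv(e)$ for some principal $e$. Then $\rv(\beta e) = \rv(cd)$, both $\beta e$ and $cd$ are principal (\prettyref{cor:berarducci}) of the same degree $\alpha := \deg(cd)$, so \prettyref{lem:rv-Jomega} yields
\[ \beta e \equiv cd \pmod{J_{\omega^{\alpha}}}. \]
Since $\beta e$ is principal of degree $\alpha$, \prettyref{prop:principal-vj} gives $\vj(\beta e) = \omega^{\alpha}$, so this is precisely the congruence $\beta^{1} \cdot e \equiv c^{1} \cdot d \pmod{J_{\vj(\beta e)}}$ required by \prettyref{prop:pitteloud} (with $k = l = 1$).

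Applying \prettyref{prop:pitteloud} then yields $\beta \mid c \pmod{J_{\vj(c)}}$ or $\beta \mid d \pmod{J_{\vj(d)}}$. Since $c$ and $d$ are principal with $\vj(c) = \omega^{\deg(c)}$ and $\vj(d) = \omega^{\deg(d)}$, \prettyref{cor:rv-mid-Jomega} translates these back into $\rv(\beta) \mid \rv(c)$ or $\rv(\beta) \mid \rv(d)$, i.e.\ $B \mid C$ or $B \mid D$, as desired. There is essentially no real obstacle here: the only step requiring care is matching up the value of $\vj(\beta e)$ with $\omega^{\alpha}$, which is where principality of $\beta e$ (guaranteed by \prettyref{cor:berarducci}) is used crucially.
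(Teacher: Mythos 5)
Your proof is correct and follows essentially the same route as the paper's: lift to principal representatives, translate divisibility in $\PRV$ into a congruence modulo $J_{\vj(\cdot)}$, invoke \prettyref{prop:pitteloud}, and translate back via \prettyref{cor:rv-mid-Jomega}. The only cosmetic difference is that you re-derive the forward direction of \prettyref{cor:rv-mid-Jomega} inline (explicitly extracting the principal quotient $e$ via \prettyref{lem:BC-PRV-B-C-PRV} and \prettyref{cor:berarducci} and then invoking \prettyref{lem:rv-Jomega}), whereas the paper cites \prettyref{cor:rv-mid-Jomega} directly for that step.
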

\begin{proof}
  Let $B, C, D$ as in the hypothesis. The conclusion is trivial for $B = 0$, so assume $B \neq 0$. Then $B = \rv(b)$ for some principal $b \in \KRR$ of degree $1$, and in particular with $v_J(b) = \omega$. Write $C = \rv(c)$, $D = \rv(d)$ with $c, d \in \KRR$ principal.

  Assume $B \mid C \cdot D$. By \prettyref{cor:rv-mid-Jomega}, this means that $b \mid cd \mod \Jbf_{\vj(cd)}$, so that there exists $e$ such that $be \equiv cd \mod \Jbf_{\vj(cd)}$. Note that we must have $\vj(be) = \vj(cd)$. By \prettyref{prop:pitteloud}, $b \mid c \mod \Jbf_{\vj(c)}$ or $b \mid d \mod \Jbf_{\vj(d)}$. By \prettyref{cor:rv-mid-Jomega}, this means that $B \mid C$ or $B \mid D$.
\end{proof}

\begin{cor}
  \label{cor:PRV-1-prime-in-RV}
  Every $B \in \PRV$ of degree $1$ is prime in $\sPRV$ and in $\sRV$.
\end{cor}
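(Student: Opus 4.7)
The plan is to prove primality of $B$ in $\sPRV$ first, by transfinite induction using the graded structure, and then to lift to $\sRV$ via the monoid-ring description $\sRV = \sPRV(\Rb^{\leq 0})$ from \prettyref{prop:sRV-tensor}. Throughout we may assume $B \neq 0$; also, $\sPRV$ is a domain, because by \prettyref{thm:berarducci} the top homogeneous components of any two nonzero elements multiply to a nonzero element, which is then the top component of their product.

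For primality in $\sPRV$, suppose $C, D \in \sPRV^{*}$ satisfy $B \mid C \cdot D$. Let $d_C := \deg^{+}(C)$ and $d_D := \deg^{+}(D)$, where $\deg^{+}$ denotes the largest degree of a nonzero graded component as in the proof of \prettyref{prop:BC-RV-B-C-RV}. Strict monotonicity of Hessenberg's sum forces $\alpha = d_C$, $\beta = d_D$ whenever $\alpha \leq d_C$, $\beta \leq d_D$ and $\alpha \oplus \beta = d_C \oplus d_D$, so the top graded component of $C \cdot D$ is exactly $C_{d_C} \cdot D_{d_D}$. Comparing top components in the relation $B \cdot X = C \cdot D$ shows that $C_{d_C} \cdot D_{d_D} = B \cdot E$ for some $E \in \PRV$ (and in particular that $d_C \oplus d_D$ is a successor ordinal). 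Applying \prettyref{cor:PRV-1-prime-in-PRV} to this factorisation in $\PRV$ yields $B \mid C_{d_C}$ or $B \mid D_{d_D}$ in $\PRV$. Say $C_{d_C} = B \cdot F$ with $F \in \PRV$, the other case being entirely symmetric. Then $C' := C - B \cdot F$ lies in $\sPRV$, satisfies $\deg^{+}(C') < d_C$, and still $B \mid C' \cdot D$. Arguing by transfinite induction on $\deg^{+}(C) \oplus \deg^{+}(D)$, we obtain $B \mid C'$ or $B \mid D$, and in the first sub-case $B \mid C' + B \cdot F = C$. The induction terminates because if $d_C \oplus d_D = 0$ then $C, D \in \PRV_0 = K^{*}$, so $C \cdot D$ is a unit while $B$ is not, making the hypothesis vacuous.

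For primality in $\sRV$: by \prettyref{prop:sRV-tensor}, $\sRV$ is the monoid ring $\sPRV(\Rb^{\leq 0})$, and since $\KR$ is $K$-free with basis $\{t^x : x \in \Rb^{\leq 0}\}$, the quotient $\sRV / B\sRV$ identifies canonically with $(\sPRV/(B))(\Rb^{\leq 0})$. By the previous step $\sPRV/(B)$ is a domain, and the monoid ring of a domain over a torsion-free cancellative totally ordered abelian monoid is again a domain (by the standard leading-exponent argument on supports), so $(B)$ is prime in $\sRV$.

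The technical heart of the argument is the induction in $\sPRV$, and in particular its symmetric structure. When $d_C$ is a limit ordinal, $B$ cannot divide any element of $\PRV_{d_C}$ since $1 \oplus \gamma$ is never a limit; one is then forced into the symmetric case and must reduce $D$ rather than $C$. Using the Hessenberg sum $\deg^{+}(C) \oplus \deg^{+}(D)$ as the induction parameter, rather than $\deg^{+}(C)$ alone, is precisely what guarantees that either reduction strictly decreases the parameter, so the induction closes.
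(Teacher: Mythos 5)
Your proof is correct and follows essentially the same route as the paper: apply \prettyref{cor:PRV-1-prime-in-PRV} to the top graded components of $C\cdot D$, induct downward in $\sPRV$, and then lift to $\sRV$ through the monoid-ring identification $\sRV = \sPRV(\Rb^{\leq 0})$ from \prettyref{prop:sRV-tensor}. You spell out the induction parameter $\deg^{+}(C)\oplus\deg^{+}(D)$ and the domain-over-ordered-monoid step a bit more explicitly than the paper does, but the substance is the same.
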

\begin{proof}
  Let $B \in \PRV_1$, $C, D \in \sRV$. Suppose first that $C, D \in \PRV$. Write $C = \sum_{\alpha} C_{\alpha}$, $D = \sum_{\alpha} D_{\alpha}$. Let $\beta = \deg(C)$, $\gamma = \deg(D)$. Then clearly $B$ divides $C_{\beta} \cdot D_{\gamma}$. By \prettyref{cor:PRV-1-prime-in-PRV}, $B \mid C_{\beta}$ or $B \mid D_{\gamma}$. Assume we are in the first case. Then $B$ divides $(C - C_\beta) \cdot D = \sum_{\alpha < \beta} C_{\alpha} \cdot D$. By induction on $\beta$ and $\gamma$, either $B \mid D$, or $B \mid (C - C_{\alpha})$, hence $B \mid C$, proving the conclusion.

  For the general case of $C, D \in \sRV$, it suffices to recall that $\sRV = \sPRV(\Rb^{\leq 0})$, and that any prime of $\sPRV$ remains prime in $\sPRV(\Rb^{\leq 0})$ (for instance, because $\sPRV(\Rb^{\leq 0})$ is a directed union of rings of polynomials over $\sPRV$, as in \prettyref{fact:KS-is-UFD}).
\end{proof}

It is now easy to lift the above result to primality in $\KRR$.

\begin{lem}
  \label{lem:b-irred-rvb-prvb-prime}
  For all $b \in \KRR$, if $\frac{\rv(b)}{\p(\rv(b))}$ is prime (in $\sPRV$) and $\p(b) = 1$, then $b$ is prime.
\end{lem}
\begin{proof}
  Let $b$ as in the hypothesis, and let $c, d \in \KRR$ be such that $b \mid cd$. We shall prove that $b \mid c$ or $b \mid d$ by induction on $\deg(cd)$. Let $B = \frac{\rv(b)}{q}$ where $q = \p(\rv(b))$. By assumption, $B$ is prime, so in particular, $B \mid \rv(c)$ or $B \mid \rv(d)$.

  Suppose that $B \mid \rv(c)$. Then $\rv(b) = q \cdot B \mid q \cdot \rv(c)$. Write $qc = be + f$ so that $\deg(f) < \deg(c)$. If $f = 0$, we are done, so assume otherwise. We have $b \mid fd$. By induction, $b \mid d$, in which case we are done, or $b \mid f$, in which case $b \mid qc$. In the latter case, let $g = \frac{qc}{b}$. By \prettyref{prop:pbc-pb-pc}, $\p(qc) = q\p(c) = \p(b)\p(g) = \p(g)$, thus in particular $\frac{g}{q} \in \KRR$ and $\frac{bg}{q} = c$, thus $b \mid c$, as desired.
\end{proof}

\begin{proof}[Proof of \prettyref{main:prime}]
  Let $b \in \KRR$ be a series of order type $\omega \hatplus k$ for some $k < \omega$ and not divisible by $t^x$ for any $x \in \Rb^{<0}$. By the assumption on the order type of $b$, $\rv(b)$ is weakly principal, so $\frac{\rv(b)}{\p(\rv(b))}$ is principal and of degree $1$, so it is prime by \prettyref{cor:PRV-1-prime-in-RV}. Moreover, $\p(\rv(b)) = t^y$ for some $y \in \Rb^{\leq 0}$. Since $\p(b)$ must divide $\p(\rv(b))$, we must have $\p(b) = t^x$ for some $x \geq y$, hence $x = 0$, so $\p(b) = 1$. Therefore, $b$ is prime by \prettyref{lem:b-irred-rvb-prvb-prime}.
\end{proof}

Once again, the arguments behind the proof of \prettyref{main:prime} can also be used to yield additional primes, although with more effort compared to the analogous work in \prettyref{sub:criterion-irred}. We will now show how to find new primes of degree $1$. We start by finding a few more primes in $\sRV$ of degree $1$, beyond the principal ones.

\begin{lem}
  \label{lem:b-irred-frac-prime}
  Let $\Rbf$ be an integral domain and $\Gbf$ be an ordered abelian group. Let $b = b_1t^{x_1} + \dots + b_nt^{x_n} \in \Rbf(\Gbf^{\leq 0})$  (with $x_1 < \dots < x_n$) be irreducible in $\Rbf(\Gbf^{\leq 0})$. If $b$ is irreducible in $\Frac(\Rbf)(\Gbf^{\leq 0})$, and $b_1$ is prime in $\Rbf$, then $b$ is prime in $\Rbf(\Gbf^{\leq 0})$.
\end{lem}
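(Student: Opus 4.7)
The plan is to lift divisibility from $\Frac(R)(G^{\leq 0})$ back down to $R(G^{\leq 0})$ via a Gauss-style clearing of denominators powered by the primality of $b_1$. Since $\Frac(R)(G^{\leq 0})$ is a GCD domain by the cited Fact, the irreducible $b$ is prime there. Hence, whenever $b \mid cd$ in $R(G^{\leq 0})$, after possibly swapping $c$ and $d$ we obtain $c = be$ for some $e \in \Frac(R)(G^{\leq 0})$, and the entire task reduces to showing $e \in R(G^{\leq 0})$.

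I would first dispose of the monomial case $n = 1$: since $x, -x \in G^{\leq 0}$ forces $x = 0$, the units of $R(G^{\leq 0})$ coincide with $R^\times$, and so irreducibility of $b = b_1 t^{x_1}$ forces $x_1 = 0$ and $b = b_1 \in R$; then $b_1$ is prime in $R(G^{\leq 0})$ via the isomorphism $R(G^{\leq 0})/b_1 R(G^{\leq 0}) \cong (R/b_1 R)(G^{\leq 0})$, whose right-hand side is a domain because $G^{\leq 0}$ is a cancellative torsion-free commutative monoid and $R/b_1 R$ is a domain. For $n \geq 2$, irreducibility of $b$ in $R(G^{\leq 0})$ forces \emph{primitivity}: no non-unit of $R$ divides all the coefficients $b_i$, for otherwise $b = r \cdot (b/r)$ would be a proper factorisation, as $b/r$ has $n \geq 2$ terms and so is not a unit. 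In particular $\bar b \neq 0$ in the domain $(R/b_1 R)(G^{\leq 0})$.

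The heart of the argument is then a two-step clearing of denominators. First, for every prime ideal $\mathfrak p$ of $R$ not containing $b_1$, $b_1$ is a unit in $R_{\mathfrak p}$, so $\tilde b := b/b_1 \in R_{\mathfrak p}(G^{\leq 0})$ has lowest coefficient equal to the unit $1$ at $t^{x_1}$; in the identity $\tilde b \cdot e = c/b_1 \in R_{\mathfrak p}(G^{\leq 0})$, the coefficient of $t^{x_1 + y_j}$ equals $e_j$ plus a sum of terms $(b_{i'}/b_1) e_{j'}$ with $i' > 1$ and $j' < j$, so a straightforward induction on $j$ from the smallest exponent of $\supp(e)$ upward shows each $e_j \in R_{\mathfrak p}$. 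It follows that $e \in \bigcap_{\mathfrak p \not\ni b_1} R_{\mathfrak p}(G^{\leq 0}) = R[b_1^{-1}](G^{\leq 0})$, so $b_1^k e \in R(G^{\leq 0})$ for some finite $k \geq 0$. Second, if $k \geq 1$, then $b \cdot (b_1^k e) = b_1^k c$ holds in $R(G^{\leq 0})$, and reducing modulo $b_1$ yields $\bar b \cdot \overline{b_1^k e} = 0$ in the domain $(R/b_1 R)(G^{\leq 0})$; since $\bar b \neq 0$, every coefficient of $b_1^k e$ is divisible by $b_1$ in $R$, and by cancellation $b_1^{k-1} e \in R(G^{\leq 0})$. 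Iterating $k$ times yields $e \in R(G^{\leq 0})$. The only subtle point I expect is verifying the inductive step of the first stage carefully in the presence of possible exponent collisions $x_{i'} + y_{j'} = x_1 + y_j$ (which force $i' > 1 \Leftrightarrow j' < j$), but since $G$ is totally ordered and the supports of $b$ and $e$ are finite, this combinatorial bookkeeping is routine.
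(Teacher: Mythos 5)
Your proof is correct and shares the same skeleton as the paper's: pass to the GCD domain $\Frac(R)(G^{\leq 0})$ where $b$ is prime, write $c = be$ with $e \in \Frac(R)(G^{\leq 0})$, show $b_1^k e \in R(G^{\leq 0})$ for some $k$, and then clear the powers of $b_1$ using primality of $b_1$ together with $\bar b \neq 0$ in the domain $(R/b_1 R)(G^{\leq 0})$. The genuine difference is in how the denominators of $e$ are bounded by powers of $b_1$. The paper inverts $b$ as a geometric series $b^{-1} = b_1^{-1}t^{-x_1}\bigl(1 - \varepsilon + \varepsilon^2 - \cdots\bigr)$ with $v(\varepsilon) = x_2 - x_1 > 0$ and uses the Archimedean hypothesis to truncate: for some finite $n$, $v(c t^{-x_1}\varepsilon^n) > 0$, so only the first $n$ powers of $\varepsilon$ can contribute to the exponents $\leq 0$, yielding denominators $b_1^m$ with $m < n$. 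You instead localise at each prime $\mathfrak p$ of $R$ not containing $b_1$, normalise $\tilde b = b/b_1$ to have leading coefficient $1$ in $R_{\mathfrak p}(G^{\leq 0})$, and run a coefficient-by-coefficient induction along the (automatically finite) support of $e$ to conclude $e \in R_{\mathfrak p}(G^{\leq 0})$, then intersect over all such $\mathfrak p$ to get $e \in R[b_1^{-1}](G^{\leq 0})$. Your commutative-algebra route is somewhat longer but, unlike the paper's, never actually invokes the Archimedean hypothesis: the finiteness of $\supp(e)$ is built in since $e$ already lies in the group algebra $\Frac(R)(G^{\leq 0})$, and the coefficient chase works over any ordered abelian group. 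You also treat the degenerate case $n = 1$ explicitly, which the paper leaves implicit; it is worth noting (as you do) that there $x_1 = 0$ is forced, since $b_1$ is prime hence a non-unit, so $t^{x_1}$ with $x_1 < 0$ would give a proper factorisation.
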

\begin{proof}
  Suppose that $b \mid cd$ for some $c, d \in \Rbf(\Gbf^{\leq 0})$. Since $b$ is irreducible in $\Frac(\Rbf)(\Gbf^{\leq 0})$, and the latter is a GCD domain by \prettyref{fact:KG-GCD-domain}, we may assume that $b \mid c$ or $b \mid d$ in the ring $\Frac(\Rbf)(\Gbf^{\leq 0})$. Without loss of generality, we may assume to be in the former case.

  Let
  \[ \varepsilon = \frac{b}{b_1}t^{-x_1} - 1 = \frac{b_2}{b_1}t^{x_2 - x_1} + \dots + \frac{b_n}{b_1}t^{x_n - x_1} \in \Rbf\left[\frac{1}{b_1}\right](\Gbf) \subseteq \Frac(\Rbf)(\Gbf), \]
  where $\Rbf\left[\frac{1}{b_1}\right]$ is the ring generated by $\Rbf$ and $\frac{1}{b_1}$, so that $b = b_1t^{x_1}(1 + \varepsilon)$ and $v(\varepsilon) = x_2 - x_1 > 0$, where $v$ is the canonical valuation of the Hahn field $\Frac(\Rbf)((\Gbf))$.

  Recall that since $v(\varepsilon) > 0$, the series $1 - \varepsilon + \varepsilon^2 - \dots$ in also in $\Frac(\Rbf)((\Gbf))$ and it is the multiplicative inverse of $(1 + \varepsilon)$ (see \prettyref{rem:KGG-geq0-GCD}). Therefore,
  \[ \frac{1}{b} = \frac{1}{b_1}t^{-x_1}(1 + \varepsilon)^{-1} = \frac{1}{b_1}t^{-x_1}\left(1 - \varepsilon + \varepsilon^2 - \ldots \right) \in \Rbf\left[\frac{1}{b_1}\right]((\Gbf)) \subseteq \Frac(\Rbf)((\Gbf)). \]
  Now multiply both sides by $c$. Since $b$ divides $c$, $\frac{c}{b}$ is in $\Frac(\Rbf)(\Gbf^{\leq 0})$, hence it is a finite sum of monomials.

  In turn, there is a maximum $m$ such that $\frac{c}{b} \in \frac{1}{b_1^m}\Rbf(\Gbf^{\leq 0})$, hence $b \mid b_1^mc$ in the ring $\Rbf(\Gbf^{\leq 0})$. Now write $be = b_1^mc$ with $e \in \Rbf(\Gbf^{\leq 0})$. Since $b_1$ is prime, and $b$ is irreducible, either $b = ub_1$ for some unit $u$, or $b_1^m$ must divide $e$ and thus $b$ divides $c$. In the former case, we have shown that $b$ is prime; in the latter, that $b \mid c$. Since the product $cd$ was arbitrary, that also implies that $b$ is prime.
\end{proof}

\begin{cor}
  \label{cor:B-sRV1-irred-frac-prime}
  If $B \in \RV$ has degree $1$ and is irreducible in both $\sRV$ and $\Frac(\sPRV)(\Rb^{\leq 0})$, then $B$ is prime in $\sRV$.
\end{cor}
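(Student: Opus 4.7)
The plan is to reduce the statement directly to \prettyref{lem:b-irred-frac-prime} applied with $R = \sPRV$ and $G = \Rb$, using the structural decomposition $\sRV = \sPRV(\Rb^{\leq 0})$ from \prettyref{prop:sRV-tensor}. Since $\Rb$ is Archimedean and, by the argument that principal series multiply to principal series together with multiplicativity of $\deg$, $\sPRV$ is an integral domain (the top graded component of a product is nonzero), the hypotheses on the ambient ring of \prettyref{lem:b-irred-frac-prime} will be met.

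First I would write $B$ in the form $B = B_1 t^{x_1} + \dots + B_n t^{x_n}$ with $x_1 < \dots < x_n$ and $B_i \in \sPRV$. Since $B \in \RV_1$, \prettyref{prop:RV-alpha-tensor} allows us to take each $B_i$ in $\PRV_1 \subseteq \sPRV$; in particular the leading coefficient $B_1$ lies in $\PRV_1$. The two irreducibility hypotheses on $B$ are exactly the two irreducibility hypotheses of \prettyref{lem:b-irred-frac-prime}: $B$ is irreducible in $\sRV = \sPRV(\Rb^{\leq 0})$ and irreducible in $\Frac(\sPRV)(\Rb^{\leq 0}) = \sPRV^{-1} \cdot \sRV$.

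The one remaining condition, namely that $B_1$ be prime in $R = \sPRV$, is supplied by \prettyref{cor:PRV-1-prime-in-RV}, which says that every element of $\PRV_1$ is prime in $\sPRV$. Feeding all of this into \prettyref{lem:b-irred-frac-prime} yields that $B$ is prime in $\sPRV(\Rb^{\leq 0}) = \sRV$, which is the conclusion.

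There is no real obstacle here; the work was already done in \prettyref{cor:PRV-1-prime-in-RV} (primality of degree-one principal elements inside $\sPRV$) and in \prettyref{lem:b-irred-frac-prime} (lifting primality from the coefficient ring to the group algebra over an Archimedean group). The only mild subtlety is making sure the two different senses of irreducibility of $B$ match exactly the two irreducibility hypotheses of the lemma, and that the leading coefficient, rather than some other coefficient, lies in $\PRV_1$; both of these are immediate from $B \in \RV_1$ and \prettyref{prop:RV-alpha-tensor}.
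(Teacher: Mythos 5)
Your proof is correct and follows exactly the same route as the paper: decompose $B$ as $B_1 t^{x_1} + \dots + B_n t^{x_n}$ with $B_i \in \PRV_1$, invoke \prettyref{cor:PRV-1-prime-in-RV} to get that $B_1$ is prime in $\sPRV$, and then apply \prettyref{lem:b-irred-frac-prime} with $R = \sPRV$ and $G = \Rb$. The extra remark that $\sPRV$ is a domain is a reasonable sanity check that the paper leaves implicit, but otherwise this is the paper's argument verbatim.
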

\begin{proof}
  Let $B \in \RV$ as in the hypothesis. Then $B = B_1 \cdot t^{x_1} + \dots + B_n \cdot t^{x_n}$ for some $B_i \in \PRV$ of degree $1$ and $x_1 < \dots < x_n$. By \prettyref{cor:PRV-1-prime-in-RV}, $B_1$ is prime, so by \prettyref{lem:b-irred-frac-prime}, $B$ is prime in $\sRV$.
\end{proof}

\begin{cor}
  \label{cor:b-irred-frac-sRV1-prime}
  For all $b \in \KRR$ of degree $1$, if $\frac{\rv(b)}{\p(\rv(b))}$ is irreducible in both $\sRV$ and $\Frac(\sPRV)(\Rb^{\leq 0})$, and $\p(b) = 1$, then $b$ is prime.
\end{cor}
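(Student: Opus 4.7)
The statement is essentially a direct combination of the two results established immediately before it, so the proof plan is short. The main idea is that the hypothesis is tailored precisely so that \prettyref{cor:B-sRV1-irred-frac-prime} applies to $B := \rv(b)/p(\rv(b))$, yielding primality of $B$ in $\sRV$; then \prettyref{lem:b-irred-rvb-prvb-prime} transports this primality back from $\sRV$ to $\KRR$.

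First I would check that $B = \rv(b)/p(\rv(b))$ makes sense and lives in $\RV_1$. Since $b$ has degree $1$, $\rv(b) \in \RV_1$, and by definition $p(\rv(b)) \in \KR$ divides $\rv(b)$ in $\RV$, so $B \in \RV$; because $\deg$ is additive on products and $p(\rv(b)) \in \RV_0$, we have $\deg(B) = 1$, hence $B \in \RV_1$.

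Next, by hypothesis $B$ is irreducible in $\sRV$ and in $\sPRV^{-1}\cdot\sRV = \Frac(\sPRV)(\Rb^{\leq 0})$. Thus \prettyref{cor:B-sRV1-irred-frac-prime} applies and gives that $B$ is prime in $\sRV$. Finally, since $b$ is irreducible and $B = \rv(b)/p(\rv(b))$ is prime, \prettyref{lem:b-irred-rvb-prvb-prime} concludes that $b$ is prime in $\KRR$.

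There is no real obstacle here: all the work has been done in the preceding lemmas (in particular, Pitteloud's criterion repackaged in \prettyref{cor:PRV-1-prime-in-RV}, the extension from $\PRV$ to $\sPRV$ to $\sRV$ in \prettyref{cor:PRV-1-prime-in-RV}, the GCD-domain trick of \prettyref{lem:b-irred-frac-prime} for crossing from $\sPRV^{-1}\cdot\sRV$ back to $\sRV$, and the primal behaviour of finite-support divisors via \prettyref{thm:KRR-fact-unique}). The only substantive check is the routine verification that $B \in \RV_1$, which is automatic from the degree being multiplicative.
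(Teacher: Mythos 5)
Your proposal is correct and follows the same route as the paper: observe that $B := \rv(b)/p(\rv(b)) \in \RV_1$, apply \prettyref{cor:B-sRV1-irred-frac-prime} to obtain primality of $B$ in $\sRV$, then transfer this to primality of $b$ in $\KRR$ via \prettyref{lem:b-irred-rvb-prvb-prime}. The only difference is that you make explicit the verification that $B \in \RV_1$, which the paper leaves implicit.
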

\begin{proof}
  Just apply \prettyref{cor:B-sRV1-irred-frac-prime} to $\rv(b)$ and use \prettyref{lem:b-irred-rvb-prvb-prime}.
\end{proof}

In turn, constructing series such that $\frac{\rv(b)}{\p(\rv(b))}$ satisfies the above assumptions is not hard, as we show in the following proposition.

\begin{prop}
  \label{prop:prime-degree-1}
  For every $\alpha \in \omega_1$, if $\deg(\alpha) = 1$, then there is a prime series $b \in \KRR$ such that $\ot(b) = \alpha$.
\end{prop}
\begin{proof}
  Let $\alpha$ as in the hypothesis, and write $\alpha = \omega \hatdot \ell \hatplus m$, where $\ell, m \in \Nb$ and $\ell > 0$. Pick some principal series $b_1, \dots, b_\ell \in \KRR$ of order type $\omega$ such that $\rv(b_1), \dots, \rv(b_\ell)$ are $\Kbf$-linearly independent (for instance, by taking $b_1, \dots, b_\ell$ with pairwise disjoint supports, thanks to \prettyref{lem:b1-bl-almost-disjoint}). Pick some real numbers $x_1 < x_2 < \dots < x_{\ell+m-1} < x_{\ell+m} = 0$, and if $\ell > 2$, chose them so that $(x_1 - x_\ell), \dots, (x_{\ell-1} - x_\ell)$ are linearly independent over $\Qb$. Let
  \[ b = b_1t^{x_1} + \dots + b_\ell t^{x_\ell} + t^{x_{\ell+1}} + \dots + t^{\ell+m}. \]

  By construction, $\ot(b) = \alpha$, and
  \[ \rv(b) = \rv(b_1) \cdot t^{x_1} + \dots + \rv(b_\ell) \cdot t^{x_\ell}. \]
  Since $\rv(b_1), \dots, \rv(b_\ell)$ are $\Kbf$-linearly independent, $\p(\rv(b)) = t^{x_\ell}$ by \prettyref{prop:RV-alpha-tensor}.

  Clearly, $\frac{\rv(b)}{t^{x_\ell}}$ is irreducible in $\sRV$: any factorisation must contain a factor of degree $0$, and that can only be an element of $\Kbf$ as we have divided out $\p(\rv(b))$. We claim that it is also irreducible in $\Frac(\sPRV)(\Rb^{\leq 0})$. This is clear if $\ell = 1$. It is also clear for $\ell > 2$, because the exponents $(x_1 - x_\ell), \dots, (x_{\ell-1} - x_\ell)$ are linearly independent over $\Qb$.

  For $\ell = 2$, we use a slightly more delicate argument. Let $B = \rv(b_2)$. Let $\sPRV_B$ denote the ring $\sPRV$ localised with respect to the complement of the ideal generated by $B$. Since $B$ is prime by \prettyref{cor:PRV-1-prime-in-RV}, $\sPRV_B$ is an integral domain. Moreover, for every $C \in \sPRV$ there is a maximum $n$ such that $B^n$ divides $C$ (e.g.\ because $n$ is bounded by the number of terms in the Cantor Normal Form of $\deg(C)$), hence $\sPRV_B$ is a discrete valuation ring. Since by construction $\rv(b_1)$ is not divisible by $B$, $\rv(b_1)$ is a unit in $\sPRV_B$, thus the polynomial $\rv(b_1)X^N + \rv(b_2)$ is irreducible over $\Frac(\sPRV)$ for every $N > 0$ by Eisenstein's criterion. It follows that $\frac{\rv(b)}{t^{x_2}} = \rv(b_1) \cdot t^{x_1 - x_2} + \rv(b_2)$ is irreducible in $\Frac(\sPRV)(\Rb^{\leq 0})$ by \prettyref{fact:KS-is-UFD}.

  On the other hand, $\p(b) = 1$ since it must divide $\p(\rv(b)) = t^{x_\ell}$ and $\sup(b) = 0$, so $b$ is prime by \prettyref{cor:b-irred-frac-sRV1-prime}.
\end{proof}

\begin{exa}
  \label{exa:prime-omega2+k+1}
  To see one example, consider the irreducible series of order type $\omega \hatplus \omega \hatplus k \hatplus 1$
  \[ b = \left(\sum_{n \in \Nb} t^{\frac{x}{3n+1}}\right)t^{3x} + \left(\sum_{n \in \Nb} t^{\frac{x}{3n+2}}\right)t^{2x} + t^{\frac{(k-1)x}{k}} + \dots + t^{\frac{x}{k}} + 1 \]
  from \prettyref{exa:irreducible-omega2+k+1}. Since $b_1 = \sum_{n \in \Nb} t^{\frac{x}{3n+1}}$, $b_2 = \sum_{n \in \Nb} t^{\frac{x}{3n+2}}$ are prime, the above argument shows that $b$ is in fact prime as well.
\end{exa}

\section{From real exponents to omnific integers}
\label{sec:omnific}

We now turn our attention to the ring $\oz$ of omnific integers, and more generally to rings of the form $\ZKGG_\kappa$ for $\Zbf$ a subring of $\Kbf$, possibly a proper class. Recall that we are working with $\Kbf$ a field of characteristic $0$, $\Gbf$ a divisible ordered abelian group, with both possibly proper classes, and $\kappa$ an uncountable cardinal or $\kappa = \on$; the subscript $\kappa$ means that we only take the series with support of cardinality strictly less than $\kappa$ (if $\kappa$ is a cardinal) or with support a set (if $\kappa = \on$). The ring $\oz$ is the special case in which $\Zbf = \Zb$, $\Kbf = \Rb$, $\Gbf$ is the additive group of $\no$ itself, and $\kappa = \on$. Such rings exhibit additional divisibility behaviours:

\begin{fact}[{\cite[Thm.\ 8.6]{Gon1986}}]
  \label{fact:gonshor-arch-classes}
  Let $b \in \ZKGG_\kappa$. If $\supp(b)$ intersects two distinct non-zero Archimedean classes of $\Gbf$, then $b$ is reducible.
\end{fact}

For instance, the omnific integer $\sum_{n \in \Nb} \omega^{\frac{1}{n+1}} + \omega^{\frac{1}{\omega}} + 1$ must be reducible because $\frac{1}{\omega} \prec 1$. Indeed,
\[ b = \sum_{n \in \Nb} \omega^{\frac{1}{n+1}} + \omega^{\frac{1}{\omega}} + 1 = \left(\sum_{n \in \Nb} \sum_{m \in \Nb} (-1)^m\omega^{\frac{1}{n+1} - \frac{m}{\omega}} + 1\right)\left(\omega^{\frac{1}{\omega}} + 1\right). \]
Even though Gonshor's proof of the above statement is written for $\no$, it applies to any $\ZKGG_\kappa$ without modifications. Thus, to find irreducible factors, we shall look at the series $b$ with support intersecting at most one non-zero Archimedean class.

We shall work with the notations of \prettyref{sub:archimedean}.

\subsection{Translating to real powers}

First, we set up some tools to reduce to rings of the form $\Lbf((\Rb^{\leq 0}))$, or more generally $\Lbf((H^{\leq 0}))$ with $H$ Archimedean, so as to be able to apply the results of \prettyref{sec:uniqueness}.

\begin{defn}
  Let $\sigma \in \Gbf_{/\asymp}$. Given $b = \sum_{x \in \Gbf} b_xt^x \in \ZKGG_\kappa$, let $T_\sigma$, $\tau_\sigma$ be the following functions from $\ZKGG_\kappa$ to itself:
  \[ T_{ \sigma}(b) \coloneqq \sum_{[x] \preceq \sigma} b_xt^x, \quad \tau_\sigma(b) \coloneqq \sum_{[x] \prec \sigma} b_xt^x. \]
\end{defn}

Note that obviously $T_\sigma \circ T_\sigma = T_\sigma$, $\tau_\sigma \circ \tau_\sigma = \tau_\sigma$, $T_\sigma \circ \tau_\sigma = \tau_\sigma \circ T_\sigma = \tau_\sigma$; if $\sigma \succ \sigma'$, then $T_\sigma \circ T_{\sigma'} = T_{\sigma'} = T_{\sigma'} \circ T_\sigma$, and likewise for $\tau_\sigma$, $\tau_{\sigma'}$.

\begin{rem}
  For any $b \in \ZKGG_\kappa$ and $\sigma \in \Gbf_{/\asymp}$, $T_\sigma(b)$ and $\tau_\sigma(b)$ are truncations of $b$ in the sense of \prettyref{def:truncation}.
\end{rem}

\begin{prop}
  \label{prop:T-t-sigma-hom}
  For all $\sigma \in \Gbf_{/\asymp}$, $T_{ \sigma}$ and $\tau_{ \sigma}$ are ring homomorphisms.
\end{prop}
\begin{proof}
  Let $b = \sum_{x \in \Gbf} b_xt^x$, $c = \sum_{x \in \Gbf} c_xt^x$ be series. It is clear from the definitions $T_{ \sigma}(b + c) = T_{ \sigma}(b) + T_{ \sigma}(c)$ and $\tau_{ \sigma}(b + c) = \tau_{ \sigma}(b) + \tau_{ \sigma}(c)$.

  As for multiplication, note that for every $x, y \in \Gbf^{\leq 0}$ we have $2\min\{x,y\} \leq x + y \leq \min\{x,y\} \leq 0$, thus $[x + y] = [\min\{x,y\}] = \max\{[x],[y]\}$, which immediately implies that $[x + y] \preceq \sigma$ if and only if $[x] \preceq \sigma$ and $[y] \preceq \sigma$. Likewise, $[x + y] \prec \sigma$ if and only if $[x] \prec \sigma$ and $[y] \prec \sigma$. Thus,
  \[ T_{ \sigma}(bc) = \sum_{[z] \preceq \sigma} t^z\left(\sum_{x+y = z}b_xc_y\right) = \sum_{[x] \preceq \sigma} b_xt^x \sum_{[x] \preceq \sigma} c_xt^x = T_{ \sigma}(b)T_{ \sigma}(c), \]
  and one can similarly show that $\tau_{ \sigma}(bc) = \tau_{ \sigma}(b)\tau_{ \sigma}(c)$.
\end{proof}

\begin{cor}
  \label{cor:deg-sup-sigma-mult}
  For all non-zero $\sigma \in \Gbf_{/\asymp}$, identify $H_\sigma$ with a subgroup of $\Rb$, and let $\deg_\sigma$ and $\sup_\sigma$ be the maps defined as follows for $b \in \ZKGG_\kappa$, with $\iota_\sigma$ from \prettyref{fact:isomorphism-K-H-sigma}:
  \[ \deg_\sigma(b) \coloneqq \deg(\iota_\sigma(T_\sigma(b))) \in \omega_1, \quad {\sup}_\sigma(b) \coloneqq \sup(\iota_\sigma(T_\sigma(b))) \in \Rb. \]
  Then $\deg_\sigma$ and $\sup_\sigma$ are multiplicative semi-valuations.
\end{cor}
\begin{proof}
  Immediate from \prettyref{prop:T-t-sigma-hom}, \prettyref{fact:isomorphism-K-H-sigma}\prettyref{item:fact-isom-K-H-image}, and respectively from \prettyref{main:degree}, \prettyref{prop:sup-valuation}.
\end{proof}

Note that the value of $\deg_\sigma$ does not depend on the embedding of $H_\sigma$ into $\Rb$.

The above observation yields a rather short and direct proof of the main theorem of \cite{Pit2002} (where the group $\Gbf$ is not assumed to be divisible).

\begin{cor}[\cite{Pit2002}]
  \label{cor:pitteloud}
  Let $\Hbf$ be any abelian ordered group (not necessarily divisible, and possibly a proper class). The ideal $\Jbf$ generated by the series $t^x$ for $x \in \Hbf^{<0}$ is prime in $\ZKHHH_\kappa$.
\end{cor}
\begin{proof}
  Suppose that $bc \in \Jbf$ for some $b, c \in \ZKHHH_\kappa$. Since $\Jbf$ is the directed union of the principal ideals generated by $t^y$ for $y \in \Hbf^{<0}$, there exists  $x \in \Hbf^{<0}$ such that $t^x$ divides $bc$.

  Let $\Gbf$ be the divisible hull of $\Hbf$, $\sigma = [x]$, and identify $H_\sigma$ with a subgroup of $\Rb$. We have ${\sup}_\sigma(bc) = {\sup}_\sigma(b) + {\sup}_\sigma(c) \leq {\sup}_\sigma(t^x) < 0$. Therefore, ${\sup}_\sigma(b) < 0$ or ${\sup}_\sigma(c) < 0$.

  Without loss of generality, assume ${\sup}_\sigma(b) < 0$. Then there is some $y \in \Hbf_{\preceq \sigma}^{<0}$ such that ${\sup}_\sigma(b) \leq {\sup}_\sigma(t^y)$. If $y$ is the maximum of $\Hbf^{<0}$, then $t^y$ divides $b$, and we are done. If not, there must be a $z \in \Hbf^{<0}$ such that $y < z < 0$ and $z - y \asymp y$, hence ${\sup}_\sigma(t^y) < {\sup}_\sigma(t^z)$, from which it follows that $t^z$ divides $b$, as desired.
\end{proof}

\subsection{Reducing divisors}
\label{sub:reduction}
By \prettyref{fact:gonshor-arch-classes}, if the support of some $b \in \ZKGG_\kappa$ intersects two non-zero Archimedean classes, then $b$ is reducible. The argument is easy to generalise using the maps $T_\sigma$ and $\tau_\sigma$, and it allows us to study divisibility in $\oz$ by considering the rings $\SLHH$. Recall from \prettyref{sub:archimedean} that for every $\sigma \in \Gbf_{/\asymp}$, we have fixed:
\begin{itemize}
  \item $H_\sigma \cong \Gbf_{\preceq \sigma}/\Gbf_{\prec \sigma}$ (an Archimedean group);
  \item $\Lbf_\sigma = \Kbf((\Gbf_{\prec \sigma}))_\kappa \subseteq \Kbf((\Gbf))_\kappa$ (a field);
  \item $\Sbf_\sigma = \Lbf_\sigma \cap \left(\ZKGG_\kappa\right) \subseteq \ZKGG_\kappa$ (a ring);
\end{itemize}
and a corresponding isomorphism $\iota_\sigma : \Zbf + \Kbf((\Gbf_{\preceq \sigma}^{<0}))_\kappa \to \SLHH$.

The results in this subsection are valid even when $\Gbf$ is not divisible.

\begin{prop}
  \label{prop:b-div-c-div-T}
  For every $b, c \in \ZKGG_\kappa$ with $b \neq 0$, $b$ divides $c$ if and only if $b$ divides $T_\sigma(c)$ for some (equivalently, all) $\sigma \succeq [v(b)]$.
\end{prop}
\begin{proof}
  Let $b,c$ be as in the hypothesis and let $\sigma = [v(b)]$. Write $c = c_0 + c_1$, where $c_0 = T_\sigma(c)$. By construction, $\supp(c_1) \succ \sigma$. We claim that $b$ divides $c_1$, namely that $\frac{c_1}{b}$ is in $\ZKGG_\kappa$, which is clearly equivalent to the conclusion.

  First, we note that $\frac{1}{b}$ is in the subfield $\Kbf((\Gbf_{\preceq \sigma}))_\kappa$, since $\supp(b) \subseteq \Gbf_{\preceq \sigma}$. Therefore, for every negative $x \succ \sigma$ and every exponent $y \in \supp(\frac{1}{b})$, we have $[x] \succ \sigma \succeq [y]$ and in particular $x + y < 0$. It follows at once that $\frac{c_1}{b} \in \Kbf((\Gbf^{<0}))_\kappa \subseteq \ZKGG_\kappa$, as desired.
\end{proof}

\begin{cor}
  \label{cor:T-t-div}
  For every non-zero $b \in \ZKGG_\kappa$ and $\sigma \in \Gbf_{/\asymp}$, each of $\tau_\sigma(b)$ and $T_\sigma(b)$, if non-zero, divides $b$.
\end{cor}

\begin{cor}
  \label{cor:lift-primal-easy}
  Let $b \in \ZKGG_\kappa$ and $\sigma = [v(b)]$. Then $\iota_\sigma(b)$ is primal in $\SLHH$ if and only if $b$ is primal in $\ZKGG_\kappa$.
\end{cor}
\begin{proof}
  For the sake of notation, let $\Rbf = \Zbf + \Kbf((\Gbf_{\preceq \sigma}^{<0}))_\kappa$, $\Sbf = \ZKGG_\kappa$. Note that $\Rbf$ has the property that if $cd \in \Rbf$ for some $c, d \in \Sbf$, then $c, d \in \Rbf$. In particular, if $c, d \in \Rbf$, then $\frac{c}{d} \in \Sbf$ if and only if it does so in $\frac{c}{d} \in \Rbf$. By \prettyref{fact:isomorphism-K-H-sigma}\prettyref{item:fact-isom-K-H-image}, $\iota_\sigma$ is an isomorphism between $\SLHH$ and $\Rbf$, so it suffices to prove that $b$ is primal in $\Sbf$ if and only if $b$ is primal in $\Rbf$.

  Suppose that $b$ is primal in $\Rbf$. Let $c, d \in \Sbf$ such that $\frac{cd}{b} \in \Sbf$. Then by \prettyref{prop:b-div-c-div-T}, $\frac{T_\sigma(c)T_\sigma(d)}{b} \in \Sbf$ too, and since $T_\sigma(cd)$ is in $\Rbf$, we must have $\frac{T_\sigma(c)T_\sigma(d)}{b} \in \Rbf$. Therefore, there are $b_1, b_2 \in \Rbf$ such that $b = b_1b_2$ and $\frac{T_\sigma(c)}{b_1}, \frac{T_\sigma(d)}{b_2} \in \Rbf$. In particular, $\frac{c}{b_1}, \frac{c}{b_2} \in \Sbf$, again by \prettyref{prop:b-div-c-div-T}. Since $c$ and $d$ were arbitrary, we have shown that $b$ is primal in $\Sbf$.

  Conversely, suppose that $b$ is primal in $\Sbf$. Let $c, d \in \Rbf$ such that $\frac{cd}{b}$ in $\Rbf$. Then there are $b_1, b_2 \in \Sbf$ such that $b = b_1b_2$ and $\frac{c}{b_1}, \frac{d}{b_2} \in \Sbf$. In turn, we must have $b_1, b_2 \in \Rbf$, hence also $\frac{c}{b_1}, \frac{d}{b_2} \in \Rbf$, showing that $b$ is primal in $\Rbf$.
\end{proof}

In particular, we can immediately reprove \prettyref{fact:gonshor-arch-classes}: if the support of $b \in \ZKGG_\kappa$ intersects two non-zero Archimedean classes, then $\tau_\sigma(b) \notin \Zbf$, where $\sigma = [v(b)]$, hence $\tau_\sigma(b)$ is a non-trivial divisor of $b$. It is convenient to remove this source of divisibility, as follows.

\begin{defn}
  \label{def:rho}
  Let $\sigma \in \Gbf_{/\asymp}$. Define $\rho_\sigma : \ZKGG_\kappa \to \ZKGG_\kappa$ as
  \[ \rho_\sigma(b) \coloneqq \begin{cases}
    \frac{T_{ \sigma}(b)}{\tau_{ \sigma}(b)} & \text{if } \tau_{ \sigma}(b) \neq 0, \\
    T_{ \sigma}(b) & \text{otherwise}.
  \end{cases} \]
\end{defn}

Note that for every $\sigma \in \Gbf_{/\asymp}$ and $b \in \ZKGG_\kappa$ such that $\rho_\sigma(b) \neq 0$, $\rho_\sigma(b)$ divides $b$. In particular, if $b \in \ZKGG_\kappa$ is irreducible, then $b = u\rho_\sigma(b)$ for some unit $u$ of $\Zbf$; in the case of $b \in \oz$ irreducible, we find $b = \pm \rho_\sigma(b)$. The converse does not hold in general: for instance in $\oz$, we have $\rho_{[0]}(n) = n$ for every $n \in \Zb$, and $\rho_{[1]}(\sum_{n \in \Nb} \omega^{\frac{1}{n}}) = \sum_{n \in \Nb} \omega^{\frac{1}{n}}$ is divisible by every $k \in \Zb$. Moreover, note that we always have $\rho_\sigma \circ \rho_\sigma = \rho_\sigma$.

\begin{prop}
  \label{prop:reduced-equiv-properties}
  Let $b \in \ZKGG_\kappa$ be non-zero. Then the following are equivalent:
  \begin{enumerate}
    \item\label{item:red-is-rho} $b = \rho_\sigma(c)$ for some $c \in \ZKGG_\kappa$ and $\sigma \in \Gbf_{/\asymp}$;
    \item\label{item:red-is-fixed-rho-some-sigma} $\rho_\sigma(b) = b$ for some $\sigma \in \Gbf_{/\asymp}$;
    \item\label{item:red-is-fixed-rho} $\rho_\sigma(b) = b$ for $\sigma = [v(b)]$;
    \item\label{item:red-is-tau-zero-one} $\tau_\sigma(b) \in \{0, 1\}$ for $\sigma = [v(b)]$;
    \item\label{item:red-supp} $\supp(b) \cap \supp(b-1)$ is contained in an Archimedean class.
  \end{enumerate}
\end{prop}
\begin{proof}
  The equivalence \prettyref{item:red-is-rho} $\Leftrightarrow$ \prettyref{item:red-is-fixed-rho-some-sigma} follows at once from $\rho_\sigma \circ \rho_\sigma = \rho_\sigma$. \prettyref{item:red-is-fixed-rho} $\Rightarrow$ \prettyref{item:red-is-fixed-rho-some-sigma} is trivial. For \prettyref{item:red-is-fixed-rho-some-sigma} $\Rightarrow$ \prettyref{item:red-is-fixed-rho}, pick a $\sigma$ such that $\rho_\sigma(b) = b$. If $\sigma = [v(b)]$, we are done; if $\sigma \succ [v(b)]$, then $T_\sigma(b) = \tau_\sigma(b)$, thus $b = \rho_\sigma(b) \in \{0,1\}$, and $b = 1$ satisfies \prettyref{item:red-is-fixed-rho}; if $\sigma \prec [v(b)]$, then $v(\rho_\sigma(b)) \prec v(b)$, a contradiction. We have obtained \prettyref{item:red-is-fixed-rho} in all cases. For \prettyref{item:red-is-fixed-rho} $\Leftrightarrow$ \prettyref{item:red-is-tau-zero-one}, let $\sigma = [v(b)]$, and note that $T_\sigma(\rho_\sigma(b)) = \rho_\sigma(b)$. It follows at once that $\rho_\sigma(b) = b$ if and only if $\tau_\sigma(b) = 1$ or $\tau_\sigma(b) = 0$.

  Now note that for $\sigma = [v(b)]$, $\tau_\sigma(b) = 0$ holds if and only if $b \in \Kbf((\sigma^{<0}))_\kappa$, and therefore $\tau_\sigma(b) = 1$ if and only if $b - 1 \in \Kbf((\sigma^{<0}))_\kappa$. In particular, if $\tau_\sigma(b) \in \{0,1\}$, then $\supp(b) \cap \supp(b - 1) \subseteq \sigma$, proving \prettyref{item:red-is-tau-zero-one} $\Rightarrow$ \prettyref{item:red-supp}. Conversely, suppose that $\supp(b) \cap \supp(b-1) \subseteq \sigma$ for some Archimedean class $\sigma$. Since $\supp(b)$ and $\supp(b - 1)$ can differ by at most the element $0$, we have that at least one of $\supp(b)$ or $\supp(b-1)$ is entirely contained in some class $\sigma$. If $b \neq 1$, then such class is unique and equal to $[v(b)]$, and we find that $\tau_\sigma(b) \in \{0,1\}$ by the previous argument; if $b = 1$, then $\tau_\sigma(b) = 1 \in \{0,1\}$ too. This shows \prettyref{item:red-supp} $\Rightarrow$ \prettyref{item:red-is-tau-zero-one}.
\end{proof}

In particular, $\rho_\sigma$ takes values in $\Zbf + \Kbf((\sigma^{<0}))_\kappa$, or more precisely in $\Kbf((\sigma^{<0}))_\kappa \cup (1 + \Kbf((\sigma^{<0}))_\kappa)$.

\begin{defn}
  \label{def:reduced}
  A non-zero $b \in \ZKGG_\kappa$ is \textbf{reduced} if $\supp(b) \cap \supp(b-1)$ is fully contained in one Archimedean class.
\end{defn}

\begin{exa}
  \label{exa:reduced}
  In the ring $\oz$, checking whether an omnific integer is reduced is a fairly explicit computation. Let $b = \sum_{i<\alpha} b_i\omega^{x_i}$ be an omnific integer. Each $x_i$ (an element of $-\supp(b)$) is itself a surreal number, which we then write as $x_i = \sum_{j<\beta_i} c_{i,j}\omega^{y_{i,j}}$. Then $b$ is reduced if and only if for every $i < \alpha$ we have either $y_{i,0} = y_{0,0}$, which means that $x_i$ and $x_0$ are in the same non-zero Archimedean class, or $x_i = 0$, and if the latter happens for some (necessarily unique) $i > 0$, then $b_i = 1$. Note that such condition is void when $x_0 = 0$, in which case $b$ is always reduced since $\supp(b) \subseteq \{0\}$.

  One may then verify by direct inspection that $\omega + 1$, $\sum_{n \in \Nb} \omega^{\frac{1}{n+1}}$, $\sum_{n \in \Nb} \omega^{1 + \frac{1}{\omega(n+1)}}$, or any element of $\Zb$ are reduced omnific integers, whereas $\sum_{n \in \Nb} \omega^{\frac{1}{n+1}} + \omega^{\frac{1}{\omega}} + 1$, $\omega + 2$ are not reduced.
\end{exa}

\begin{prop}
  \label{prop:reduced-b-div-c}
  Let $b \in \ZKGG_\kappa$ be reduced and $\sigma = [v(b)]$. For all $c \in \ZKGG_\kappa$, $b$ divides $c$ if and only if $b$ divides $\rho_\sigma(c)$.
\end{prop}
\begin{proof}
  Let $c \in \ZKGG_\kappa$. By \prettyref{prop:b-div-c-div-T}, $b$ divides $c$ if and only if $b$ divides $T_{ \sigma}(c)$. We may then directly assume that $c = T_\sigma(c)$.

  Suppose that $b$ divides $c$ and write $c = be$ for some $e \in \ZKGG_\kappa$. Note that $T_\sigma(e) = e$, because $[v(e)] \preceq [v(c)] = \sigma$. If $\tau_{ \sigma}(c) = 0$, then $b$ divides $\rho_\sigma(c) = T_{ \sigma}(c) = c$, as desired. If $\tau_{ \sigma}(c) \neq 0$, then $\tau_{ \sigma}(be) = \tau_\sigma(b)\tau_\sigma(e) \neq 0$, hence we must have $\tau_{ \sigma}(b) = 1$ by \prettyref{prop:reduced-equiv-properties}\prettyref{item:red-is-tau-zero-one}. It follows that $\tau_{ \sigma}(c) = \tau_{ \sigma}(e)$, hence $b\rho_\sigma(e) = \rho_\sigma(c)$, which says that $b$ divides $\rho_\sigma(c)$, as desired.

  For the converse, assume that $b$ divides $\rho_\sigma(c)$. When $\rho_\sigma(c) \neq 0$, $\rho_\sigma(c)$ divides $c$, so $b$ divides $c$. When $\rho_\sigma(c) = 0$, then in fact $c = 0$, and so $b$ is a divisor of $c$.
\end{proof}

\begin{prop}
  \label{prop:lift-primal-easy-reduced}
  Let $b \in 1 + \Kbf((\Gbf^{<0}))_\kappa$ be reduced and $\sigma = [v(b)]$. Then $b$ is primal in $\ZKGG_\kappa$ if and only if $\iota_\sigma(b)$ is primal in $\LHH$.
\end{prop}
\begin{proof}
  By \prettyref{cor:lift-primal-easy}, $b$ is primal in $\ZKGG_\kappa$ if and only if $\iota_\sigma(b)$ is primal in $\SLHH$. Since $b$ is reduced and in $1 + \Kbf((\Gbf^{<0}))_\kappa$, we have in particular that $\tau_\sigma(b) = 1$, thus $\iota_\sigma(b) \in 1 + \Lbf_\sigma((H_\sigma^{<0}))$. One can then verify that a series in $1 + \Lbf_\sigma((H_\sigma^{<0}))$ is primal in $\SLHH$ if and only if it is primal in $\LHH$ by direct computation.
\end{proof}

If we drop the assumption that $b$ is in $1 + \Kbf((\Gbf^{<0}))_\kappa$, whether $b$ is primal or not depends also on the properties of $\Gbf$, $\kappa$, and $\Zbf$. We analyse this phenomenon in \prettyref{sec:on-primal-series}.

\begin{proof}[Proof of \prettyref{main:Gonshor}]
  By \prettyref{prop:lift-primal-easy-reduced} and \prettyref{cor:KR-primal-KRR}, $\omega^{\sqrt{2}} + \omega + 1$ is primal in $\oz$. Since it is also irreducible by \cite{Ber2000}, it is prime.
\end{proof}

\subsection{Pseudo-irreducibles and pseudo-polynomials}
\label{sub:pseudo-stuff}

We can now describe the factorisation theory of reduced series in $\ZKGG_\kappa$, based on \prettyref{thm:KHH-fact-unique} and the fact that the map $\iota_\sigma$ of \prettyref{fact:isomorphism-K-H-sigma} identifies each $b \in \ZKGG_\kappa \setminus \Zbf$ with an element of $\SLHH$, where $\sigma = [v(b)]$. Recall that reduced series are then mapped into $\Zbf + \Lbf_\sigma((H_\sigma^{<0}))$.

We have an additional divisibility phenomenon to control: there are series which are irreducible in $\LHH$, but are reducible in $\Zb + \LHH$, such as the series $\sum_{n \in \Nb} t^{\frac{-x}{n+1}}$ for any $x \in H_\sigma^{<0}$. We handle this with a suitable weakening of the notion of irreducibility.

\begin{defn}
  \label{def:pseudo-stuff}
  Given a reduced $b \in \ZKGG_\kappa \setminus \Zbf$ with $\sigma = [v(b)]$:
  \begin{itemize}
    \item the \textbf{pseudo-support} of $b$, denoted by $\osupp(b)$, is the support of $\iota_\sigma(b)$ (a subset of $H_\sigma \subseteq \Rb$);
    \item $b$ is a \textbf{pseudo-polynomial} if its pseudo-support is finite;
    \item $b$ is a \textbf{pseudo-monomial} if its pseudo-support consists of exactly one point;
    \item $b$ is \textbf{pseudo-irreducible} if $\iota_\sigma(b)$ is irreducible in $\LHH$;
    \item $b$ is \textbf{almost irreducible} if $\iota_\sigma(b)$ is almost irreducible in $\LHH$ (in the sense of \prettyref{sub:non-complete}).
  \end{itemize}
\end{defn}

\begin{exa}
  In the case of $\oz$, we can make the computation of pseudo-supports explicit. Just as in \prettyref{exa:reduced}, let $b = \sum_{i < \alpha} b_i\omega^{x_i}$ be an omnific integer, and write each exponent as $x_i = \sum_{j < \beta_i} c_{i,j}\omega^{y_{i,j}}$. We assume that $b$ is reduced and not in $\Zb$, and note that $\sigma = [v(b)] = [x_0] = [\omega^{y_{0,0}}]$. Recall that $H_\sigma \cong \Rb$, and we choose the isomorphism so that $\omega^{y_{0,0}}$ is mapped to $1$.

  Then the pseudo-support of $b$ is the set of coefficients $\{-c_{i,0} : y_{i,0} = y_{0,0}\}$, plus the element $0$ if there is some (unique) $i$ such that $x_i = 0$. We may then verify by direct inspection that $\omega^2 + \omega + 1$, $\sum_{n \in \Nb}\omega^{\omega + \frac{1}{n+1}} + \omega^\omega + 1$ are pseudo-polynomials of pseudo-supports respectively $\{-2, -1, 0\}$, $\{-1, 0\}$; similarly, $\sum_{n \in \Nb}\omega^{3\omega + \frac{1}{n+1}}$ is a pseudo-monomial since its pseudo-support is $\{-3\}$.

  Likewise, the pseudo-support of $b = \sum_{n \in \Nb}\omega^{\frac{1}{n+1}}$ is $\{-1, -\frac{1}{2}, -\frac{1}{3}, \dots\}$, thus of order type $\omega$ with supremum $0$ (as a subset of $\Rb$); therefore, $b$ is pseudo-irreducible by \cite[Thm.\ 10.5]{Ber2000}, and it is not a pseudo-polynomial.
\end{exa}

We claim that the above types of series all have a good factorisation theory. Moreover, we claim that they are all that we need. First, pseudo-polynomials can be analysed using Ritt's theorem.

\begin{prop}[Ritt's theorem for pseudo-polynomials]
  \label{prop:ritt-pseudo-poly}
  Let $p$ be a pseudo-polynomial in $\ZKGG_\kappa$. There are an integer $n \in \Nb$ and pseudo-polynomials $m, c_1, \dots, c_n \in \ZKGG_\kappa$ such that $p = mc_1 \cdots c_n$, where $m$ is a pseudo-monomial, each $c_i$ satisfies exactly one of the following:
  \begin{enumerate}
    \item\label{item:ritt-pp-dim-2} $c_i$ is irreducible and $\lspan{\osupp(c_i)}$ has dimension $\geq 2$;
    \item\label{item:ritt-pp-dim-1} $\lspan{\osupp(c_i)}$ has dimension $1$ and $0 \in \supp(c_i)$;
  \end{enumerate}
  and the pseudo-supports of the $c_i$'s satisfying \prettyref{item:ritt-pp-dim-1} are pairwise $\Qb$-linearly independent. Moreover, the factors $c_i$ and $m$ as above are unique up to reordering.
\end{prop}
\begin{proof}
  Let $\sigma = [v(p)]$. The conclusion is trivial if $\sigma = [0]$, so we may assume $\sigma \neq [0]$, namely $p \notin \Zbf$. Note that there are unique $k \in \Lbf_\sigma$, $x \in H_\sigma^{\leq 0}$, and a pseudo-polynomial $p'$ such that $\iota_{v(p)}(p) = kt^xp'$ and $0 \in \supp(p')$, $0 \notin \supp(p' - 1)$. Write $\iota_{v(p)}(p') = c_1' \cdots c_n'$ according to \prettyref{prop:ritt}, so that each $c_i'$ is irreducible, or with support generating a space of dimension 1,  with the latter spaces being pairwise linearly independent. After multiplying by the appropriate elements of $\Lbf_\sigma$, we may assume that $0 \notin \supp(c_i' - 1)$ for every $i$, which determines such factors uniquely up to reordering. We find the desired factorisation on letting $m = \iota_\sigma^{-1}(m)$ and $c_i = \iota_\sigma^{-1}(c_i')$.
\end{proof}

Moreover, we have an immediate generalisation of \prettyref{main:Gonshor}.
\begin{prop}
  \label{prop:pseudo-poly-primal-easy}
  Every pseudo-polynomial in $1 + \Kbf((\Gbf^{<0}))_\kappa$ is primal in the ring $\ZKGG_\kappa$.
\end{prop}
\begin{proof}
  Immediate by \prettyref{cor:KH-primal-KHH} and \prettyref{prop:lift-primal-easy-reduced}.
\end{proof}

The factorisation theory for pseudo-monomials, pseudo-irreducibles, and almost irreducibles is even simpler.
\begin{prop}
  \label{prop:fact-pseudo-monomial}
  Every pseudo-monomial is almost irreducible and not pseudo-irreducible.
\end{prop}
\begin{proof}
  Let $m \in \ZKGG_\kappa$ be a pseudo-monomial and $\sigma = [v(m)]$. By definition, $\iota_\sigma(m)$ is a monomial $kt^x$ for some $k \in \Lbf_\sigma$ and $x \in H_\sigma^{<0}$. It now suffices to note that every monomial of $\LHH$ is almost irreducible, because its only divisors are other monomials, and not pseudo-irreducible, for instance because $t^{\frac{x}{2}}$ is a proper divisor of $kt^x$.
\end{proof}

\begin{prop}
  \label{prop:fact-almost-irreducible}
  Let $b \in \ZKGG_\kappa$ be almost irreducible. Then:
  \begin{enumerate}
    \item\label{item:almost-irred-div} for every divisor $c$ of $b$, one of $c$, $\frac{b}{c}$ is a pseudo-monomial, or $v(c) \prec v(b)$ or $v\left(\frac{b}{c}\right) \prec c$;
    \item\label{item:pseudo-irred-div} if $b$ is pseudo-irreducible, then for every divisor $c$ of $b$ we have $v(c) \prec v(b)$ and $\frac{b}{c}$ pseudo-irreducible, or  $v(\frac{b}{c}) \prec v(b)$ and $c$ pseudo-irreducible;
    \item\label{item:all-small-div} if $0 \notin \supp(b)$, then every non-zero $c \in \ZKGG_\kappa$ with $v(c) \prec v(b)$ divides $b$;
    \item\label{item:pseudo-irred-osupp} $b$ is pseudo-irreducible if and only if $\sup\osupp(b) = 0$;
    \item\label{item:irred-osupp} $b$ is irreducible if and only if $0 \in \supp(b)$, or $\sup\osupp(b) = 0$, $\Zbf$ is a field, and $\Gbf_{\prec [v(b)]} = \{0\}$.
  \end{enumerate}
\end{prop}
\begin{proof}
  Let $\sigma \in \Gbf_{/\asymp}$ be the Archimedean class of $v(b)$.

  \prettyref{item:almost-irred-div} Let $c$ be a divisor of $b$. Recall that by assumption of almost irreducibility, one of $\iota_\sigma(c)$, $\iota_\sigma(\frac{b}{c})$ is a monomial, thus of the form $kt^x$ for some $k \in \Lbf_\sigma$ and $x \in H_\sigma$. When $x < 0$, then one of $b$, $\frac{b}{c}$ is a pseudo-monomial; if $x = 0$, then one of $b$, $c$ is in $\Lbf_\sigma$, thus of canonical valuation $\prec v(b)$.

  \prettyref{item:pseudo-irred-div} Let $c$ be a divisor of $b$. When $b$ is pseudo-irreducible, then one of $\iota_\sigma(c)$, $\iota_\sigma(\frac{b}{c})$ is in $\Lbf_\sigma$ while the other is irreducible in $\LHH$, hence either $\frac{b}{c}$ is pseudo-irreducible and $v(c) \prec v(b)$, or $c$ is pseudo-irreducible and $v(\frac{b}{c}) \prec v(b)$.

  \prettyref{item:all-small-div} If $0 \notin \supp(b)$, then $\tau_\sigma(b) = 0$, hence $T_{\sigma'}(b) = 0$ for every $\sigma' \prec \sigma$. By \prettyref{prop:b-div-c-div-T}, $b$ is divisible by every $c \in \ZKGG_\kappa$ such that $v(c) \prec v(b)$.

  \prettyref{item:pseudo-irred-osupp} \prettyref{rem:almost-irred} shows that $\iota_\sigma(b)$ is irreducible in $\Lbf_\sigma((H_\sigma^{\leq 0}))$, or equivalently $b$ is pseudo-irreducible, if and only if $\sup\osupp(b) = 0$.

  \prettyref{item:irred-osupp} If $0 \in \supp(b)$, $\iota_\sigma(b)$ is irreducible in $\LHH$ by \prettyref{item:pseudo-irred-osupp}. Moreover, $\tau_\sigma(b) = 1$, so $\iota_\sigma(b) \in 1 + \Lbf_\sigma((H_\sigma^{<0}))$, and it follows that $\iota_\sigma(b)$ is irreducible in $\SLHH$ as well. This implies that $b$ is irreducible. If instead we know that $0 = \sup\osupp(b)$, $\Zbf$ is a field, and $\Gbf_{\prec \sigma} = \{0\}$, then $\Sbf_\sigma = \Zbf$ is a field, and one deduces similarly that $b$ is irreducible, proving one direction of~\prettyref{item:irred-osupp}. Conversely, if $b$ is irreducible, then $\sup\osupp(b) = 0$ by \prettyref{item:pseudo-irred-osupp}; if moreover $0 \notin \supp(b)$, then $b$ is divisible by every non-zero $k \in \Sbf_\sigma$ by \prettyref{item:all-small-div}, thus every non-zero $k \in \Sbf_\sigma$ is a unit, or in other words, $\Sbf_\sigma$ is a field, which immediately implies that $\Zbf$ is a field and that $\Gbf_{\prec \sigma} = \{0\}$, completing the argument.
\end{proof}

We conclude this subsection by spelling out the immediate generalisation of \prettyref{thm:KHH-fact-unique} for reduced series.
\begin{prop}
  \label{prop:reduced-fact-unique}
  Let $b \in \ZKGG_\kappa \setminus \Zbf$ be reduced. Then there are $n \in \Nb$ and reduced $p, m, c_1, \dots, c_n \in \ZKGG_\kappa$ such that $b = pmc_1 \cdots c_n$, where
  \begin{enumerate}
    \item\label{item:red-pseudo-poly} $p$ is a pseudo-polynomial with $0 \in \supp(p)$ and $v(p) \asymp v(b)$, or $p = 1$;
    \item\label{item:red-pseudo-mono} $m$ is a pseudo-monomial with $v(m) \asymp v(b)$, or $m = 1$;
    \item\label{item:red-almost-irred} each $c_i$ is almost irreducible with infinite pseudo-support and $v(c_i) \asymp v(b)$.
  \end{enumerate}
  Among these factorisations, $p$ is unique. Moreover:
  \begin{enumerate}[label=(\alph*),ref=\alph*]
      \item\label{item:red-sup} if $\sup_\sigma(b) \in H_\sigma$, then we may take $c_1, \dots, c_n$ pseudo-irreducible, in which case $m$ is unique up to multiplication by series $d \in \Kbf((\Gbf))_\kappa$ such that $\supp(d) \prec v(b)$;
      \item\label{item:red-max} if $\osupp(b)$ has a maximum, then we may take $c_1, \dots, c_n$ irreducible, in which case $m$ is unique.
  \end{enumerate}
\end{prop}
\begin{proof}
  Let $\sigma = [v(b)]$. By \prettyref{thm:KHH-fact-unique} applied to $\iota_\sigma(b)$, there is a factorisation $\iota_\sigma(b) = p' t^y c_1'\cdots c_n'$ where $p' = \p(\iota_\sigma(\rho_\sigma(b)))_{H_\sigma} \in 1  + \Lbf_\sigma(H_\sigma^{<0})$, each $c_i'$ is almost irreducible in $\LHH$ with infinite support, and $y \in H_\sigma^{\leq 0}$.

  Since $b$ is reduced, after multiplying $t^y$ by some $k \in \Kbf$, we may assume that $0 \notin \supp(c_i'-1)$ whenever $0 \in \supp(c_i')$. Hence their preimages $p = \iota_\sigma^{-1}(p')$, $c_i = \iota_\sigma^{-1}(c_i')$ are reduced. Moreover, $p$ is a pseudo-polynomial with $v(p) \asymp v(b)$, or $p = 1$, and each $c_i$ is almost irreducible with infinite pseudo-support. We have found the factors of the form \prettyref{item:red-pseudo-poly} and \prettyref{item:red-almost-irred}.

  Now let $m = \iota_\sigma^{-1}(kt^y)$. If $y < 0$, then $m$ is a pseudo-monomial, with $v(m) \asymp v(b)$. If $y = 0$, and so $m = k$, we distinguish two cases: if $0 \notin \supp(b)$, then $0 \notin \osupp(c_i)$ for some $i$, hence after replacing $c_i$ with $mc_i$, we may assume that $m = 1$; otherwise, $0 \in \supp(b)$, hence $\tau_\sigma(b) = \tau_\sigma(p) = \tau_\sigma(c_i) = 1$ for all $i$, and so in particular $k = \tau_\sigma(m) = 1$, as required in \prettyref{item:red-pseudo-mono}. Note that $m$ is reduced in either case.

  If $\sup_\sigma(b) \in H_\sigma$, then by \prettyref{thm:KHH-fact-unique}, we may choose $c_i'$ irreducible, in which case $c_1, \dots, c_n$ are pseudo-irreducible and $y = \sup_\sigma(b)$ is uniquely determined. Conversely, any such factorisation must satisfy $\sup(\iota_\sigma(m)) = y$. In particular, $m$ is uniquely determined up to multiplication by series $d \in \Kbf((\Gbf))_\kappa$ such that $\supp(d) \prec v(b)$, proving \prettyref{item:red-sup}.

  When moreover $\osupp(b)$ has a maximum, we may choose again $c_i'$ irreducible, and we now have $0 \in \supp(c_i')$, thus $c_1,\dots,c_n$ are irreducible by \prettyref{prop:fact-almost-irreducible}. Conversely, if we are given one such factorisation, we must have $\iota_\sigma(m) = kt^y$, where $y = \max\osupp(b)$ and $k$ is the coefficient of $t^0$ in $\frac{\iota_\sigma(b)}{t^y}$, thus $m$ is uniquely determined, proving \prettyref{item:red-max}.
\end{proof}

\subsection{Infinite product}

To conclude our analysis of $\ZKGG_\kappa$, we want to go from reduced series to arbitrary series. We address this by writing every $b \in \ZKGG_\kappa$ as an infinite product of reduced series.

We first present the special case of finite products, which require no additional tools. Suppose that $\supp(b)$ intersect only finitely many Archimedean classes (for instance, because $\supp(b)$ itself is finite). Let $\sigma_1 \succ \dots \succ \sigma_n$ be an enumeration of the classes intersected by $\supp(b)$. We have
\[ b = T_{\sigma_1}(b) = \frac{T_{\sigma_1}(b)}{\tau_{\sigma_1}(b)} \tau_{\sigma_1(b)} = \rho_{\sigma_1}(b) T_{\sigma_2}(b) = \dots = \rho_{\sigma_1}(b) \cdot \dots \cdot \rho_{\sigma_n}(b). \]
Thus, the factorisation theory of such a $b$ is completely controlled by its associated reduced series. For instance, we have the following.
\begin{cor}\label{cor:finite-supp-primal-easy}
  Every $b \in 1 + \Kbf((\Gbf^{<0}))_\kappa$ with finite support is primal in $\ZKGG_\kappa$.
\end{cor}
\begin{proof}
  Take $b$ as in the assumptions. For every $\sigma$ intersected by $\supp(b)$, the corresponding series $\rho_\sigma(b)$ is a pseudo-polynomial with $0$ in its support, and is thus primal in $\ZKGG_\kappa$ by \prettyref{prop:pseudo-poly-primal-easy}, or it is $1$. Since $b$ is a finite product of such primal series, it is primal.
\end{proof}

We now generalise the above finite product to the case of $\supp(b)$ intersecting infinitely many Archimedean classes.

\begin{defn}
  \label{def:Red}
  Given $b \in \ZKGG_\kappa$, let
  \[ \Red(b) \coloneqq (\rho_\sigma(b) : \sigma \in \Gbf_{/\asymp} \text{ such that } \supp(b) \cap \sigma \neq \varnothing). \]
\end{defn}
\begin{prop}
  \label{prop:Rho-uniqueness}
  For all $b, c \in \ZKGG_\kappa$, $\Red(b) = \Red(c)$ if and only if:
  \begin{enumerate}
    \item\label{item:Rho-min} there is a minimum $\sigma$ intersecting $\supp(b)$ and $b = c$, or
    \item\label{item:Rho-no-min} there is no such minimum and $\supp(\frac{b}{c}) \prec \sigma$ for all $\sigma$'s intersecting $\supp(b)$.
  \end{enumerate}
\end{prop}
\begin{proof}
  Let $b, c \in \ZKGG_\kappa$ such that $\Red(b) = \Red(c)$. In particular, $\supp(b)$ and $\supp(c)$ intersect the same Archimedean classes. Furthermore, the set of classes intersected by $\supp(b)$ is reverse well ordered, so we may enumerate them as $\sigma_0 \succ \sigma_1 \succ \dots \succ \sigma_\beta \succ \dots$ for $\beta < \alpha$, where $\alpha$ is some ordinal. Let $d = \frac{b}{c} \in \Kbf((\Gbf))_\kappa$, so that $b = cd$. We shall prove by induction that $\supp(d) \prec \sigma_\beta$ for all $\beta < \alpha$, or $b = c$. By assumption, $\rho_{\beta}(b) = \rho_{\beta}(c)$.

  Suppose that $\supp(d) \prec \sigma_\gamma$ for every $\gamma < \beta$. Let $b_1 = b - T_{\sigma_\beta}(b)$, $c_1 = c - T_{\sigma_\beta}(c)$, so that $\supp(b_1), \supp(c_1) \succ \sigma_\beta$. The Archimedean classes intersected by $\supp(b_1)$ are then exactly the $\sigma_\gamma$'s for $\gamma < \beta$, and the same holds for $\supp(c_1)$. Thus by inductive hypothesis, $\supp(c_1d)$ intersects the same Archimedean classes as $\supp(c_1)$, we have $\supp(d) \prec \supp(b_1), \supp(c_1)$, and moreover $\supp(c_1d) \succ \supp(T_{\sigma_\beta}(c)d)$. It follows that $b_1 = c_1d$ and $T_{\sigma_\beta}(b) = T_{\sigma_\beta}(c)d$, hence $\supp(d) \preceq \sigma_\beta$.

  We distinguish two cases. If $\tau_{\sigma_\beta}(b) = 0$, then $\sigma_\beta$ is the minimum Archimedean class intersected by $\supp(b)$ (equivalently, by $\supp(c)$), thus $\tau_{\sigma_\beta}(c) = 0$, hence
  \[ d = \frac{T_{\sigma_\beta}(b)}{T_{\sigma_\beta}(c)} = \frac{\rho_{\sigma_\beta}(b)}{\rho_{\sigma_\beta}(c)} = 1. \]
  Therefore, $d = 1$, or in other words, $b = c$. If $\tau_{\sigma_\beta}(b) \neq 0$, then $\sigma_\beta$ is not the minimum Archimedean class intersected by $\supp(b)$, thus $\tau_{\sigma_\beta}(c) \neq 0$, and
  \[ d = \frac{T_{\sigma_\beta}(b)}{T_{\sigma_\beta}(c)} = \frac{\rho_{\sigma_\beta}(b)}{\rho_{\sigma_\beta}(c)} \frac{\tau_{\sigma_\beta}(b)}{\tau_{\sigma_\beta}(c)} = \frac{\tau_{\sigma_\beta}(b)}{\tau_{\sigma_\beta}(c)}. \]
  Therefore, $\supp(d) \prec \sigma_\beta$, completing the induction. Thus, we either find a minimum $\beta$ such that $\sigma_\beta$ intersects $\supp(b)$ and $b = c$, reaching conclusion~\prettyref{item:Rho-min}, or the induction runs over all the classes $\sigma_\beta$ intersecting $\supp(b)$, reaching conclusion~\prettyref{item:Rho-no-min}.

  For the converse, in case \prettyref{item:Rho-min} we have $b = c$, so the conclusion is trivial. Suppose we are in case \prettyref{item:Rho-no-min}, so take some $b \in \ZKGG_\kappa$ such that there is no minimum $\sigma$ intersecting $\supp(b)$. Let $c \in \ZKGG_\kappa$ be such that $d = \frac{b}{c} \in \Kbf((\Gbf))_\kappa$ satisfies $\supp(d) \prec \sigma$ for all $\sigma$'s intersecting $\supp(b)$. Note that $\supp(c) = \supp(bd)$ intersects the same Archimedean classes as $\supp(b)$.

  Now fix a $\sigma$ intersecting $\supp(b)$. Since $\sigma$ is not minimal, we must have $\tau_\sigma(b) \neq 0$. Since $\supp(d) \prec \sigma$, one can easily verify that $T_\sigma(c) = T_\sigma(b)d$, $\tau_\sigma(c) = \tau_\sigma(b)d$. It follows that
  \[ \rho_\sigma(c) = \frac{T_\sigma(c)}{\tau_\sigma(c)} = \frac{T_\sigma(b)d}{\tau_\sigma(b)d} = \frac{T_\sigma(b)}{\tau_\sigma(b)} = \rho_\sigma(b). \]

  Therefore, $\Red(b) = \Red(c)$.
\end{proof}

\begin{defn}
  \label{def:inverse-R}
  Let $(b_\sigma : \sigma \in J)$ be some family of non-zero reduced series, where $J$ is a non-empty subset of $\Gbf_{/\asymp}$. We define $\iprod_{\sigma \in J} b_\sigma$ to be any arbitrarily chosen $b \in \ZKGG_\kappa$, if one exists, such that $\Red(b) = (b_\sigma : \sigma \in J)$.
\end{defn}

In other words, $\iprod$ is a section of $\Red$ over its image. The equality $\iprod_{\sigma \in J} b_\sigma = b$ means that $J$ is the set of the $\sigma$'s that intersect the support of $b$, and that $b_\sigma = \rho_\sigma(b)$ for all $\sigma \in J$. If such $b$ exists, and $J$ is finite, we have that
\[ \iprod_{\sigma \in J} b_\sigma = b = \prod_{\sigma \in J} \rho_{\sigma}(b) = \prod_{\sigma \in J} b_\sigma, \]
thus $\iprod$ and $\prod$ agree on the families where they are both defined.

Conversely, for every non-zero $b \in \ZKGG_\kappa$, if we let
\[ J = \{ \sigma \in \Gbf_{/\asymp} : \supp(b) \cap \sigma \neq \varnothing \}, \]
then $\iprod_{\sigma \in J} \rho_\sigma(b)$ is defined, and we may thus write $b = d \iprod_{\sigma \in J} \rho_\sigma(b)$ for a unique $d \in \Kbf((\Gbf))_\kappa$. \prettyref{prop:Rho-uniqueness} implies that $\supp(d) \prec \supp(b)$, and if there is a minimum $\sigma$ intersecting $\supp(b)$, then we must have $d = 1$. In particular, if $0 \in \supp(b)$, then $\sigma = [0]$ is such minimum, and so $d = 1$.

\begin{rem}
  \label{rem:canonical-infinite-product}
  \prettyref{prop:Rho-uniqueness} shows that the choice of value for $b = \iprod_{\sigma \in J} b_\sigma$ is not unique when $J$ has no minimum: for instance, in this case $b$ and $-b$ are both valid choices, as we get $\Red(b) = \Red(-b)$.

  We can make the choice of $b$ canonical if we have additional structure. Write $b = \iprod_{\sigma \in J} b_\sigma = rt^{-x} + b'$ with $v(b') > x$. The range of possible choices for $x$ is precisely the convex class of the elements $y \in \Gbf$ such that $(x-y) \prec \supp(b)$. If we are working in $\no$, we may then require that $r = 1$ and that $x$ is minimal with respect to simplicity. This determines a unique $b$. More generally, if $\Gbf$ is equipped with a truncation closed embedding into a Hahn group, one can require that $x$ is minimal under truncation and that $r = 1$, which again determines a unique choice for $b$.
\end{rem}

\begin{rem}\label{rem:image-of-Red}
  For a family $(b_\sigma : \sigma \in J)$ to be in the image of $\Red$, and thus have a value for $\iprod_{\sigma \in J} b_\sigma$, one must at least have:
  \begin{enumerate}
    \item $v(b_\sigma) \in \sigma$ for all $\sigma \in J$;
    \item $J$ is reverse well ordered;
    \item if $\sigma \in J$ is not the minimum of $J$, then $0 \in \supp(b_\sigma)$.
  \end{enumerate}
  When $J$ is finite, the above conditions are sufficient: one can immediately verify that under those assumptions, $b = \prod_{\sigma \in J} b_\sigma$ satisfies $\Red(b) = (b_\sigma : \sigma \in J)$.

  This is not the case for $J$ infinite. For instance, there is no $b \in \oz$ such that
  \[ \Red(b) = \left(\omega^{\!\sqrt[n]{\omega}} + 1 : \left[\!\sqrt[n]{\omega}\right] \in \no_{/\asymp}\right) \]
  where $n$ ranges over the positive natural numbers. Indeed, if such $b$ exists, we may also define $b_i = \iprod_{n \geq i} \left(\omega^{\!\sqrt[n]{\omega}} + 1\right)$. Note that $v(b_i) \asymp \omega^{\sqrt[i]{\omega}}$ and
  \[ b = b_1 = \omega^\omega b_2 + b_2 = \omega^\omega(\omega^{\!\sqrt{\omega}}b_3 + b_3) + b_2 = \dots. \]
  In each sum $\omega^{\!\sqrt[n]{\omega}}b_{n+1} + b_{n+1}$, the supports of the two summands are disjoint. It follows that $b$ contains monomials from $\omega^\omega b_2$, $\omega^{\omega + \sqrt{\omega}} b_3$, $\omega^{\omega + \sqrt{\omega} + \!\sqrt[3]{\omega}} b_4$, $\dots$, thus its support is not well ordered, a contradiction. It would be interesting to find a reasonable characterisation of the families in the image of $\Red$, although it is not needed for the rest of this paper.
\end{rem}

\begin{defn}
  \label{def:infinite-product}
  Let $(b_i : i \in I)$ be some family of reduced series, where $I$ is some set. We write $\iprod_{i \in I} b_i = b$ when:
  \begin{itemize}
    \item for every $\sigma \in \Gbf_{/\asymp}$, the class $\{i \in I : v(b_i) \in \sigma\}$ is finite;
    \item if we let $J = \{[v(b_i)] : i \in I\}$ and $c_\sigma = \prod_{v(b_i) \in \sigma}b_i$, then $b = \iprod_{\sigma \in J} c_\sigma$ as per \prettyref{def:inverse-R}.
  \end{itemize}
\end{defn}
Note that when the family $(b_i : i \in I)$ is of the form $(b_\sigma : \sigma \in I)$ with $I \subseteq \Gbf_{/\asymp}$ and $v(b_\sigma) \in \sigma$ for all $\sigma \in I$, the above definition of $\iprod_{i \in I}b_i$ coincides with \prettyref{def:inverse-R}.

\begin{thm}
  \label{thm:ZKGG}
  Let $b \in \ZKGG_\kappa$ be non-zero. Then there are a family $(c_i : i \in I)$ of reduced series $c_i \in \ZKGG_\kappa$ and a series $d \in \Kbf((\Gbf))_\kappa$ such that $b = d \iprod_{i \in I} c_i$, where $\supp(d) \prec \supp(b)$ or $d = 1$, and for each $i \in I$, $v(c_i) \nprec \supp(b)$ and $c_i$ is one of the following:
  \begin{enumerate}[label=(\arabic*),ref=\arabic*]
    \item\label{item:pseudo-poly-irred} an irreducible pseudo-polynomial with $\lspan{\osupp(c_i)}$ of dimension $\geq 2$;
    \item\label{item:pseudo-poly-dim1} a pseudo-polynomial with $\langle\osupp(c_i)\rangle_\Qb$ of dimension $1$ and $0 \in \supp(c_i)$;
    \item\label{item:pseudo-mono} a pseudo-monomial;
    \item\label{item:almost-irred} an almost irreducible series with infinite pseudo-support.
  \end{enumerate}
  Moreover:
  \begin{enumerate}[label={(\alph*)},ref=\alph*]
    \item\label{item:pseudo-poly-irred-unique} the factors in \prettyref{item:pseudo-poly-irred} are unique up to reordering;
    \item\label{item:pseudo-poly-dim1-unique} for each $\sigma$, we may assume that the pseudo-supports of the factors $c_i$ as in \prettyref{item:pseudo-poly-dim1} with $[v(c_i)] = \sigma$ are pairwise $\Qb$-linearly independent, in which case they are unique up to reordering;
    \item\label{item:pseudo-mono-irred-min} the factors $c_i$ in \prettyref{item:almost-irred} that are reducible and the ones in \prettyref{item:pseudo-mono} must satisfy $[v(c_i)] = \min\{[x] : x \in \supp(b)\}$ (and thus are at most finitely many);
    \item\label{item:d=1} if $\{[x] : x \in \supp(b)\}$ has a minimum, then we may assume $d = 1$;
    \item\label{item:sup-in-H} if $\{[x] : x \in \supp(b)\}$ has a minimum $\sigma$, and $\sup_\sigma(b) \in H_\sigma$, then we may assume that the factors in \prettyref{item:almost-irred} are pseudo-irreducible, in which case the product of all (finitely many) factors in \prettyref{item:pseudo-mono} is unique up to multiplication by $e \in \Kbf((\Gbf))_\kappa$ with $\supp(e) \prec \supp(c_i)$;
    \item\label{item:no-pseudo-irred} if $\supp(b)$ has a maximum, then we may assume that the factors in \prettyref{item:almost-irred} are irreducible, in which case the product of $d$ and all factors in \prettyref{item:pseudo-mono} is unique.
  \end{enumerate}
\end{thm}
\begin{proof}
  Write $b = d\iprod_{\sigma \in J}\rho_\sigma(b)$ as per \prettyref{def:inverse-R} and discussion thereafter, thus with $J = \{\sigma : \supp(b) \cap \sigma \neq \varnothing\}$. We then apply \prettyref{prop:reduced-fact-unique} to each $\rho_\sigma(b)$ for $\sigma \in J$ and find
  \[ \rho_\sigma(b) = p_\sigma m_\sigma c_{1,\sigma} \cdots c_{n_\sigma,\sigma}. \]
  where $p_\sigma$ is a (unique) reduced pseudo-polynomial with $0 \in \supp(p_\sigma)$, and $[v(p_\sigma)] = \sigma$ or $p_\sigma = 1$; $m_\sigma$ is a pseudo-monomial with $[v(m_\sigma)] = \sigma$ or $m_\sigma = 1$; each $c_{i,\sigma}$ is almost irreducible with infinite support and $[v(c_{i,\sigma})] = \sigma$. We discard the factors $m_\sigma = 1$ for $\sigma \neq [0]$ and the factors $p_\sigma = 1$. We further apply \prettyref{prop:ritt-pseudo-poly} to the remaining factors $p_\sigma \neq 1$. Let $(c_i : i \in I)$ be the resulting collection of reduced series, which are of the required types \prettyref{item:pseudo-poly-irred}, \prettyref{item:pseudo-poly-dim1}, \prettyref{item:pseudo-mono}, \prettyref{item:almost-irred}, and satisfy $b = d\iprod_{i \in I} c_i$.

  Now suppose $b = d\iprod_{i \in I} c_i$ is a factorisation as in the statement. By definition of product, for any $\sigma \in \Gbf_{/\asymp}$, $\{i \in I : v(c_i) \in \sigma\}$ is finite, and it is non-empty if and only if $\supp(b) \cap \sigma \neq \varnothing$, in which case $\prod_{v(c_i) \in \sigma} c_i = \rho_\sigma(b)$. Pick some $i \in I$ and let $\sigma = [v(c_i)]$.

  If $c_i$ is a pseudo-polynomial as in \prettyref{item:pseudo-poly-irred}, \prettyref{item:pseudo-poly-dim1}, then $c_i$ divides $\rho_\sigma(b)$, and conclusions \prettyref{item:pseudo-poly-irred-unique}, \prettyref{item:pseudo-poly-dim1-unique} follow at once from Propositions~\ref{prop:ritt-pseudo-poly},~\ref{prop:reduced-fact-unique}.

  If there is an $x \in \supp(b)$ such that $[x] \prec \sigma$, then $\tau_\sigma(\rho_\sigma(b)) = 1$, hence $0 \in \supp(\rho_\sigma(b))$, which implies $0 \in \supp(c_i)$. In turn, $c_i$ cannot be a pseudo-monomial, as it would be equal to $1$ by \prettyref{prop:reduced-fact-unique}, contradicting the assumption $[x] \prec \sigma$, and if $c_i$ is almost  irreducible, then it is irreducible by \prettyref{prop:fact-almost-irreducible}. Switching to the contrapositive, if $c_i$ is a non-irreducible almost irreducible factor, or is a pseudo-monomial, then $\sigma$ is the minimum of $\{[x] : x \in \supp(b)\}$, and $c_i$ is a factor of $\rho_\sigma(b)$, hence it belongs to a finite subset of the family $(c_i : i \in I)$, proving \prettyref{item:pseudo-mono-irred-min}.

  Now assume that $\{[x] : x \in \supp(b)\}$ has a minimum $\sigma$. If $d \neq 1$, then $0 \notin \supp(b)$, hence $\tau_\sigma(b) = 0$, so $\rho_\sigma(b) = T_\sigma(b)$ and $0 \notin \supp(\rho_\sigma(b))$, thus $0 \notin \supp(c_i)$ for some $c_i$ with $[v(c_i)] = \sigma$, and in this case, we may replace $c_i$ with $dc_i$ and assume $d = 1$, as claimed in \prettyref{item:d=1}.

  Assume moreover that $\sup_\sigma(b) \in H_\sigma$, or equivalently that $\sup_\sigma(\rho_\sigma(b)) \in H_\sigma$. Then by \prettyref{prop:reduced-fact-unique}, we may assume that the factors in \prettyref{item:almost-irred} are pseudo-irreducible, and in that case, the product of all pseudo-monomial factors together with $d$ is the pseudo-monomial factor of $\rho_\sigma(b)$ given by \prettyref{prop:reduced-fact-unique}, up to multiplication by some $e \in \Kbf((\Gbf))_\kappa$ with $\supp(e) \prec v(c_i)$. This shows \prettyref{item:sup-in-H}.

  To conclude, suppose that $\supp(b)$ has a maximum. This implies in particular that $\{[x] : x \in \supp(b)\}$ has a minimum $\sigma$ and that $\supp(\rho_\sigma(b))$ has a maximum. By \prettyref{prop:reduced-fact-unique} again, we may assume that the factors of $\rho_\sigma(b)$ of type \prettyref{item:almost-irred} appearing in the factorisation are irreducible. In this case, the product of all pseudo-polynomial factors and $d$ is unique, proving \prettyref{item:no-pseudo-irred}.
\end{proof}

\begin{proof}[Proof of \prettyref{main:Oz}]
  We apply \prettyref{thm:ZKGG} to $\oz = \Zb + \Rb((\no^{<0}))_\on$. When $0 \in \supp(b)$, $\supp(b)$ in particular has a maximum, and $\sigma = [0]$ is the minimum $\sigma$ such that $\rho_\sigma(b) \neq 0$. Thus, in the factorisation given by \prettyref{thm:ZKGG}, the factors in \prettyref{item:almost-irred} are irreducible, the product of the pseudo-monomials \prettyref{item:pseudo-mono} is unique and in $\Zb$, and $d = 1$. To conclude, it suffices to split the factors in $\Zb$ into irreducible factors.
\end{proof}

\begin{rem}
  In $\oz$, the groups $H_\sigma$ are always isomorphic to $\Rb$ by \prettyref{prop:H-sigma-is-R}, thus the second premise of \prettyref{item:sup-in-H} is automatically true. In particular, the factors of infinite pseudo-support that appear in $\oz$ are always at least pseudo-irreducible, and the product of the pseudo-monomials is unique up to multiplication by surreal numbers with small support as specified in \prettyref{item:sup-in-H}.
\end{rem}

\begin{rem}\label{rem:infinite-products}
  Surreal numbers already have a natural definition of infinite product, thanks to the presence of a (canonical) exponential function:
  \[ \prod_{i \in I} b_i \coloneqq \exp\left(\sum_{i \in I} \log(b_i)\right). \]
  This is only defined when the family $(\log(b_i) : i \in I)$ is \emph{summable}, that is to say, the union of the supports $\supp\left(\log(b_i)\right)$ is well ordered and each of its monomials appears in $\supp(\log(b_i))$ for at most finitely many $i \in I$. We shall  see that $\prod$ and $\iprod$ disagree in general. We shall use the following facts about surreal exponentiation:
  \begin{itemize}
      \item $\log(\omega^{\omega^x})$ is a monomial for every $x \in \no$;
      \item $\log\omega^{\sum_{i < \alpha} r_i\omega^{x_i}} = \sum_{i<\alpha}r_i\log\left(\omega^{\omega^{x_i}}\right)$;
      \item $\log(1 + \varepsilon) = \sum_{n \in \Nb} (-1)^n \frac{\varepsilon^{n+1}}{n+1}$ for all $\varepsilon \prec 1$;
      \item $\exp$ and its inverse $\log$ are strictly increasing.
  \end{itemize}
  We refer the reader to \cite{Gon1986} for the definition of $\exp$ and $\log$ on surreal numbers and proofs of the above properties.

  First of all, the two products have different domains. $\iprod$ is only defined on the families $(b_i : i \in I)$ where every $b_i$ is reduced and if $b_i \succ b_j$, then $0 \in \supp(b_i)$, as explained in \prettyref{rem:image-of-Red}. For instance, the family $(\omega^{\sqrt[n+1]{\omega}} : n \in \Nb)$ is not in the domain of $\iprod$, but
  \[ \prod_{n \in \Nb}\omega^{\sqrt[n+1]{\omega}} = \omega^{\sum_{n \in \Nb}\sqrt[n+1]{\omega}}.\]

  There are also families in the domain of $\iprod$ that do not have natural infinite product. Pick any sequence of positive monomials $x_1 \succ x_2 \succ x_3 \succ \dots \succ \omega$; for instance, $x_n = \sqrt[n]{\omega^{\omega}}$. Let
  \[ b = \omega^{x_1} +
    \omega^{x_2 - \omega} +
    \omega^{x_3 - 2\omega} + \dots + 1 \in \oz. \]
  Let $b_0 \coloneqq \rho_{[0]}(b) = 1$, $b_{n+1} \coloneqq \rho_{[x_{n+1}]}(b)$. Since $0 \in \supp(b)$, we have $b = \iprod_{n \in \Nb} b_n$. For $n > 0$ we have
  \[ b_n = \frac{\omega^{x_n - x_{n+1} + \omega}}{1 + \omega^{-x_{n+1}+x_{n+2}-\omega} + \omega^{-x_{n+1}+x_{n+3}-2\omega} + \dots + \omega^{-x_{n+1} + n\omega}} + 1. \]
  Therefore, for $n > 0$, $\log(b_n) = \log\left(\omega^{x_n - x_{n+1} + \omega}\right) + \varepsilon_n = \log\left(\omega^{x_n}\right) - \log\left(\omega^{x_{n+1}}\right) + \log\left(\omega^{\omega}\right) + \varepsilon_n$ where $\varepsilon_n \prec 1$. Since $\log\left(\omega^{x_n}\right)$, $\log\left(\omega^\omega\right)$ are monomials and $\log\left(\omega^{x_n}\right) \succ \log\left(\omega^\omega\right) \succ 1$, the monomial $\log\left(\omega^\omega\right)$ appears infinitely many times, thus $(\log(b_n) : n \in \Nb)$ is not summable, and so the natural product of $(b_n : n \in \Nb)$ is not defined.

  A minor tweak shows that the two products may even disagree on their common domain when infinite families are involved (we recall that $\prod$ and $\iprod$ agree on finite families, as explained in \prettyref{rem:image-of-Red}). Let
  \[ c = \omega^{x_1 - \omega} +
    \omega^{x_2 - \omega} +
    \omega^{x_3 - \omega} + \dots + 1. \]
  As before, let $c_0 \coloneqq \rho_{[0]}(c)= 1$, $c_{n+1} \coloneqq \rho_{[x_{n+1}]}(c)$, so that $c = \iprod_{n \in \Nb} c_n$. For $n > 0$,
  \[ c_n = \frac{\omega^{x_n - x_{n+1}}}{1 + \omega^{-x_{n+1}+x_{n+2}} + \omega^{-x_{n+1}+x_{n+3}} + \dots + \omega^{-x_{n+1}+\omega}} + 1. \]
  On writing $c_n = \omega^{x_n - x_{n+1}}(1 + \delta_n)$, the support of $\delta_n$ is contained in the additive semigroup generated by $(-x_n + x_{n+1})$, $(-x_{n+1}+\omega)$, and $(-x_{n+1} + x_{n+k+2})$ for $k \in \Nb$. Likewise,
  \[ \log(c_n) = \log\left(\omega^{x_n}\right) - \log\left(\omega^{x_{n+1}}\right) + \varepsilon_n \]
  where $\supp(\varepsilon_n)$ is contained in the same semigroup. In particular, $\supp(\varepsilon_n) \subseteq [x_n] \cup [x_{n+1}]$, thus $\supp(\varepsilon_n) \cap \supp(\varepsilon_{n+k+2}) = \varnothing$ for all $n, k \in \Nb$. It follows that the family $(\log(c_n) : n \in \Nb)$ is summable, and
  \[ \prod_{n \in \Nb} c_n = \exp\left(\sum_{n \in \Nb}\log(c_n)\right) = \omega^{x_1}\exp\left(\sum_{n \in \Nb} \varepsilon_n\right) \asymp \omega^{x_1} \not\asymp \omega^{x_1 - \omega} \asymp c. \]
\end{rem}

\subsection{The \texorpdfstring{$\sup$}{sup} valuation}
\label{sub:sup-valuation}

We report here another consequence of \prettyref{cor:deg-sup-sigma-mult}: the function $b \mapsto \sup(\supp(b))$, where the supremum is taken in the Dedekind-MacNeille completion of $\Gbf$, is a multiplicative valuation, up to taking an appropriate quotient. This will not be used in the rest of the paper, so the reader is free to skip to the next section. The notation is similar to the one of \cite[\S XIII.13]{Bir1979}, but we use $\oplus$ for the sum in the completion of $\Gbf$ to avoid ambiguities; also, we tweak the definition of completion so that it works when $\Gbf$ is a proper class.

Note that here \emph{we do not assume that $\Gbf$ is divisible}.

\begin{defn}
  Given a set $A \subseteq \Gbf$, let $L(A) \coloneqq \{x \in \Gbf : x \leq A\}$, $U(A) \coloneqq \{x \in \Gbf : x \geq A\}$, and $A^\# \coloneqq L(U(A)) \supseteq A$. We call \textbf{restricted Dedekind-MacNeille completion} of $\Gbf$ the class $\overline{\Gbf}_\kappa \coloneqq \{A^\# : A \subseteq \Gbf\} = \{L(A) : A \subseteq \Gbf, A \text{ set}, |A| < \kappa\}$ (where $|A| < \on$ is vacuously true). Given $\eta, \zeta \in \overline{\Gbf}_\kappa$, $x \in \Gbf$, we define the following:
  \[ \eta \leq \zeta \xLeftrightarrow{\text{def}} \eta \subseteq \zeta, \quad \eta \oplus \zeta \coloneqq (\eta + \zeta)^\#, \quad x^\# \coloneqq \{x\}^\#. \]
\end{defn}

\begin{prop}
  \label{prop:dedekind}\phantom{ }
  \begin{enumerate}
  \item\label{item:G-bar-monoid} $\overline{\Gbf}_\kappa$ equipped with $\leq$, $\oplus$ is an ordered commutative monoid with identity $0^\#$;
  \item\label{item:sharp-embedding} $x \mapsto x^\#$ is an ordered group embedding;
  \item\label{item:G-dense} for every $\eta < \zeta$ in $\overline{\Gbf}_\kappa$, there is $x \in \Gbf$ such that $\eta \leq x < \zeta$.
  \end{enumerate}
\end{prop}
\begin{proof}
  Straightforward from the definitions and left to the reader.
\end{proof}
In light of \prettyref{prop:dedekind}(\ref{item:sharp-embedding}), and the fact that $\eta \leq x^\#$ is equivalent to $\eta \leq x$, we shall identify $\Gbf$ with its copy inside $\overline{\Gbf}_\kappa$.

\begin{defn}
  For $\eta, \zeta \in \overline{\Gbf}_\kappa^{\leq 0}$, write $\eta \sim \zeta$ if for every $n \in \Nb$ and $n\eta \leq x,y \leq n\zeta$ (or $n\zeta \leq x,y \leq n\eta$) we have $x - y \prec y$.
\end{defn}

\begin{prop}
  \label{prop:sim-congruence}
  The relation $\sim$ is a congruence on $\overline{\Gbf}_\kappa^{\leq 0}$ with convex equivalence classes.
\end{prop}
\begin{proof}
  In what follows, let $\eta, \zeta, \xi \in \overline{\Gbf}_\kappa^{\leq 0}$.

  Reflexivity and symmetry of $\sim$ are immediate from the definition. For transitivity, say that $\eta \sim \zeta \sim \xi$, with $\eta \leq \xi$. Let $x,y \in \Gbf$ be such that $\eta \leq x \leq y \leq \xi$. If $\zeta \leq \eta$ or $\xi \leq \zeta$, we immediately obtain that $x - y \prec y$, so assume $\eta \leq \zeta \leq \xi$. If $x+y \leq 2\zeta$, then $2\eta \leq 2x, x+y \leq 2\zeta$, hence $2x -(x+y) = x - y \prec 2y \asymp y$. Likewise, if $2\zeta \leq x+y$, then $y - x \prec y$. By applying the same argument with $n\eta$ and $n\xi$ in place of respectively $\eta$ and $\xi$, we obtain that $\eta \sim \xi$. Note that we have also verified that the $\sim$-equivalence classes are convex.

  Now say that $\eta \sim \zeta$, with $\eta \leq \zeta$. We wish to prove that $\eta \oplus \xi \sim \zeta \oplus \xi$ for any $\xi$. We may assume that $\xi \neq 0$. Let $x,y \in \Gbf$ be such that $\eta \oplus \xi \leq x < y \leq \zeta \oplus \xi$. By construction, $y \neq 0$. Suppose by contradiction that there exists some $n \in \Nb$ such that $n(x - y) \leq 2y$, or in other words $nx \leq (n+2)y$. In particular,
  \[ n(\eta \oplus \xi) \leq nx \leq (n+2)y < (n+1)y < ny \leq n(\zeta \oplus \xi). \]
  Since $(n+1)y < n\zeta \oplus n\xi$, there are $z \in n\zeta$ and $w \in n\xi$ such that $(n+1)y \leq z + w$. By assumption, $(n+2)y \geq n\eta + w$, thus $(n+2)y - w \geq n\eta$. On the other hand, $(n+1)y - w \leq z \leq n\zeta$. It follows that $((n+2)y - w) - ((n+1)y - w) = y \prec y$, a contradiction.

  Therefore, $n(x-y) > 2y$ for every $n \in \Nb$, thus $x - y \prec y$. It now suffices to apply the same argument with $n\eta$, $n\zeta$, $n\xi$ in place of respectively $\eta$, $\zeta$, $\xi$ to deduce that $\eta \oplus \xi \sim \zeta \oplus \xi$, as desired.
\end{proof}

\begin{defn}
  Given $b \in \KGG_\kappa$, let $\sup(b)$ be the $\sim$-equivalence class of $\supp(b)^\#$. Let $\tilde{\Gbf}_\kappa^{\leq 0}$ be the quotient of $\overline{\Gbf}_\kappa^{\leq 0}$ by $\sim$.
\end{defn}

\begin{prop}
  \label{prop:sup-G}
  The map $\sup : \KGG_\kappa \to \tilde{\Gbf}_\kappa^{\leq 0}$ is a multiplicative valuation.
\end{prop}
\begin{proof}
  Note that the quotient $\tilde{\Gbf}_\kappa^{\leq 0}$ is a well-defined ordered monoid by \prettyref{prop:sim-congruence}. Let $b,c \in \KGG_\kappa$ be some arbitrary series. Note that $\sup(0) = [\varnothing^\#] \eqqcolon -\infty$, the minimum element of $\tilde{\Gbf}_\kappa^{\leq 0}$. If $b \neq 0$, any $x \in \supp(b)$ is such that $\varnothing^\# \leq 2x, x \leq \supp(b)^\#$, while however $2x - x \not\prec x$, thus $\sup(b) \neq [\varnothing^\#]$. Therefore, $\sup(b) = -\infty$ if and only if $b = 0$.

  Since $\supp(b + c) \subseteq \supp(b) \cup \supp(c)$ and $\supp(bc) \subseteq \supp(b) + \supp(c)$, we have $\sup(b+c) \leq \max\{\sup(b),\sup(c)\}$ and $\sup(bc) \leq \sup(b) \oplus \sup(c)$. Thus, it remains to prove that the latter inequality is an equality. For the sake of notation, let $\eta = \supp(b)^\#$, $\zeta = \supp(c)^\#$, $\xi = \supp(bc)^\#$. We know that $\xi \leq \eta \oplus \zeta$, and we wish to prove that $\xi \sim \eta \oplus \zeta$.

  Let $x, y \in \Gbf$ be such that $n\xi \leq x \leq y \leq n(\eta \oplus \zeta)$. We need to show that $x - y \prec y$. Let $\sigma = [x]$. Let $\pi_\sigma$ be the projection from $\Gbf_{\preceq \sigma}$ to $H_\sigma$. By construction, $\pi_\sigma(y) \geq \pi_\sigma(x) \geq n\sup_\sigma(bc)$. By \prettyref{cor:deg-sup-sigma-mult}, $n\sup_\sigma(bc) = n\sup_\sigma(b) + n\sup_\sigma(c)$. Suppose by contradiction that $x - y \succeq y$, thus equivalently that $\pi_\sigma(x) < \pi_\sigma(y)$. Then $\pi_\sigma(y) > n\sup_\sigma(b) + n\sup_\sigma(c)$, from which it follows that $y > n(\eta \oplus \zeta)$, a contradiction.
\end{proof}

\section{On primal series}
\label{sec:on-primal-series}

Let us continue working on $\ZKGG_\kappa$. We keep using the notations of \prettyref{sub:archimedean}. We shall prove the following.

\begin{thm}
  \label{thm:non-pS}
  Suppose that $\ZKGG_\kappa$ is a pre-Schreier domain. Then the following conditions hold for all $\sigma \in \Gbf_{/\asymp}$:
  \begin{enumerate}
  \item[\textup{(A1)}\textsubscript{$\sigma$}]\label{item:arch-quot} $H_\sigma \cong \Rb$ or $\sigma = [0]$;
  \item[\textup{(A2)}\textsubscript{$\sigma$}]\label{item:big-cof} $\Gbf_{\prec \sigma}$ has cofinality $\geq \kappa$, or $\Gbf_{\prec \sigma} = \{0\}$ and $\Frac(\Zbf) = \Kbf$, or $\sigma = [0]$;
  \item[\textup{(A3)}\phantom{\textsubscript{$\sigma$}}]\label{item:Z-pS} $\Zbf$ is a pre-Schreier domain.
  \end{enumerate}
  Moreover, $\Lbf_\sigma((\Rb^{\leq 0}))$ is a pre-Schreier domain for every $\sigma \in \Gbf_{/\asymp}$.
\end{thm}

The above three conditions are satisfied by $\oz$, for every $\sigma$: the first one by \prettyref{prop:H-sigma-is-R}; the second one by \prettyref{prop:no-G-sigma-cofinality}; the third one because $\Zb$ is a unique factorisation domain.

Conversely, we prove that if the above conditions are satisfied, then primal elements in $\LHH$ yield primal elements of $\ZKGG_\kappa$ via the map $\iota_\sigma$, generalising \cite[Cor.\ 4.3]{BKK2006}. In this way, we will produce many new primal elements in $\oz$ (\prettyref{exas:primes-primal}).

\subsection{Producing non-primal series}
\label{sub:not-pre-schreier}
We start by proving \prettyref{thm:non-pS}. The easiest condition to deal with is \refZpS{}.

\begin{prop}
  \label{prop:Z-not-pS-not-pS}
  If $\ZKGG_\kappa$ is a pre-Schreier domain, then \refZpS{} holds.
\end{prop}
\begin{proof}
  Suppose that $\ZKGG_\kappa$ is a pre-Schreier domain and pick $b \in \Zbf$. Suppose that $b$ divides $cd$ in $\Zbf$ for some $c, d \in \Zbf$, namely $\frac{cd}{b} \in \Zbf$. Then there are $b_1, b_2 \in \ZKGG_\kappa$ such that $b_1b_2 = b$ and $\frac{c}{b_1}, \frac{d}{b_2} \in \ZKGG_\kappa$. Since $v(b_1) = v(b_2) = v(b) = 0$, we have $b_1, b_2 \in \Zbf$. In particular, we also have $\frac{c}{b_1}, \frac{d}{b_2} \in \Zbf$. By letting $c$, $d$ range over $\Zbf$, we find that $b$ is primal in $\Zbf$. Since $b$ was arbitrary, $\Zbf$ is a pre-Schreier domain, proving \refZpS{}.
\end{proof}

For the remaining assumptions \refarchquot{}, \refbigcof{}, we exhibit series that are not primal when one of those assumptions is not satisfied.

\begin{prop}
  \label{prop:G-not-complete-not-pS}
  Let $H$ be a non-trivial, non-discrete Archimedean group that is not complete. Then for every $x \in H^{<0}$ and non-zero $k \in \Kbf$, $kt^x$ is not primal in $\Zbf + \Kbf((H^{<0}))$. In particular, $\Zbf + \Kbf((H^{<0}))$ is not a pre-Schreier domain.
\end{prop}
\begin{proof}
  Suppose without loss of generality that $H \subsetneq \Rb$ and let $x \in H^{<0}$. Since $H$ is a group, $\Rb \setminus H$ is dense in $\Rb$, so we can find some $y \in \Rb \setminus H$ such that $x < y < 0$. Since $H$ is not discrete, it is dense in $\Rb$, so we can find a increasing sequence $(y_n \in H)_{n \in \Nb}$ such that $\sup_{n \in \Nb} y_n = y$. Likewise, we can find a increasing sequence $(z_n \in H)_{n \in \Nb}$ such that $\sup_{n \in \Nb} z_n = x - y$. Let
  \[ b = \sum_{n \in \Nb} t^{y_n}, c = \sum_{n \in \Nb} t^{z_n} \in \Zbf + \Kbf((H^{<0})). \]
  By \prettyref{prop:sup-valuation}, $\sup(bc) = \sup(b) + \sup(c) = y + (x-y) = x \in H$. This implies that $t^x$ divides $bc$. On the other hand, if we write $t^x = d_1d_2$ with $d_1, d_2 \in \KHH$, then $d_i = k_it^{w_i}$ for some $k_i \in \Kbf$ and $w_i \in H^{\leq 0}$ such that $k_1k_2 = 1$, $w_1 + w_2 = x$. Since $y \notin H$, then we either have $w_1 < y$, or $w_2 < x - y$. Therefore, either $d_1$ divides $b$, or $d_2$ divides $c$, but not both. This shows that $t^x$ is not primal.
\end{proof}
\begin{exa}
  \label{exa:RQQ-not-pre-schreier}
  The series $t^{-1}$ is not primal in $\Rb((\Qb^{\leq 0}))$, so $\Rb((\Qb^{\leq 0}))$ is not a pre-Schreier domain, hence also not a GCD domain. On the other hand, note that if $H$ is a discrete Archimedean group, then $\Kbf((H^{\leq 0})) = \Kbf(H^{\leq 0})$, because the only well ordered subsets of $H$ are finite, hence it is a GCD domain.
\end{exa}

\prettyref{prop:G-not-complete-not-pS} suggests that monomials may be the only source of non-primal elements when $H$ is Archimedean. It seems appropriate to localise $\ZKHH$ by monomials, namely work in the ring
\begin{multline*}
  t^H\left(\ZKHH\right) \coloneqq \{ t^xb : x \in H,\ b \in \ZKHH \} = \\
    = \{ b \in \Kbf((H)) : \supp(b) \text{ is bounded}\} = t^H\KHH.
\end{multline*}
In this ring, all monomials are units, and conversely, all units are monomial (since the map $\sup : t^H\KHH \to H$ defined by $\sup(t^xb) \coloneqq x + \sup(b)$ is clearly a valuation, and $v(b) \geq \sup(b)$ for all $b$, thus the units must satisfy $v(b) = \sup(b)$). This prompts the following question:
\begin{que*}
  Let $H$ be a divisible Archimedean group. Is $t^H\KHH$ a pre-Schreier domain?
\end{que*}

\begin{cor}
  \label{cor:H-not-complete-not-pS}
  Let $\sigma \in \Gbf_{/\asymp}$ be such that \refarchquot{} does not hold. Then every pseudo-monomial $m$ with $[v(m)] = \sigma$ (for instance, $t^x$ for some $x \in \sigma^{<0}$) is not primal in $\ZKGG_\kappa$. In particular, $\ZKGG_\kappa$ is not a pre-Schreier domain.
\end{cor}
\begin{proof}
  Let $\sigma$ as in the hypothesis. By \prettyref{prop:G-not-complete-not-pS}, if $m \in \ZKGG_\kappa$ is a pseudo-monomial with $[v(m)] = \sigma$, namely $\iota_\sigma(m) = kt^x$ for some $x \in H_\sigma^{<0}$ and $k \in \Lbf_\sigma$, then $\iota_\sigma(m)$ is not primal in $\SLHH$. Then $m$ is not primal in $\Zbf + \Kbf((\Gbf_{\preceq \sigma}^{< 0}))_\kappa$, and it follows that $m$ is not primal in $\ZKGG_\kappa$ by \prettyref{prop:b-div-c-div-T}.
\end{proof}

\begin{prop}
  \label{prop:G-smallcof-not-pS}
  If there is some $\sigma \in \Gbf_{/\asymp}$ such that \refbigcof{} does not hold, then $\ZKGG_\kappa$ is not a pre-Schreier domain.
\end{prop}
\begin{proof}
  Let $\sigma$ be as in the hypothesis. Then $\Lbf_\sigma \neq \Frac(\Sbf_\sigma)$: when $\Gbf_{\prec \sigma} \neq \{0\}$, it follows from \prettyref{prop:frac-Z-is-K}, otherwise simply because $\Sbf_\sigma = \Zbf$ and $\Lbf_\sigma = \Kbf$. Let $\eta \in \Lbf_\sigma \setminus \Frac(\Sbf_\sigma)$, and set
  \[ b = t^{-x}, \quad c = \eta t^{-x}, \quad d = \sum_{n \in \Nb} t^{-\frac{x}{n+1}} \]
  where $x$ is some element of $\Gbf^{<0}$ with $x \in \sigma$. Note that $d$ is pseudo-irreducible.

  We have that $b$ divides $cd$, since $\frac{cd}{b} = \eta\sum_{n \in \Nb} t^{\frac{-x}{n+1}} \in \ZKGG_\kappa$. Now suppose that $b = b_1b_2$ for some $b_1,b_2 \in \ZKGG_\kappa$ with $b_2$ dividing $d$. Since $b$ is a pseudo-monomial, $b_2$ is a pseudo-monomial with $v(b_2) \asymp x$ or it satisfies $v(b_2) \prec x$ by Propositions~\ref{prop:fact-pseudo-monomial},~\ref{prop:fact-almost-irreducible}\prettyref{item:almost-irred-div}. Since $d$ is pseudo-irreducible, we must be in the latter case by \prettyref{prop:fact-almost-irreducible}\prettyref{item:pseudo-irred-div}, hence $b_2 \in \Sbf_\sigma$. It follows that $\frac{c}{b_1} = \frac{cb_2}{t^{-x}} = \eta b_2 \in \Lbf_\sigma \setminus \Sbf_\sigma$. It follows that $\frac{c}{b_1} \notin \ZKGG_\kappa$, which means that $b_1$ does not divide $c$. This shows that $b$ is not primal in $\ZKGG_\kappa$.
\end{proof}

\begin{exa}
  A notable example is $\Zb + \Kbf((\Rb^{<0}))$ for any $\Kbf \neq \Qb$. In this case, for $\sigma = [1]$, we have $\Gbf_{\prec \sigma} = \{0\}$ and $\Frac(\Zb) \neq \Kbf$. As the above proof shows, $t^{-1}$ divides $(\sqrt{2}t^{-1})\left(\sum_{n \in \Nb}t^{\frac{-1}{n+1}}\right)$, but there is no factorisation $t^{-1} = b_1b_2$ such that $b_1$ divides $\sqrt{2}t^{-1}$ and $b_2$ divides $\sum_{n \in \Nb}t^{\frac{-1}{n+1}}$. Thus, $\Zb + \Kbf((\Rb^{<0}))$ is neither a pre-Schreier domain, nor in particular a GCD domain.
\end{exa}

\begin{proof}[Proof of \prettyref{thm:non-pS}]
  By Propositions~\ref{prop:Z-not-pS-not-pS},~\ref{prop:G-smallcof-not-pS}, and \prettyref{cor:H-not-complete-not-pS}.
\end{proof}

\subsection{Lifting primal elements}
\label{sub:lifting-primal}

First, we generalise the primality transfer result \prettyref{prop:lift-primal-easy-reduced}. We start with an abstract transfer lemma.

\begin{lem}
  \label{lem:primal}
  Let $\Lbf$ be a field and $\Abf$ be a commutative $\Lbf$-algebra equipped with an $\Lbf$-algebra morphism $\pi : \Abf \to \Lbf$. Let $\Sbf \subseteq \Lbf$ be a subring. An element $b \in \pi^{-1}(\Sbf)$ is primal in $\pi^{-1}(\Sbf)$ if and only if:
  \begin{enumerate}
    \item\label{item:pi(b)-not-0} either $\pi(b) \neq 0$ is primal in $\Sbf$ and $b$ is primal in $\Abf$,
    \item\label{item:pi(b)-is-0} or $\pi(b) = 0$ and $b$ is primal in $\pi^{-1}(\Frac(\Sbf))$.
  \end{enumerate}
\end{lem}
\begin{proof}
  Let $b \in \pi^{-1}(\Sbf)$. In a small abuse of notation, we shall write $\frac{c}{d} \in R$ to mean that there is some $e \in R$ such that $de = c$, and let $\frac{c}{d}$ denote any such witness $e$ (of which there may be several if $d$ is a zero divisor). Let us prove the left-to-right direction first.

  Let us work in the case $\pi(b) \neq 0$. Suppose that $b$ is primal in $\pi^{-1}(\Sbf)$. Say that $\frac{cd}{\pi(b)} \in \Sbf$ for some $c, d \in \Sbf$. By assumption, $b' = \frac{b}{\pi(b)} \in \pi^{-1}(1) \subseteq \pi^{-1}(\Sbf)$, and $\frac{cdb'}{b} = \frac{cd}{\pi(b)} \in \Sbf \subseteq \pi^{-1}(\Sbf)$. Since $b$ is primal in $\pi^{-1}(\Sbf)$, there are $b_1,b_2 \in \pi^{-1}(\Sbf)$ such that $b = b_1b_2$ and $\frac{c}{b_1}, \frac{db'}{b_2} \in \pi^{-1}(\Sbf)$. Thus $\pi(b_1)$, $\pi(b_2)$ divide respectively $c$, $d$ in $\Sbf$, and since $c$, $d$ were arbitrary, this shows that $\pi(b)$ is primal in $\Sbf$. Now suppose that $\frac{cd}{b} \in \Abf$ for some $c, d \in \Abf$.

  If $\pi(c) \neq 0$, let $c' = \frac{c}{\pi(c)} \in \Abf$, otherwise define $c' = c$. Likewise for $d'$. Since $\frac{cd}{b} \in \Abf$, we also have $\frac{c'd'}{b} \in \Abf$, where $\pi(c'), \pi(d') \in \{0,1\} \subseteq \Sbf$. Therefore, there are $b_1, b_2 \in \pi^{-1}(\Sbf)$ such that $b = b_1b_2$ and $\frac{c'}{b_1}, \frac{d'}{b_2} \in \Abf$, hence in particular $\frac{c}{b_1}, \frac{d}{b_2} \in \Abf$, showing that $b$ is primal in $\Abf$. This proves the left-to-right direction of \prettyref{item:pi(b)-not-0}.

  When $\pi(b) = 0$, suppose that $\frac{cd}{b} \in \pi^{-1}(\Frac(\Sbf))$ for some $c, d \in \pi^{-1}(\Frac(\Sbf))$. Define $c'$, $d'$ as in the previous paragraph. We have that $\frac{c'd'}{b} \in \pi^{-1}(\Frac(\Sbf))$, and since $\pi(c'), \pi(d') \in \{0,1\} \subseteq \Sbf$, there are $b_1, b_2 \in \pi^{-1}(\Sbf)$ such that $b = b_1b_2$ and $\frac{c'}{b_1}, \frac{d'}{b_2} \in \pi^{-1}(\Sbf)$. In turn, $\frac{c}{b_1}, \frac{d}{b_2} \in \pi^{-1}(\Frac(\Sbf))$, proving the left-to-right direction of \prettyref{item:pi(b)-is-0}.

  For the right-to-left direction, pick some $c,d$ in $\pi^{-1}(\Sbf)$ such that $e = \frac{cd}{b} \in \pi^{-1}(\Sbf)$. By assumption, there are $b_1,b_2,f,g \in \Abf$ such that $b = b_1b_2$ and $f = \frac{c}{b_1}, g = \frac{d}{b_2} \in \Abf$; when $\pi(b) = 0$, we may further assume $b_1,b_2,f,g \in \pi^{-1}(\Frac(\Sbf))$. Note that $e = fg$.

  We claim that there exists $\eta \in \Frac(\Sbf)$ such that $b_1\eta^{-1}, b_2\eta, f\eta, g\eta^{-1} \in \pi^{-1}(\Sbf)$. This implies that $\frac{c}{b_1\eta^{-1}}, \frac{d}{b_2\eta} \in \pi^{-1}(\Sbf)$, while $b = (b_1\eta^{-1}) (b_2\eta)$. In particular, since $c, d$ were arbitrary, the claim implies that $b$ is primal in $\pi^{-1}(\Sbf)$.

  For \prettyref{item:pi(b)-not-0}, namely $\pi(b) \neq 0$, we can find non-zero $b_1', b_2' \in \Sbf$ such that $\pi(b) = b_1'b_2'$ and $\frac{\pi(c)}{b_1'}, \frac{\pi(d)}{b_2'} \in \Sbf$. We choose $\eta = \frac{\pi(b_1)}{b_1'}$, so that
  \[ \pi(b_1\eta^{-1}) = b_1', \quad \pi(b_2\eta) = \frac{\pi(b)}{\pi(b_1\eta^{-1})}= b_2', \quad \pi(f\eta) = \frac{\pi(c)}{b_1'}, \quad \pi(g\eta^{-1}) = \frac{\pi(d)}{b_2'} \]
  which are all in $\Sbf$ by construction, as desired.

  Under \prettyref{item:pi(b)-is-0}, the condition $\pi(b) = 0$ forces $\pi(b_1) = 0$ or $\pi(b_2) = 0$, and without loss of generality, we may assume $\pi(b_1) = 0$. When $\pi(g) = 0$, pick some $\eta \in \Sbf$ such that $\eta\pi(b_2),\eta\pi(f) \in \Sbf$. Then:
  \[ \pi(b_1\eta^{-1}) = 0, \quad \pi(b_2\eta) = \eta\pi(b_2), \quad \pi(f\eta) = \eta\pi(f), \quad \pi(g\eta^{-1}) = 0 \]
  which are all in $\Sbf$ by construction. If instead $\pi(g) \neq 0$, we choose $\eta = \pi(g)$. Then:
  \[ \pi(b_1\eta^{-1}) = 0, \quad \pi(b_2\eta) = \pi(b_2g) = \pi(d), \quad \pi(f\eta) = \pi(fg) = \pi(e), \quad \pi(g\eta^{-1}) = 1 \]
  which are all in $\Sbf$ by construction. This concludes the proof of the claim.
\end{proof}

We thus obtain the following generalisation of \prettyref{prop:lift-primal-easy-reduced} and of \cite[Cor.\ 4.3]{BKK2006}.

\begin{prop}
  \label{prop:lift-primal}
  Let $b \in \ZKGG_\kappa$ be reduced and $\sigma = [v(b)]$, with $b \notin \Zbf$. Assume that $0 \in \supp(b)$, or that \refbigcof{} holds. Then $b$ is primal in $\ZKGG_\kappa$ if and only if $\iota_\sigma(b)$ is primal in $\LHH$.
\end{prop}
\begin{proof}
  Note that since $b \notin \Zbf$, we have $\sigma \succ [0]$. Recall that $\iota_\sigma$ restricts to an isomorphism between $\Zbf + \Kbf((\Gbf_{\preceq \sigma}^{<0}))_\kappa$ and $\SLHH$ by \prettyref{fact:isomorphism-K-H-sigma}\prettyref{item:fact-isom-K-H-image}. If $b$ is primal in $\ZKGG_\kappa$, then $b$ is primal in $\Zbf + \Kbf((\Gbf_{\preceq \sigma}^{<0}))_\kappa$, because for any factorisation $b = b_1b_2$ with $b_1, b_2 \in \ZKGG_\kappa$ we must have $v(b_1) \preceq \sigma$, $v(b_2) \preceq \sigma$. Thus, $\iota_\sigma(b)$ is primal in $\SLHH$. Since $b$ is reduced, the coefficient of exponent $0$ of $\iota_\sigma(b)$ is either $0$ or $1$, and one can easily verify in this case that $\iota_\sigma(b)$ is also primal in $\LHH$.

  For the converse, assume that $\iota_\sigma(b)$ is primal in $\LHH$. Suppose that $b$ divides some product $cd$ with $c,d \in \ZKGG_\kappa$. In particular, $b$ divides $T_\sigma(cd) = T_\sigma(c)T_\sigma(d)$, hence $\iota_\sigma(b)$ divides $\iota_\sigma(T_\sigma(cd))$.

  We now apply \prettyref{lem:primal} to $\iota_\sigma(b)$ with $\Lbf = \Lbf_\sigma$, $\Abf = \LHH$, $\pi : \Abf \to \Lbf$ the map projecting each series to its coefficient of exponent $0$, and $\Sbf = \Sbf_\sigma$. Note that by construction, $\pi^{-1}(\Sbf) = \SLHH$; moreover, $\pi(\iota_\sigma(c)) = \tau_\sigma(c)$ for every $c \in \ZKGG_\kappa$.

  If $0 \in \supp(b)$, then $\pi(\iota_\sigma(b)) = \tau_\sigma(b) = 1$ because $b$ is reduced, and in particular $\pi(\iota_\sigma(b))$ is primal in $\Sbf = \Sbf_\sigma$. Otherwise, $\tau_\sigma(b) = 0$, and by \refbigcof{}, either $\Frac(\Sbf_\sigma) = \Lbf_\sigma$ by \prettyref{prop:frac-Z-is-K}, or $\Gbf_{\prec \sigma} = \{0\}$ and $\Frac(\Sbf_\sigma) = \Frac(\Zbf) = \Kbf = \Lbf_\sigma$. In all cases, the assumptions of \prettyref{lem:primal} are satisfied, and we obtain that $\iota_\sigma(b)$ is primal in $\SLHH$, and so $b$ is primal in $\Zbf + \Kbf((\Gbf_{\preceq \sigma}^{<0}))_\kappa$.

  Therefore, there are $b_1, b_2 \in \ZKGG_\kappa$ such that $b = b_1b_2$ with $b_1,b_2$ dividing respectively $T_\sigma(c)$, $T_\sigma(d)$, hence dividing respectively $c$, $d$, as desired.
\end{proof}

We may then generalise \prettyref{prop:pseudo-poly-primal-easy} to all pseudo-polynomials.

\begin{cor}
  \label{cor:pseudo-poly-primal}
  Let $p \in \ZKGG_\kappa$ be a pseudo-polynomial. Assume that \refarchquot{} and \refbigcof{} hold for $\sigma = [v(p)]$, or that $0 \in \supp(p)$. Then $p$ is primal in $\ZKGG_\kappa$. In particular, all pseudo-polynomials of $\oz$ are primal in $\oz$.
\end{cor}
\begin{proof}
  Let $p \in \ZKGG_\kappa$ and $\sigma = [v(p)]$. Then:
  \begin{itemize}
  \item when $0 \in \supp(b)$, $p$ is primal in $\ZKGG_\kappa$ by \prettyref{prop:pseudo-poly-primal-easy};
  \item when \refarchquot{}, \refbigcof{} hold, $\iota_\sigma(p)$ is primal in $\Lbf_\sigma((\Rb^{\leq 0}))$ by \prettyref{cor:KR-primal-KRR}, thus $p$ is primal in $\ZKGG_\kappa$ by \prettyref{prop:lift-primal}. \qedhere
  \end{itemize}
\end{proof}

\begin{cor}\label{cor:finite-supp-primal}
  Let $p \in \ZKGG_\kappa$ be a series with finite support and $\sigma \in \Gbf_{/\asymp}$ be minimal intersecting $\supp(b)$. Assume \refarchquot{}, \refbigcof{}, and if $0 \in \supp(b)$, assume \refZpS{}. Then $p$ is primal in $\ZKGG_\kappa$. In particular, all omnific integers with finite support are primal in $\oz$.
\end{cor}
\begin{proof}
  Just repeat the argument for \prettyref{cor:finite-supp-primal-easy} with \prettyref{cor:pseudo-poly-primal} and \refZpS{} in place of \prettyref{prop:pseudo-poly-primal-easy}.
\end{proof}

\begin{exa}\label{exa:finite-supp-primal}
  For instance $\omega^{\sqrt{2}} + \omega + 1$, $\omega^{2+\frac{1}{\omega}} + \omega$, $\omega + \omega^{\frac{1}{\omega}}$ have finite support, thus they are primal in $\oz$, providing an alternative proof of \prettyref{main:Gonshor}.
\end{exa}

\begin{cor}
  \label{cor:primal-in-oz}
  Let $b \in \ZKGG_\kappa$ be reduced with pseudo-support of order type $\omega \hatplus k$ for some $k \in \Nb$, and $\sigma = [v(b)]$. If $0 \in \osupp(b)$ and \refbigcof{} holds, or if \refarchquot{} and \refbigcof{} hold, then $b$ is primal. If $0 \in \supp(b)$, then $b$ is prime.
\end{cor}
\begin{proof}
  Let $b \in \ZKGG_\kappa$ be reduced with pseudo-support of order type $\omega \hatplus k$ for some $k \in \Nb$ and let $\sigma = [v(b)]$. Let $b' = \frac{\iota_\sigma(b)}{t^x} \in \Lbf_\sigma((\Rb^{\leq 0}))$, where $x = \sup(\osupp(b)) \in \Rb$, so that $b'$ is not divisible by any $t^y$ with $y \in \Rb^{<0}$.

  By \prettyref{main:prime}, $b'$ is prime in $\Lbf_\sigma((\Rb^{\leq0}))$. If $0 \in \osupp(b)$, then $x = 0$, and in particular $b' \in \LHH$, hence $b'$ is prime in $\LHH$, thus it is primal in $\ZKGG_\kappa$ by \prettyref{prop:lift-primal} and $0 \in \supp(b)$ or \refbigcof{}. When $0 \in \supp(b)$, $b$ is also irreducible by \prettyref{prop:fact-almost-irreducible}, hence it is prime in $\ZKGG_\kappa$. If on the other hand $0 \notin \osupp(b)$, \refarchquot{} and \prettyref{cor:KR-primal-KRR} ensure that $t^x$ and $b'$ are primal in $\LHH = \Lbf_\sigma((\Rb^{\leq 0}))$, thus $b$ is primal in $\ZKGG_\kappa$ by \refbigcof{} and \prettyref{prop:lift-primal}.
\end{proof}

\begin{cor}
  If $b \in \oz$ is reduced with pseudo-support of order type $\omega \hatplus k$ for some $k \in \Nb$, then $b$ is primal, and it is prime if and only if $0 \in \supp(b)$.
\end{cor}
\begin{proof}
  Simply recall that in $\oz$, \refarchquot{} and \refbigcof{} hold, thus any $b$ as in the assumptions is primal, and it is prime if $0 \in \supp(b)$. Conversely, if $b$ is prime, then it is irreducible, thus $0 \in \supp(b)$.
\end{proof}

\begin{exas}
  \label{exas:primes-primal}
  The omnific integer $1 + \sum_{n \in \Nb}\omega^{\frac{1}{n+1}}$ is a prime of $\oz$, since it is clearly reduced, its pseudo-support has order type $\omega \hatplus 1$, and it contains $0$ in its support (recall that this could also be deduced from \cite{BKK2006,Pit2001}). Similarly, the omnific integer $\sum_{n \in \Nb}\omega^{\frac{1}{n+1}}$ is primal in $\oz$, but obviously not prime as it is not irreducible. Note moreover that thanks to \prettyref{prop:lift-primal}, every reduced prime series found using the results of \prettyref{sec:new-irreducibles-primes} lifts to a primal or prime series; for instance, from \prettyref{exa:prime-omega2+k+1}, we obtain that
  \begin{gather*}
    \left(\sum_{n \in \Nb} \omega^{\frac{1}{3n+1}}\right)\omega^3 + \left(\sum_{n \in \Nb} \omega^{\frac{1}{3n+2}}\right)\omega^2 + \omega^{\frac{k-1}{k}} + \dots + \omega^{\frac{1}{k}} + 1
  \end{gather*}
  is prime in $\oz$.
\end{exas}

\printbibliography

\end{document}